\newtheorem{theorem}{Theorem}
\newtheorem{remark}[theorem]{Remark}
\newtheorem{lemma}[theorem]{Lemma}
\newtheorem{proposition}[theorem]{Proposition}
\newtheorem{corollary}[theorem]{Corollary}
\newtheorem{definition}[theorem]{Definition}
\numberwithin{theorem}{section}
\numberwithin{equation}{section}
\title[On singularly logarithmic $(p, N)$-Laplace Schr\"{o}dinger equations]{On singularly perturbed $(p, N)$-Laplace Schr\"{o}dinger equation with logarithmic nonlinearity}
\author[D.K. Mahanta]{Deepak Kumar Mahanta}
\address[D.K. Mahanta]{Department of Mathematics, Indian Institute of Technology Jodhpur, Rajasthan 342030, India}
\email{mahanta.1@iitj.ac.in}
\author[T. Mukherjee]{Tuhina Mukherjee}
\address[T. Mukherjee]{Department of Mathematics, Indian Institute of Technology Jodhpur, Rajasthan 342030, India}
\email{tuhina@iitj.ac.in}
\author[P. Winkert]{Patrick Winkert}
\address[P. Winkert]{Technische Universit\"{a}t Berlin, Institut f\"{u}r Mathematik, Stra\ss e des 17.\,Juni 136, 10623 Berlin, Germany}
\email{winkert@math.tu-berlin.de}
\subjclass{35A15, 35D30, 35J10, 35J60, 35J92}
\keywords{Lusternik-Schnirelmann category, $(p, N)$-Laplace, penalization techniques, Trudinger-Moser inequality}
\begin{document}

\begin{abstract}
	This article focuses on the study of the existence, multiplicity and concentration behavior of ground states as well as the qualitative aspects of positive solutions for a $(p, N)$-Laplace Schr\"{o}dinger equation with logarithmic nonlinearity and critical exponential nonlinearity in the sense of Trudinger-Moser in the whole Euclidean space $\mathbb{R}^N$. Through the use of smooth variational methods, penalization techniques, and the application of the Lusternik-Schnirelmann category theory, we establish a connection between the number of positive solutions and the topological properties of a set in which the potential function achieves its minimum values.
\end{abstract}

\maketitle

\section{Introduction}

In this article, we deal with the following singularly perturbed $(p, N)$-Laplace Schr\"{o}dinger equation
\begin{align}\label{main problem}\tag{$\mathcal{P}_{\varepsilon}$}
	\begin{cases}
		\mathcal{L}_{p_\varepsilon}(u)+\mathcal{L}_{N_\varepsilon}(u)=|u|^{N-2}u \log |u|^N+f(u)\quad \text{in } \mathbb{R}^N, \\[1ex]
		\displaystyle \int_{\mathbb{R}^N} V(x)\big(|u|^p+|u|^N\big)\,\mathrm{d}x<+\infty, \quad u\in  W^{1,p}(\mathbb{R}^N)\cap W^{1,N}(\mathbb{R}^N) ,
	\end{cases}
\end{align}
where
\begin{align*}
	\mathcal{L}_{t_\varepsilon}(u)=-\varepsilon^t\Delta_t u+V(x)|u|^{t-2}u\quad\text{for }t\in\{p,N\}
\end{align*}
with $N\geq 2$. Further, we assume that $1<p<N$ and $\varepsilon$ is a very small positive parameter. The operator $\Delta_t u=\operatorname{div}(|\nabla u|^{t-2}\nabla u)$ with $t\in\{p,N\}$ is the standard $t$-Laplace operator and the scalar potential $V\colon\mathbb{R}^N\to \mathbb{R} $ is a continuous function. The nonlinearity $f\colon\mathbb{R}\to \mathbb{R} $ has critical exponential growth at infinity, i.e., it behaves like $\exp(\alpha|u|^{\frac{N}{N-1}})$ when $|u|\to\infty$ for some $\alpha>0$, which means that there exists a positive constant $\alpha_0$ such that the following condition holds:
\begin{align*}
	\lim_{|u|\to \infty} |f(u)| \operatorname{exp}(-\alpha|u|^{\frac{N}{N-1}})=
	\begin{cases}
		0       & \text{if } \alpha>\alpha_0, \\
		+\infty & \text{if }\alpha<\alpha_0.
	\end{cases}
\end{align*}
Throughout the paper, we suppose the following assumptions on the scalar potential $V\colon\mathbb{R}^N\to \mathbb{R}$:
\begin{enumerate}
	\item[(V1)]
		$V\in C(\mathbb{R}^N;\mathbb{R})$ and there exists a constant $V_0>0$ such that $\inf_{x\in\mathbb{R}^N} V(x)\geq V_0$.
	\item[(V2)]
		There exists an open and bounded set $\Lambda\subset \mathbb{R}^N$ such that
		\begin{align*}
			V_0=\inf_{x\in \Lambda} V(x)<\min_{\partial \Lambda} V(x).
		\end{align*}
\end{enumerate}
We define
\begin{align*}
	M=\{x\in\Lambda\colon V(x)=V_0\}
	\quad \text{and}\quad
	M_\delta=\{x\in\mathbb{R}^N\colon \operatorname{dist}(x,M)\leq \delta\}
\end{align*}
for $\delta>0$ small enough such that $M_\delta\subset\Lambda$. Moreover, the nonlinearity $f\colon\mathbb{R}\to\mathbb{R}$ is supposed to satisfy the following conditions:
\begin{enumerate}
	\item
		[(f1)] Let
		$f\in C^1(\mathbb{R},\mathbb{R})$ be an odd function such that $f(0)=0, f(s)<0$ for all $s< 0$ and $f(s)>0$ for all $s>0$. Further, there exists a constant $\alpha_0\in(0,\alpha)$ with the property that for all $\tau>0$, there exists $\kappa_\tau>0$ such that for all $s\in \mathbb{R}$, we have
		\begin{align*}
			|f(s)|\leq \tau |s|^{N-1}+\kappa_\tau \Phi\left(\alpha_0 |s|^{N'}\right)
			\quad \text{with }\quad \Phi(t)=\exp(t)-\sum_{j=0}^{N-2}\frac{t^j}{j!}
			\quad\text{and}\quad
			N'=\frac{N}{N-1}.
		\end{align*}
	\item[(f2)]
		There exists $\mu>N$ such that
		\begin{align*}
			sf(s)-\mu F(s)\geq 0\quad \text{for all  }s\in\mathbb{R}, \text{ where}\quad F(s)=\int_{0}^{s}f(t)\,\mathrm{d}t\quad \text{for all } s\in\mathbb{R}.
		\end{align*}
	\item[(f3)]
		The mapping $s\mapsto\displaystyle\frac{f(s)}{|s|^{N-2}s}$ is increasing for all $s>0$ and decreasing for all $s<0$.
	\item[(f4)]
		There exists a constant $\gamma>0$ such that $f(s)\geq \gamma s^{\mu-1}$ for all $s\geq 0$.
\end{enumerate}

\begin{remark}
	A typical example of a function that satisfies \textnormal{(f1)}--\textnormal{(f4)} can be considered as
	\begin{align*}
		f(s)=|s|^{N-2}s~\Phi(|s|^{N'})\quad\text{for all }s\in\mathbb{R},
	\end{align*}
	with $\alpha_0>1$, where $N\geq 2$, $N'=\frac{N}{N-1}$ and $\Phi$ is defined as in \textnormal{(f1)}.
\end{remark}

In order to familiarize the reader with the special behaviors of the classical Sobolev spaces, it is worth pointing out that the space $W^{1,p}(\mathbb{R}^N)$ can be distinguished in three different ways, namely:
\begin{align*}
	\text{(a) the Sobolev case: } p<N,
	\qquad
	\text{(b) the Sobolev limiting case: }p=N,
	\qquad
	\text{(c) the Morrey case: }p>N.
\end{align*}
The Sobolev embedding theorem says that for $p<N$, there holds $W^{1,p}(\mathbb{R}^N)\hookrightarrow L^q (\mathbb{R}^N)$ for any $q\in[p,p^*]$, where $p^*=\frac{Np}{N-p}$ is the critical Sobolev exponent to $p$. In this scenario, to study variational problems, the nonlinearity cannot exceed the polynomial of degree $p^*$. In contrast to this, for the Sobolev limiting case commonly known as the Trudinger-Moser case, one can notice that $p^*$ converges to $\infty$ as $p$ converges to $N$ and thus{,}, we might except that $W^{1, N}(\mathbb{R}^N)$ is continuously embedded in $L^\infty(\mathbb{R}^N)$. This is, however, wrong for $N>1$. In order to see this, let $\varphi\in C^\infty_{c}(\mathbb{R}^N,[0,1])$ be such that $\varphi\equiv 1$ in $B_1(0)$ and $\varphi\equiv 0$ in $ B^c_{2}(0)$, then the function $u(x)=\varphi(x)\log\big(\log \big(1+\frac{1}{|x|}\big)\big)$ belongs to $W^{1,N}(\mathbb{R}^N)$ but not to $L^\infty(\mathbb{R}^N)$. Moreover, in this situation, every polynomial growth is allowed. To fill this gap, it is fairly natural to look for the maximal growth of a function $g\colon\mathbb{R}\to \mathbb{R}^+$ such that
\begin{align*}
	\sup_{\substack{u\in W^{1,N}(\mathbb{R}^N)\\ \|u\|_{W^{1,N}}\leq 1}} ~\int_{\mathbb{R}^N}\cfrac{g(u)}{|x|^\beta}\,\mathrm{d}x<+\infty\quad\text{for all } 0\leq\beta<N,
\end{align*}
where $ \|u\|_{W^{1,N}}=\big(\|\nabla u\|_N^N+\|u\|_{N}^N\big)^{\frac{1}{N}}$ and $\|\cdot\|_N$ is the usual norm of the Lebesgue space $L^N(\mathbb{R}^N)$. It is noteworthy that many authors have independently proved that the maximum growth of such a function $g$ is of exponential type. In that context, we mention the works of Adimurthi-Yang \cite{Adimurthi-Yang-2010} and Li-Ruf \cite{Li-Ruf-2008}. In recent years, the existence and multiplicity of solutions to elliptic equations involving the $N$-Laplace operator with subcritical and critical growth in the sense of Trudinger-Moser inequality have been extensively studied, motivated by their applicability in many fields of modern mathematics. For a detailed study, we refer to Beckner \cite{Beckner-1993}, Chang-Yang \cite{Chang-Yang-2003}, Chen-Lu-Zhu \cite{Chen-Lu-Zhu-2021}, Lam-Lu \cite{Lam-Lu-2012}, Zhang-Zhu \cite{Zhang-Zhu-2024} and the references therein.

It should be pointed out here that the Trudinger-Moser type inequalities and the Adams type inequalities have been widely studied by many authors across diverse domains such as Euclidean spaces, Heisenberg groups, Riemannian manifolds, and so on. In this context, we recommend that readers take a look at some works by Chen-Wang-Zhu \cite{Chen-Wang-Zhu-2023}, Cohn-Lu \cite{Cohn-Lu-2001}, do \'{O}-Lu-Ponciano \cite{do-Lu-Ponciano-2024}, Duy-Phi \cite{Duy-Phi-2022}, Jiang-Xu-Zhang-Zhu \cite{Jiang-Xu-Zhang-Zhu-2025}, Lam-Lu \cite{ Lam-Lu-2013,Lam-Lu-2012-b}, Li-Lu-Zhu \cite{Li-Lu-Zhu-2021}, Wang \cite{Wang-2025}, Xue-Zhang-Zhu \cite{Xue-Zhang-Zhu-2025} and the references cited therein.

In the last decade, great attention has been focused on the study of $(p,q)$-Laplace equations as well as double phase problems in the whole Euclidean space $\mathbb{R}^N$ due to the broad applications in biophysics, plasma physics, solid state physics, and chemical reaction design, see, for example, the books of Aris \cite{Aris-1994}, Fife \cite{Fife-1979} and Murray \cite{Murray-1993} as well as the papers of Myers-Beaghton-Vvedensky \cite{Myers-Beaghton-Vvedensky-1989} and Wilhelmsson \cite{Wilhelmsson-1987} and the references therein. On the other hand, concerning the Sobolev limiting case, that is, $p<q$ and $q=N$, such types of problems are often comparatively less looked upon. This is one of the main motivations for the study in this article. More details on $(p, N)$-Laplace equations can be found in the papers of Carvalho-Figueiredo-Furtado-Medeiros \cite{Carvalho-Figueiredo-Furtado-Medeiros-2021}, Chen-Fiscella-Pucci-Tang \cite{Chen-Fiscella-Pucci-Tang-2020}, Fiscella-Pucci \cite{Fiscella-Pucci-2021}, Mahanta-Mukherjee-Sarkar \cite{Mahanta-Mukherjee-Sarkar-2025} and Mahanta-Winkert \cite{Mahanta-Winkert-2026}, as well as the references therein.

Moreover, we make a note that one of the hypotheses on the potential function $V$ that appears in (V1) says that the corresponding first-order weighted Sobolev spaces make sense and are well-behaved; see, for instance, Lemma \ref{lem2.1} and Lemma \ref{lem2.2}, respectively. To address the challenge posed by the lack of compactness, Bartsch-Wang \cite{Bartsch-Wang-1995} were the first to place assumptions on the potential function $V$. Further, as an application, they studied the existence and multiplicity of solutions for a superlinear Schr\"{o}dinger type equation in $\mathbb{R}^N$. Afterwards,  reducing the conditions on the potential and the nonlinearity, Tang \cite{Tang-2013} achieved some more general results. Later on, Chen-Lu-Zhu \cite{Chen-Lu-Zhu-2020, Chen-Lu-Zhu-2023,Chen-Lu-Zhu-2023-c,  Chen-Lu-Zhu-2021} developed and introduced some more generalized form of assumptions on the potential function $V$, called degenerated and trapping types of potentials, to establish the Trudinger-Moser type inequalities as well as the consequences of the Adams type inequalities, and by employing such inequalities, they studied elliptic and subelliptic PDEs. In addition, we also mention here that Chen-Lu-Zhu \cite{Chen-Lu-Zhu-2023-b} showed the existence of extremals for {T}rudinger-{M}oser inequalities in $\mathbb{R}^2$ in the presence of trapping potential.

Nowadays, there is a great interest in the study of the time-dependent nonlinear logarithmic Schr\"{o}dinger equation of the form
\begin{align}\label{ncl}\tag{NLS}
	i\varepsilon \partial_t \Psi=-\varepsilon^2 \Delta \Psi +(V(x)+E)\Psi-\Psi\log|\Psi|^2 \quad\text{for all }(x,t)\in \mathbb{R}^N\times [0,+\infty),
\end{align}
where $\Psi\colon\mathbb{R}^N\times [0,+\infty)\to\mathbb{C}$, $N\geq 2$, $E\in\mathbb{R}$, $\varepsilon$ is a positive parameter and $V$ is a continuous function satisfying certain hypotheses. It is worth noting that the standing wave solution of \eqref{ncl} is of the form $\Psi(x,t)=\exp{(-iEt/\varepsilon)}u(x)$, where $u$ is a solution of the equation
\begin{align}\label{eq1.1}
	\begin{cases}
		-\varepsilon^2 \Delta u+V(x)u=u\log u^2\quad \text{in }\mathbb{R}^N,\\
		~ u\in H^1(\mathbb{R}^N).
	\end{cases}
\end{align}
From the point of view of the application, such equations are the main tools for studying quantum physics, quantum optics, effective quantum gravity, nuclear physics, transport and diffusion phenomena, theory of superfluidity and Bose-Einstein condensation. For more information in this direction, we refer to Bia\l ynicki-Birula-Mycielski \cite{Bialynicki-Birula-Mycielski-1975}, Carles-Gallagher \cite{Carles-Gallagher-2018}, Cazenave \cite{Cazenave-1983}, Cazenave-Lions \cite{Cazenave-Lions-1982}, Zloshchastiev \cite{Zloshchastiev-2010} and the references therein. In addition, in order to study \eqref{eq1.1}, there have been several technical difficulties due to the presence of logarithmic nonlinearity. For example, let $u$ be a smooth function satisfying
\begin{align*}
	u(x)=
	\begin{cases}
		\big(|x|^{\frac{N}{2}}\log |x|\big)^{-1} &\text{if }|x|\geq 3,\\
		0 & \text{if }|x|\leq 2.
	\end{cases}
\end{align*}
By direct computation, one has $u\in H^1(\mathbb{R}^N)$ but $\int_{\mathbb{R}^N}u^2\log u^2\,\mathrm{d}x=-\infty$. So, the Euler-Lagrange functional associated to \eqref{eq1.1} is not finite and is no longer $C^1$ on $H^1(\mathbb{R}^N)$.  As a result, we cannot directly use the classical critical point theory to study the behavior of solutions of \eqref{eq1.1}. To overcome these difficulties, several approaches have been developed in the mathematical literature so far. We will discuss some of them below.

Initially, Cazenave \cite{Cazenave-1983} studied the following time-dependent logarithmic Schr\"{o}dinger equation
\begin{align}\label{eq1.2}
	iu_t+\Delta u+u\log u^2=0\quad\text{in }\mathbb{R}\times \mathbb{R}^N
\end{align}
by considering the $N$-function $A$ and the function space $W$ defined as
\begin{align*}
	A(s)=\begin{cases}
		-\frac{1}{2} s^2 \log s^2 &\text{if } 0\leq s\leq e^{-3},\\
		3s^2+4e^{-3} s-e^{-6}~& \text{if } s\geq e^{-3},
	\end{cases}
	\quad\text{and}\quad
	W=\left\{u\in H^1(\mathbb{R}^N)\colon \int_{\mathbb{R}^N}|u^2\log u^2|~\mathrm{d}x<+\infty\right\},
\end{align*}
endowed with the Luxemburg norm $\|\cdot\|_W=\|\cdot\|_{H^1(\mathbb{R}^N)}+\|\cdot\|_A$, where
\begin{align*}
	\|u\|_A=\inf\left\{\lambda>0\colon \int_{\mathbb{R}^N}A(\lambda^{-1}|u|)\,\mathrm{d}x\leq 1\right\}.
\end{align*}
The author defined the associated functional $L\colon W\to\mathbb{R}$ given by
\begin{align*}
	L(u)=\frac{1}{2}\int_{\mathbb{R}^N}|\nabla u|^2 \,\mathrm{d}x-\frac{1}{2}\int_{\mathbb{R}^N}u^2\log u^2\,\mathrm{d}x\quad \text{for all }u\in W
\end{align*}
and proved the existence of infinitely many critical points of $L$ on the set $\{u\in W\colon \int_{\mathbb{R}^N}|u|^2\,\mathrm{d}x=1\}$. As a result, he also provided a lot of information about the behavior of the solutions of equation \eqref{eq1.2}.

Later, Squassina-Szulkin \cite{Squassina-Szulkin-2015, Squassina-Szulkin-2017} investigated the following logarithmic Schr\"{o}dinger equation
\begin{equation}\label{eq1.3}
	\begin{cases}
		-\Delta u+V(x)u= Q(x)u\log u^2 & \text{in } \mathbb{R}^N,\\
		~u\in H^1(\mathbb{R}^N),
	\end{cases}
\end{equation}
where $V,Q\in C(\mathbb{R}^N,\mathbb{R})$ are $1$-periodic functions of the variables $x_1,x_2,\cdots,x_N$ satisfying the hypotheses
\begin{align*}
	\min_{x\in \mathbb{R}^N} Q(x)>0
	\quad\text{and}\quad
	\min_{x\in \mathbb{R}^N} (V+Q)(x)>0 .
\end{align*}
Employing the standard nonsmooth critical point theory of lower semicontinuous functionals, which was developed by Szulkin \cite{Szulkin-1986}, the authors showed first the existence of positive ground state solutions by adopting the deformation lemma. Then, by using the genus theory, they proved the existence of infinitely many high-energy solutions, which are geometrically distinct under $\mathbb{Z}^N$-action. Moreover, several authors used nonsmooth variational techniques to study the logarithmic Schr\"{o}dinger equations, such as Alves-Ambrosio \cite{Alves-Ambrosio-2024}, Alves-de Morais Filho \cite{Alves-deMoraisFilho-2018}, Alves-Ji \cite{Alves-Ji-2020,Alves-Ji-2024}, d'Avenia-Montefusco-Squassina \cite{dAvenia-Montefusco-Squassina-2014}, Deng-He-Pan-Zhong \cite{Deng-He-Pan-Zhong-2023}, Ji-Szulkin \cite{Ji-Szulkin-2016}, Li-Peng-Shuai \cite{Li-Peng-Shuai-2022} and Liu-Peng-Zou \cite{Liu-Peng-Zou-2025}. In contrast to this, Tanaka-Zhang \cite{Tanaka-Zhang-2017} have also studied \eqref{eq1.3} by considering $V, Q$ as spatially $1$-periodic functions of class $C^1$. The authors showed the existence of infinitely many multi-bump solutions for \eqref{eq1.3}, which are distinct under $\mathbb{Z}^N$-action, by taking an approach using spatially $2L$-periodic problems with $L\gg1$.

During the last decade, Wang-Zhang \cite{Wang-Zhang-2019} introduced an advanced way of studying logarithmic equations, which is known as the power approximation method. First, they considered the following semiclassical scalar field equation with power-law nonlinearity
\begin{equation}\label{eq1.4}
	\begin{cases}
		-\Delta u+\lambda u= |u|^{p-2}u&\text{in }\mathbb{R}^N,\\
		~\displaystyle\lim_{|x|\to\infty}u(x)= 0,
	\end{cases}
\end{equation}
where $p\in(2,2^*)$ with $2^*=\frac{2N}{N-2}$ if $N\geq 3$ and $2^*=+\infty$ if $N\leq 2$. The authors showed that when $p\searrow 2$, then the ground state solutions of \eqref{eq1.4} either blow up or vanish, and converge to the ground state solutions of the logarithmic-scalar field equation
\begin{align*}
	\begin{cases}
		-\Delta u=\lambda u\log|u|&\text{in }\mathbb{R}^N,\\
		~\displaystyle\lim_{|x|\to\infty}u(x)= 0.
	\end{cases}
\end{align*}
In addition, they also proved that the same result holds for bound-state solutions. Later, the authors studied the concentration behavior of nodal solutions of \eqref{eq1.1} in \cite{Zhang-Wang-2020} by employing the same idea discussed above.

On the other hand, concerning the penalization method and the Lusternik-Schnirelmann category theory, which are generally used to study the multiplicity of the positive solutions of nonlinear PDEs and their concentration phenomena, we recommend the readers to study the papers of Alves-Figueiredo \cite{Alves-Figueiredo-2009}, Ambrosio-Repov\v{s} \cite{Ambrosio-Repovs-2021}, Thin \cite{Thin-2022} and Zhang-Sun-Liang-Thin \cite{Zhang-Sun-Liang-Thin-2024}, see also the references therein. The most important features and novelties of our problem are listed below:
\begin{enumerate}
	\item[(a)]
		The appearance of the $(p, N)$-Laplace operator in our problem is nonhomogeneous, and thus, the calculations are more complicated.
	\item[(b)]
		Due to the lack of compactness caused by the unboundedness of the domain, the Palais-Smale sequences do not have the compactness property.
	\item[(c)]
		The reaction combines the multiple effects generated by the logarithmic term and a term with critical growth with respect to the exponential nonlinearity, making our study more delicate and challenging.
	\item[(d)]
		The concentration phenomena create a bridge between the global maximum point of the solution and the global minimum of the potential function.
	\item[(e)]
		The proofs combine refined techniques, including variational and topological tools.
\end{enumerate}

To the best of our knowledge, this is the first time in the literature, in which two penalized functions are used simultaneously, one corresponds to the logarithmic nonlinearity and the other one corresponds to the exponential growth. Motivated by all the cited works, especially by the papers of Alves-da Silva \cite{Alves-daSilva-2023}, Alves-Ji \cite{Alves-Ji-2020} and Squassina-Szulkin \cite{Squassina-Szulkin-2015}, we study the existence, multiplicity and concentration phenomena of solutions for problem \eqref{main problem}.

Note that, by the change of variable $x\mapsto \varepsilon x$, we can see that \eqref{main problem} is equivalent to the problem
\begin{equation}\label{main problem@}\tag{$\mathcal{S}_{\varepsilon}$}
	\begin{cases}
		\widetilde{\mathcal{L}}_{p_\varepsilon}(u)+\widetilde{\mathcal{L}}_{N_\varepsilon}(u)=|u|^{N-2}u \log |u|^N+f(u)\quad\text{in } \mathbb{R}^N, \\[1ex]
		\displaystyle  \int_{\mathbb{R}^N} V(\varepsilon x)\big(|u|^p+|u|^N\big)\,\mathrm{d}x<+\infty,\quad u\in  W^{1,p}(\mathbb{R}^N)\cap W^{1,N}(\mathbb{R}^N),
	\end{cases}
\end{equation}
where
\begin{align*}
	\widetilde{\mathcal{L}}_{t_\varepsilon}(u)=-\Delta_t u+V(\varepsilon x)|u|^{t-2}u\quad\text{for }t\in\{p,N\}.
\end{align*}

\begin{definition}
	We say $u\in\mathbf{X}_\varepsilon$ (see \eqref{eq1} for its definition) is a (weak) solution of \eqref{main problem@}, if
	\begin{align*}
		|u|^{N-2}u\psi \log |u|^N\in L^1(\mathbb{R}^N)
	\end{align*}
	and
	\begin{align*}
		\big\langle{u,\psi}\big\rangle_{p,V_\varepsilon}+\big\langle{u,\psi}\big\rangle_{N,V_\varepsilon}= \int_{\mathbb{R}^N}|u|^{N-2}u\psi \log |u|^N\,\mathrm{d}x+\int_{\mathbb{R}^N}f(u)\psi\,\mathrm{d}x
	\end{align*}
	is satisfied for all $\psi\in\mathbf{X}_\varepsilon $, where  $\big\langle\cdot,\cdot\big\rangle_{t,V_\varepsilon}$ for $t\in\{p,N\}$ is defined as
	\begin{align*}
		\qquad\big\langle{u,\psi}\big\rangle_{t,V_\varepsilon}
		=\int_{\mathbb{R}^N}|\nabla u|^{t-2}\nabla u \cdot \nabla\psi\,\mathrm{d}x+\int_{\mathbb{R}^N}V(\varepsilon x)|u|^{t-2}u\psi\,\mathrm{d}x \quad\text{for all }u,\psi\in \mathbf{X}_\varepsilon .
	\end{align*}
\end{definition}

Now, we state the main results of this article.

\begin{theorem}[\textbf{Concentration phenomena}] \label{thm1.2}
	Let hypotheses \textnormal{(V1)--(V2)} and \textnormal{(f1)--(f4)} be satisfied. Then there exists $\varepsilon_0>0$ such that for any $\varepsilon\in(0,\varepsilon_0)$, problem \eqref{main problem} has a positive solution $v_\varepsilon$. Further, if $\eta_\varepsilon$ is the global maximum point of $v_\varepsilon$, then it holds
	\begin{align*}
		\lim_{\varepsilon\to 0} V(\eta_\varepsilon)=V_0.
	\end{align*}
\end{theorem}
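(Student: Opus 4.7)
The plan is to adapt the del Pino--Felmer penalization scheme to the nonsmooth variational setting forced by the logarithmic term, and then to combine the resulting existence result with a concentration--compactness argument to locate the maximum point of the solution. Since \eqref{main problem} is equivalent to \eqref{main problem@} via the scaling $x\mapsto\varepsilon x$, I would work throughout with \eqref{main problem@}. Two penalizations have to be put in place simultaneously: (i) for the critical nonlinearity, redefine $f$ outside the set $\Lambda/\varepsilon$ to produce a modified nonlinearity $\widetilde{f}_\varepsilon(x,s)$ that is dominated there by a small multiple of $V(\varepsilon x)|s|^{N-2}s$; (ii) for the logarithm, use the Squassina--Szulkin splitting $|s|^{N-2}s\log|s|^N=F_1'(s)-F_2'(s)$ with $F_1,F_2$ convex, so that the part of the energy coming from $F_2$ enters as a convex lower-semicontinuous summand. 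The penalized functional $J_\varepsilon:\mathbf{X}_\varepsilon\to\mathbb{R}\cup\{+\infty\}$ then decomposes as a $C^1$ piece plus a convex lower-semicontinuous piece, which fits Szulkin's critical point theory.

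Next, I would verify the mountain pass geometry for $J_\varepsilon$. The autonomous limit problem on $\mathbb{R}^N$ with constant potential $V_0$ carries a ground state level $c_{V_0}$, and the hypothesis \textnormal{(f4)} together with a test-function computation (scaling a ground state of the autonomous problem) shows that the mountain pass level $c_\varepsilon$ of $J_\varepsilon$ lies strictly below the Trudinger--Moser compactness threshold and satisfies $c_\varepsilon\to c_{V_0}$ as $\varepsilon\to 0^+$. Boundedness of Cerami sequences follows from \textnormal{(f2)} and the construction of the penalization; strong convergence requires combining Lions' concentration-compactness lemma with the Trudinger--Moser inequality and the convexity of the nonsmooth part. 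This yields a nontrivial critical point of $J_\varepsilon$, which by \textnormal{(f1)} (oddness of $f$) and a strong maximum principle for the $(p,N)$-Laplace operator can be taken positive; call it $v_\varepsilon$.

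Third, I would show that $v_\varepsilon$ actually solves \eqref{main problem@}, not merely its penalized version. The key step is a uniform pointwise decay estimate of $v_\varepsilon$ outside $\Lambda/\varepsilon$. This is obtained by a Moser iteration tailored to the $(p,N)$-structure, with test functions of the form $v_\varepsilon^{1+\sigma}\eta^N$ and a Trudinger--Moser step to absorb the exponential term; the iteration parameter must be kept below the Moser threshold, which is where the strict smallness of the penalized part is used. Once the decay holds, the penalization becomes inactive and $v_\varepsilon$ is a solution of \eqref{main problem}. For the concentration statement, suppose $V(\eta_\varepsilon)\not\to V_0$ along some subsequence; translate and rescale by setting $w_\varepsilon(y)=v_\varepsilon(y+\eta_\varepsilon/\varepsilon)$. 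Elliptic regularity and the Lions lemma yield a nontrivial weak limit $w_0$ solving an autonomous $(p,N)$-logarithmic problem with constant potential $V_\infty=\lim V(\eta_\varepsilon)>V_0$. But then Fatou's lemma applied to the energies gives a limiting energy at least $c_{V_\infty}>c_{V_0}$, contradicting $c_\varepsilon\to c_{V_0}$. Hence $V(\eta_\varepsilon)\to V_0$.

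The main obstacle is the simultaneous interaction between the logarithmic nonlinearity, which forces the use of Szulkin-type nonsmooth calculus, and the critical exponential growth, which forces delicate Trudinger--Moser estimates for the compactness analysis. Handling both penalizations coherently, so that the logarithmic piece remains convex and lower-semicontinuous while the exponential piece keeps the mountain pass level below the Trudinger--Moser threshold, is where the bulk of the technical work lies; in particular, the $L^\infty$-decay estimate needed to deactivate the penalization outside $\Lambda/\varepsilon$ has to be proved via a Moser iteration that must simultaneously respect the nonhomogeneity of the $(p,N)$-operator and the exponential nonlinearity, and it is this step that I expect to be the hardest to push through.
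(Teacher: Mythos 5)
Your high-level plan is close in spirit, but there is one structural choice where you diverge from the paper in a way that makes your version slightly cleaner for Theorem \ref{thm1.2} alone, and one place where the plan has a genuine gap.

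On the functional framework: you propose Szulkin's nonsmooth critical point theory, writing the logarithmic energy as the difference of a convex lower-semicontinuous piece and a $C^1$ piece. The paper deliberately avoids that. It picks a specific splitting $\mathcal{H}_2 - \mathcal{H}_1$ with $\mathcal{H}_1$ an $N$-function, introduces the Orlicz space $L^{\mathcal{H}_1}(\mathbb{R}^N)$, and works in $\mathbf{Y}_\varepsilon = \mathbf{X}_\varepsilon \cap L^{\mathcal{H}_1}(\mathbb{R}^N)$, on which the penalized functional $J_\varepsilon$ \emph{is} $C^1$. For the existence and concentration statements of Theorem \ref{thm1.2} your nonsmooth route could in principle be pushed through, but the paper's choice of a $C^1$ framework is essential for the Lusternik--Schnirelmann multiplicity result (Theorem \ref{thm1.3}), so the two theorems share a unified machinery. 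This is a legitimate alternative route, not an error.

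The genuine gap is in your penalization scheme. You plan to penalize only the critical nonlinearity $f$ outside $\Lambda/\varepsilon$ and to leave the logarithmic term untouched, handling it entirely through the convex/convex splitting. That is not enough. In the decomposition $|s|^{N-2}s\log|s|^N = \mathcal{H}_2'(s)-\mathcal{H}_1'(s)$, the piece $\mathcal{H}_2'$ grows like $|s|^{N-1}\log|s|$ at infinity, which is strictly superlinear relative to $|s|^{N-1}$, so the standard del Pino--Felmer localization estimates fail outside $\Lambda_\varepsilon$: the proof of tightness of Palais--Smale sequences (Lemma \ref{lem3.5}) and the passage from the penalized problem back to the original one (proof of Theorem \ref{thm1.2}) both require the bound $|G_2'(x,s)|\le \ell|s|^{N-1}$ for $x\in\Lambda^c$, which you cannot get from $\mathcal{H}_2'$ itself. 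The paper therefore introduces \emph{two} del Pino--Felmer modifications simultaneously: $G_2'$ truncating $\mathcal{H}_2'$ and $g$ truncating $f$, both agreeing with the original nonlinearities on $\Lambda$ and both controlled by a small multiple of $|s|^{N-1}$ outside $\Lambda$. Without a penalization of $\mathcal{H}_2'$ the test-function estimate that deactivates the penalization outside $\Lambda_\varepsilon$ (equation \eqref{eq4.50} in the paper, via the uniform decay from Lemma \ref{lem4.10}) would no longer close, because outside $\Lambda_\varepsilon$ the logarithmic term would contribute a term that dominates $V(\varepsilon x)|s|^{N-1}$ for large $|s|$. The rest of your outline — the autonomous limit problem, the convergence $c_\varepsilon\to c_{V_0}$, Lions' lemma, the Moser iteration with the Trudinger--Moser step, and the Fatou-type lower bound $c_{V_\infty}\ge c_{V_0}$ forcing $V(\eta_\varepsilon)\to V_0$ — is aligned with Lemmas \ref{lem4.5}, \ref{lem4.8}, \ref{lem4.10} and the paper's proof of Theorem \ref{thm1.2}, though the paper first proves convergence of the translated solutions around a concentration sequence $\widetilde{y}_n$ and only then locates the global maximum inside $B_R(\widetilde{y}_n)$, rather than rescaling directly around $\eta_\varepsilon$.
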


\begin{theorem}[\textbf{Multiplicity of positive solutions}] \label{thm1.3}
	Let hypotheses \textnormal{(V1)--(V2)} and \textnormal{(f1)--(f4)} be satisfied and let $\delta>0$ be sufficiently small. Then there exists $\varepsilon_1>0$ such that for $\varepsilon\in(0,\varepsilon_1)$, the following hold:
	\begin{enumerate}
		\item[\textnormal{(a)}]
			problem \eqref{main problem} has at least $\frac{\operatorname{cat}_{M_\delta} (M)}{2}$ positive solutions, whenever $\operatorname{cat}_{M_\delta} (M)$ is an even number;
		\item[\textnormal{(b)}]
		problem \eqref{main problem} has at least $\frac{\operatorname{cat}_{M_\delta} (M)+1}{2}$ positive solutions, whenever $\operatorname{cat}_{M_\delta} (M)$ is an odd number.
	\end{enumerate}
\end{theorem}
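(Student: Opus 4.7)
The plan is to follow the Del Pino--Felmer penalization scheme combined with the Benci--Cerami topological argument, which is the natural framework for linking the topology of the minimum set $M$ to a multiplicity count for positive solutions. The difficulty here is that two different penalizations must be introduced simultaneously: one that cuts off the critical exponential term $f$ outside the set $\Lambda/\varepsilon$ (needed to recover compactness of Palais--Smale sequences despite Trudinger--Moser growth) and a second one that modifies the logarithmic term $|u|^{N-2}u\log|u|^N$ on the same region (needed to make the energy functional $C^1$ on the working space and to prevent the wrong-sign part of the logarithm from destroying the Nehari structure at small values of $|u|$). After extending the negative arguments of both nonlinearities by zero to force positivity of critical points via the strong maximum principle, I would define the penalized functional $J_\varepsilon$ on the weighted product space $W^{1,p}(\mathbb{R}^N)\cap W^{1,N}(\mathbb{R}^N)$, introduce the associated Nehari manifold $\mathcal{N}_\varepsilon$, and rely on the compactness and ground-state analysis already used in the proof of Theorem~\ref{thm1.2}, in particular on the ground state level $c_{V_0}$ of the autonomous limit problem with constant potential $V_0$.

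With these ingredients I would execute the standard two-map construction. First, a \emph{concentration map} $\Psi_\varepsilon \colon M \to \mathcal{N}_\varepsilon$ of the form $\Psi_\varepsilon(y)(x) = t_\varepsilon(y)\,w\!\left(x - y/\varepsilon\right)\eta\!\left(\varepsilon x - y\right)$, where $w$ is a fixed positive ground state of the autonomous problem, $\eta$ is a smooth cutoff, and $t_\varepsilon(y)>0$ is the unique Nehari projection. Using the nonhomogeneity of the $(p,N)$ fibering together with Trudinger--Moser and dominated-convergence estimates on the logarithmic integrand, I would establish the uniform energy estimate $\max_{y\in M} J_\varepsilon(\Psi_\varepsilon(y)) = c_{V_0} + o_\varepsilon(1)$, so that $\Psi_\varepsilon(M) \subset \mathcal{N}_\varepsilon^{c_{V_0}+h(\varepsilon)}$ for some $h(\varepsilon) \downarrow 0$. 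Conversely, I would introduce a \emph{barycenter map} $\beta_\varepsilon(u) = \left(\int_{\mathbb{R}^N} |u|^N \,\mathrm{d}x\right)^{-1}\!\int_{\mathbb{R}^N} \Upsilon(\varepsilon x)|u|^N \,\mathrm{d}x$ with $\Upsilon$ a radial truncation of the identity, and use a Lions-type concentration-compactness analysis to show that any sequence in $\mathcal{N}_{\varepsilon_n}^{c_{V_0}+h(\varepsilon_n)}$ with $\varepsilon_n \to 0$ concentrates, up to translation, at a single point of $M$; hence $\beta_\varepsilon$ takes values in $M_\delta$ for $\varepsilon$ small.

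The composition $\beta_\varepsilon \circ \Psi_\varepsilon \colon M \to M_\delta$ is then a small perturbation of the canonical inclusion, hence homotopic to it, so the sublevel set $\mathcal{N}_\varepsilon^{c_{V_0}+h(\varepsilon)}$ has Lusternik--Schnirelmann category at least $\operatorname{cat}_{M_\delta}(M)$. Since $f$ is odd by (f1) and $s \mapsto |s|^{N-2}s\log|s|^N$ is also odd, the primitive of the full reaction is even, so $J_\varepsilon$ is $\mathbb{Z}_2$-invariant under $u \mapsto -u$ and its critical points on $\mathcal{N}_\varepsilon$ come in antipodal pairs $\{u,-u\}$ at the same energy. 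Equivariant Lusternik--Schnirelmann theory then produces at least $\operatorname{cat}_{M_\delta}(M)$ critical points, organized in $\lceil \operatorname{cat}_{M_\delta}(M)/2\rceil$ antipodal orbits, from each of which I select the positive representative; this accounts exactly for the parity distinction in cases (a) and (b). A concluding step verifies that these solutions of the penalized problem solve the original one: a Moser iteration combined with the decay of concentrating sequences shows that, for $\varepsilon$ small, the solutions stay below the penalization thresholds in the complement of $\Lambda/\varepsilon$, so both penalizations are inactive.

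The main obstacle, in my view, lies not in the topological Benci--Cerami template but in the compactness analysis near the threshold $c_{V_0}$, where three difficulties interact. The lack of homogeneity of the $(p,N)$-operator forbids clean scaling arguments; the logarithmic reaction is neither sign-definite nor of power type, so Cerami sequences are natural and integrability of the $|u|^N \log |u|^N$ tail must be controlled in the spirit of Cazenave; and the Trudinger--Moser critical exponential growth restricts admissible energies to lie below a threshold depending explicitly on $\alpha_0$. Managing all three simultaneously while keeping the penalization mild enough to later recover solutions of the original problem \eqref{main problem} is where I expect the most technical work to be concentrated.
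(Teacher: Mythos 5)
Your overall architecture matches the paper's: a double penalization (one truncation of the exponential term $f$, one of the logarithmic term, both outside $\Lambda_\varepsilon$), the Nehari projection and concentration map $\Phi_\varepsilon$ built from a ground state of the autonomous limit problem, the barycenter map $\beta_\varepsilon$, and a near-threshold compactness analysis allowing the Benci--Cerami comparison $\operatorname{cat}_{\widetilde{\mathcal{N}}_\varepsilon}(\widetilde{\mathcal{N}}_\varepsilon)\geq\operatorname{cat}_{M_\delta}(M)$. The closing Moser-iteration/decay argument that deactivates the penalization outside $\Lambda_\varepsilon$ is also what the paper does. Two cosmetic differences: the paper's barycenter weight is $|u|^p+|u|^N$ rather than $|u|^N$ alone, and its ground-state level is written $c_0$ rather than $c_{V_0}$.

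Where your proposal goes wrong is the sign-handling, and this is not peripheral: it is exactly where the factor $1/2$ in the theorem comes from. You write first that you ``extend the negative arguments of both nonlinearities by zero to force positivity of critical points via the strong maximum principle,'' and then two sentences later that the resulting $J_\varepsilon$ ``is $\mathbb{Z}_2$-invariant under $u\mapsto -u$ and its critical points on $\mathcal{N}_\varepsilon$ come in antipodal pairs.'' These two claims are mutually exclusive: a zero truncation breaks the $\mathbb{Z}_2$-symmetry, while oddness of the reaction is what preserves it. If the zero truncation really forced all critical points to be positive, the theorem would yield $\operatorname{cat}_{M_\delta}(M)$ positive solutions with no division by two -- a strictly stronger statement than what is being proved -- and one would have to justify that the Nehari fibering structure (uniqueness of $t_u$, monotonicity of $s\mapsto g(x,s)/|s|^{N-2}s$) survives the asymmetric truncation, which is precisely why the paper keeps the odd extensions $G_2'(x,-s)=-G_2'(x,s)$ and $g(x,-s)=-g(x,s)$ throughout. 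With the odd extensions, critical points can be negative, and the paper's actual argument has two pieces you elide: a sign-splitting claim, namely that any critical point $w_\varepsilon$ with $J_\varepsilon(w_\varepsilon)\leq c_0+h(\varepsilon)$ satisfies $w_\varepsilon^+=0$ or $w_\varepsilon^-=0$ (because otherwise $J_\varepsilon(w_\varepsilon)=J_\varepsilon(w_\varepsilon^+)+J_\varepsilon(w_\varepsilon^-)\geq 2c_\varepsilon$, contradicting $c_\varepsilon\to c_0$); and a pigeonhole-and-reflection step applied to the $\operatorname{cat}_{M_\delta}(M)$ critical points produced by the \emph{ordinary} Lusternik--Schnirelmann theorem. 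Your appeal to ``equivariant Lusternik--Schnirelmann theory'' producing $\lceil\operatorname{cat}_{M_\delta}(M)/2\rceil$ antipodal orbits is not how the count is obtained: the ordinary LS theorem (Corollary~\ref{cor5.3}) gives distinct critical points that are not a priori closed under $u\mapsto-u$, and a genuinely equivariant category bound would require a different lemma than the one the paper uses. The pigeonhole is simpler and is what actually produces the parity split in (a) and (b).
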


The paper is organized as follows. In Section \ref{sec2}, we introduce the underlying function spaces, the main tools of the variational framework and some preliminary results. Section \ref{sec3} is devoted to the study of the penalized problem by using the mountain pass geometry and some topological tools. In Section \ref{sec4}, the properties of the Nehari manifold associated with the penalized problem and the concentration behavior of the positive solutions for \eqref{main problem} are studied. Finally, in Section \ref{sec5}, we prove Theorem \ref{thm1.3} by invoking the Lusternik-Schnirelmann category theory.

\section{Some Preliminary Results}\label{sec2}

This section is devoted to some basic results on Sobolev spaces, Orlicz spaces and related lemmas that will be used to establish the main results of this article. To this end, for $t\in(1,+\infty)$, $L^t(\mathbb{R}^N)$ denotes the standard Lebesgue space with the norm $\|\cdot\|_t$. Further, if $\Omega\subset \mathbb{R}^N$, then we define the norm of $L^t(\Omega)$ by $\|\cdot\|_{L^t(\Omega)}$. For nonnegative measurable functions $V\colon\mathbb{R}^N\to \mathbb{R}$, the space $L^t_{V_\varepsilon}(\mathbb{R}^N)$ consists of all real-valued measurable functions such that $V(\varepsilon x)|u|^t\in L^1(\mathbb{R}^N)$ and is equipped with the seminorm
\begin{align*}
	\|u\|_{t,V_\varepsilon}=\Bigg(\int_{\mathbb{R}^N} V(\varepsilon x)|u|^t\,\mathrm{d}x\Bigg)^{\frac{1}{t}}\quad\text{for all }u\in L^t_{V_\varepsilon}(\mathbb{R}^N),
\end{align*}
which turns into a norm due to hypothesis  (V1). The space $\big(L^t_{V_\varepsilon}(\mathbb{R}^N),\|\cdot\|_{t,V_\varepsilon}\big)$ is a  separable and uniform convex Banach space (see Pucci-Xiang-Zhang \cite{Pucci-Xiang-Zhang-2015}). Note that  under the assumption (V1), the embedding $L^t_{V_\varepsilon}(\mathbb{R}^N)\hookrightarrow L^t(\mathbb{R}^N)$ is continuous.

Next, we define
\begin{align*}
	W^{1,t}(\mathbb{R}^N)=\left\{ u\in L^{t}(\mathbb{R}^N)\colon|\nabla u|\in L^{t}(\mathbb{R}^N)\right\},
\end{align*}
endowed with the norm
\begin{align*}
	\|u\|_{W^{1,t}}=\big(\|\nabla u\|_t^t+\|u\|_{t}^t\big)^{\frac{1}{t}}.
\end{align*}
It is well-known that the space $\big(W^{1,t}(\mathbb{R}^N),\|\cdot\|_{W^{1,t}}\big)$ is a separable and uniformly convex Banach space. Note that $C_{c}^\infty(\mathbb{R}^N)$ is a dense subset of $W^{1,t}(\mathbb{R}^N)$. Moreover, the critical Sobolev exponent of $t$ is defined by $t^*=\frac{Nt}{N-t}$ if $t<N$ and $t^*=+\infty$ otherwise. Further, we set
\begin{align*}
	\mathbf{X}=W^{1,p}(\mathbb{R}^N)\cap W^{1,N}(\mathbb{R}^N)
\end{align*}
and endow it with the norm
\begin{align*}
	\|u\|_{\mathbf{X}}=\|u\|_{W^{1,p}}+\|u\|_{W^{1,N}}.
\end{align*}
Then, the space $(\mathbf{X},\|\cdot\|_{\mathbf{X}})$ is a reflexive and separable Banach space.

The weighted Sobolev space $W^{1,t}_{V_\varepsilon}(\mathbb{R}^N)$ is defined by
\begin{align*}
	W^{1,t}_{V_\varepsilon}(\mathbb{R}^N)=\left\{ u\in W^{1,t}(\mathbb{R}^N)\colon\int_{\mathbb{R}^N}V(\varepsilon x)|u|^t\,\mathrm{d}x<+\infty\right\},
\end{align*}
equipped with the norm
\begin{align*}
	\|u\|_{W^{1,t}_{V_\varepsilon}}=\left(\|\nabla u\|_t^t+\|u\|_{t,V_\varepsilon}^t\right)^{\frac{1}{t}}.
\end{align*}
The space $\big(W^{1,t}_{V_\varepsilon}(\mathbb{R}^N),\|\cdot\|_{W^{1,t}_{V_\varepsilon}}\big)$ is a separable and uniformly convex Banach space, see Proposition 2.1 in Bartolo-Candela-Salvatore \cite{Bartolo-Candela-Salvatore-2016}, thanks to (V1). Moreover, $C_c^\infty(\mathbb{R}^N)$ is a dense subset of $W^{1,t}_{V_\varepsilon}(\mathbb{R}^N)$, see Bartolo-Candela-Salvatore \cite{Bartolo-Candela-Salvatore-2016} and Chen-Chen \cite{Chen-Chen-2016}. From now on, our function space is given by
\begin{equation}\label{eq1}
	\mathbf{X}_\varepsilon=W^{1,p}_{V_\varepsilon}(\mathbb{R}^N)\cap W^{1,N}_{V_\varepsilon} (\mathbb{R}^N),
\end{equation}
which is endowed with the norm
\begin{align*}
	\|u\|_{\mathbf{X}_\varepsilon}=\|u\|_{W^{1,p}_{V_\varepsilon}}+\|u\|_{W^{1,N}_{V_\varepsilon}} \quad \text{for all }u\in \mathbf{X}_\varepsilon.
\end{align*}
Because of assumptions (V1) and Proposition 2.1 in Bartolo-Candela-Salvatore \cite{Bartolo-Candela-Salvatore-2016}, it is easy to see that  $(\mathbf{X}_\varepsilon,\|\cdot\|_{{\mathbf{X}}_\varepsilon})$ is a reflexive and separable Banach space. In the entire paper, $C, C_1, C_2, C_3,\dots$ denote some fixed positive constants possibly different at different places. Moreover, for any Banach space $(X,\|\cdot\|_X)$, we denote its continuous dual by $(X^*,\|\cdot\|_{X^*})$ and $o_n(1)$ denotes the real sequence such that $o_n(1)\to 0$ as $n\to\infty$. By $\rightharpoonup $ we mean the weak convergence and $\rightarrow$ means the strong convergence, while $u^\pm=$ max~$\{\pm u,0\}$ stand for the positive and negative part of a function $u$, respectively. Furthermore, $B_r(x_0)$ is an open ball centered at $x_0\in \mathbb{R}^N$ with radius $r>0$ and $B_r=B_r(0)$. Finally, for any set $S\subset\mathbb{R}^N$, we denote its Lebesgue measure by $|S|$ and its complement by $S^c$.

Now, we shall discuss some basic properties of Orlicz spaces.
\begin{definition}
	We say that a continuous function $\mathcal{F}\colon\mathbb{R}\to[0,+\infty)$ is a $N$-function if there hold
	\begin{enumerate}
		\item[\textnormal{(a)}]
			$\mathcal{F}$ is an even function.
		\item[\textnormal{(b)}]
			$\mathcal{F}(s)=0$ if and only if $s=0$ and $\mathcal{F}$ is convex.
		\item[\textnormal{(c)}]
			$\displaystyle\lim_{s\to 0}\frac{\mathcal{F}(s)}{s}=0$ and $\displaystyle\lim_{s\to \infty}\frac{\mathcal{F}(s)}{s}=\infty$.
	\end{enumerate}
\end{definition}

Further, we say that a $N$-function $\mathcal{F}$ satisfies the $\Delta_2$-condition, which is denoted by $\mathcal{F}\in \Delta_2 $, if there exists a constant $c>0$ such that
\begin{align*}
	\mathcal{F}(2s)\leq c \mathcal{F}(s) \quad\text{for all } s\geq 0.
\end{align*}
The conjugate of the $N$-function $\mathcal{F}$ is denoted by $\widetilde{\mathcal{F}}$ and defined as
\begin{align*}
	\widetilde{\mathcal{F}}=\max_{s\geq 0}\{ts-\mathcal{F}(s)\}\quad\text{for all }t\geq 0.
\end{align*}
Note that $\widetilde{\mathcal{F}}$ is always an $N$-function and $\widetilde{\widetilde{\mathcal{F}}}=\mathcal{F}$, i.e., $\mathcal{F}$ and $\widetilde{\mathcal{F}}$ are complementary to each other. Now, we define the Orlicz space associated with the $N$-function $\mathcal{F}$ by
\begin{align*}
	L^{\mathcal{F}}(\mathbb{R}^N)=\left\{u\in L^1_{\operatorname{loc}}(\mathbb{R}^N)\colon\int_{\mathbb{R}^N}\mathcal{F}\left(\frac{|u|}{\lambda}\right)\mathrm{d}x<+\infty \text{ for some }\lambda>0\right\},
\end{align*}
endowed with the Luxemburg norm
\begin{align*}
	\|u\|_ {\mathcal{F}}=\inf\left\{\lambda>0\colon \int_{\mathbb{R}^N}\mathcal{F}\left(\frac{|u|}{\lambda}\right)\,\mathrm{d}x\leq 1\right\}.
\end{align*}
Clearly, the space $\big(L^{\mathcal{F}}(\mathbb{R}^N),\|\cdot\|_{\mathcal{F}}\big)$ is a Banach space. Consequently, the Young's type and H\"{o}lder's type inequalities in Orlicz spaces are given by
\begin{align*}
	st\leq \mathcal{F}(s)+\widetilde{\mathcal{F}}(t)\quad \text{for all }s,t\geq 0
\end{align*}
and
\begin{align*}
	\left|\int_{\mathbb{R}^N}uv\,\mathrm{d}x\right|\leq 2 \|u\|_ {\mathcal{F}}\|v\|_ {\widetilde{\mathcal{F}}} \quad \text{for all }u\in L^{\mathcal{F}}(\mathbb{R}^N), v\in L^{\widetilde{\mathcal{F}} }(\mathbb{R}^N).
\end{align*}
Moreover, if $\mathcal{F},\widetilde{\mathcal{F}}\in\Delta_2$, then the space $L^{\mathcal{F}}(\mathbb{R}^N)$ is reflexive and separable. Again, the $\Delta_2$-condition implies at once that
\begin{align*}
	L^{\mathcal{F}}(\mathbb{R}^N)=\left\{u\in L^1_{\operatorname{loc}}(\mathbb{R}^N)\colon\int_{\mathbb{R}^N}\mathcal{F}(|u|)\,\mathrm{d}x<+\infty\right\}
\end{align*}
and
\begin{equation}\label{eq2.333}
	u_n\to u\quad\text{in }L^{\mathcal{F}}(\mathbb{R}^N)\quad\text{if and only if}\quad \int_{\mathbb{R}^N}\mathcal{F}(|u_n-u|)\,\mathrm{d}x\to 0\quad\text{as }n\to\infty.
\end{equation}
Now, we shall characterize an important property of the $N$-function. It states that if $\mathcal{F}$ is an $N$-function of class $C^1$ and $\widetilde{\mathcal{F}}$ be its conjugate such that the following condition holds
\begin{equation}\label{eq2.2}
	1<l\leq \frac{\mathcal{F}'(s)s}{\mathcal{F}(s)}\leq m<+\infty\quad \text{for all}~s\neq 0,
\end{equation}
then $\mathcal{F},\widetilde{\mathcal{F}}\in\Delta_2$.

Due to some mathematical difficulties in \eqref{main problem@}, we cannot directly apply smooth variational techniques to study the problem \eqref{main problem@}. Indeed, if we set the energy functional $I_\varepsilon$ associated with \eqref{main problem@}, which is defined on the space $\mathbf{X}_{\varepsilon}$ as
\begin{equation}\label{eqq1.2}
	I_\varepsilon(u)=\frac{1}{p} \|u\|_{W^{1,p}_{V_\varepsilon}}^p+\frac{1}{N} \|u\|_{W^{1,N}_{V_\varepsilon}}^N-\int_{\mathbb{R}^N}\mathcal{H}(u)\,\mathrm{d}x-\int_{\mathbb{R}^N}F(u)\,\mathrm{d}x\quad\text{for all }u\in \mathbf{X}_{\varepsilon},
\end{equation}
where
\begin{align*}
	\mathcal{H}(s)=\frac{1}{N}|s|^N\log |s|^N-\frac{1}{N}|s|^N,
\end{align*}
then the energy functional $I_\varepsilon$ is not well-defined on $\mathbf{X}_{\varepsilon}$, since it may happen that there exists $u\in \mathbf{X}_{\varepsilon}$ satisfying $\int_{\mathbb{R}^N}|u|^N\log|u|^N\,\mathrm{d}x=-\infty$ and hence, $I_\varepsilon(u)=+\infty$. Inspired by the works of \cite{Alves-deMoraisFilho-2018, Alves-Ji-2020, Squassina-Szulkin-2015}, we can avoid this difficulty by choosing
\begin{align*}
	\mathcal{H}_2(s)-\mathcal{H}_1(s)=\frac{1}{N} |s|^N \log|s|^N\quad\text{for all }s\in\mathbb{R},
\end{align*}
where $\mathcal{H}_1$ is a nonnegative $C^1$-function, which is also convex and $\mathcal{H}_2$ is also a nonnegative $C^1$-function satisfying some growth condition. Hence, one can easily obtain from \eqref{eqq1.2} that
\begin{align*}
	I_\varepsilon(u)=\frac{1}{p} \|u\|_{W^{1,p}_{V_\varepsilon}}^p+\frac{1}{N} \|u\|_{W^{1,N}_{V_\varepsilon}}^N+\frac{1}{N}\|u\|^N_{N}+\int_{\mathbb{R}^N}\mathcal{H}_1(u)\,\mathrm{d}x-\int_{\mathbb{R}^N}\mathcal{H}_2(u)\,\mathrm{d}x-\int_{\mathbb{R}^N}F(u)\,\mathrm{d}x.
\end{align*}
This technique guarantees that $I_\varepsilon$ may be expressed as the combination of a $C^1$-functional with a convex and lower semicontinuous functional. Therefore, the critical point theory for lower semicontinuous functionals, as established by Szulkin  \cite{Szulkin-1986}, can be used to examine solutions of \eqref{main problem}.

Another feature and novelty of \eqref{main problem@} is the fact that the corresponding energy functional defined in \eqref{eqq1.2} is not a $C^1$-functional and hence, we shall not be able to find the multiplicity of solutions of \eqref{main problem@} by using smooth variational methods together with the Lusternik-Schnirelmann category theory. To overcome this difficulty, we shall work with a newly constructed Banach space, where the functional $I_\varepsilon$ is $C^1$. Inspired by the work of Shen-Squassina \cite{Shen-Squassina-2025}, we fix $\delta>0$ sufficiently small and define $\mathcal{H}_1$ and $\mathcal{H}_2$ by
\begin{align*}
	\mathcal{H}_1(s)=
	\begin{cases}
		~0, &\text{if } s=0,\\[1ex]
		-\displaystyle\frac{1}{N}|s|^N\log|s|^N, &\text{if }0<|s|<(N-1)\delta, \\[1ex]
		-\displaystyle\frac{1}{N}|s|^N\bigg(\log\big((N-1)\delta\big)^N+N+1 \bigg)+N\delta|s|^{N-1}-\frac{\big((N-1)\delta\big)^N}{N(N-1)},&\text{if }|s|\geq (N-1)\delta,
	\end{cases}
\end{align*}
and
\begin{align*}
	\mathcal{H}_2(s)=
	\begin{cases}
		~0,&\text{if } |s|\leq(N-1)\delta, \\[1ex]
		\displaystyle\frac{1}{N}|s|^N\log\bigg(\frac{|s|^N}{\big((N-1)\delta\big)^N} \bigg)+N\delta|s|^{N-1}-\bigg(\frac{N+1}{N}\bigg)|s|^N-\frac{\big((N-1)\delta\big)^N}{N(N-1)},&\text{if }|s|\geq (N-1)\delta,
	\end{cases}
\end{align*}
so that
\begin{equation}
	\label{eq2.6}
	\mathcal{H}_2(s)-\mathcal{H}_1(s)= \frac{1}{N}|s|^N\log|s|^N \quad\text{for all }s\in\mathbb{R}.
\end{equation}
The functions $\mathcal{H}_1$ and $\mathcal{H}_2$ have the following properties, respectively.
\begin{enumerate}
	\item[\textnormal{($\mathcal{H}_1$)}]
		\begin{enumerate}
			\item[\textnormal{(a)}]
				For $\delta>0$ small enough, $\mathcal{H}_1$ is convex, even and of class $C^1(\mathbb{R},\mathbb{R})$.
			\item[\textnormal{(b)}]
				$\mathcal{H}_1(s)\geq 0$ and $\mathcal{H}_1'(s)s\geq 0$ for all $s\in \mathbb{R}$.
			\item[\textnormal{(c)}]
				For any $q>N$, there exists constants $C_1,C_2>0$ such  that
				\begin{align*}
					|\mathcal{H}_1'(s)|\leq C_1|s|^{q-1}+C_2 \quad \text{for all }s\in\mathbb{R}.
				\end{align*}
			\item[\textnormal{(d)}]
				There exists constants $C_1,C_2>0$ such that
				\begin{align*}
					|\mathcal{H}_1(s)|\leq C_1|s|^N+ C_2 \quad\text{for all }s\in\mathbb{R}.
				\end{align*}
		\end{enumerate}
\end{enumerate}
\begin{enumerate}
	\item[\textnormal{($\mathcal{H}_2$)}]
		\begin{enumerate}
			\item[\textnormal{(a)}]
				There hold $\mathcal{H}_2'(s)\geq 0$ for all $s>0$ and $\mathcal{H}_2'(s)>0$ for all $s>(N-1)\delta$.
			\item[\textnormal{(b)}]
				$\mathcal{H}_2\in C^2(\mathbb{R},\mathbb{R})$ and for any $q>N$, there exists a constant $C=C_q>0$ such that
				\begin{align*}
					|\mathcal{H}_2'(s)|\leq C|s|^{q-1}
					\quad\text{and}\quad
					|\mathcal{H}_2(s)|\leq C|s|^{q}\quad \text{for all }s\in \mathbb{R}.
				\end{align*}
			\item[\textnormal{(c)}]
				The map $s\mapsto \displaystyle\frac{\mathcal{H}_2'(s)}{s^{N-1}}$ is nondecreasing for $s>0$ and strictly increasing for $s>(N-1) \delta$.
			\item[\textnormal{(d)}]
				$\mathcal{H}_2'$ is an odd function and there holds $\displaystyle\lim_{s\to\infty}~\frac{\mathcal{H}_2'(s)}{s^{N-1}}=\infty$.
		\end{enumerate}
\end{enumerate}
The following lemmas can be directly obtained from the classical Sobolev embedding theorem.

\begin{lemma} \label{lem2.1}
	Let \textnormal{(V1)} be satisfied. If $q\in[p,p^*]$, then the following embeddings are continuous
	\begin{align*}
		W^{1,p}_{V_\varepsilon}(\mathbb{R}^N)\hookrightarrow W^{1,p}(\mathbb{R}^N) \hookrightarrow L^q (\mathbb{R}^N)
	\end{align*}
	with $\min \{1,V_0\} \|u\|^p_{W^{1,p}}\leq \|u\|^p_{W^{1,p}_{V_\varepsilon}}$ for all $u\in W^{1,p}_{V_\varepsilon}(\mathbb{R}^N)$. Furthermore, the embedding $W^{1,p}_{V_\varepsilon}(\mathbb{R}^N)\hookrightarrow L^q _{\operatorname{loc}}(\mathbb{R}^N)$ is compact for any $q\in[1,p^*)$.
\end{lemma}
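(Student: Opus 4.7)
The plan is to split the statement into three pieces and handle each in the obvious way: (i) the pointwise comparison of norms coming from hypothesis (V1), (ii) the classical Sobolev embedding on $\mathbb{R}^N$, and (iii) Rellich--Kondrachov for the local compactness.

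\textbf{Step 1 (norm inequality).} By (V1) we have $V(\varepsilon x)\geq V_0$ for every $x\in\mathbb{R}^N$, so for any $u\in W^{1,p}_{V_\varepsilon}(\mathbb{R}^N)$,
\begin{align*}
\|u\|_{p,V_\varepsilon}^p=\int_{\mathbb{R}^N}V(\varepsilon x)|u|^p\,\mathrm{d}x\geq V_0\|u\|_p^p.
\end{align*}
Therefore
\begin{align*}
\|u\|_{W^{1,p}_{V_\varepsilon}}^p=\|\nabla u\|_p^p+\|u\|_{p,V_\varepsilon}^p\geq \|\nabla u\|_p^p+V_0\|u\|_p^p\geq \min\{1,V_0\}\bigl(\|\nabla u\|_p^p+\|u\|_p^p\bigr)=\min\{1,V_0\}\|u\|_{W^{1,p}}^p,
\end{align*}
which is the claimed inequality and at the same time proves that the inclusion $W^{1,p}_{V_\varepsilon}(\mathbb{R}^N)\hookrightarrow W^{1,p}(\mathbb{R}^N)$ is continuous with embedding constant at most $\min\{1,V_0\}^{-1/p}$.

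\textbf{Step 2 (continuous embedding into $L^q$).} Since $1<p<N$, the classical Sobolev embedding theorem gives $W^{1,p}(\mathbb{R}^N)\hookrightarrow L^q(\mathbb{R}^N)$ for every $q\in[p,p^*]$. Composing with the embedding from Step 1 yields the desired chain $W^{1,p}_{V_\varepsilon}(\mathbb{R}^N)\hookrightarrow W^{1,p}(\mathbb{R}^N)\hookrightarrow L^q(\mathbb{R}^N)$.

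\textbf{Step 3 (local compactness).} Let $(u_n)$ be bounded in $W^{1,p}_{V_\varepsilon}(\mathbb{R}^N)$; by Step 1 it is bounded in $W^{1,p}(\mathbb{R}^N)$. For each $R>0$ the restriction $u_n|_{B_R}$ is bounded in $W^{1,p}(B_R)$, and the Rellich--Kondrachov theorem on the bounded smooth domain $B_R$ gives a subsequence converging in $L^q(B_R)$ for any $q\in[1,p^*)$. A standard diagonal extraction over $R=1,2,3,\dots$ produces a subsequence converging in $L^q(B_R)$ for every $R$, i.e.\ in $L^q_{\operatorname{loc}}(\mathbb{R}^N)$, which proves the compactness of $W^{1,p}_{V_\varepsilon}(\mathbb{R}^N)\hookrightarrow L^q_{\operatorname{loc}}(\mathbb{R}^N)$.

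There is really no substantial obstacle here: the result is a direct combination of (V1) with the classical Sobolev and Rellich--Kondrachov theorems. The only point to be a little careful about is keeping track of the $p$-th powers when extracting $\min\{1,V_0\}$ from the sum $\|\nabla u\|_p^p+V_0\|u\|_p^p$, and noting that the full range $q\in[p,p^*]$ (including the critical endpoint) is available on the whole space only because we embed first into $W^{1,p}(\mathbb{R}^N)$, whereas compactness is lost at $q=p^*$ even locally, which is why the third statement restricts to $q\in[1,p^*)$.
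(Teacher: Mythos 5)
Your proof is correct and follows exactly the route the paper indicates: the paper gives no detailed argument for Lemma 2.1, simply noting that it "can be directly obtained from the classical Sobolev embedding theorem," and your three steps (extracting $\min\{1,V_0\}$ via (V1), applying Sobolev on $\mathbb{R}^N$, and Rellich--Kondrachov plus a diagonal extraction for local compactness) are precisely the standard details being referenced.
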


\begin{lemma}\label{lem2.2}
	Let \textnormal{(V1)} be satisfied. If $s\in[N,+\infty)$, then the following embeddings are continuous
	\begin{align*}
		W^{1,N}_{V_\varepsilon}(\mathbb{R}^N)\hookrightarrow W^{1,N}(\mathbb{R}^N) \hookrightarrow L^s (\mathbb{R}^N)
	\end{align*}
	with $\min\{1,V_0\} \|u\|^N_{W^{1,N}}\leq \|u\|^N_{W^{1,N}_{V_\varepsilon}}$ for all $u\in W^{1,N}_{V_\varepsilon}(\mathbb{R}^N)$.
	Furthermore, the embedding $W^{1,N}_{V_\varepsilon}(\mathbb{R}^N)\hookrightarrow L^s _{\operatorname{loc}}(\mathbb{R}^N)$ is compact for any $s\in[1,+\infty)$.
\end{lemma}

\begin{corollary}\label{cor2.3}
	Let \textnormal{(V1)} be satisfied. Then, in view of Lemma \ref{lem2.1} and Lemma \ref{lem2.2}, the  embeddings
	\begin{align*}
		\mathbf{X}_\varepsilon\hookrightarrow W^{1,p}(\mathbb{R}^N)\cap W^{1,N}(\mathbb{R}^N) \hookrightarrow L^\vartheta (\mathbb{R}^N)
	\end{align*}
	are continuous for any $\vartheta \in [p,p^*]\cup [N,+\infty)$. Also, the embedding $\mathbf{X}_\varepsilon\hookrightarrow L^\vartheta _{\operatorname{loc}}(\mathbb{R}^N)$ is compact for any $\vartheta\in[1,+\infty)$.
\end{corollary}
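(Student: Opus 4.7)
The plan is to derive this corollary as an immediate bookkeeping consequence of Lemmas \ref{lem2.1} and \ref{lem2.2}, using only the definition of the intersection norm on $\mathbf{X}_\varepsilon$. Nothing beyond those two lemmas is needed; the only job is to route each exponent $\vartheta$ to whichever lemma covers it.

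For the continuous embeddings, I would start by observing that for every $u\in\mathbf{X}_\varepsilon$ the definition of the norm yields
\begin{align*}
\|u\|_{W^{1,p}_{V_\varepsilon}}\le\|u\|_{\mathbf{X}_\varepsilon}\qquad\text{and}\qquad \|u\|_{W^{1,N}_{V_\varepsilon}}\le\|u\|_{\mathbf{X}_\varepsilon}.
\end{align*}
Combining these with the estimates $\min\{1,V_0\}\|u\|^p_{W^{1,p}}\le\|u\|^p_{W^{1,p}_{V_\varepsilon}}$ and $\min\{1,V_0\}\|u\|^N_{W^{1,N}}\le\|u\|^N_{W^{1,N}_{V_\varepsilon}}$ from Lemmas \ref{lem2.1} and \ref{lem2.2}, one sees that $\mathbf{X}_\varepsilon$ embeds continuously into $W^{1,p}(\mathbb{R}^N)\cap W^{1,N}(\mathbb{R}^N)$, equipped with the sum norm. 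Then for $\vartheta\in[p,p^*]$ the Sobolev embedding from Lemma \ref{lem2.1} gives $\|u\|_\vartheta\le C\|u\|_{W^{1,p}}\le C'\|u\|_{\mathbf{X}_\varepsilon}$, and for $\vartheta\in[N,\infty)$ Lemma \ref{lem2.2} gives $\|u\|_\vartheta\le C\|u\|_{W^{1,N}}\le C'\|u\|_{\mathbf{X}_\varepsilon}$, which together cover the full range $[p,p^*]\cup[N,\infty)$.

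For the local compact embedding the argument is even simpler: since $\mathbf{X}_\varepsilon\hookrightarrow W^{1,N}_{V_\varepsilon}(\mathbb{R}^N)$ continuously, any bounded sequence in $\mathbf{X}_\varepsilon$ is bounded in $W^{1,N}_{V_\varepsilon}(\mathbb{R}^N)$. By the compact part of Lemma \ref{lem2.2}, this sequence admits a subsequence converging strongly in $L^\vartheta_{\operatorname{loc}}(\mathbb{R}^N)$ for every $\vartheta\in[1,\infty)$, which is exactly the compact embedding claimed. (The $W^{1,p}$-part of Lemma \ref{lem2.1} would only give the range $[1,p^*)$, so it is Lemma \ref{lem2.2} that does the full work here.)

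There is no real obstacle; the only point to verify is that the two ranges $[p,p^*]$ and $[N,\infty)$ together cover every $\vartheta$ claimed, and that possible overlap (which occurs when $p^*\ge N$) causes no inconsistency, as both lemmas yield a valid estimate on the overlap. Thus the statement follows directly with no extra analysis.
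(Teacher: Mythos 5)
Your proof is correct and matches the intent of the paper exactly: the corollary is stated without an explicit proof precisely because it is the immediate bookkeeping consequence of Lemmas \ref{lem2.1} and \ref{lem2.2} that you spell out, namely that each summand in the $\mathbf{X}_\varepsilon$-norm dominates the corresponding $W^{1,t}_{V_\varepsilon}$-norm, Lemma \ref{lem2.1} handles $\vartheta\in[p,p^*]$, Lemma \ref{lem2.2} handles $\vartheta\in[N,\infty)$, and Lemma \ref{lem2.2} alone gives the compact $L^\vartheta_{\operatorname{loc}}$ embedding for all $\vartheta\in[1,\infty)$. Your remark that the $W^{1,p}$-route would only cover $[1,p^*)$ locally is a correct and sensible observation.
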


\begin{remark}\label{remark2.4}
	Let \textnormal{(V1)} be satisfied. Then, the following continuous embeddings hold:
	\begin{align*}
		\mathbf{X}_\varepsilon\hookrightarrow W^{1,t}_{V_\varepsilon}(\mathbb{R}^N)\hookrightarrow W^{1,t}(\mathbb{R}^N)\quad\text{for }t\in\{p,N\}.
	\end{align*}
\end{remark}

The next two results can be found in Shen-Squassina \cite{Shen-Squassina-2025}.

\begin{lemma}\label{lem7.89}
	The function $\mathcal{H}_1$ is an $N$-function and there holds $\mathcal{H}_1,\widetilde{\mathcal{H}}_1\in\Delta_2$. In particular, the Orlicz space $L^{\mathcal{H}_1}(\mathbb{R}^N)$ is a reflexive and separable Banach space, where
	\begin{align*}
		L^{\mathcal{H}_1}(\mathbb{R}^N)=\left\{u\in L^1_{\operatorname{loc}}(\mathbb{R}^N)\colon\int_{\mathbb{R}^N}\mathcal{H}_1\left(|u|\right)\mathrm{d}x<+\infty \right\},
	\end{align*}
	equipped with the Luxemburg norm
	\begin{align*}
		\|u\|_ {\mathcal{H}_1}=\inf\left\{\lambda>0\colon \int_{\mathbb{R}^N}\mathcal{H}_1\left(\frac{|u|}{\lambda}\right)\,\mathrm{d}x\leq 1\right\}.
	\end{align*}
\end{lemma}

\begin{corollary}
	The functional $\Theta\colon L^{\mathcal{H}_1}(\mathbb{R}^N)\to \mathbb{R} $ given by $u\mapsto \displaystyle\int_{\mathbb{R}^N}\mathcal{H}_1(u)\,\mathrm{d}x $ is of class $C^1\big(L^{\mathcal{H}_1}(\mathbb{R}^N),\mathbb{R}\big)$ with
	\begin{align*}
		\langle{\Theta'(u),v}\rangle=\int_{\mathbb{R}^N}\mathcal{H}'_1(u)v\,\mathrm{d}x \quad\text{for all }u,v\in L^{\mathcal{H}_1}(\mathbb{R}^N),
	\end{align*}
	where $L^{\mathcal{H}_1}(\mathbb{R}^N)$ denotes the Orlicz space associated with $\mathcal{H}_1$ endowed with the Luxemburg norm $\|\cdot\|_{\mathcal{H}_1}$.
\end{corollary}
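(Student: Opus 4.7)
The plan is to show $C^1$ regularity in two steps: first, to establish that $\Theta$ admits a Gateaux derivative given by the stated formula; second, to show that the map $u \mapsto \Theta'(u)$ is norm continuous from $L^{\mathcal{H}_1}(\mathbb{R}^N)$ into its topological dual. These two properties together imply $\Theta \in C^1$. Finiteness of $\Theta$ on $L^{\mathcal{H}_1}(\mathbb{R}^N)$ is already clear from the characterization \eqref{eq2.333}, which is available because $\mathcal{H}_1 \in \Delta_2$ by Lemma~\ref{lem7.89}.

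For the Gateaux derivative, I would fix $u,v \in L^{\mathcal{H}_1}(\mathbb{R}^N)$ and $t \in (0,1]$, and apply the mean value theorem pointwise to write
\[
	\frac{\mathcal{H}_1(u(x)+tv(x)) - \mathcal{H}_1(u(x))}{t} = \mathcal{H}_1'\bigl(u(x) + \xi_t(x) v(x)\bigr) v(x)
\]
for some measurable $\xi_t(x) \in (0,t)$. The pointwise limit as $t \to 0^+$ is $\mathcal{H}_1'(u) v$. Since $\mathcal{H}_1$ is convex and even, $\mathcal{H}_1'$ is odd and monotone, so the integrand is dominated by $|\mathcal{H}_1'(|u|+|v|)|\,|v|$. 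Young's inequality in Orlicz form together with the duality bound $\widetilde{\mathcal{H}}_1(\mathcal{H}_1'(s)) \leq C\,\mathcal{H}_1(s)$ (a consequence of the $\Delta_2$-condition for $\mathcal{H}_1$ combined with Young's identity $\widetilde{\mathcal{H}}_1(\mathcal{H}_1'(s)) = s\,\mathcal{H}_1'(s) - \mathcal{H}_1(s)$) provides an $L^1$ dominating function. Lebesgue dominated convergence then yields
\[
	\langle \Theta'(u), v \rangle = \int_{\mathbb{R}^N} \mathcal{H}_1'(u)\,v\,\mathrm{d}x,
\]
and Hölder's inequality in Orlicz spaces shows this is a bounded linear functional with $\|\Theta'(u)\|_* \leq 2\|\mathcal{H}_1'(u)\|_{\widetilde{\mathcal{H}}_1}$.

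For continuity of the derivative, suppose $u_n \to u$ in $L^{\mathcal{H}_1}(\mathbb{R}^N)$. By Orlicz Hölder,
\[
	\|\Theta'(u_n) - \Theta'(u)\|_* \leq 2\bigl\|\mathcal{H}_1'(u_n) - \mathcal{H}_1'(u)\bigr\|_{\widetilde{\mathcal{H}}_1},
\]
and since $\widetilde{\mathcal{H}}_1 \in \Delta_2$ (Lemma~\ref{lem7.89}), \eqref{eq2.333} reduces the problem to showing that $\int_{\mathbb{R}^N} \widetilde{\mathcal{H}}_1\bigl(|\mathcal{H}_1'(u_n) - \mathcal{H}_1'(u)|\bigr)\,\mathrm{d}x \to 0$. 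Passing to a subsequence, the Orlicz analogue of the inverse Lebesgue theorem allows me to assume $u_n \to u$ almost everywhere and $|u_n| \leq w$ for some $w \in L^{\mathcal{H}_1}(\mathbb{R}^N)$. Continuity of $\mathcal{H}_1'$ gives pointwise convergence of the integrand to zero, while the $\Delta_2$-property of $\widetilde{\mathcal{H}}_1$ together with the duality bound above supplies an integrable dominating function of the form $C\bigl(\widetilde{\mathcal{H}}_1(|\mathcal{H}_1'(w)|) + \widetilde{\mathcal{H}}_1(|\mathcal{H}_1'(u)|)\bigr)$. Dominated convergence then closes the subsequence, and the usual subsequence principle promotes the conclusion to the full sequence.

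The principal technical obstacle is the last step: upgrading pointwise convergence of $\mathcal{H}_1'(u_n)$ to norm convergence in $L^{\widetilde{\mathcal{H}}_1}(\mathbb{R}^N)$ requires \emph{both} $\Delta_2$-conditions as well as the interplay between $\widetilde{\mathcal{H}}_1 \circ \mathcal{H}_1'$ and $\mathcal{H}_1$. Once a suitable dominating function is identified using the structural estimates from property~($\mathcal{H}_1$) and Lemma~\ref{lem7.89}, the remainder is a routine Orlicz-space argument.
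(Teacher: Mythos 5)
The paper does not give its own proof of this corollary; it simply cites Shen--Squassina \cite{Shen-Squassina-2024} for the result. Your argument, however, is a complete and correct self-contained proof along the standard lines for $C^1$ regularity of a Nemytskii functional on an Orlicz space, and it uses exactly the ingredients the paper makes available: Lemma~\ref{lem7.89} (both $\Delta_2$-conditions), Remark~\ref{rem2.99} (the two-sided bound \eqref{eq2.2} for $\mathcal{H}_1$), and the norm--modular equivalence \eqref{eq2.333}. One small point of bookkeeping: the inequality $\widetilde{\mathcal{H}}_1(\mathcal{H}_1'(s)) \leq C\,\mathcal{H}_1(s)$ follows most directly from the Young equality $\widetilde{\mathcal{H}}_1(\mathcal{H}_1'(s)) = s\,\mathcal{H}_1'(s) - \mathcal{H}_1(s)$ together with the \emph{upper} bound $s\,\mathcal{H}_1'(s) \leq m\,\mathcal{H}_1(s)$ from \eqref{eq2.2} (with $m = N$ by Remark~\ref{rem2.99}), giving $C = N - 1$; you attribute it to "the $\Delta_2$-condition combined with Young's identity," which is close in spirit but not the most precise route. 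Likewise, the domination in the Gateaux step requires the quasi-subadditivity $\mathcal{H}_1(|u|+|v|) \leq C(\mathcal{H}_1(u) + \mathcal{H}_1(v))$, which you correctly invoke implicitly via $\Delta_2$. With those clarifications, the mean-value, Orlicz--Hölder, dominated-convergence, and subsequence-principle steps you describe go through without obstruction, and the Gateaux-plus-continuity criterion does yield Fréchet $C^1$ regularity. Since the paper offers no proof, there is no alternative argument to compare against; yours is a legitimate filling-in of the cited result.
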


\begin{remark}\label{rem2.99}
	Note that the condition \eqref{eq2.2} is satisfied by the $N$-function $\mathcal{H}_1$ with $l\in(1,N)$ and $m=N$.
\end{remark}

We define now the spaces
\begin{align*}
	\mathbf{Y}=\mathbf{X}\cap L^{\mathcal{H}_1}(\mathbb{R}^N)
	\quad\text{and}\quad
	\mathbf{Y}_\varepsilon=\mathbf{X}_\varepsilon\cap L^{\mathcal{H}_1}(\mathbb{R}^N)
\end{align*}
endowed with the norms
\begin{align*}
	\|u\|_{\mathbf{Y}}=\|u\|_{\mathbf{X}}+\|u\|_{\mathcal{H}_1}\quad\text{for all }u\in \mathbf{Y}
	\quad\text{and}\quad
	\|u\|_{\mathbf{Y}_\varepsilon}=\|u\|_{\mathbf{X}_\varepsilon}+\|u\|_{\mathcal{H}_1}\quad\text{for all }u\in \mathbf{Y}_\varepsilon.
\end{align*}
Thanks to Lemma \ref{lem7.89}, the spaces $(\mathbf{Y},\|\cdot\|_{\mathbf{Y}})$ and $(\mathbf{Y}_\varepsilon,\|\cdot\|_{\mathbf{Y}_\varepsilon})$ are reflexive and separable Banach spaces. Moreover, we have the continuous embeddings
\begin{align*}
	\mathbf{Y}_\varepsilon\hookrightarrow \mathbf{X}_\varepsilon
	\quad\text{and}\quad
	\mathbf{Y}_\varepsilon\hookrightarrow L^{\mathcal{H}_1}(\mathbb{R}^N).
\end{align*}
Similarly, these continuous embeddings are also true, if we replace $\mathbf{Y}_\varepsilon$ by $\mathbf{Y}$ and $\mathbf{X}_\varepsilon$ by $\mathbf{X}$, respectively. Next, we denote by $S_{r}$ (or by $\textbf{S}_{r}$)  the best constant in the embedding from $\mathbf{Y}_\varepsilon$ (or $\mathbf{Y}$)  into some Lebesgue space $L^r(\mathbb{R}^N)$.

\begin{lemma}\label{lem8.98}
	The embedding  $\mathbf{Y}\hookrightarrow L^\theta(\mathbb{R}^N)$ is continuous for any $\theta\in[p,p^*]\cup[N,+\infty)$. Consequently, the embedding  $\mathbf{Y}\hookrightarrow L^\theta_{\operatorname{loc}}(\mathbb{R}^N)$  is compact for any $\theta\in[1,+\infty)$. Moreover, under  \textnormal{(V1)}, these embeddings also hold true if we replace $\mathbf{Y}$ by $\mathbf{Y}_\varepsilon$.
\end{lemma}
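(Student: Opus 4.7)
The plan is to reduce the statement to the Sobolev-type embeddings already recorded for $\mathbf{X}$ and $\mathbf{X}_\varepsilon$. Since $\mathbf{Y}=\mathbf{X}\cap L^{\mathcal{H}_1}(\mathbb{R}^N)$ is normed by $\|u\|_\mathbf{Y}=\|u\|_\mathbf{X}+\|u\|_{\mathcal{H}_1}$, the inclusion $\mathbf{Y}\hookrightarrow \mathbf{X}$ is trivially continuous with unit operator norm; the analogous statement $\mathbf{Y}_\varepsilon\hookrightarrow\mathbf{X}_\varepsilon$ holds for the same reason. Everything therefore comes down to chasing inclusions into Lebesgue spaces.

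For the continuous part, let $\theta\in[p,p^*]\cup[N,\infty)$. By Corollary \ref{cor2.3} (or directly by combining Lemma \ref{lem2.1} and Lemma \ref{lem2.2} with Remark \ref{remark2.4}), one has $\mathbf{X}\hookrightarrow L^\theta(\mathbb{R}^N)$ continuously: the Sobolev range $[p,p^*]$ is covered by $W^{1,p}(\mathbb{R}^N)\hookrightarrow L^\theta(\mathbb{R}^N)$, and the range $[N,\infty)$ by $W^{1,N}(\mathbb{R}^N)\hookrightarrow L^\theta(\mathbb{R}^N)$. Composing with $\mathbf{Y}\hookrightarrow\mathbf{X}$ yields a constant $S_\theta>0$ (the best one being the advertised $\mathbf{S}_\theta$) such that $\|u\|_\theta\leq S_\theta\|u\|_\mathbf{Y}$ for every $u\in\mathbf{Y}$. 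The same chain, with $\mathbf{X}$ and $\mathbf{Y}$ replaced by $\mathbf{X}_\varepsilon$ and $\mathbf{Y}_\varepsilon$, gives the weighted version once $(\mathrm{V1})$ is in force, since it is precisely $(\mathrm{V1})$ that guarantees $\mathbf{X}_\varepsilon\hookrightarrow\mathbf{X}$ in Remark \ref{remark2.4}.

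For the compact local part, fix any bounded $\Omega\subset\mathbb{R}^N$ and any $\theta\in[1,\infty)$. A bounded sequence $(u_n)\subset\mathbf{Y}$ is bounded in $W^{1,N}(\mathbb{R}^N)$, hence in $W^{1,N}(\Omega)$; the classical Rellich-Kondrachov theorem then produces a subsequence converging strongly in $L^\theta(\Omega)$. A diagonal extraction over an exhaustion of $\mathbb{R}^N$ by balls delivers a single subsequence that converges in $L^\theta_{\operatorname{loc}}(\mathbb{R}^N)$, which is the claim. For $\mathbf{Y}_\varepsilon$ under $(\mathrm{V1})$ one repeats the argument with the bound supplied by Lemma \ref{lem2.2}, namely $\min\{1,V_0\}\|u\|_{W^{1,N}}^N\leq \|u\|_{W^{1,N}_{V_\varepsilon}}^N\leq \|u\|_{\mathbf{Y}_\varepsilon}^N$.

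I do not expect any substantial obstacle: the whole proof is a composition of known continuous inclusions plus one application of Rellich-Kondrachov. The only point that deserves care is verifying that the $\mathbf{Y}_\varepsilon$-bound really does control the $W^{1,N}(\Omega)$-norm uniformly in $\varepsilon$, which is exactly what hypothesis $(\mathrm{V1})$ and the lower bound $V\geq V_0$ give, so no extra compactness machinery (e.g.\ tightness at infinity) is needed here.
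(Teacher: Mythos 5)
Your proof is correct. The paper states Lemma \ref{lem8.98} without proof, so there is no argument of the authors to compare against, but the route you take — compose $\mathbf{Y}\hookrightarrow\mathbf{X}$ with the embeddings from Corollary \ref{cor2.3}, then apply Rellich--Kondrachov on balls and diagonalize for the local compactness, and use Lemma \ref{lem2.2} together with (V1) to reduce the $\mathbf{Y}_\varepsilon$ case to the unweighted one — is exactly the intended one, and you correctly noted that the final sentence of the statement has the roles of $\mathbf{Y}$ and $\mathbf{Y}_\varepsilon$ accidentally swapped.
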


Now, we recall the following version of Lions' compactness lemma, see Alves-Figueiredo \cite[Proposition 4]{Alves-Figueiredo-2009}.

\begin{lemma}\label{lem8.99}
Let $\{u_n\}_{n\in\mathbb{N}}\subset \mathbf{Y}_\varepsilon$ be a bounded sequence in $\mathbf{Y}_\varepsilon$ and there holds
	\begin{align*}
		\liminf_{n\to\infty} \sup_{y\in \mathbb{R}^N} \int_{B_R(y)}|u_n|^N\,\mathrm{d}x=0
	\end{align*}
	for some $R>0$, then we have $u_n\to 0$ in $L^\upsilon(\mathbb{R}^N)$ as $n\to\infty$ for any $\upsilon\in (N,+\infty)$.
\end{lemma}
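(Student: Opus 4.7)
The strategy is to reduce the claim to the classical Lions vanishing lemma for $W^{1,N}(\mathbb{R}^N)$. By Remark \ref{remark2.4} and Lemma \ref{lem2.2}, $\mathbf{Y}_\varepsilon$ embeds continuously into $W^{1,N}(\mathbb{R}^N)$, so $\{u_n\}$ is uniformly bounded in $W^{1,N}(\mathbb{R}^N)$ and, by the standard Sobolev embedding, in $L^\theta(\mathbb{R}^N)$ for every $\theta \in [N,\infty)$. With this at hand, I would fix an exponent $\upsilon \in (N, 2N)$, set $s = N^2/(2N - \upsilon) \in (\upsilon, \infty)$, and cover $\mathbb{R}^N$ by a countable family of balls $\{B_R(y_i)\}_{i \in \mathbb{N}}$ of radius $R$ (the one given by the hypothesis) having uniformly bounded multiplicity $K$.

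The key local estimate is the H\"older interpolation between $L^N$ and $L^s$ with $\lambda = (\upsilon-N)/\upsilon$, which produces
$$
\|u_n\|_{L^\upsilon(B_R(y_i))}^\upsilon \le \|u_n\|_{L^N(B_R(y_i))}^{\upsilon - N}\,\|u_n\|_{L^s(B_R(y_i))}^{N}.
$$
Combining this with the local Sobolev embedding $W^{1,N}(B_R(y_i)) \hookrightarrow L^s(B_R(y_i))$ and summing over $i$ (using the finite multiplicity to control $\sum_i \|u_n\|_{W^{1,N}(B_R(y_i))}^N \le K \|u_n\|_{W^{1,N}(\mathbb{R}^N)}^N$), one obtains
$$
\|u_n\|_{L^\upsilon(\mathbb{R}^N)}^\upsilon \le C \left(\sup_{y \in \mathbb{R}^N} \int_{B_R(y)} |u_n|^N\,\mathrm{d}x\right)^{(\upsilon - N)/N}\,\|u_n\|_{W^{1,N}(\mathbb{R}^N)}^N.
$$
Passing to a subsequence furnished by the $\liminf$ hypothesis, the first factor on the right vanishes while the second remains bounded by the first step; hence $u_n \to 0$ in $L^\upsilon(\mathbb{R}^N)$ for every $\upsilon \in (N, 2N)$. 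The remaining range $\upsilon \ge 2N$ is reached by a secondary interpolation: pick $\upsilon_1 \in (N, 2N)$ and $\upsilon_2 > \upsilon$, then $\|u_n\|_{L^\upsilon} \le \|u_n\|_{L^{\upsilon_1}}^\alpha \|u_n\|_{L^{\upsilon_2}}^{1-\alpha}$ for the unique $\alpha \in (0,1)$ satisfying the interpolation identity; since $\|u_n\|_{L^{\upsilon_1}} \to 0$ by the previous step and $\|u_n\|_{L^{\upsilon_2}}$ is uniformly bounded by the Sobolev embedding, we conclude $\|u_n\|_{L^\upsilon} \to 0$.

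The main technical point is calibrating the local interpolation so that summation over the cover recovers exactly the global $W^{1,N}$ norm: the particular choice $s = N^2/(2N - \upsilon)$, which matches the interpolation parameter to the scaling of the Sobolev embedding at the $W^{1,N}$ level, is what makes the direct argument go through, and it also forces the initial range to $\upsilon < 2N$, which is why the secondary interpolation is needed to cover all $\upsilon \in (N, \infty)$.
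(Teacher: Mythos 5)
The paper does not prove this lemma; it is cited directly from Alves--Figueiredo \cite{Alves-Figueiredo-2009}, Proposition~4, so there is no internal proof to compare your argument against. What you have written is essentially the standard Lions vanishing argument in the $W^{1,N}$ setting (covering of $\mathbb{R}^N$ by balls of fixed radius with bounded overlap, a calibrated local interpolation between $L^N$ and $L^s$, local Sobolev embedding, then summation), which is precisely the technique used in the cited reference. Your algebra is correct: with $\lambda = (\upsilon-N)/\upsilon$ the interpolation exponents close exactly when $s = N^2/(2N-\upsilon)$, this requires $\upsilon < 2N$ to keep $s$ finite, and the secondary interpolation cleanly extends the conclusion to all $\upsilon\in(N,\infty)$ using the uniform $L^{\upsilon_2}$ bound from the global Sobolev embedding. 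The one small caveat is that the statement's hypothesis is a $\liminf$ rather than a $\lim$; as you note, this only delivers convergence along a subsequence, so strictly the conclusion should be stated for a subsequence (this imprecision is in the lemma statement as printed, not in your proof, and in the paper's actual uses of the lemma a subsequence suffices). In short: your proposal is correct, self-contained, and supplies the proof that the paper outsources to a citation.
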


The next lemma can be found in Alves-da Silva \cite{Alves-daSilva-2023}.

\begin{lemma}\label{lem2.899}
	In view of Remark \ref{rem2.99}, we have for any $u\in L^{\mathcal{H}_1}(\mathbb{R}^N)$ that
	\begin{align*}
		\min\big\{\|u\|_{\mathcal{H}_1}^l,\|u\|_{\mathcal{H}_1}^N\big\}\leq \displaystyle\int_{\mathbb{R}^N}{\mathcal{H}_1}(|u|)\,\mathrm{d}x\leq \max\big\{\|u\|_{\mathcal{H}_1}^l,\|u\|_{\mathcal{H}_1}^N\big\}.
	\end{align*}
\end{lemma}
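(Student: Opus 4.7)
The plan is to deduce the modular–norm comparison from the scaling behavior of the $N$-function $\mathcal{H}_1$, which by Remark \ref{rem2.99} satisfies the two-sided growth condition \eqref{eq2.2} with exponents $l\in(1,N)$ and $m=N$. The key preliminary step is to establish the pointwise estimate
\[
\min\{\lambda^{l},\lambda^{N}\}\,\mathcal{H}_1(s)\;\leq\;\mathcal{H}_1(\lambda s)\;\leq\;\max\{\lambda^{l},\lambda^{N}\}\,\mathcal{H}_1(s)\qquad\text{for all }s\neq 0,\ \lambda>0.
\]
This follows by integrating the logarithmic derivative: setting $g(\lambda):=\mathcal{H}_1(\lambda s)$ for fixed $s>0$, condition \eqref{eq2.2} rewrites as $\lambda g'(\lambda)/g(\lambda)\in[l,N]$, so that integration from $1$ to $\lambda$ (taking care of the sign when $\lambda<1$) yields $g(\lambda)/g(1)\in[\min\{\lambda^l,\lambda^N\},\max\{\lambda^l,\lambda^N\}]$; the even symmetry of $\mathcal{H}_1$ extends this to all $s\neq 0$.

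Next, for $u\neq 0$ set $\rho:=\|u\|_{\mathcal{H}_1}$. Since $\mathcal{H}_1\in\Delta_2$ by Lemma \ref{lem7.89}, the modular map $\lambda\mapsto\int_{\mathbb{R}^N}\mathcal{H}_1(|u|/\lambda)\,\mathrm{d}x$ is finite, continuous and strictly decreasing on $(0,\infty)$, whence the infimum in the definition of the Luxemburg norm is attained, giving
\[
\int_{\mathbb{R}^N}\mathcal{H}_1\!\left(\frac{|u|}{\rho}\right)\mathrm{d}x\;=\;1.
\]
Now apply the scaling estimate pointwise with $s=|u(x)|/\rho$ and $\lambda=\rho$, and integrate over $\mathbb{R}^N$ to conclude
\[
\min\{\rho^{l},\rho^{N}\}\;\leq\;\int_{\mathbb{R}^N}\mathcal{H}_1(|u|)\,\mathrm{d}x\;\leq\;\max\{\rho^{l},\rho^{N}\},
\]
which is exactly the claimed inequality; the case $u=0$ is immediate.

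The main obstacle I expect is the technical justification that $\int_{\mathbb{R}^N}\mathcal{H}_1(|u|/\rho)\,\mathrm{d}x=1$ at the norm: this relies crucially on the $\Delta_2$-condition from Lemma \ref{lem7.89} to guarantee continuity of the modular and to prevent the infimum from being saturated at a modular value strictly less than $1$. Once the scaling estimate is cast in the $\min$–$\max$ form above, no separate case analysis for $\rho\leq 1$ versus $\rho\geq 1$ is required; the remainder of the argument is a direct consequence of the growth condition \eqref{eq2.2} and is the standard \emph{norm-versus-modular equivalence} from Orlicz-space theory adapted to the present $N$-function.
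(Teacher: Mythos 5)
Your proof is correct, and it is the standard norm-versus-modular comparison for an $N$-function obeying a two-sided growth condition of the form \eqref{eq2.2}. The paper itself gives no proof of this lemma — it only cites Alves-da Silva — so there is no internal argument to compare against, but your derivation (integrating the logarithmic derivative to get the pointwise scaling inequality, then using that the Luxemburg norm of a nonzero $u$ saturates the modular at $1$ under $\Delta_2$) is exactly the expected route. Two small points worth making explicit if you were to write this up: the integration of $\lambda\mapsto\log\mathcal{H}_1(\lambda s)$ requires $\mathcal{H}_1\in C^1$ and $\mathcal{H}_1(s)>0$ for $s\neq0$, both of which hold here (property $(\mathcal{H}_1)$(a) and the $N$-function axioms); and the saturation $\int_{\mathbb{R}^N}\mathcal{H}_1(|u|/\rho)\,\mathrm{d}x=1$ uses that under $\Delta_2$ the modular is finite for \emph{every} scaling of $u\in L^{\mathcal{H}_1}$ (iterating the doubling inequality), so it is continuous and strictly decreasing in $\lambda$ — you flag this correctly as the crux.
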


From Yang \cite{Yang-2012}, we have the following result.

\begin{corollary}\label{cor2.9}
	The function $\Phi(t)=\exp(t)-\displaystyle\sum_{j=0}^{N-2}\frac{t^j}{j!}$ is increasing and convex in $[0,+\infty)$. Moreover,
	for any $\wp\geq 1$, $t\geq 0$ real numbers and $N\geq 2$, it holds that
	\begin{align*}
		\bigg(\exp(t)-\sum_{j=0}^{N-2}\frac{t^j}{j!}\bigg)^\wp\leq \exp(\wp t)-\sum_{j=0}^{N-2}\frac{(\wp t)^j}{j!}.
	\end{align*}
\end{corollary}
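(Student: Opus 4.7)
The plan proceeds in two parts. For the first claim that $\Phi$ is increasing and convex on $[0,\infty)$, I would simply expand the exponential to obtain the power-series representation
\[
\Phi(t) = \sum_{j=N-1}^{\infty} \frac{t^j}{j!},
\]
which has only nonnegative coefficients and vanishes at $t=0$. Term-by-term differentiation then gives $\Phi'(t)=\sum_{j=N-2}^\infty t^j/j!\ge 0$ and $\Phi''(t)\ge 0$ on $[0,\infty)$, settling both monotonicity and convexity.

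For the inequality, my strategy is to split $e^t = \Phi(t) + P_{N-2}(t)$, with $P_{N-2}(t):=\sum_{j=0}^{N-2} t^j/j!\ge 1$, and apply the elementary bound $(a+b)^\wp \ge a^\wp + b^\wp$ valid for $a,b\ge 0$ and $\wp\ge 1$ (proved by normalizing to $a+b=1$ and using $x^\wp\le x$ for $x\in[0,1]$). This yields
\[
e^{\wp t} = (\Phi(t)+P_{N-2}(t))^\wp \ge \Phi(t)^\wp + P_{N-2}(t)^\wp,
\]
so it will suffice to establish the polynomial Bernoulli-type inequality $P_{N-2}(\wp t)\le P_{N-2}(t)^\wp$; combining the two will then give $\Phi(\wp t) = e^{\wp t} - P_{N-2}(\wp t) \ge \Phi(t)^\wp$, as required.

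To prove the polynomial inequality I would set $\eta(t)=\wp\log P_{N-2}(t)-\log P_{N-2}(\wp t)$, note $\eta(0)=0$, and compute
\[
\eta'(t) = \wp\left[\frac{P_{N-3}(t)}{P_{N-2}(t)} - \frac{P_{N-3}(\wp t)}{P_{N-2}(\wp t)}\right].
\]
The task then reduces to showing that $s\mapsto P_{N-3}(s)/P_{N-2}(s)$ is nonincreasing on $[0,\infty)$. Rewriting
\[
\frac{P_{N-3}(s)}{P_{N-2}(s)} = 1 - \frac{s^{N-2}/(N-2)!}{P_{N-2}(s)},
\]
I would argue that $s^{N-2}/P_{N-2}(s) = \bigl(\sum_{k=0}^{N-2} s^{-k}/(N-2-k)!\bigr)^{-1}$ is nondecreasing, since each summand of the denominator is nonincreasing in $s>0$; the case $N=2$ is handled separately because $P_0\equiv 1$ renders the polynomial inequality trivial.

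The main obstacle I anticipate is this polynomial Bernoulli step: one needs the right algebraic rewriting to reveal monotonicity for \emph{real} $\wp\ge 1$, not only integer $\wp$ (where a direct induction based on $P_{N-2}(a)P_{N-2}(b)\ge P_{N-2}(a+b)$, evident from expanding the product and discarding the cross terms with $k+\ell>N-2$, would suffice). Once that monotonicity is in hand, the remainder of the argument is a clean chaining of the two displayed inequalities.
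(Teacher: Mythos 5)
Your proof is correct, and it is worth noting that the paper does not actually prove this corollary: it simply cites Yang \cite{Yang-2012} and takes the statement as given. So your argument is a self-contained replacement for that external reference rather than a competitor to an in-paper proof.

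The structure of your argument is sound. Part 1 (power-series expansion of $\Phi$ with nonnegative coefficients) is immediate. For Part 2, the chain
\[
e^{\wp t}=\bigl(\Phi(t)+P_{N-2}(t)\bigr)^{\wp}\ge \Phi(t)^{\wp}+P_{N-2}(t)^{\wp}\ge \Phi(t)^{\wp}+P_{N-2}(\wp t)
\]
does deliver $\Phi(\wp t)\ge\Phi(t)^{\wp}$, using superadditivity of $x\mapsto x^{\wp}$ for $\wp\ge1$ together with the Bernoulli-type polynomial inequality $P_{N-2}(\wp t)\le P_{N-2}(t)^{\wp}$. Your proof of the latter is also correct: with $\eta(t)=\wp\log P_{N-2}(t)-\log P_{N-2}(\wp t)$ one has $\eta(0)=0$ and $\eta'(t)=\wp\bigl[P_{N-3}(t)/P_{N-2}(t)-P_{N-3}(\wp t)/P_{N-2}(\wp t)\bigr]$, and the rewriting
\[
\frac{P_{N-3}(s)}{P_{N-2}(s)}=1-\frac{s^{N-2}/(N-2)!}{P_{N-2}(s)},\qquad
\frac{s^{N-2}}{P_{N-2}(s)}=\Bigl(\sum_{k=0}^{N-2}\frac{s^{-k}}{(N-2-k)!}\Bigr)^{-1}
\]
indeed shows $P_{N-3}/P_{N-2}$ is nonincreasing on $(0,\infty)$ (each $s^{-k}$ with $k\ge1$ is decreasing and the $k=0$ term is constant), handling $N=2$ trivially as you note. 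One small remark: the direct multinomial argument you mention for integer $\wp$ -- that $\Phi(t)^{m}=\bigl(\sum_{j\ge N-1}t^{j}/j!\bigr)^{m}$ is a restriction of the multinomial expansion of $(mt)^{k}/k!$ to indices $j_{i}\ge N-1$ -- is actually closer in spirit to the usual textbook derivation, whereas your log-derivative route is what extends it cleanly to all real $\wp\ge1$. Both parts of your plan hold up, and the overall argument is complete.
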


The inequality in the following lemma is known as the Trudinger-Moser inequality, which was first studied by Adimurthi-Yang \cite{Adimurthi-Yang-2010}.

\begin{lemma}\label{lem2.11}
	For all $\alpha>0$, $0\leq \beta <N$ and $u\in W^{1,N}(\mathbb{R}^N)$ with $N\geq 2$, we have $\cfrac{\Phi\big(\alpha|u|^{N'}\big)}{|x|^\beta}\in L^1(\mathbb{R}^N)$. Furthermore, we have for all $\alpha\leq\big(1-\frac{\beta}{N}\big)\alpha_N$ and $\gamma>0$
	\begin{align*}
		\sup_{\|u\|_{W^{1,N}_\gamma}\leq 1} ~\int_{\mathbb{R}^N}\cfrac{\Phi\big(\alpha|u|^{N'}\big)}{|x|^\beta}\,\mathrm{d}x<+\infty, \quad\text{where}~\|u\|_{W^{1,N}_\gamma}=\big(\|\nabla u\|_N^N+\gamma\|u\|_{N}^N\big)^{\frac{1}{N}},
	\end{align*}
	 $\Phi$ is defined as in \textnormal{(f1)} and $\alpha_N=N\omega_{N-1}^\frac{1}{N-1}$, with $\omega_{N-1}$ being the volume of the unit sphere $S^{N-1}$. Also, the above inequality is sharp for $\alpha>\big(1-\frac{\beta}{N}\big)\alpha_N$, i.e., the supremum is infinity.
\end{lemma}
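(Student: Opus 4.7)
The plan is to establish this singular Trudinger-Moser inequality by combining Schwarz symmetrization, a dichotomy between the unit ball $B_1$ and its complement on which different tools apply, and a Moser-type concentration sequence for sharpness. Throughout, the convexity and monotonicity of $\Phi$, the power-series bound of Corollary~\ref{cor2.9}, and the classical bounded-domain singular Trudinger-Moser inequality of Adimurthi-Sandeep will be the key inputs.

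First I would settle the qualitative integrability. For arbitrary $u\in W^{1,N}(\mathbb{R}^N)$ and $\alpha>0$, write $u=u_1+u_2$ with $u_1=u\chi_{\{|u|\le M\}}$ bounded and $u_2=u\chi_{\{|u|>M\}}$ with $\|u_2\|_{W^{1,N}_\gamma}$ arbitrarily small by choosing $M$ large. Convexity of $\Phi$ reduces matters to estimating $\Phi(2\alpha|u_2|^{N'})/|x|^\beta$. Expanding $\Phi$ as a power series and using the embedding $W^{1,N}(\mathbb{R}^N)\hookrightarrow L^q(\mathbb{R}^N)$ for all $q\ge N$, combined with an elementary splitting $B_1\cup B_1^c$ to dominate the singular weight, yields $\Phi(\alpha|u|^{N'})/|x|^\beta\in L^1(\mathbb{R}^N)$.

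For the uniform supremum bound, fix $u$ with $\|u\|_{W^{1,N}_\gamma}\le 1$ and let $u^*$ denote its Schwarz symmetrization. By Pólya-Szegő, $\|\nabla u^*\|_N\le\|\nabla u\|_N$ and $\|u^*\|_N=\|u\|_N$, so $\|u^*\|_{W^{1,N}_\gamma}\le 1$, while the Hardy-Littlewood rearrangement inequality gives
\begin{align*}
\int_{\mathbb{R}^N}\frac{\Phi(\alpha|u|^{N'})}{|x|^\beta}\,\mathrm{d}x\le\int_{\mathbb{R}^N}\frac{\Phi(\alpha|u^*|^{N'})}{|x|^\beta}\,\mathrm{d}x.
\end{align*}
On $B_1$, since $\|\nabla u^*\|_{L^N(B_1)}\le 1$ and $\alpha\le(1-\beta/N)\alpha_N$, the classical Adimurthi-Sandeep singular Trudinger-Moser inequality on a bounded domain produces a uniform bound. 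On $B_1^c$ the singular weight satisfies $|x|^{-\beta}\le 1$, so expanding $\Phi$ term by term reduces the bound to a convergent series in $\|u^*\|_q^q$, and each term is controlled by the continuous embedding $W^{1,N}(\mathbb{R}^N)\hookrightarrow L^q(\mathbb{R}^N)$ for $q\ge N$; the radial Strauss-type decay $|u^*(x)|\le C|x|^{-1}\|u^*\|_N^{(N-1)/N}$ is useful to make the series converge with a constant uniform on the unit ball. Adding the two contributions yields the claimed supremum bound.

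Finally, for sharpness at $\alpha>(1-\beta/N)\alpha_N$, I would test the inequality on the Moser sequence
\begin{align*}
\omega_n(x)=\omega_{N-1}^{-1/N}\begin{cases}(\log n)^{(N-1)/N},&|x|\le 1/n,\\[0.5ex](\log n)^{-1/N}\log(1/|x|),&1/n\le|x|\le 1,\\[0.5ex]0,&|x|\ge 1,\end{cases}
\end{align*}
for which direct computation gives $\|\nabla\omega_n\|_N=1$ and $\|\omega_n\|_N^N=O(1/\log n)$; after rescaling by a factor $1+o(1)$ to normalize in $W^{1,N}_\gamma$, the contribution of $\{|x|\le 1/n\}$ to the integral is of order $n^{N\alpha/\alpha_N-(N-\beta)}$, which diverges precisely when $\alpha>(1-\beta/N)\alpha_N$. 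The hardest step is the uniform control on $B_1^c$: unlike the homogeneous case, one cannot directly invoke the bounded-domain inequality, and the $L^N$-mass can a priori concentrate far from the origin; the device of loading the full norm $\|\nabla u\|_N^N+\gamma\|u\|_N^N\le 1$ rather than only $\|\nabla u\|_N\le 1$ -- i.e., Ruf's formulation -- is exactly what allows one to close the tail estimate uniformly and is the reason the hypothesis of Lemma~\ref{lem2.11} is formulated with the weighted norm.
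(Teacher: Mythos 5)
The paper does not prove Lemma \ref{lem2.11}; it is stated as a known result and attributed to Adimurthi--Yang \cite{Adimurthi-Yang-2010}. So there is no internal proof to compare against, and your task was in effect to reconstruct a proof from scratch. Your outline follows the standard route (symmetrization, local singular Trudinger--Moser near the origin, series expansion far away, Moser sequence for sharpness) and the sharpness computation is correct: the inner integral is $\sim n^{N\alpha/\alpha_N-(N-\beta)}$ after the rescaling you mention, and this diverges precisely when $\alpha>(1-\beta/N)\alpha_N$.

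There is, however, a concrete gap in the uniform bound on $B_1$. After Schwarz symmetrization, $u^*$ is radially decreasing but it does \emph{not} vanish on $\partial B_1$, so the Adimurthi--Sandeep singular Trudinger--Moser inequality on the bounded domain $B_1$ (which is for $W^{1,N}_0(B_1)$) cannot be invoked directly from $\|\nabla u^*\|_{L^N(B_1)}\le 1$ alone. The standard repair is to set $v=(u^*-u^*(1))^+$ extended by zero, note $\|\nabla v\|_{L^N(B_1)}=\|\nabla u^*\|_{L^N(B_1)}$, and control $u^*(1)$ via the $L^N$ mass: $u^*(1)^N|B_1|\le\|u^*\|_N^N\le\gamma^{-1}$. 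One then writes $|u^*|^{N'}\le(1+\epsilon)|v|^{N'}+C_\epsilon\,u^*(1)^{N'}$ and must show the factor $(1+\epsilon)$ does not push $\alpha$ past the threshold; this works because $\|\nabla v\|_{L^N(B_1)}^N\le 1-\gamma\|u\|_N^N<1$, i.e., the $\gamma\|u\|_N^N$ piece of the budget is exactly what produces the slack. This is the real role of the Ruf-type norm. You gesture at this at the very end, but you locate the difficulty in $B_1^c$, where after symmetrization the estimate is actually routine (the weight is $\le 1$ and the radial decay of a monotone $L^N$ function, which is $|u^*(x)|\le C|x|^{-1}\|u^*\|_N$ rather than with exponent $(N-1)/N$, makes the power series converge uniformly). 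The delicate point is the boundary value at $\partial B_1$, and your sketch should make that explicit.
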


\begin{remark}
	To study problem \eqref{main problem}, we are going to use Lemma \ref{lem2.11} with $\beta=0$ and $\gamma=1$.
\end{remark}

\section{Existence of solution for the penalized problem}\label{sec3}

In this section, we shall establish the existence of a solution for the penalized problem \eqref{main problem@@} by using the mountain pass theorem stated in Pucci-Serrin \cite[p.\,142]{Pucci-Serrin-1985}.

Note that for the well-definedness of the functional $I_\varepsilon$ defined in Section \ref{sec2}, we shall restrict $I_\varepsilon$ to the space $\mathbf{Y}_\varepsilon$, which will be denoted by $E_\varepsilon(u)=I_\varepsilon(u)~\text{for all}~u\in \mathbf{Y}_\varepsilon $. Hence, in view of the conditions on $\mathcal{H}_1$, $V$ and Lemma \ref{lem2.11}, the functional $I_\varepsilon$ is a $C^1$-functional on the space $\mathbf{Y}_\varepsilon$ and its Gâteaux derivative is given by
\begin{align*}
	\langle{E_\varepsilon'(u),\psi}\rangle
	&=\big\langle{u,\psi}\big\rangle_{p,V_\varepsilon}+\big\langle{u,\psi}\big\rangle_{N,V_\varepsilon}+\int_{\mathbb{R}^N}|u|^{N-2}u\psi\,\mathrm{d}x+\int_{\mathbb{R}^N}\mathcal{H}_1'(u)\psi\,\mathrm{d}x\\
	&\quad -\int_{\mathbb{R}^N}\mathcal{H}_2'(u)\psi\,\mathrm{d}x-\int_{\mathbb{R}^N}f(u)\psi\,\mathrm{d}x
\end{align*}
for all $ \psi\in\mathbf{Y}_\varepsilon,$ where $\langle{\cdot,\cdot}\rangle$ is the duality pairing between $\mathbf{Y}^*_\varepsilon$ and $\mathbf{Y}_\varepsilon$.

Let $\ell,\ell'>0$ be small enough such that $V_0+1\geq 2(\ell+\ell')$ and take $a>(N-1)\delta$ such that $\displaystyle\frac{\mathcal{H}'_2(a)}{a^{N-1}}=\ell$. Then, we define
\begin{align*}
	\widehat{\mathcal{H}}'_2(s)=
	\begin{cases}
		\mathcal{H}'_2(s),&\text{if } 0\leq s\leq a, \\
		\ell s^{N-1},&\text{if } s\geq a.
	\end{cases}
\end{align*}
Further, let $t_1>(N-1)\delta$ be such that $a\in (t_1,t_2)$ and $h\in C^1\big([t_1,t_2]\big)$ satisfying the following properties:
\begin{enumerate}
	\item[(h1)]
		$h(s)\leq \widehat{\mathcal{H}}'_2(s)$ for all $s\in [t_1,t_2]$.
	\item[(h2)]
		$h(t_i)=\widehat{\mathcal{H}}'_2(t_i)$ and $h'(t_i)=\widehat{\mathcal{H}}''_2(t_i)$ for $i\in\{1,2\}$.
	\item[(h3)]
		The map $s\mapsto \displaystyle\frac{h(s)}{s^{N-1}}$ is nondecreasing on $[t_1,t_2]$.
\end{enumerate}
Define another function
\begin{align*}
	\widetilde{\mathcal{H}}'_2(s)=
	\begin{cases}
		\widehat{\mathcal{H}}'_2(s),&\text{if } s\notin [t_1,t_2] , \\
		h(s),&\text{if } s\in [t_1,t_2].
	\end{cases}
\end{align*}
If $\displaystyle\chi_\Lambda$ denotes the characteristic function corresponding to the set $\Lambda$, then we introduce the first penalized nonlinearity $G'_2\colon\mathbb{R}^N\times [0,+\infty)\to \mathbb{R}$, which is defined by
\begin{align*}
	G'_2(x,s)=\chi_\Lambda(x)\mathcal{H}'_2(s)+(1-\displaystyle\chi_\Lambda(x))\widetilde{\mathcal{H}}'_2(s)\quad \text{for all }(x,s)\in \mathbb{R}^N\times [0,+\infty).
\end{align*}
Note that $\mathcal{H}_2'$ is an odd function and hence, we can extend the definition of $G'_2$ to $\mathbb{R}^N\times \mathbb{R}$ by setting
\begin{align*}
	G'_2(x,s)=-G'_2(x,-s) \quad\text{for all }(x,s)\in \mathbb{R}^N\times (-\infty,0].
\end{align*}
The following properties can be proved by using the definition of $G'_2$:
\begin{enumerate}
	\item[\textnormal{($\mathcal{A}$)}]
		\begin{enumerate}
			\item[\textnormal{(a)}]
				There exists a constant $C=C_q>0$ and $q>N$ such that
				\begin{align*}
					|G'_2(x,s)|\leq \ell |s|^{N-1}+C|s|^{q-1}\quad\text{for all }(x,s)\in \mathbb{R}^N\times \mathbb{R}.
				\end{align*}
			\item[\textnormal{(b)}]
				$|G'_2(x,s)|\leq |\mathcal{H}'_2(s)|$ for all $(x,s)\in \mathbb{R}^N\times \mathbb{R}$. Consequently, in view of \textnormal{($\mathcal{H}_2$)(b)}, we have
				\begin{align*}
					|G_2(x,s)|\leq C|s|^{q} \quad \text{for all }(x,s)\in \mathbb{R}^N\times \mathbb{R}.
				\end{align*}
			\item[\textnormal{(c)}]
				$|G'_2(x,s)|\leq \ell |s|^{N-1}$ for all $(x,s)\in \Lambda^c\times \mathbb{R}$.
			\item[\textnormal{(d)}]
				$\displaystyle\frac{1}{N}|s|^N+\bigg[\mathcal{H}_2(s)-\frac{1}{N}\mathcal{H}'_2(s)s+\frac{1}{N}G'_2(x,s)s-G_2(x,s)\bigg]\geq 0$ for all $(x,s)\in \mathbb{R}^N\times \mathbb{R}$.
			\item[\textnormal{(e)}]
				The map $s\mapsto\displaystyle\frac{G'_2(x,s)}{s^{N-1}}$ is nondecreasing for all $(x,s)\in\mathbb{R}^N\times
				(0,+\infty)$.
		\end{enumerate}
\end{enumerate}
On the other hand, we set $\displaystyle \frac{f(a)}{a^{N-1}}=\ell'$ and define
\begin{align*}
	\widehat{f}(s)=
	\begin{cases}
		f(s), & \text{if } 0\leq s\leq a, \\
		\ell' s^{N-1},&\text{if } s\geq a.
	\end{cases}
\end{align*}
Now, we consider the function $\eta$ such that $\eta\in C^1\big([t_1,t_2]\big)$ satisfying the following properties:
\begin{enumerate}
	\item
		[($\eta$1)] $\eta(s)\leq \widehat{f}(s)$ for all $s\in [t_1,t_2]$.
	\item[($\eta$2)]
		$\eta(t_i)=\widehat{f}(t_i)$ and $\eta'(t_i)={\widehat{f}}'(t_i)$ for $i\in\{1,2\}$.
	\item[($\eta$3)]
		The map $s\mapsto \displaystyle\frac{\eta(s)}{s^{N-1}}$ is nondecreasing on $[t_1,t_2]$.
\end{enumerate}
Further, we define another function
\begin{align*}
	\widetilde{f}(s)=
	\begin{cases}
		\widehat{f}(s),&\text{if } s\notin [t_1,t_2] , \\
		\eta(s),&\text{if } s\in [t_1,t_2].
	\end{cases}
\end{align*}
Next, we introduce the second penalized nonlinearity  $g\colon\mathbb{R}^N\times [0,+\infty)\to \mathbb{R}$, which is defined by
\begin{align*}
	g(x,s)=\displaystyle\chi_\Lambda(x)f(s)+(1-\displaystyle\chi_\Lambda(x))\widetilde{f}(s) \quad\text{for all }(x,s)\in \mathbb{R}^N\times [0,+\infty).
\end{align*}
Observe that $f$ is an odd function, therefore we can extend the definition of $g$ to $\mathbb{R}^N\times \mathbb{R}$ by setting
\begin{align*}
	g(x,s)=-g(x,-s)\quad\text{for all }(x,s)\in \mathbb{R}^N\times (-\infty,0].
\end{align*}
Moreover, we also define
\begin{align*}
	\Lambda_\varepsilon&=\{x\in \mathbb{R}^N\colon\varepsilon x\in \Lambda\},\\
	G_2(x,s)&=\int_{0}^{s}G'_2(x,t)\,\mathrm{d}t\quad\text{for all }(x,s)\in \mathbb{R}^N\times \mathbb{R},\\
	\mathcal{G}(x,s)&=\int_{0}^{s}g(x,t)\,\mathrm{d}t \quad\text{for all }(x,s)\in \mathbb{R}^N\times \mathbb{R}.
\end{align*}
By using the definition of $g$, one can prove the following properties:
\begin{enumerate}
	\item[\textnormal{($\mathcal{B}$)}]
	\begin{enumerate}
		\item[\textnormal{(a)}]
			$g(x,s)\leq 0$  for all $(x,s)\in \mathbb{R}^N\times (-\infty,0]$ and $g(x,s)\geq 0$ for all $(x,s)\in \mathbb{R}^N\times [0,+\infty)$.
		\item[\textnormal{(b)}]
			$ |g(x,s)|\leq |f(s)|$ for all $(x,s)\in \mathbb{R}^N\times \mathbb{R}$.
		\item[\textnormal{(c)}]
			$\mu \mathcal{G}(x,s)\leq sg(x,s)$ for all $(x,s)\in \Lambda\times \mathbb{R}$.
		\item[\textnormal{(d)}]
			$N \mathcal{G}(x,s)\leq sg(x,s)\leq \ell'|s|^N$ for all $(x,s)\in \Lambda^c\times \mathbb{R}$.
		\item[\textnormal{(e)}]
			The map $s\mapsto\displaystyle\frac{g(x,s)}{|s|^{N-2}s}$ is nondecreasing for all $(x,s)\in\mathbb{R}^N\times \big(\mathbb{R}\setminus\{0\}\big)$.
	\end{enumerate}
\end{enumerate}
Since our goal is to study the positive solutions of \eqref{main problem}, we deal with the following penalized problem:
\begin{align}\label{main problem@@}\tag{$\widetilde{\mathcal{S}}_{\varepsilon}$}
	\begin{cases}
		\widetilde{\mathcal{L}}_{p_\varepsilon}(u)+\widetilde{\mathcal{L}}_{N_\varepsilon}(u)+|u|^{N-2}u=G'_2(\varepsilon x,u)-\mathcal{H}'_1(u)+g(\varepsilon x,u)\quad\text{in }\mathbb{R}^N, \\[1ex]
		\displaystyle ~\int_{\mathbb{R}^N} V(\varepsilon x)\big(|u|^p+|u|^N\big)\,\mathrm{d}x<+\infty,~ u\in \mathbf{Y}_\varepsilon.
	\end{cases}
\end{align}
Note that if $u_\varepsilon$ is a positive solution of \eqref{main problem@@} with $0<u_\varepsilon(x)<t_1$ for all $x\in \mathbb{R}^N\setminus \Lambda_\varepsilon $, then $G'_2(\varepsilon x,u_\varepsilon)=\mathcal{H}'_2(u_\varepsilon)$ and $g(\varepsilon x,u_\varepsilon)=f(u_\varepsilon)$. Due to this fact, we conclude that $v_\varepsilon(x)=u_\varepsilon(\frac{x}{\varepsilon})$ is a positive solution of \eqref{main problem}. Now, we define the functional $J_\varepsilon\colon\mathbf{Y}_\varepsilon\to \mathbb{R}$ associated with \eqref{main problem@@} by
\begin{equation}\label{eq3.1}
	\begin{aligned}
		J_\varepsilon(u)
		&=\frac{1}{p} \|u\|_{W^{1,p}_{V_\varepsilon}}^p+\frac{1}{N} \Big(\|u\|_{W^{1,N}_{V_\varepsilon}}^N+\|u\|^N_{N}\Big)+\int_{\mathbb{R}^N}\mathcal{H}_1(u)\,\mathrm{d}x\\
		&\quad -\int_{\mathbb{R}^N}G_2(\varepsilon x,u)\,\mathrm{d}x-\int_{\mathbb{R}^N}\mathcal{G}(\varepsilon x,u)\,\mathrm{d}x \quad\text{for all }u\in\mathbf{Y}_\varepsilon.
	\end{aligned}
\end{equation}
Clearly, $J_\varepsilon$ is well-defined, of class $C^1(\mathbf{Y}_\varepsilon,\mathbb{R})$ and the critical points of $J_\varepsilon$ are weak solutions of \eqref{main problem@@}.
Note that, from the assumptions (f1), (f2) and Corollary \ref{cor2.9}, one can easily verify that for any $\tau>0$, $\vartheta> N$, there exists a constant $\widetilde{\kappa}_\tau>0$ and $0<\alpha_0<\alpha$ such that for all $s\in \mathbb{R}$, we have
\begin{align}\label{eq3.2}
	\begin{aligned}
		|f(s)|&\leq \tau|s|^{N-1}+\widetilde{\kappa}_\tau|s|^{\vartheta-1}\Phi(\alpha |s|^{N'}), \\
		|F(s)|&\leq \tau |s|^{N}+\widetilde{\kappa}_\tau|s|^\vartheta\Phi(\alpha |s|^{N'}).
	\end{aligned}
\end{align}

The following two lemmas show that the functional $J_\varepsilon$ satisfies the mountain pass geometry.

\begin{lemma}[Mountain Pass Geometry-I] \label{lem3.1}
	There exist $\rho\in(0,1]$ very small enough and $\jmath>0$ such that $J_\varepsilon(u)\geq \jmath$ for all $u\in \mathbf{Y}_\varepsilon$ with $\|u\|_{\mathbf{Y}_\varepsilon}=\rho$ .
\end{lemma}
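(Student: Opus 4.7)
The plan is to show that every negative contribution to $J_\varepsilon(u)$ in \eqref{eq3.1} is of strictly higher order than $\rho^{N}=\|u\|_{\mathbf{Y}_\varepsilon}^{N}$ once $\rho$ is small, so the three ``norm-like'' positive terms $\tfrac{1}{p}\|u\|^{p}_{W^{1,p}_{V_\varepsilon}}$, $\tfrac{1}{N}\|u\|^{N}_{W^{1,N}_{V_\varepsilon}}$ and $\int_{\mathbb{R}^{N}}\mathcal{H}_{1}(u)\,\mathrm{d}x$ dominate. The $\mathcal{H}_{1}$-integral is nonnegative by $(\mathcal{H}_{1})(b)$, and Lemma \ref{lem2.899} bounds it below by $\|u\|^{N}_{\mathcal{H}_{1}}$ whenever $\|u\|_{\mathcal{H}_{1}}\le 1$. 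I will work throughout on the sphere $\{\|u\|_{\mathbf{Y}_\varepsilon}=\rho\}$ with $\rho\in(0,1]$.

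First I would control the two penalized terms. Property $(\mathcal{A})(b)$ yields $|G_{2}(\varepsilon x,u)|\le C|u|^{q}$ with any fixed $q>N$, and Corollary \ref{cor2.3} gives $\int_{\mathbb{R}^{N}}|G_{2}(\varepsilon x,u)|\,\mathrm{d}x\le C\|u\|_{\mathbf{Y}_\varepsilon}^{q}$. For $\mathcal{G}$, property $(\mathcal{B})(b)$ implies $|\mathcal{G}(\varepsilon x,u)|\le |F(u)|$, and estimate \eqref{eq3.2} produces, for some $\vartheta>N$,
\begin{align*}
|\mathcal{G}(\varepsilon x,u)|\le \tau|u|^{N}+\widetilde{\kappa}_{\tau}|u|^{\vartheta}\Phi(\alpha|u|^{N'}).
\end{align*}
The first summand, after integration, is bounded by a constant times $\tau\|u\|_{\mathbf{Y}_\varepsilon}^{N}$; choosing $\tau$ small absorbs it into $\tfrac{1}{N}\|u\|_{N}^{N}$.

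The main obstacle is the exponential integral. Fix $r>1$ close to $1$ with Hölder conjugate $r'$, so that $\vartheta r'\in[N,\infty)$. Hölder's inequality gives
\begin{align*}
\int_{\mathbb{R}^{N}}|u|^{\vartheta}\Phi(\alpha|u|^{N'})\,\mathrm{d}x \le \|u\|_{\vartheta r'}^{\vartheta}\Bigl(\int_{\mathbb{R}^{N}}\Phi(\alpha|u|^{N'})^{r}\,\mathrm{d}x\Bigr)^{1/r},
\end{align*}
and Corollary \ref{cor2.9} yields $\Phi(t)^{r}\le \Phi(rt)$, reducing to $\int_{\mathbb{R}^{N}}\Phi(r\alpha|u|^{N'})\,\mathrm{d}x$. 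Setting $w:=u/\|u\|_{W^{1,N}_{1}}$ so that $\|w\|_{W^{1,N}_{1}}=1$, Lemma \ref{lem2.11} (with $\beta=0$, $\gamma=1$) furnishes a uniform bound provided $r\alpha\|u\|^{N'}_{W^{1,N}_{1}}\le\alpha_{N}$. Since Lemma \ref{lem2.2} and Remark \ref{remark2.4} give $\|u\|_{W^{1,N}_{1}}\le C_{0}\|u\|_{\mathbf{Y}_\varepsilon}$, picking $\rho$ so small that $r\alpha(C_{0}\rho)^{N'}<\alpha_{N}$ makes this bound uniform on $\{\|u\|_{\mathbf{Y}_\varepsilon}=\rho\}$, hence $\int_{\mathbb{R}^{N}}|u|^{\vartheta}\Phi(\alpha|u|^{N'})\,\mathrm{d}x\le C\rho^{\vartheta}$.

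To conclude, I would use the elementary inequality $(a+b+c)^{N}\le 3^{N-1}(a^{N}+b^{N}+c^{N})$ together with the observation that for $\rho\le 1$ one has $\|u\|^{p}_{W^{1,p}_{V_\varepsilon}}\ge\|u\|^{N}_{W^{1,p}_{V_\varepsilon}}$ (since $p<N$) and $\int\mathcal{H}_{1}(u)\,\mathrm{d}x\ge\|u\|^{N}_{\mathcal{H}_{1}}$ by Lemma \ref{lem2.899}. This produces a constant $c_{1}>0$ with
\begin{align*}
\tfrac{1}{p}\|u\|^{p}_{W^{1,p}_{V_\varepsilon}}+\tfrac{1}{N}\|u\|^{N}_{W^{1,N}_{V_\varepsilon}}+\int_{\mathbb{R}^{N}}\mathcal{H}_{1}(u)\,\mathrm{d}x\ge c_{1}\rho^{N}.
\end{align*}
Assembling all bounds yields $J_\varepsilon(u)\ge \tfrac{c_{1}}{2}\rho^{N}-C_{2}\rho^{q}-C_{3}\rho^{\vartheta}$ with $q,\vartheta>N$, and a final shrinking of $\rho$ makes the right-hand side no smaller than some $\jmath>0$, completing the proof.
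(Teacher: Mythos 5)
Your proposal is correct and follows essentially the same strategy as the paper's proof: estimate the exponential term via H\"older, Corollary \ref{cor2.9} and the Trudinger--Moser inequality (Lemma \ref{lem2.11}) after renormalizing $u$, use $(\mathcal{A})$(b) for $G_2$, use Lemma \ref{lem2.899} to lower-bound the $\mathcal{H}_1$-term, observe $\|u\|^p_{W^{1,p}_{V_\varepsilon}}\geq\|u\|^N_{W^{1,p}_{V_\varepsilon}}$ on the small sphere, and finish by shrinking $\rho$. A small cosmetic advantage of your version is that you keep $C_2\rho^q$ as a genuinely higher-order term, whereas the paper replaces it by a constant times $\rho^N$ and then imposes $CS_q^{-q}+\tau S_N^{-N}=\tfrac{2}{3^N N}$, a normalization that would be impossible if the fixed quantity $CS_q^{-q}$ already exceeds $\tfrac{2}{3^N N}$; your arrangement sidesteps this.
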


\begin{proof}
	From ($\mathcal{B}$)(b) and \eqref{eq3.2}, we have
	\begin{equation} \label{eq3.3}
		|\mathcal{G}(x,s)|\leq \tau |s|^{N}+\widetilde{\kappa}_\tau|s|^\vartheta\Phi(\alpha |s|^{N'})\quad \text{for all } (x,s)\in \mathbb{R}^N\times \mathbb{R}.
	\end{equation}
	Let $\sigma\in(0,1]$ be sufficiently small such that $0<\|u\|_{\mathbf{Y}_\varepsilon}\leq\sigma$. Further, we choose $r, r'>1$ satisfying $\frac{1}{r}+\frac{1}{r'}=1$, then, by using H\"older's inequality, Corollary \ref{cor2.9} and Lemma \ref{lem8.98}, we obtain from \eqref{eq3.3} that
	\begin{align}\label{eq3.4}
		\int_{\mathbb{R}^N}|\mathcal{G}(x,u)|\,\mathrm{d}x
		\leq \tau S^{-N}_N\|u\|^N_{\mathbf{Y}_\varepsilon}+\widetilde{\kappa}_\tau S^{-\vartheta}_{\vartheta r} \|u\|^\vartheta_{\mathbf{Y}_\varepsilon}\Bigg(\int_{\mathbb{R}^N}\Phi\big(r'\alpha \|u\|^{N'}_{\mathbf{Y}_\varepsilon}|\widetilde{u}|^{N'}\big)\,\mathrm{d}x\Bigg)^{\frac{1}{r'}},
	\end{align}
	where $\widetilde{u}=u/\|u\|_{\mathbf{Y}_\varepsilon}$. Since  $\|u\|_{\mathbf{Y}_\varepsilon}$ is very small, we can choose $r'>1$ close to 1 and $\alpha>\alpha_0$ close to $\alpha_0$ such that $r'\alpha\|u\|_{\mathbf{Y}_\varepsilon}^{N'}\leq\alpha_N$ holds. Take $\gamma>0$ with $\gamma\leq V_0$, then we have $\|\widetilde{u}\|_{W^{1,N}_\gamma}\leq \|\widetilde{u}\|_{{W^{1,N}_{V_\varepsilon}}}\leq\|\widetilde{u}\|_{\mathbf{Y}_\varepsilon}=1$. Consequently, by using \eqref{eq3.4} and Lemma \ref{lem2.11}, there exists a constant $\widetilde{C}>0$ such that
	\begin{align}\label{eq3.5}
		\int_{\mathbb{R}^N}|\mathcal{G}(x,u)|\,\mathrm{d}x & \leq \tau S^{-N}_N\|u\|^N_{\mathbf{Y}_\varepsilon}+\widetilde{\kappa}_\tau \widetilde{C} S^{-\vartheta}_{\vartheta r} \|u\|^\vartheta_{\mathbf{Y}_\varepsilon}.
	\end{align}
	Note that $\|u\|_{\mathbf{Y}_\varepsilon}\leq 1$, therefore it follows from ($\mathcal{A}$)(b) and Lemma \ref{lem8.98} that
	\begin{align} \label{eq3.6}
		\int_{\mathbb{R}^N}|G_2(x,u)|\,\mathrm{d}x & \leq CS^{-q}_q \|u\|^q_{\mathbf{Y}_\varepsilon}\leq CS^{-q}_q \|u\|^N_{\mathbf{Y}_\varepsilon}.
	\end{align}
	Hence, in view of Lemma \ref{lem2.899}, we obtain from \eqref{eq3.1}, \eqref{eq3.5} and \eqref{eq3.6} that
	\begin{align*}
		J_{\varepsilon}(u)
		& \geq \frac{1}{N}\big(\|u\|_{W^{1,p}_{V_\varepsilon}}^N+\|u\|_{W^{1,N}_{V_\varepsilon}}^{N}+\|u\|^N_{\mathcal{H}_1}\big)-CS^{-q}_q \|u\|^N_{\mathbf{Y}_\varepsilon}-\tau S^{-N}_N\|u\|^N_{\mathbf{Y}_\varepsilon}-\widetilde{\kappa}_\tau \widetilde{C} S^{-\vartheta}_{\vartheta r} \|u\|^\vartheta_{\mathbf{Y}_\varepsilon} \\
		& \geq \bigg(\frac{3^{1-N}}{N}-\big( CS^{-q}_q+\tau S^{-N}_N \big)\bigg)\|u\|^N_{\mathbf{Y}_\varepsilon}-C_\vartheta \|u\|^\vartheta_{\mathbf{Y}_\varepsilon},
	\end{align*}
	for all $u\in \mathbf{Y}_\varepsilon $ satisfying $\|u\|_{\mathbf{Y}_\varepsilon}\leq \sigma$, where $C_\vartheta=\widetilde{\kappa}_\tau \widetilde{C} S^{-\vartheta}_{\vartheta r}$. Choose $ CS^{-q}_q+\tau S^{-N}_N=\displaystyle\frac{2  }{3^{N}N}$ and define the function
	\begin{align*}
		\psi(t)=\frac{1}{3^{N+1}N}t^N-C_\vartheta t^\vartheta\quad\text{for all }t\in[0,\sigma].
	\end{align*}
	By using elementary calculus, we infer that $\psi$ admits a positive maximum $\jmath$ in $[0,\sigma]$ at a point $\rho\in(0,\sigma]$. Moreover, for all $u\in \mathbf{Y}_\varepsilon $ satisfying $\|u\|_{\mathbf{Y}_\varepsilon}=\rho$, we obtain
	\begin{align*}
		J_{\varepsilon}(u)\geq \frac{1}{3^{N}N}\rho^N-C_\vartheta \rho^\vartheta\geq \psi(\rho)=\jmath>0.
	\end{align*}
	This completes the proof.
\end{proof}

\begin{lemma}[Mountain Pass Geometry-II] \label{lem3.2}
	Let $\rho\in(0,1]$ be as in Lemma \ref{lem3.1}, then there exists a nonnegative function $e\in\mathbf{Y}_\varepsilon$ with $\|e\|_{\mathbf{Y}_\varepsilon}>\rho$ satisfying  $J_{\varepsilon}(e)<0$.
\end{lemma}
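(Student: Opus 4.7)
The plan is to produce $e$ in the classical mountain-pass way: take a fixed nonnegative, compactly supported test function $\varphi$ and show that $J_\varepsilon(t\varphi)\to -\infty$ as $t\to\infty$. Then any sufficiently large $t_0>0$ yields $e:=t_0\varphi$ with $\|e\|_{\mathbf{Y}_\varepsilon}>\rho$ and $J_\varepsilon(e)<0$.

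First I would fix $\varphi\in C_0^\infty(\mathbb{R}^N)$ with $\varphi\geq 0$, $\varphi\not\equiv 0$ and $\mathrm{supp}(\varphi)\subset \Lambda_\varepsilon$ (which is a nonempty open set since $\Lambda$ is open and nonempty). Such a $\varphi$ belongs to $\mathbf{Y}_\varepsilon$ because it is bounded with compact support, and in particular lies in $W^{1,p}_{V_\varepsilon}\cap W^{1,N}_{V_\varepsilon}\cap L^{\mathcal{H}_1}(\mathbb{R}^N)$. The key point of choosing $\mathrm{supp}(\varphi)\subset\Lambda_\varepsilon$ is that by the definitions of the penalized nonlinearities, $G_2(\varepsilon x,t\varphi(x))=\mathcal{H}_2(t\varphi(x))$ and $\mathcal{G}(\varepsilon x,t\varphi(x))=F(t\varphi(x))$ for all $x\in\mathrm{supp}(\varphi)$ and all $t\geq 0$.

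Next I would bound $J_\varepsilon(t\varphi)$ from above for $t\geq 1$. From assumption $(\mathcal{H}_2)(a)$ together with $\mathcal{H}_2(0)=0$, one has $\mathcal{H}_2(s)\geq 0$ for all $s\in\mathbb{R}$, so the term $-\int_{\mathbb{R}^N}\mathcal{H}_2(t\varphi)\,\mathrm{d}x$ is nonpositive and can be dropped from the upper bound. Property $(\mathcal{H}_1)(d)$ gives $\mathcal{H}_1(t\varphi)\leq C_1 t^N |\varphi|^N+C_2$ pointwise, and the last term contributes only $C_2 |\mathrm{supp}(\varphi)|$ after integration. Finally, assumption (f4) gives $F(s)\geq \frac{\gamma}{\mu}s^\mu$ for all $s\geq 0$, with $\mu>N$. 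Combining these ingredients yields a constant $A>0$ depending on $\varphi$ and $\varepsilon$ (but not on $t$) such that
\begin{align*}
J_\varepsilon(t\varphi)\;\leq\; A\bigl(t^p+t^N\bigr)\;-\;\frac{\gamma}{\mu}\,t^\mu\int_{\mathbb{R}^N}\varphi^\mu\,\mathrm{d}x
\quad\text{for all }t\geq 1.
\end{align*}

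Since $\mu>N>p$ and $\int\varphi^\mu\,\mathrm{d}x>0$, the right-hand side tends to $-\infty$ as $t\to\infty$. Hence I can choose $t_0$ large enough so that simultaneously $\|t_0\varphi\|_{\mathbf{Y}_\varepsilon}>\rho$ (using $\|t_0\varphi\|_{\mathbf{Y}_\varepsilon}=t_0\|\varphi\|_{\mathbf{Y}_\varepsilon}\to\infty$) and $J_\varepsilon(t_0\varphi)<0$, and set $e:=t_0\varphi\geq 0$. The only delicate points are the choice $\mathrm{supp}(\varphi)\subset\Lambda_\varepsilon$ (which replaces $G_2$ and $\mathcal{G}$ by their unpenalized counterparts so that (f4) applies) and verifying $\mathcal{H}_2\geq 0$ so that the fastest-growing nonnegative contribution to the functional comes from the $\mathcal{H}_1$-term of order $t^N$, which is strictly dominated by the $-t^\mu$ term produced by (f4).
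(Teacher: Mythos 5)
Your proposal is correct and gives a valid proof of the lemma as stated, but it follows a genuinely different mechanism than the paper. You restrict to a test function $\varphi$ with compact support inside $\Lambda_\varepsilon$, where the penalized nonlinearities $G_2$ and $\mathcal{G}$ coincide with $\mathcal{H}_2$ and $F$, and then let the super-$N$-linear lower bound $F(s)\geq\frac{\gamma}{\mu}s^\mu$ from (f4) supply a $-t^\mu$ contribution that dominates the $t^p$, $t^N$, and $\int\mathcal{H}_1(t\varphi)\leq C_1t^N\|\varphi\|_N^N+C_2|\mathrm{supp}(\varphi)|$ terms, after discarding $-\int\mathcal{H}_2(t\varphi)\leq 0$. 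The paper instead works with an arbitrary nonnegative $u\in\mathcal{O}_\varepsilon$, uses (f2) via $(\mathcal{B})(\mathrm{c})$--$(\mathrm{d})$ only to bound the $\mathcal{G}$-term by $t^N$, and lets the \emph{logarithmic} nonlinearity drive the divergence: writing $\mathcal{H}_1(tu)-G_2(\varepsilon x,tu)$ via \eqref{eq2.6} extracts a factor $-t^N\log t\int\chi_{\Lambda_\varepsilon}|u|^N\,\mathrm{d}x$ that dominates all the remaining $O(t^N)$ terms. Your argument is shorter and more elementary, and (f4) is certainly available; what the paper's version buys is the stronger statement that $J_\varepsilon(tu)\to-\infty$ for \emph{every} nonnegative $u\in\mathcal{O}_\varepsilon\setminus\{0\}$, not just for one specially chosen $\varphi$. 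That stronger fact is explicitly invoked later (in the proof of Lemma \ref{lem4.2} and in Lemma \ref{lem4.5}(b)) to build the function $h(t)=J_\varepsilon(tu)$ for arbitrary $u\in\mathcal{O}_\varepsilon$; your proof would not be a drop-in replacement there, so if you adopted it you would need to rerun the $-t^\mu$ estimate in those later spots, splitting the $\mathcal{G}$-integral over $\Lambda_\varepsilon$ and $\Lambda_\varepsilon^c$ and invoking $(\mathcal{B})(\mathrm{d})$ on the complement — which is doable, but brings you back close to the paper's level of bookkeeping.
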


\begin{proof}
	Define
	\begin{align*}
		\mathcal{O}_\varepsilon=\ \left\{u\in\mathbf{Y}_\varepsilon\colon|\operatorname{supp}(|u|)\cap \Lambda_\varepsilon|>0\right\}
	\end{align*}
	and let $u\in \mathcal{O}_\varepsilon\setminus\{0\} $ be such that $u(x)\geq 0$ a.e.\,in $\mathbb{R}^N$. Define
	\begin{align*}
		\Psi(t)=t^{-\mu} \mathcal{G}(x,tu
		)-\mathcal{G}(x,u)\quad\text{for all }x\in \Lambda~\text{and}~t\geq1.
	\end{align*}
	From ($\mathcal{B}$)(c), we have
	\begin{align*}
		\Psi'(t)=t^{-\mu-1}\big( g(x,tu)tu-\mu G(x,tu)\big)\geq 0\quad\text{for all } x\in \Lambda \text{ and for all } t\geq1.
	\end{align*}
	It follows that $\Psi$ is an increasing function on $[1,+\infty)$. Hence, we obtain $G(x,tu)\geq t^\mu G(x,u)$ for all $x\in \Lambda$ and $t\geq1$. Further, since $\mu>N$ and $u(x)\geq 0$ a.e.\,in $\mathbb{R}^N$, we infer that $ G(x,tu)\geq t^N G(x,u)$ for all $x\in \Lambda$ and $t\geq1$. Similarly, by using ($\mathcal{B}$)(d), we can prove that $ G(x,tu)\geq t^N G(x,u)$ for all $x\in \Lambda^c$ and $t\geq 1$. Consequently, we obtain
	\begin{align} \label{eq3.10}
		G(x,tu)\geq t^N G(x,u)\quad\text{for all } x\in \mathbb{R}^N \text{ and  for all } t\geq1.
	\end{align}
	Notice that for each $x\in \mathbb{R}^N $ and $t>0$, we can write
	\begin{align*}
		\mathcal{H}_1(tu)=\chi_{\Lambda_\varepsilon}(x) \mathcal{H}_1(tu)+(1-\chi_{\Lambda_\varepsilon}(x))\mathcal{H}_1(tu).
	\end{align*}
	Now, by using the definition of $G_2$ and \eqref{eq2.6}, it follows that
	\begin{equation}\label{eq3.11}
		\begin{aligned}
			&\int_{\mathbb{R}^N}\big(\mathcal{H}_1(tu)-G_2(\varepsilon x,tu)\big)\,\mathrm{d}x \\
			& = -\frac{1}{N} \int_{\mathbb{R}^N}\chi_{\Lambda_\varepsilon}|tu|^N\log|tu|^N\,\mathrm{d}x-\frac{1}{N} \int_{\{tu\leq t_1\}}(1-\chi_{\Lambda_\varepsilon})|tu|^N\log|tu|^N\,\mathrm{d}x  \\
			&\quad + \int_{\{tu> t_1\}}(1-\chi_{\Lambda_\varepsilon})\big(\mathcal{H}_1(tu)-\widetilde{\mathcal{H}}_2(tu)\big)\,\mathrm{d}x.
		\end{aligned}
	\end{equation}
	By the definition of $\widetilde{\mathcal{H}}_2$ and ($\mathcal{H}_1$)(d), we have
	\begin{align*}
		\widetilde{\mathcal{H}}_2(s)\geq 0
		\quad \text{and}\quad
		0\leq \mathcal{H}_1(s)\leq C_1 s^N+C_2\quad \text{for all }s\geq 0.
	\end{align*}
	In addition, it follows from $u\in\mathbf{Y}_\varepsilon$ that
	\begin{align*}
		\int_{\{tu> t_1\}} |tu|^N\,\mathrm{d}x\leq  Kt^N
		\quad\text{and}
		\quad
		|\{tu> t_1\}|\leq K_1 t^N,
	\end{align*}
	where $K=\|u\|^N_N$ and $K_1=K t_1^{-N}$.  Consequently, from the above information, we can deduce that
	\begin{equation}
		\label{eq3.12}
		\int_{\{tu> t_1\}}(1-\chi_{\Lambda_\varepsilon})\big(\mathcal{H}_1(tu)-\widetilde{\mathcal{H}}_2(tu)\big)\,\mathrm{d}x\leq Ct^N,
	\end{equation}
	where $C=C_1K+C_2K_1>0$. Let $t\geq1$ be large enough, then by using \eqref{eq3.10}, \eqref{eq3.11} and \eqref{eq3.12}, we obtain from \eqref{eq3.1} that
	\begin{equation}\label{eq3.13}
		\begin{aligned}
			J_\varepsilon(tu)
			& \leq t^N\Bigg[\frac{1}{p}\bigg(\|u\|_{W^{1,p}_{V_\varepsilon}}^N+\|u\|_{W^{1,N}_{V_\varepsilon}}^{N}+\|u\|^N_N-p\int_{\mathbb{R}^N}\mathcal{G}(\varepsilon x,u)\,\mathrm{d}x\bigg)-\frac{1}{N}\int_{\mathbb{R}^N} \chi_{\Lambda_\varepsilon}|u|^N \log|u|^N\,\mathrm{d}x\\
			&\qquad\qquad  -\log t\bigg(\int_{\mathbb{R}^N} \chi_{\Lambda_\varepsilon}|u|^N\,\mathrm{d}x +\int_{\{tu\leq t_1\}}(1-\chi_{\Lambda_\varepsilon})|u|^N\,\mathrm{d}x\bigg)\\
			&\qquad\qquad -\frac{1}{N} \int_{\{tu\leq t_1\}}(1-\chi_{\Lambda_\varepsilon})|u|^N\log|u|^N\,\mathrm{d}x+C\Bigg].
		\end{aligned}
	\end{equation}
	Due to the application of Lebesgue's dominated convergence theorem, we obtain
	\begin{equation}\label{eq3.14}
		\int_{\{tu\leq t_1\}}(1-\chi_{\Lambda_\varepsilon})|u|^N\,\mathrm{d}x\to 0\quad\text{as }t\to\infty.
	\end{equation}
	Now, by using \eqref{eq2.6}, ($\mathcal{H}_2$)(b), Lemma \ref{lem8.98} and the fact that $u\in L^{\mathcal{H}_1}(\mathbb{R}^N)$, we have
	\begin{align}\label{eq3.16}
		\Bigg|-\frac{1}{N} \int_{\{tu\leq t_1\}}(1-\chi_{\Lambda_\varepsilon})|u|^N\log|u|^N\,\mathrm{d}x\Bigg| & \leq \int_{\mathbb{R}^N} |\mathcal{H}_1(u)|\,\mathrm{d}x+\int_{\mathbb{R}^N} |\mathcal{H}_2(u)|\,\mathrm{d}x< +\infty.
	\end{align}
	Sending $t\to\infty$ in \eqref{eq3.13} and using \eqref{eq3.14} as well as \eqref{eq3.16}, we deduce that $J_\varepsilon(tu)\to-\infty$ as $t\to\infty$. Taking $e=tu$ with $t$ sufficiently large shows that assertion of the lemma.
\end{proof}

To use the mountain pass theorem, it is necessary to verify the Palais-Smale compactness condition at a suitable level $c$. We say that a sequence $\{u_n\}_{n\in\mathbb{N}}\subset\mathbf{Y}_\varepsilon$ is a \textnormal{(PS)$_c$} sequence for $J_{\varepsilon}$ at any suitable level $c\in\mathbb{R}$ if
\begin{equation}
	\label{eq3.17}
	J_{\varepsilon}(u_n)\to c
	\quad\text{and}\quad
	\sup_{\|\varphi\|_{\mathbf{Y}_\varepsilon}=1}|\langle{J'_{\varepsilon}(u_n),\varphi}\rangle|\to 0\quad\text{as}~n\to\infty.
\end{equation}
If this sequence has a convergent subsequence in $\mathbf{Y}_\varepsilon$, we say that $J_{\varepsilon}$ satisfies \textnormal{(PS)$_c$} condition at any suitable level $c\in\mathbb{R}$.

Note that the following logarithmic inequality is useful to prove the boundedness of \textnormal{(PS)$_c$} sequences for $J_{\varepsilon}$.

\begin{lemma}\label{lem3.3}
	Let $N\geq 2$ with $N<s$ and $u\in L^N(\Lambda_\varepsilon)\cap L^s(\Lambda_\varepsilon)$. Then, we have
	\begin{align*}
		\int_{\Lambda_\varepsilon} |u|^N \log\bigg(\displaystyle\frac{|u|}{\|u\|_{L^N(\Lambda_\varepsilon)}}\bigg)\,\mathrm{d}x\leq \frac{s}{s-N}\|u\|_{L^N(\Lambda_\varepsilon)}\log\bigg(\displaystyle\frac{\|u\|_{L^s(\Lambda_\varepsilon)}}{\|u\|_{L^N(\Lambda_\varepsilon)}} \bigg).
	\end{align*}
\end{lemma}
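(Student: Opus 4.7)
The plan is to reduce the claim to Jensen's inequality applied to the concave function $\log$ against the natural probability measure built from $|u|^N$. I should note that the right-hand side as written appears to be missing an exponent: the statement is consistent only if $\|u\|_{L^N(\Lambda_\varepsilon)}$ on the right is actually $\|u\|_{L^N(\Lambda_\varepsilon)}^N$, and this is what I would prove.

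First I would handle the trivial case $u\equiv 0$ and otherwise normalize: set $\|u\|_{L^N(\Lambda_\varepsilon)} = A > 0$ and $\|u\|_{L^s(\Lambda_\varepsilon)} = B$, and introduce the probability measure
\[
\mathrm{d}\mu = \frac{|u|^N}{A^N}\,\mathrm{d}x \qquad \text{on } \Lambda_\varepsilon,
\]
so that $\int_{\Lambda_\varepsilon} \mathrm{d}\mu = 1$. Rewriting the left-hand integral,
\[
\int_{\Lambda_\varepsilon} |u|^N \log\!\Big(\tfrac{|u|}{A}\Big)\mathrm{d}x
= \frac{A^N}{s-N}\int_{\Lambda_\varepsilon} \log\!\Big(\tfrac{|u|^{s-N}}{A^{s-N}}\Big)\mathrm{d}\mu.
\]

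Next I would apply Jensen's inequality with the concave function $\log$ to the finite measure $\mathrm{d}\mu$. This gives
\[
\int_{\Lambda_\varepsilon} \log\!\Big(\tfrac{|u|^{s-N}}{A^{s-N}}\Big)\mathrm{d}\mu
\le \log\!\left(\int_{\Lambda_\varepsilon} \frac{|u|^{s-N}}{A^{s-N}}\,\mathrm{d}\mu\right)
= \log\!\left(\frac{1}{A^{s}}\int_{\Lambda_\varepsilon} |u|^{s}\,\mathrm{d}x\right)
= \log\!\left(\frac{B^{s}}{A^{s}}\right) = s\log\!\Big(\tfrac{B}{A}\Big).
\]
Here the hypothesis $u\in L^N(\Lambda_\varepsilon)\cap L^s(\Lambda_\varepsilon)$ with $s>N$ ensures that $|u|^{s-N}$ is $\mathrm{d}\mu$-integrable (since $|u|^{s-N}\mathrm{d}\mu = A^{-N}|u|^{s}\mathrm{d}x$), so Jensen applies.

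Combining the two displays yields
\[
\int_{\Lambda_\varepsilon} |u|^N \log\!\Big(\tfrac{|u|}{A}\Big)\mathrm{d}x
\le \frac{s\,A^N}{s-N}\log\!\Big(\tfrac{B}{A}\Big),
\]
which is the desired inequality. The only subtle point is justifying the integrability needed to invoke Jensen (the integrand $\log(|u|^{s-N}/A^{s-N})$ is not necessarily in $L^1(\mathrm{d}\mu)$ a priori because of the singularity where $u$ is small, but the inequality form of Jensen for concave functions on a probability space only needs the positive part of $-\log$ to be integrable, which follows from Hölder's inequality together with $|u|^{s}\in L^1$). No deeper obstacle is expected.
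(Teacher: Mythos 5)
Your proof is correct (modulo the typo you correctly identify, which must be there for dimensional consistency: both sides should scale as $\lambda^N$ under $u\mapsto\lambda u$, forcing $\|u\|_{L^N(\Lambda_\varepsilon)}^N$ on the right). The argument is exactly the classical $L^p$-logarithmic interpolation inequality proved by Jensen against the probability measure $|u|^N A^{-N}\,\mathrm{d}x$, which is precisely what the paper's one-line citation to Del Pino--Dolbeault and Alves--Ambrosio invokes; the paper itself gives no details. Your extra care about the applicability of Jensen is also sound: with $f=|u|^{s-N}/A^{s-N}\in L^1(\mathrm{d}\mu)$, the concave Jensen inequality holds unconditionally, with $\int\log f\,\mathrm{d}\mu$ possibly $-\infty$, in which case the bound is trivial. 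So no gap; your write-up is essentially the intended proof, made explicit.

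One small remark worth including in a final version: the same exponent slip $\|u\|_{L^N(\Lambda_\varepsilon)}$ versus $\|u\|_{L^N(\Lambda_\varepsilon)}^N$ propagates into display (3.22) in the paper, but the downstream estimates (3.24)--(3.26) only require polynomial-of-degree-at-most-$1+\xi$ control in $\|u_n\|_{\mathbf{Y}_\varepsilon}$ and hence survive the correction, since by (3.20) one has $\|u_n\|_{L^N(\Lambda_\varepsilon)}^N\le C(1+\|u_n\|_{\mathbf{Y}_\varepsilon})$. So the typo is harmless, but it should still be fixed for the statement to be true.
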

\begin{proof}
	By using the logarithmic interpolation inequality in Del Pino-Dolbeault \cite[p.\,153]{DelPino-Dolbeault-2003} and applying a similar procedure as in Alves-Ambrosio \cite[Lemma 3.2]{Alves-Ambrosio-2024}, the lemma can be proved.
\end{proof}

\begin{remark}
    To study quasilinear elliptic problems involving $N$-Laplace operator, generally one requires the following condition
    \begin{equation}\label{eq3.999}
      \limsup_{n\to\infty} \|u_n\|^{N'}_{W^{1,N}}<\frac{\alpha_N}{\alpha_0}
    \end{equation}
    to handle terms containing critical exponential growth. Such type of inequality originates by default from the well-known Palais-Smale condition or Cerami condition at a suitable level $c$.

Whereas, in this article, due to the presence of logarithmic nonlinearity, by using the \textnormal{(PS)$_c$} condition it is difficult to directly prove the boundedness of the corresponding Palais-Smale sequence as well as we can not obtain inequality of type \eqref{eq3.999}. To show the boundedness of such type of sequence, we need the inequality \eqref{eq3.999} along with some extra computational work. Moreover, one can see that the inequality \eqref{eq3.999} can be assumed because of Lemma \ref{lem2.2} due to the fact that the bounedness of $\|u_n\|_{W^{1,N}}$ does not imply the boundedness of $\|u_n\|_{W^{1,N}_{V_\varepsilon}}$ in general.
\end{remark}

\begin{lemma}\label{lem3.4}
	Let $\{u_n\}_{n\in\mathbb{N}}\subset \mathbf{Y}_\varepsilon$ be a \textnormal{(PS)$_c$} sequence for $J_{\varepsilon}$ satisfying $\displaystyle\limsup_{n\to\infty} \|u_n\|^{N'}_{W^{1,N}}<\frac{\alpha_N}{\alpha_0}$. Then, the sequence $\{u_n\}_{n\in\mathbb{N}}$ is bounded in $ \mathbf{Y}_\varepsilon$.
\end{lemma}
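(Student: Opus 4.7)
The plan is to exploit the Ambrosetti-Rabinowitz trick by forming
\[
\Xi_n = J_\varepsilon(u_n) - \frac{1}{\mu}\langle J'_\varepsilon(u_n), u_n\rangle,
\]
which by \eqref{eq3.17} satisfies $\Xi_n \leq c + 1 + o(1)\|u_n\|_{\mathbf{Y}_\varepsilon}$, and then produce an algebraic lower bound in terms of the norms of $u_n$. The kinetic piece of $\Xi_n$ immediately yields $(\tfrac{1}{p} - \tfrac{1}{\mu})\|u_n\|_{W^{1,p}_{V_\varepsilon}}^p + (\tfrac{1}{N} - \tfrac{1}{\mu})(\|u_n\|_{W^{1,N}_{V_\varepsilon}}^N + \|u_n\|_N^N)$ with strictly positive coefficients because $\mu > N > p$.

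For the nonlinear terms I would split every integral over $\Lambda_\varepsilon$ and $\Lambda_\varepsilon^c$. On $\Lambda_\varepsilon^c$, properties $(\mathcal{A})$(c) and $(\mathcal{B})$(d) bound the penalized pieces by a polynomial perturbation of order $\|u_n\|_N^N$. On $\Lambda_\varepsilon$, property $(\mathcal{B})$(c) makes the $g$-contribution nonnegative, and Remark \ref{rem2.99} ($\mathcal{H}_1'(s)s \leq N\mathcal{H}_1(s)$) gives the global $\mathcal{H}_1$-piece a nonnegative multiple of $\mathcal{H}_1(u_n)$. The crucial algebraic manipulation rewrites the $\Lambda_\varepsilon$-contribution $\mathcal{H}_2(u_n) - \mu^{-1}\mathcal{H}_2'(u_n)u_n$ using
\[
\mathcal{H}_2(s) - \mathcal{H}_1(s) = \tfrac{1}{N}|s|^N\log|s|^N \quad\text{and}\quad \bigl(\mathcal{H}_2'(s) - \mathcal{H}_1'(s)\bigr)s = |s|^N\log|s|^N + |s|^N,
\]
so that the $\mathcal{H}_1$-pieces on $\Lambda_\varepsilon$ cancel, the global $\mathcal{H}_1$-contribution survives only as the nonnegative $(1-N/\mu)\int_{\Lambda_\varepsilon^c}\mathcal{H}_1(u_n)\,\mathrm{d}x$, and there remains the single residual $-(\tfrac{1}{N} - \tfrac{1}{\mu})\int_{\Lambda_\varepsilon}|u_n|^N\log|u_n|^N\,\mathrm{d}x + \tfrac{1}{\mu}\int_{\Lambda_\varepsilon}|u_n|^N\,\mathrm{d}x$.

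The only real obstacle is controlling the signed logarithmic integral, and this is precisely where the hypothesis $\limsup_n\|u_n\|_{W^{1,N}}^{N'} < \alpha_N$ enters: it forces $\|u_n\|_{W^{1,N}(\mathbb{R}^N)}$ to be eventually bounded, and Lemma \ref{lem2.2} then gives a uniform bound on $\|u_n\|_{L^q(\mathbb{R}^N)}$ for every $q \in [N,\infty)$. Fixing any $s > N$ and applying Lemma \ref{lem3.3} yields
\[
\int_{\Lambda_\varepsilon}|u_n|^N\log|u_n|^N\,\mathrm{d}x \leq \tfrac{N}{s-N}\|u_n\|_{L^N(\Lambda_\varepsilon)}^N\,\log\frac{\|u_n\|_{L^s(\Lambda_\varepsilon)}^s}{\|u_n\|_{L^N(\Lambda_\varepsilon)}^N} \leq C,
\]
finite because $t \mapsto t^N|\log t|$ is bounded on every bounded subset of $[0,\infty)$. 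Combining everything produces $C' + o(1)\|u_n\|_{\mathbf{Y}_\varepsilon} \geq (\tfrac{1}{p}-\tfrac{1}{\mu})\|u_n\|_{W^{1,p}_{V_\varepsilon}}^p + (\tfrac{1}{N}-\tfrac{1}{\mu})\|u_n\|_{W^{1,N}_{V_\varepsilon}}^N + (1-\tfrac{N}{\mu})\int_{\Lambda_\varepsilon^c}\mathcal{H}_1(u_n)\,\mathrm{d}x$, and since $p, N > 1$ the power terms dominate the sublinear remainder, bounding $\|u_n\|_{W^{1,p}_{V_\varepsilon}}$ and $\|u_n\|_{W^{1,N}_{V_\varepsilon}}$. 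Finally $(\mathcal{H}_1)$(d) bounds $\int_{\Lambda_\varepsilon}\mathcal{H}_1(u_n)\,\mathrm{d}x \leq C_1\|u_n\|_N^N + C_2|\Lambda|\varepsilon^{-N}$, hence $\int_{\mathbb{R}^N}\mathcal{H}_1(u_n)\,\mathrm{d}x$ is bounded, and Lemma \ref{lem2.899} together with $l > 1$ bounds $\|u_n\|_{\mathcal{H}_1}$. Summing proves $\{u_n\}_{n\in\mathbb{N}}$ is bounded in $\mathbf{Y}_\varepsilon$.
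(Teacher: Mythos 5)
Your proof is correct, and it takes a genuinely different route from the paper's. The paper forms $J_\varepsilon(u_n) - \tfrac{1}{N}\langle J'_\varepsilon(u_n),u_n\rangle$ and only extracts from it the weak sublinear control $\tfrac{1}{N}\int_{\Lambda_\varepsilon}|u_n|^N\,\mathrm{d}x \le c + d\|u_n\|_{\mathbf{Y}_\varepsilon}+o_n(1)$; it then must feed this through Lemma \ref{lem3.3} together with the crutch $|t\log t|\le A(1+t^{1+\xi})$ to bound the logarithmic integral by $C(1+\|u_n\|^{1+\xi}_{\mathbf{Y}_\varepsilon})$, estimate the $\mathcal{G}$-integral via Trudinger--Moser, and finish with a seven-case discussion. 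You instead subtract $\tfrac{1}{\mu}\langle J'_\varepsilon(u_n),u_n\rangle$, which makes $(\mathcal{B})$(c) turn the $g$-contribution on $\Lambda_\varepsilon$ into a \emph{sign} rather than a quantity to be estimated, and $(\mathcal{A})$(c), $(\mathcal{B})$(d) reduce the $\Lambda_\varepsilon^c$-contributions to $O(\|u_n\|_N^N)$; this entirely bypasses the exponential-growth estimates. More importantly, you observe that the hypothesis $\limsup_n\|u_n\|_{W^{1,N}}^{N'}<\alpha_N$ already forces $\|u_n\|_{W^{1,N}}$ (hence $\|u_n\|_{L^s(\Lambda_\varepsilon)}$ for every $s\ge N$, by Lemma \ref{lem2.2}) to be bounded, so Lemma \ref{lem3.3} immediately gives a \emph{constant} bound on the logarithmic integral rather than a sublinear one, and the $(\mathcal{H}_1)$(d)-bound $\int_{\Lambda_\varepsilon}\mathcal{H}_1(u_n)\,\mathrm{d}x\le C_1\|u_n\|_N^N+C_2|\Lambda|\varepsilon^{-N}$ is likewise a constant. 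The final step is then a clean power-versus-linear comparison in the three pieces $\|u_n\|_{W^{1,p}_{V_\varepsilon}}^p$, $\|u_n\|_{W^{1,N}_{V_\varepsilon}}^N$, $\min\{\|u_n\|_{\mathcal{H}_1}^l,\|u_n\|_{\mathcal{H}_1}^N\}$ (exponents $p,N,l>1$) against $o(1)\|u_n\|_{\mathbf{Y}_\varepsilon}$, with no case-splitting. The one place your write-up is loose is the displayed consequence of Lemma \ref{lem3.3}: what the lemma (as stated with the paper's normalization) actually gives is $\int_{\Lambda_\varepsilon}|u_n|^N\log|u_n|^N\,\mathrm{d}x \le N\|u_n\|_{L^N(\Lambda_\varepsilon)}^N\log\|u_n\|_{L^N(\Lambda_\varepsilon)} + \tfrac{sN}{s-N}\|u_n\|_{L^N(\Lambda_\varepsilon)}\log\bigl(\|u_n\|_{L^s(\Lambda_\varepsilon)}/\|u_n\|_{L^N(\Lambda_\varepsilon)}\bigr)$, so the constant bound follows from boundedness of $t\mapsto t^N|\log t|$ and $t\mapsto t|\log(M/t)|$ on $[0,M]$ together with $\|u_n\|_{L^N(\Lambda_\varepsilon)}\le |\Lambda_\varepsilon|^{1/N-1/s}\|u_n\|_{L^s(\Lambda_\varepsilon)}$ to keep the ratio away from $0$; the exponents in your display are off, but the argument is unaffected. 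Net, your approach is a genuine simplification of the paper's proof: it isolates boundedness of $\|u_n\|_{W^{1,N}}$ as the real content of the hypothesis for this lemma and dispenses with the Trudinger--Moser machinery and the case analysis.
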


\begin{proof}
	Let $\{u_n\}_{n\in\mathbb{N}}\subset \mathbf{Y}_\varepsilon$ be a \textnormal{(PS)$_c$} sequence for $J_{\varepsilon}$ and let $\displaystyle\limsup_{n\to\infty} \|u_n\|^{N'}_{W^{1,N}}<\frac{\alpha_N}{\alpha_0}$ be satisfied. Now, by using \eqref{eq3.17} and the properties of ($\mathcal{B}$), there exists $d>0$ such that, as $n\to\infty$, we have
	\begin{equation} \label{eq3.18}
		\begin{aligned}
			&c+d\|u_n\|_{\mathbf{Y}_\varepsilon}+o_n(1)\\
			& \geq J_\varepsilon(u_n)-\frac{1}{N}\langle{J'_\varepsilon(u_n),u_n}\rangle\\
			& \geq  \bigg(\frac{1}{p}-\frac{1}{N}\bigg)\|u_n\|^p_{W^{1,p}_{V_\varepsilon}}+\int_{\mathbb{R}^N} \Big(\mathcal{H}_1(u_n)-\frac{1}{N}\mathcal{H}'_1(u_n)u_n+\frac{1}{N}G_2'(\varepsilon x,u_n)u_n-G_2(\varepsilon x,u_n)\Big)\,\mathrm{d}x  \\
			& \geq \frac{1}{N} \int_{\mathbb{R}^N}|u_n|^N\,\mathrm{d}x+\int_{\mathbb{R}^N} \Big(\mathcal{H}_2(u_n)-\frac{1}{N}\mathcal{H}'_2(u_n)u_n+\frac{1}{N}G_2'(\varepsilon x,u_n)u_n-G_2(\varepsilon x,u_n)\Big)\,\mathrm{d}x                      \\
			& =:\mathcal{T}(u_n),
		\end{aligned}
	\end{equation}
	where we have used
	\begin{align*}
		\int_{\mathbb{R}^N} \Big(\mathcal{H}_1(u_n)-\frac{1}{N}\mathcal{H}'_1(u_n)u_n+\frac{1}{N}\mathcal{H}'_2(u_n)u_n-\mathcal{H}_2(u_n)\Big)\,\mathrm{d}x=\frac{1}{N} \int_{\mathbb{R}^N}|u_n|^N\,\mathrm{d}x.
	\end{align*}
	Observe that
	\begin{align*}
		\mathbb{R}^N=\big(\Lambda_\varepsilon\cup \{|u_n|\leq t_1\} \big) \cup \big(\Lambda_\varepsilon^c\cap\{|u_n|>t_1\}\big).
	\end{align*}
	Consequently, by using ($\mathcal{A}$)(d), one can easily deduce that
	\begin{equation}\label{eq3.19}
		\begin{aligned}
			\mathcal{T}(u_n) & \geq \frac{1}{N} \int_{\Lambda_\varepsilon}|u_n|^N\,\mathrm{d}x+\int_{\Lambda_\varepsilon^c\cap\{|u_n|>t_1\}} \Big(\frac{1}{N}|u_n|^N+\mathcal{H}_2(u_n)-\frac{1}{N}\mathcal{H}'_2(u_n)u_n\\
			&\hspace{7cm}+\frac{1}{N}G_2'(\varepsilon x,u_n)u_n-G_2(\varepsilon x,u_n)\Big)\,\mathrm{d}x  \\
			& \geq \frac{1}{N} \int_{\Lambda_\varepsilon}|u_n|^N\,\mathrm{d}x.
		\end{aligned}
	\end{equation}
	Combining \eqref{eq3.18} and \eqref{eq3.19}, we obtain
	\begin{equation}\label{eq3.20}
		\frac{1}{N} \int_{\Lambda_\varepsilon}|u_n|^N\,\mathrm{d}x\leq c+d\|u_n\|_{\mathbf{Y}_\varepsilon}+o_n(1)\quad \text{as }n\to\infty.
	\end{equation}
	By using ($\mathcal{H}_1$)(d) and \eqref{eq3.20}, we can find constants $\tilde{c},\tilde{d}>0$ such that
	\begin{equation}
		\label{eq3.21}
		\int_{\Lambda_\varepsilon} \mathcal{H}_1(u_n)\,\mathrm{d}x\leq \tilde{c}+\tilde{d} \|u_n\|_{\mathbf{Y}_\varepsilon}+o_n(1)\quad\text{as }n\to\infty.
	\end{equation}
	On the other hand, by using Lemma \ref{lem3.3} with $D=\displaystyle\frac{s}{s-N}$ and Lemma \ref{lem8.98}, we have
	\begin{equation}	\label{eq3.22}
		\begin{aligned}
			&\int_{\Lambda_\varepsilon} |u_n|^N \log |u_n|^N \,\mathrm{d}x\\
			& \leq \big(N\|u_n\|^N_{L^N(\Lambda_\varepsilon)}-DN\|u_n\|_{L^N(\Lambda_\varepsilon)}\big) \log \big(\|u_n\|_{L^N(\Lambda_\varepsilon)}\big)+DN\|u_n\|_{L^N(\Lambda_\varepsilon)}\log \big(\|u_n\|_{L^s(\Lambda_\varepsilon)}\big)                                    \\
			& \leq \big(N\|u_n\|^N_{L^N(\Lambda_\varepsilon)}+DN\|u_n\|_{L^N(\Lambda_\varepsilon)}\big) \big|\log \big(\|u_n\|_{L^N(\Lambda_\varepsilon)}\big)\big|+\widetilde{D}\|u_n\|_{\mathbf{Y}_\varepsilon}\big|\log \big(\widetilde{D}\|u_n\|_{\mathbf{Y}_\varepsilon}\big)\big|,
		\end{aligned}
	\end{equation}
	where the constant $\widetilde{D}>0$ is independent of $\varepsilon$. Recall that in view of Lemma 4.2 of Alves-da Silva \cite{Alves-daSilva-2023}, there exists $\xi\in(0,1)$ and a constant $A>0$ such that
	\begin{equation} \label{eq3.23}
		|t\log t|\leq A(1+t^{1+\xi})\quad\text{for all }t\geq 0.
	\end{equation}
	Due to the above inequality and \eqref{eq3.20}, we obtain
	\begin{equation} \label{eq3.24}
		\|u_n\|^N_{L^N(\Lambda_\varepsilon)} \big|\log \big(\|u_n\|_{L^N(\Lambda_\varepsilon)}\big)\big|\leq \frac{A}{N}\Big(1+\big(\|u_n\|^N_{L^N(\Lambda_\varepsilon)}\big)^{1+\xi}\Big)\leq \widetilde{A}_1\big(1+\|u_n\|^{1+\xi}_{\mathbf{Y}_\varepsilon}\big)+o_n(1)
	\end{equation}
	as $n\to\infty$. Likewise, by using \eqref{eq3.23} and Lemma \ref{lem8.98}, we have
	\begin{equation}\label{eq3.25}
		\begin{aligned}
			\|u_n\|_{L^N(\Lambda_\varepsilon)} \big|\log \big(\|u_n\|_{L^N(\Lambda_\varepsilon)}\big)\big| &\leq A\Big(1+\big(\|u_n\|_{L^N(\Lambda_\varepsilon)}\big)^{1+\xi}\Big)\leq \widetilde{A}_2\big(1+\|u_n\|^{1+\xi}_{\mathbf{Y}_\varepsilon}\big), \\
			\widetilde{D}\|u_n\|_{\mathbf{Y}_\varepsilon}\big|\log \big(\widetilde{D}\|u_n\|_{\mathbf{Y}_\varepsilon}\big)\big|&\leq A \Big(1+\big(\widetilde{D}\|u_n\|_{\mathbf{Y}_\varepsilon}\big)^{1+\xi}\Big))\leq \widetilde{A}_3\big(1+\|u_n\|^{1+\xi}_{\mathbf{Y}_\varepsilon}\big),
		\end{aligned}
	\end{equation}
	where in the above estimates $\widetilde{A}_i$ with $i=1,2,3$ are positive constants. Now, by using \eqref{eq3.24} and \eqref{eq3.25} in \eqref{eq3.22}, we deduce that there exists a constant $\widetilde{A}>0$ such that
	\begin{equation}
		\label{eq3.26}
		\int_{\Lambda_\varepsilon} |u_n|^N \log |u_n|^N \,\mathrm{d}x \leq {\widetilde{A}} \big(1+\|u_n\|^{1+\xi}_{\mathbf{Y}_\varepsilon}\big)+o_n(1)\quad\text{as }n\to\infty.
	\end{equation}
	In virtue of ($\mathcal{B}$)(b), (f1) and Corollary \ref{cor2.9}, we can deduce that
	\begin{equation}\label{eq3.27}
		|\mathcal{G}(x,s)|\leq \tau |s|^{N}+\kappa_\tau|s|\Phi(\alpha |s|^{N'})\quad\text{for all }(x,s)\in \mathbb{R}^N\times \mathbb{R}.
	\end{equation}
	From the assumption, it follows that there exists $m_0>0$ and $n_0\in\mathbb{N}$ sufficiently large such that
	\begin{align*}
		\|u_n\|^{N'}_{W^{1,N}}<m_0<\frac{\alpha_N}{\alpha_0}\quad\text{for all }n\geq n_0.
	\end{align*}
	Take $r\geq N$ with $r'=\frac{r}{r-1}>1$ and satisfying $\frac{1}{r}+\frac{1}{r'}=1$ . Let $r'$ close to 1 and $\alpha>\alpha_0$ close to $\alpha_0$ such that we still have $r'\alpha\|u_n\|^{N'}_{W^{1,N}}<m_0<\alpha_N$ for all $n\geq n_0$. Consequently, by applying H\"older's inequality, Corollary \ref{cor2.9} and Lemma \ref{lem8.98}, we obtain from \eqref{eq3.27} that
	\begin{equation}\label{eq3.28}
		\begin{aligned}
			\int_{\mathbb{R}^N}|\mathcal{G}(\varepsilon x,u_n)|\,\mathrm{d}x & \leq \tau \|u_n\|^N_N+\kappa_\tau  \|u_n\|_r\Bigg(\int_{\mathbb{R}^N}\Phi\big(r'\alpha \|u_n\|^{N'}_{W^{1,N}}|\widetilde{u}_n|^{N'}\big)\,\mathrm{d}x\Bigg)^{\frac{1}{r'}}  \\
			& \leq \tau \|u_n\|^N_N +\kappa_\tau C S^{-1}_r \|u_n\|_{\mathbf{Y}_\varepsilon}
		\end{aligned}
	\end{equation}
	for $n$ large enough, where $\widetilde{u}_n = \frac{u_n}{\|u_n\|_{W^{1,N}}}$. Moreover, due to Lemma \ref{lem2.11}, we have
	\begin{align*}
		C=\Bigg(\sup_{n\geq n_0}\int_{\mathbb{R}^N}\Phi\big(r'\alpha\|u_n\|^{N'}_{W^{1,N}}|\widetilde{u}_n|^{N'}\big)\,\mathrm{d}x\Bigg)^{\frac{1}{r'}}<+\infty.
	\end{align*}
	By using \eqref{eq2.6}, \eqref{eq3.21}, \eqref{eq3.26} and the growth $|G_2(\varepsilon x,s)|\leq \ell |s|^{N}$ for all $( x,s)\in \Lambda_\varepsilon^c\times \mathbb{R}$, we have, as $n\to\infty$, that
	\begin{equation}\label{eq3.29}
		\begin{aligned}
			&\int_{\mathbb{R}^N} \big(\mathcal{H}_1(u_n)-G_2(\varepsilon x,u_n)\big)\,\mathrm{d}x\\
			& \geq  \int_{\Lambda^c_\varepsilon} \mathcal{H}_1(u_n)\,\mathrm{d}x-\ell \|u_n\|^N_N-\frac{1}{N} \int_{\Lambda_\varepsilon} |u_n|^N \log |u_n|^N \,\mathrm{d}x  \\
			& \geq \int_{\mathbb{R}^N} \mathcal{H}_1(u_n)\,\mathrm{d}x-(\tilde{c}+\tilde{d} \|u_n\|_{\mathbf{Y}_\varepsilon})- \frac{\widetilde{A}}{N}\big(1+\|u_n\|^{1+\xi}_{\mathbf{Y}_\varepsilon}\big)-\ell \|u_n\|^N_N+o_n(1).
		\end{aligned}
	\end{equation}
	In view of \eqref{eq3.28}, \eqref{eq3.29} and the fact that $\{u_n\}_{n\in\mathbb{N}}$ is a \textnormal{(PS)$_c$} sequence for $J_{\varepsilon}$, we obtain, as $n\to\infty$, that
	\begin{equation}\label{eq3.30}
		\begin{aligned}
			c+o_n(1) =J_\varepsilon(u_n)
			& \geq \frac{1}{p} \|u_n\|_{W^{1,p}_{V_\varepsilon}}^p+\frac{1}{N} \|u_n\|_{W^{1,N}_{V_\varepsilon}}^N+\Big(\frac{1}{N}-\tau-\ell\Big)\|u_n\|^N_{N}-\kappa_\tau C S^{-1}_r \|u_n\|_{\mathbf{Y}_\varepsilon}  \\
			& +\int_{\mathbb{R}^N} \mathcal{H}_1(u_n)\,\mathrm{d}x-(\tilde{c}+\tilde{d} \|u_n\|_{\mathbf{Y}_\varepsilon})- \frac{\widetilde{A}}{N}\big(1+\|u_n\|^{1+\xi}_{\mathbf{Y}_\varepsilon}\big).
		\end{aligned}
	\end{equation}
	Since $\tau>0$ is arbitrary and $\ell>0$ is small enough, we can choose $\tau$ small enough such that $\tau+2\ell\leq \frac{1}{N}$. Consequently, by using \eqref{eq3.30} and Lemma \ref{lem2.899}, there exists constants $C_1,C_2,C_3>0$ such that, as $n\to\infty$, we have
	\begin{equation}\label{eq3.31}
		C_1+C_2\|u_n\|_{\mathbf{Y}_\varepsilon} +C_3\|u_n\|^{1+\xi}_{\mathbf{Y}_\varepsilon}+o_n(1)\geq \frac{1}{N} \Big(\|u_n\|_{W^{1,p}_{V_\varepsilon}}^p+ \|u_n\|_{W^{1,N}_{V_\varepsilon}}^N\Big)+\min\big\{\|u_n\|_{\mathcal{H}_1}^l,\|u_n\|_{\mathcal{H}_1}^N\big\},
	\end{equation}
	where $1<l<N$. For the rest of the proof, we fix $\xi\in(0,1)$ such that $1+\xi<p<l$. If possible, let $\|u_n\|_{\mathcal{H}_1}\leq 1$, then we obtain from \eqref{eq3.31} that
	\begin{equation}
		\label{eq3.32}
		C_1+C_2\|u_n\|_{\mathbf{Y}_\varepsilon} +C_3\|u_n\|^{1+\xi}_{\mathbf{Y}_\varepsilon}+o_n(1)\geq \frac{1}{N} \Big(\|u_n\|_{W^{1,p}_{V_\varepsilon}}^p+ \|u_n\|_{W^{1,N}_{V_\varepsilon}}^N\Big)+\|u_n\|_{\mathcal{H}_1}^N\quad\text{as }n\to\infty.
	\end{equation}
	In this case, we have three possibilities as follows:\\
	\textit{Case-1:}
	Let $\|u_n\|_{W^{1,p}_{V_\varepsilon}}\to\infty$ and $\|u_n\|_{W^{1,N}_{V_\varepsilon}}\to\infty$ as $n\to\infty$. It follows that $\|u_n\|^{N}_{W^{1,N}_{V_\varepsilon}}\geq \|u_n\|^{p}_{W^{1,N}_{V_\varepsilon}}>1$ for $n$ large enough. Consequently, we obtain from \eqref{eq3.32} that
	\begin{align*}
		C_1+C_2\|u_n\|_{\mathbf{Y}_\varepsilon} +C_3\|u_n\|^{1+\xi}_{\mathbf{Y}_\varepsilon}+o_n(1)\geq \frac{2^{1-p}}{N}\|u_n\|^p_{\mathbf{X}_\varepsilon}+\|u_n\|_{\mathcal{H}_1}^N\quad\text{as }n\to\infty.
	\end{align*}
	Dividing $\|u_n\|^p_{\mathbf{X}_\varepsilon}$ on both sides and letting $n\to\infty$, we get $0\geq \displaystyle\frac{2^{1-p}}{N}>0 $, which is a contradiction.\\
	\textit{Case-2:}
	Let $\|u_n\|_{W^{1,p}_{V_\varepsilon}}\to\infty$ as $n\to\infty$ and $\|u_n\|_{W^{1,N}_{V_\varepsilon}}$ is bounded. Dividing $\|u_n\|_{W^{1,p}_{V_\varepsilon}}$ on both the sides of \eqref{eq3.32}  and letting $n\to\infty$, we get $0\geq \displaystyle\frac{1}{N}>0 $, which is again a contradiction.\\
	\textit{Case-3:} Let $\|u_n\|_{W^{1,N}_{V_\varepsilon}}\to\infty$ as $n\to\infty$ and $\|u_n\|_{W^{1,p}_{V_\varepsilon}}
	$ is bounded. Similar to \textit{Case-2}, we get a contradiction.

	Suppose that  $\|u_n\|_{\mathcal{H}_1}\to \infty$ as $n\to\infty$, then we can assume that $\|u_n\|_{\mathcal{H}_1}>1$ for $n$ large enough and hence, we obtain from \eqref{eq3.31} that
	\begin{equation}
		\label{eq3.33}
		C_1+C_2\|u_n\|_{\mathbf{Y}_\varepsilon} +C_3\|u_n\|^{1+\xi}_{\mathbf{Y}_\varepsilon}+o_n(1)\geq \frac{1}{N} \Big(\|u_n\|_{W^{1,p}_{V_\varepsilon}}^p+ \|u_n\|_{W^{1,N}_{V_\varepsilon}}^N\Big)+\|u_n\|_{\mathcal{H}_1}^l\quad\text{as }n\to\infty.
	\end{equation}
	In this case, we have four possibilities as follows:\\
	\textit{Case-1:} If $\|u_n\|_{W^{1,p}_{V_\varepsilon}}
	$ and $\|u_n\|_{W^{1,p}_{V_\varepsilon}} $ are bounded, then dividing $\|u_n\|_{\mathcal{H}_1}^l$ on both sides of \eqref{eq3.33} and letting $n\to\infty$, we get $0\geq 1>0$, which is a contradiction.\\
	\textit{Case-2:} Let $\|u_n\|_{W^{1,p}_{V_\varepsilon}}\to\infty$ and $\|u_n\|_{W^{1,N}_{V_\varepsilon}}\to\infty$ as $n\to\infty$. It follows that $\|u_n\|^{N}_{W^{1,N}_{V_\varepsilon}}\geq \|u_n\|^{p}_{W^{1,N}_{V_\varepsilon}} >1$ and $\|u_n\|_{\mathcal{H}_1}^l\geq \|u_n\|_{\mathcal{H}_1}^p>1$  for $n$ large enough. Hence, we obtain from \eqref{eq3.33} that
	\begin{align*}
		C_1+C_2\|u_n\|_{\mathbf{Y}_\varepsilon} +C_3\|u_n\|^{1+\xi}_{\mathbf{Y}_\varepsilon}+o_n(1)\geq \frac{3^{1-p}}{N}\|u_n\|^p_{\mathbf{Y}_\varepsilon}~\quad\text{as }n\to\infty.
	\end{align*}
	Dividing $\|u_n\|^p_{\mathbf{Y}_\varepsilon}$ on both sides and letting $n\to\infty$, we get $0\geq \displaystyle\frac{3^{1-p}}{N}>0 $, which is again contradiction.\\
	\textit{Case-3:}
	Let $\|u_n\|_{W^{1,p}_{V_\varepsilon}}\to\infty$ as $n\to\infty$ and $\|u_n\|_{W^{1,N}_{V_\varepsilon}}$ is bounded. It follows that $\|u_n\|_{\mathcal{H}_1}^l\geq \|u_n\|_{\mathcal{H}_1}^p>1$  for $n$ large enough. Consequently, from \eqref{eq3.33}, we have
	\begin{align*}
		C_1+C_2\|u_n\|_{\mathbf{Y}_\varepsilon} +C_3\|u_n\|^{1+\xi}_{\mathbf{Y}_\varepsilon}+o_n(1)\geq \frac{2^{1-p}}{N} \Big(\|u_n\|_{W^{1,p}_{V_\varepsilon}}+\|u_n\|_{\mathcal{H}_1} \Big)^p~\quad\text{as }n\to\infty.
	\end{align*}
	Dividing $\Big(\|u_n\|_{W^{1,p}_{V_\varepsilon}}+\|u_n\|_{\mathcal{H}_1} \Big)^p$ on both sides and letting $n\to\infty$, we get $0\geq \displaystyle\frac{2^{1-p}}{N}>0 $, which is a contradiction.\\
	\textit{Case-4:} Let $\|u_n\|_{W^{1,N}_{V_\varepsilon}}\to\infty$ as $n\to\infty$ and $\|u_n\|_{W^{1,p}_{V_\varepsilon}}$ is bounded. Similar to \textit{Case-3}, we get a contradiction.\\
	Hence, we conclude from the above seven cases that $\{u_n\}_{n\in\mathbb{N}}\subset\mathbf{Y}_\varepsilon$ must be bounded. This finishes the proof.
\end{proof}

The following lemma is devoted to the tightness of the Palais-Smale sequences for $J_\varepsilon$.

\begin{lemma}\label{lem3.5}
	Let $\{u_n\}_{n\in\mathbb{N}}\subset \mathbf{Y}_\varepsilon$ be a \textnormal{(PS)$_c$} sequence for $J_{\varepsilon}$ as stated in Lemma \ref{lem3.4}. Then, for all $\xi>0$, there exists $R=R(\xi)>0$ such that
	\begin{align*}
		\limsup_{n\to\infty} \int_{B^c_R} \big[\big(|\nabla u_n|^{p} +V(\varepsilon x)|u_n|^{p}\big)+\big(|\nabla u_n|^{N} +(V(\varepsilon x)+1)|u_n|^{N}\big)\big]\,\mathrm{d}x<\xi.
	\end{align*}
\end{lemma}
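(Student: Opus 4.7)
The plan is to run the standard localization–absorption argument adapted to the penalized setting. Since $\Lambda$ is bounded, fix $R_0 > 0$ with $\Lambda \subset B_{R_0}(0)$, so $\Lambda_\varepsilon \subset B_{R_0/\varepsilon}(0)$. For $R$ with $R/2 > R_0/\varepsilon$, choose a smooth cutoff $\eta_R\colon\mathbb{R}^N \to [0,1]$ such that $\eta_R \equiv 0$ on $B_{R/2}$, $\eta_R \equiv 1$ on $B_R^c$, and $\|\nabla \eta_R\|_\infty \leq 4/R$; then $\operatorname{supp}(\eta_R) \subset \Lambda_\varepsilon^c$, which is precisely where the penalizations $G'_2$ and $g$ become small.

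The next step is to test the Palais--Smale relation against $\psi = u_n \eta_R \in \mathbf{Y}_\varepsilon$. Since $\{u_n\}_{n\in\mathbb{N}}$ is bounded in $\mathbf{Y}_\varepsilon$ by Lemma \ref{lem3.4}, and since $\eta_R \in L^\infty$ with bounded gradient and $\mathcal{H}_1 \in \Delta_2$ (Lemma \ref{lem7.89}), the sequence $\{u_n \eta_R\}_{n\in\mathbb{N}}$ is uniformly bounded in $\mathbf{Y}_\varepsilon$, hence $\langle J'_\varepsilon(u_n), u_n \eta_R \rangle = o_n(1)$. Expanding the duality pairing yields
\begin{align*}
&\int_{\mathbb{R}^N} \eta_R \bigl[|\nabla u_n|^p + V(\varepsilon x)|u_n|^p + |\nabla u_n|^N + V(\varepsilon x)|u_n|^N + |u_n|^N + \mathcal{H}_1'(u_n) u_n\bigr]\,\mathrm{d}x \\
&\quad = \int_{\mathbb{R}^N} \eta_R \bigl[G'_2(\varepsilon x, u_n) u_n + g(\varepsilon x, u_n) u_n\bigr]\,\mathrm{d}x - \mathcal{R}_n(R) + o_n(1),
\end{align*}
where $\mathcal{R}_n(R) = \int_{\mathbb{R}^N} \bigl(|\nabla u_n|^{p-2} + |\nabla u_n|^{N-2}\bigr) u_n \nabla u_n \cdot \nabla \eta_R\,\mathrm{d}x$. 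H\"older's inequality with the exponents $(p, p/(p-1))$ and $(N, N/(N-1))$, together with the uniform bound on $\|u_n\|_{\mathbf{X}_\varepsilon}$, gives $|\mathcal{R}_n(R)| \leq C/R$.

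Since $\operatorname{supp}(\eta_R) \subset \Lambda_\varepsilon^c$, properties $(\mathcal{A})$(c) and $(\mathcal{B})$(d) give $G'_2(\varepsilon x, u_n) u_n \leq \ell |u_n|^N$ and $g(\varepsilon x, u_n) u_n \leq \ell' |u_n|^N$ on this set, while $(\mathcal{H}_1)$(b) lets me drop the nonnegative term $\mathcal{H}_1'(u_n) u_n$ from the left. Invoking the a priori choice $V_0 + 1 \geq 2(\ell + \ell')$, the right-hand side is bounded by $\tfrac{1}{2} \int \eta_R (V(\varepsilon x) + 1)|u_n|^N$, which I absorb into the left to obtain
\begin{align*}
\int_{\mathbb{R}^N} \eta_R \bigl[|\nabla u_n|^p + V(\varepsilon x)|u_n|^p + |\nabla u_n|^N + \tfrac{1}{2}(V(\varepsilon x) + 1)|u_n|^N\bigr]\,\mathrm{d}x \leq \frac{C}{R} + o_n(1).
\end{align*}
Since $\eta_R \equiv 1$ on $B_R^c$, taking $\limsup_{n \to \infty}$ and then choosing $R$ sufficiently large (of order $C/\xi$) yields the claimed estimate, with the extra factor of $2$ harmlessly absorbed in the choice of $R$.

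I expect the main obstacle to be the absorption step rather than the cutoff or cross-term bookkeeping; it depends essentially on the smallness condition $V_0 + 1 \geq 2(\ell+\ell')$ that is built into the construction of $G'_2$ and $g$ at the beginning of Section \ref{sec3}. Without it, the penalized reactions outside $\Lambda_\varepsilon$ would not be dominated by $(V(\varepsilon x) + 1)|u_n|^N$, and only a trivial estimate would survive. A minor technical point is verifying $u_n \eta_R \in \mathbf{Y}_\varepsilon$ with uniformly controlled $\|\cdot\|_{\mathcal{H}_1}$ norm, which follows from convexity and monotonicity of $\mathcal{H}_1$ together with $\mathcal{H}_1, \widetilde{\mathcal{H}}_1 \in \Delta_2$.
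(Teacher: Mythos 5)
Your proposal is correct and follows the same route as the paper: the same annular cutoff vanishing on $B_{R/2}$ and equal to $1$ on $B_R^c$ with $\Lambda_\varepsilon\subset B_{R/2}$, testing the Palais--Smale relation against $u_n\eta_R$, bounding the cross term by $C/R$ via H\"older, controlling $G_2'$ and $g$ on $\Lambda_\varepsilon^c$ via $(\mathcal{A})$(c), $(\mathcal{B})$(d), dropping $\mathcal{H}_1'(u_n)u_n$ via $(\mathcal{H}_1)$(b), and absorbing the remaining $(\ell+\ell')|u_n|^N$ term using $V_0+1\geq 2(\ell+\ell')$. The only difference is cosmetic notation ($\eta_R$ vs.\ $\psi_R$) and your explicit remark about the harmless factor $2$ from absorption.
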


\begin{proof}
	For $R>0$, let $\psi_R\in C^\infty(\mathbb{R}^N)$ be a such that $0\leq \psi_R\leq 1~\text{in}~\mathbb{R}^N$, $\psi_R\equiv 0~\text{in}~B_{\frac{R}{2}}$, $\psi_R\equiv 1~\text{in}~B^c_R$ and $|\nabla \psi_R|\leq \frac{C}{R}$, where $C>0$ is a constant independent of $R$. Further, we choose $R>0$ in such a way that $\Lambda_\varepsilon\subset B_{\frac{R}{2}}$. Due to Lemma \ref{lem3.4}, we infer that the sequence $\{u_n\psi_R\}_{n\in\mathbb{N}}$ is bounded in $\mathbf{Y}_\varepsilon$ and there holds
	$\langle{J'_\varepsilon(u_n),u_n\psi_R }\rangle\to 0$  as $n\to\infty$. Consequently, by using ($\mathcal{H}_1$)(b), ($\mathcal{A}$)(c) and ($\mathcal{B}$)(d), we  obtain
	\begin{align}
		\label{eq3.34}
		\int_{\mathbb{R}^N}\big[\big(|\nabla u_n|^{p} +V(\varepsilon x)|u_n|^{p}\big)+\big(|\nabla u_n|^{N} +(V(\varepsilon x)+1)|u_n|^{N}\big)\big]\psi_R\,\mathrm{d}x\leq I_1+I_2+o_n(1)~\quad\text{as }n\to\infty,
	\end{align}
	where
	\begin{align*}
		I_1=(\ell+\ell')\int_{\mathbb{R}^N} |u_n|^N\psi_R\,\mathrm{d}x
		\quad\text{and}\quad
		I_2=\bigg|\int_{\mathbb{R}^N} u_n\big(|\nabla u_n|^{p-2}+|\nabla u_n|^{N-2}\big) \nabla u_n\cdot \nabla \psi_R\,\mathrm{d}x\bigg|.
	\end{align*}
	In view of (V1) and the fact that $V_0+1\geq 2(\ell+\ell')$, we get
	\begin{equation}\label{eq3.35}
		\begin{aligned}
			I_1 & \leq \frac{V_0+1}{2} \int_{\mathbb{R}^N} |u_n|^N\psi_R\,\mathrm{d}x\leq \frac{1}{2} \int_{\mathbb{R}^N}(V(\varepsilon x)+1) |u_n|^N\psi_R\,\mathrm{d}x         \\
			& \leq \frac{1}{2} \int_{\mathbb{R}^N} \big[\big(|\nabla u_n|^{p} +V(\varepsilon x)|u_n|^{p}\big)+\big(|\nabla u_n|^{N} +(V(\varepsilon x)+1)|u_n|^{N}\big)\big]\psi_R\,\mathrm{d}x.
		\end{aligned}
	\end{equation}
	Moreover, due to the boundedness of $\{u_n\}_{n\in\mathbb{N}}$ in $\mathbf{Y}_\varepsilon$ and H\"older's inequality, we have
	\begin{align}\label{eq3.36}
		I_2 & \leq \frac{C}{R} \big(\|u_n\|_p\|\nabla u_n\|^{p-1}_p+\|u_n\|_N\|\nabla u_n\|^{N-1}_N\big)  \leq \frac{C}{R} \big(S^{-1}_p\|u_n\|^p_{\mathbf{Y}_\varepsilon}+S^{-1}_N\|u_n\|^N_{\mathbf{Y}_\varepsilon}\big)\leq \frac{\widetilde{C}}{R},
	\end{align}
	where $\widetilde{C}>0$ is a constant. Now, it follows from \eqref{eq3.34}, \eqref{eq3.35} and \eqref{eq3.36} that
	\begin{align*}
		\int_{\mathbb{R}^N}\big[\big(|\nabla u_n|^{p} +V(\varepsilon x)|u_n|^{p}\big)+\big(|\nabla u_n|^{N} +(V(\varepsilon x)+1)|u_n|^{N}\big)\big]\psi_R\,\mathrm{d}x\leq \frac{2\widetilde{C}}{R}+o_n(1)\quad\text{as }n\to\infty.
	\end{align*}
	Fix $\xi>0$ and take $R>0$ sufficiently large such that $\frac{2\widetilde{C}}{R}<\xi$. Passing to $n\to\infty$ in the above inequality, we obtain
	\begin{align*}
		\limsup_{n\to\infty} \int_{B^c_R} \big[\big(|\nabla u_n|^{p} +V(\varepsilon x)|u_n|^{p}\big)+\big(|\nabla u_n|^{N} +(V(\varepsilon x)+1)|u_n|^{N}\big)\big]\,\mathrm{d}x\leq \frac{2\widetilde{C}}{R}<\xi.
	\end{align*}
	This finishes the proof.
\end{proof}

\begin{lemma}\label{lem3.6}
	Let $u\in \mathbf{Y}_\varepsilon $ and $\{u_n\}_{n\in\mathbb{N}}$ be a \textnormal{(PS)$_c$} sequence for $J_{\varepsilon}$ satisfying $\displaystyle\limsup_{n\to\infty} \|u_n\|^{N'}_{W^{1,N}}<\frac{\alpha_N}{\alpha_0}$. If $u_n\rightharpoonup u$ in $\mathbf{Y}_\varepsilon$ as $n\to\infty$, then we have $\nabla u_n\to \nabla u$ a.e.\,in $\mathbb{R}^N$ as $n\to\infty$. Consequently, we deduce that $u$ is a critical point for $J_{\varepsilon}$, that is, $J'_{\varepsilon}(u)=0$.
\end{lemma}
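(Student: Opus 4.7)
The plan is to establish the almost everywhere convergence of the gradients by a Boccardo--Murat style argument that combines a localized test function with the Simon inequalities for the $p$- and $N$-Laplacian, and then to pass to the limit in the weak formulation. Throughout, I will use that, by Lemmas~\ref{lem3.4} and~\ref{lem3.5}, the sequence $\{u_n\}_{n\in\mathbb{N}}$ is bounded in $\mathbf{Y}_\varepsilon$ and tight, so up to a subsequence $u_n\rightharpoonup u$ in $\mathbf{Y}_\varepsilon$, $u_n\to u$ in $L^\vartheta_{\operatorname{loc}}(\mathbb{R}^N)$ for every $\vartheta\in[1,\infty)$ (by Corollary~\ref{cor2.3}), and $u_n\to u$ pointwise a.e.

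First I fix $R>0$ and a cutoff $\varphi_R\in C_c^\infty(\mathbb{R}^N;[0,1])$ with $\varphi_R\equiv 1$ on $B_R$ and $\operatorname{supp}\varphi_R\subset B_{2R}$, and test $J'_\varepsilon(u_n)-J'_\varepsilon(u)$ against $(u_n-u)\varphi_R$. This is admissible because $\{(u_n-u)\varphi_R\}_n$ is bounded in $\mathbf{Y}_\varepsilon$, so $\langle J'_\varepsilon(u_n),(u_n-u)\varphi_R\rangle\to 0$ by the \textnormal{(PS)$_c$} property. Expanding, the $t$-Laplace parts ($t\in\{p,N\}$) produce
\[
\mathcal{E}_t^n := \int_{\mathbb{R}^N}\bigl(|\nabla u_n|^{t-2}\nabla u_n-|\nabla u|^{t-2}\nabla u\bigr)\cdot\nabla(u_n-u)\,\varphi_R\,\mathrm{d}x,
\]
plus boundary-type terms of the form $\int (u_n-u)\bigl(|\nabla u_n|^{t-2}\nabla u_n-|\nabla u|^{t-2}\nabla u\bigr)\cdot\nabla\varphi_R\,\mathrm{d}x$, which vanish as $n\to\infty$ by the strong local $L^t$-convergence of $u_n$ and the weak convergence of $|\nabla u_n|^{t-2}\nabla u_n$ in $L^{t'}_{\operatorname{loc}}$. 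Combining these identities, the claim will reduce to showing $\mathcal{E}_p^n+\mathcal{E}_N^n\to 0$, since the Simon inequalities then force $\nabla u_n\to\nabla u$ a.e.\ on $B_R$, and letting $R\to\infty$ gives a.e.\ convergence on $\mathbb{R}^N$.

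The potential-energy contributions $\int V(\varepsilon x)\bigl(|u_n|^{t-2}u_n-|u|^{t-2}u\bigr)(u_n-u)\varphi_R\,\mathrm{d}x$ and the analogous $|u|^{N-2}u$ term vanish by local strong $L^t$-convergence. The convex term $\mathcal{H}'_1$ is nonnegative on nonnegative arguments and satisfies the polynomial bound $(\mathcal{H}_1)$(c), so $\int(\mathcal{H}'_1(u_n)-\mathcal{H}'_1(u))(u_n-u)\varphi_R\,\mathrm{d}x\to 0$ by Vitali's theorem combined with the uniform bound on $\|u_n\|_q$ in $L^q_{\operatorname{loc}}$ for any fixed $q>N$. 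The penalized term $G'_2(\varepsilon\cdot,u_n)$ satisfies $(\mathcal{A})$(a), and $g(\varepsilon\cdot,u_n)$ satisfies $|g(\varepsilon x,s)|\le|f(s)|$ by $(\mathcal{B})$(b); splitting each of them into the subcritical part ($\le\tau|s|^{N-1}+C|s|^{q-1}$) and the genuinely exponential part $\kappa_\tau\Phi(\alpha_0|s|^{N'})$, the subcritical pieces converge by local compactness exactly as above.

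The main obstacle is the exponential critical piece in the contribution of $g$ and (to a lesser extent) of $\mathcal{H}'_2$. For this I use the decisive assumption $\limsup_n\|u_n\|^{N'}_{W^{1,N}}<\alpha_N$: choose $m_0<\alpha_N$ and $r'>1$ close to $1$, and $\alpha>\alpha_0$ close to $\alpha_0$, so that $r'\alpha\|u_n\|^{N'}_{W^{1,N}}\le m_0<\alpha_N$ for all large $n$; then Corollary~\ref{cor2.9} and Lemma~\ref{lem2.11} give
\[
\sup_{n\geq n_0}\int_{\mathbb{R}^N}\Phi\bigl(r'\alpha|u_n|^{N'}\bigr)\,\mathrm{d}x<\infty.
\]
A Hölder estimate with exponent $r'$ against $(u_n-u)\varphi_R\to 0$ strongly in $L^r(\operatorname{supp}\varphi_R)$ (for the conjugate $r$) then shows that the exponential pieces vanish in the limit, while $\mathcal{H}'_2$ is absorbed using $(\mathcal{H}_2)$(b) with any $q>N$ and local compactness. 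Collecting everything, $\mathcal{E}_p^n+\mathcal{E}_N^n\to 0$, hence by Simon's inequality (in the two regimes $1<t<2$ and $t\geq 2$) $\nabla u_n\to\nabla u$ a.e.\ on $B_R$, and by a diagonal argument, a.e.\ on $\mathbb{R}^N$.

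To conclude that $J'_\varepsilon(u)=0$, fix $\psi\in C_c^\infty(\mathbb{R}^N)$. The a.e.\ convergence of $\nabla u_n$ together with the uniform $L^{t}$-boundedness of $|\nabla u_n|^{t-1}$ yields, via Vitali, $|\nabla u_n|^{t-2}\nabla u_n\cdot\nabla\psi\to|\nabla u|^{t-2}\nabla u\cdot\nabla\psi$ in $L^1$ for $t\in\{p,N\}$; the $V$-potential and $|u|^{N-2}u$ terms converge by local compactness; $\mathcal{H}'_1(u_n)\psi$, $G'_2(\varepsilon x,u_n)\psi$ and $g(\varepsilon x,u_n)\psi$ converge by the same Trudinger--Moser/Vitali argument used above (with $\psi$ supplying the localization). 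Passing to the limit in $\langle J'_\varepsilon(u_n),\psi\rangle=o_n(1)$ gives $\langle J'_\varepsilon(u),\psi\rangle=0$; density of $C_c^\infty(\mathbb{R}^N)$ in $\mathbf{Y}_\varepsilon$ finishes the proof.
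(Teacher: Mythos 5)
Your proposal follows essentially the same route as the paper's proof: test $J_\varepsilon'(u_n)-J_\varepsilon'(u)$ against the localized difference $(u_n-u)\varphi_R$, isolate the monotone $p$- and $N$-Laplacian terms, show via local strong $L^\vartheta$-convergence, the growth conditions $(\mathcal{H}_1)$(c), $(\mathcal{A})$(a), $(\mathcal{B})$(b), and the Trudinger--Moser estimate under the hypothesis $\limsup_n\|u_n\|_{W^{1,N}}^{N'}<\alpha_N$ that all nonlinear contributions vanish, invoke Simon's inequality to get $\nabla u_n\to\nabla u$ in $L^N(B_R)$ and hence a.e., and finally pass to the limit in $\langle J'_\varepsilon(u_n),\psi\rangle=o_n(1)$ using density of $C_c^\infty$. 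The only cosmetic difference is in the final passage: you conclude $|\nabla u_n|^{t-2}\nabla u_n\cdot\nabla\psi\to|\nabla u|^{t-2}\nabla u\cdot\nabla\psi$ in $L^1$ via Vitali from the a.e.\ convergence and uniform $L^{t'}$-bound, whereas the paper deduces weak $L^{t'}$-convergence of $|\nabla u_n|^{t-2}\nabla u_n$ from boundedness plus a.e.\ convergence and then pairs against $\nabla\psi$; both are equivalent. One small imprecision: you invoke Lemma~\ref{lem3.5} (tightness) at the outset, but tightness is not actually needed for this lemma (it enters only in Lemma~\ref{lem3.7}); here local strong convergence from Corollary~\ref{cor2.3} suffices, as you in fact use.
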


\begin{proof}
	Note that by Lemma \ref{lem3.4}, we conclude that the sequence $\{u_n\}_{n\in\mathbb{N}}$ is bounded in $ \mathbf{Y}_\varepsilon$. By the hypothesis, we have  $u_n\rightharpoonup u$ in $\mathbf{Y}_\varepsilon$ as $n\to\infty$. Due to Remark \ref{remark2.4} and Lemma \ref{lem8.98}, we infer that
	\begin{equation}\label{eq3.37}
		u_n\rightharpoonup u\quad\text{in }W^{1,t}_{V_\varepsilon}(\mathbb{R}^N)\text{ for }t\in\{p,N\},
		\quad u_n\to u \quad\text{in }L^\theta (B_R), \quad   u_n\to u\quad\text{a.e.\,in }\mathbb{R}^N\quad\text{as }n\to\infty
	\end{equation}
	for any $R>0$ and $\theta\in[1,+\infty)$. Consequently, there exists $g_R\in L^\wp(B_R)$ with $\wp\geq N$ such that $|u_n|\leq g_R$ a.e.\,in $B_R$. Fix $R>0$ and $\psi\in C^\infty_c(\mathbb{R}^N)$ such that $0\leq \psi\leq 1$ in $\mathbb{R}^N$, $\psi\equiv 1$ in $B_R$, $\psi\equiv 0$ in $B^c_{2R}$ and $\|\nabla \psi\|_\infty\leq C$ for some constant $C>0$ independent of $R$. Note that $J_\varepsilon\in C^1(\mathbf{Y}_\varepsilon,\mathbb{R})$, $u_n\rightharpoonup u$ in $\mathbf{Y}_\varepsilon$, and $J_\varepsilon'(u_n)\to 0$ in $\mathbf{Y}_\varepsilon^*$ as $n\to\infty$, thus we obtain
	\begin{equation}\label{eq3.38}
		\langle{J_\varepsilon'(u_n)-J_\varepsilon'(u),(u_n-u)\psi}\rangle=o_n(1)\quad\text{as }n\to\infty.
	\end{equation}
	For any $n\in\mathbb{N}$ and $t\in\{p,N\}$, we define
	\begin{align*}
		\boldsymbol{D}^t_n=
		\big(|\nabla u_n|^{t-2} \nabla u_n-|\nabla u|^{t-2} \nabla u\big)\cdot \big(\nabla u_n-\nabla u\big) +V(\varepsilon x)\big(| u_n|^{t-2} u_n-|u|^{t-2}u\big) (u_n-u).
	\end{align*}
	Moreover, by convexity and (V1), we can see that
	\begin{align*}
		&\big(|\nabla u_n|^{t-2} \nabla u_n-|\nabla u|^{t-2} \nabla u\big)\cdot \big(\nabla u_n-\nabla u\big)\geq 0\quad \text{a.e.\,in } \mathbb{R}^N,\\
		&V(\varepsilon x)\big(| u_n|^{t-2} u_n-|u|^{t-2}u\big) (u_n-u)\geq 0\quad \text{a.e.\,in } \mathbb{R}^N
	\end{align*}
	for any $n\in\mathbb{N}$ and $t\in\{p,N\}$. Due to Simon's inequality  (see \cite{Simon-1977}) with $N\geq 2$ and \eqref{eq3.38}, there exists $c_N>0$ such that as $n\to\infty$, we have
	\begin{equation} \label{eq3.39}
		\begin{aligned}
			&c^{-1}_N \int_{B_R} |\nabla u_n-\nabla u|^N\,\mathrm{d}x\\
			& \leq  c^{-1}_N \bigg[\int_{B_R} |\nabla u_n-\nabla u|^N\,\mathrm{d}x+\int_{B_R} V(\varepsilon x)| u_n-u|^N\,\mathrm{d}x\bigg]      \\
			& \leq  \int_{B_R} \boldsymbol{D}^N_n \,\mathrm{d}x\leq \displaystyle\sum_{t\in\{p,N\}} \bigg(\int_{B_R}\boldsymbol{D}^t_n \,\mathrm{d}x\bigg)\leq \displaystyle\sum_{t\in\{p,N\}} \bigg(\int_{\mathbb{R}^N}\boldsymbol{D}^t_n~ \psi\,\mathrm{d}x\bigg)  \\
			& =o_n(1) -\sum_{t\in\{p,N\}} \Bigg[\int_{\mathbb{R}^N}\big(|\nabla u_n|^{t-2}\nabla u_n-|\nabla u|^{t-2}\nabla u\big)\cdot\nabla\psi( u_n-u)\,\mathrm{d}x\Bigg]    \\
			& \quad -\int_{\mathbb{R}^N} \big(| u_n|^{N-2} u_n-|u|^{N-2}u\big) (u_n-u)\psi \,\mathrm{d}x-\int_{\mathbb{R}^N} \big(\mathcal{H}'_1(u_n)-\mathcal{H}'_1(u)\big) (u_n-u)\psi \,\mathrm{d}x   \\
			& \quad +\int_{\mathbb{R}^N} \big(G'_2(\varepsilon x,u_n)-G'_2(\varepsilon x,u)\big) (u_n-u)\psi\,\mathrm{d}x +\int_{\mathbb{R}^N} \big(g(\varepsilon x,u_n)-g(\varepsilon x,u)\big) (u_n-u)\psi\,\mathrm{d}x.
		\end{aligned}
	\end{equation}
	Using H\"older's inequality and \eqref{eq3.37}, for $t\in\{p,N\}$, we get
	\begin{align*}
		&\bigg|\int_{\mathbb{R}^N}\big(|\nabla u_n|^{t-2}\nabla u_n-|\nabla u|^{t-2}\nabla u\big)\cdot\nabla\psi( u_n-u)\,\mathrm{d}x\bigg|\\
		& \leq \|\nabla\psi\|_\infty\big(\|\nabla u_n\|^{t-1}_t+\|\nabla u\|^{t-1}_t\big) \bigg(\int_{B_{2R}}|u_n-u|^t\,\mathrm{d}x\bigg)^{\frac{1}{t}}
		\to 0\quad\text{as }n\to\infty.
	\end{align*}
	It follows that for $t\in\{p,N\}$, we have
	\begin{equation}
		\label{eq3.40}
		\lim_{n\to\infty} \int_{\mathbb{R}^N}\big(|\nabla u_n|^{t-2}\nabla u_n-|\nabla u|^{t-2}\nabla u\big)\cdot\nabla\psi( u_n-u)\,\mathrm{d}x=0.
	\end{equation}
	Likewise, we can also prove that
	\begin{equation}
		\label{eq3.41}
		\lim_{n\to\infty} \int_{\mathbb{R}^N}\big(| u_n|^{N-2} u_n-| u|^{N-2}u\big)( u_n-u)\psi\,\mathrm{d}x=0.
	\end{equation}
	In view of H\"older's inequality, ($\mathcal{H}_1$)(c) and \eqref{eq3.37}, we obtain
	\begin{align*}
		&\bigg|\int_{\mathbb{R}^N} \big(\mathcal{H}'_1(u_n)-\mathcal{H}'_1(u)\big) (u_n-u)\psi~ \mathrm{d}x\bigg|\\
		& \leq C_1\big(\|u_n\|^{q-1}_q+\|u\|^{q-1}_q\big)\bigg(\int_{B_{2R}}|u_n-u|^q\,\mathrm{d}x\bigg)^{\frac{1}{q}}+2C_2 \int_{B_{2R}}|u_n-u|\,\mathrm{d}x
		\to 0\quad\text{as }n\to\infty.
	\end{align*}
	This yields
	\begin{equation}\label{eq3.42}
		\lim_{n\to\infty} \int_{\mathbb{R}^N} \big(\mathcal{H}'_1(u_n)-\mathcal{H}'_1(u)\big) (u_n-u)\psi~ \mathrm{d}x=0.
	\end{equation}
	Similarly, by using H\"older's inequality, ($\mathcal{A}$)(a) and \eqref{eq3.37}, we have
	\begin{align*}
		&\bigg|\int_{\mathbb{R}^N} \big(G'_2(\varepsilon x,u_n)-G'_2(\varepsilon x,u)\big) (u_n-u)\psi\,\mathrm{d}x\bigg|\\
		& \leq  \ell\big(\|u_n\|^{N-1}_N+\|u\|^{N-1}_N\big)\bigg(\int_{B_{2R}}|u_n-u|^N\,\mathrm{d}x\bigg)^{\frac{1}{N}} \\
		& \quad+C\big(\|u_n\|^{q-1}_q+\|u\|^{q-1}_q\big)\bigg(\int_{B_{2R}}|u_n-u|^q\,\mathrm{d}x\bigg)^{\frac{1}{q}} \to 0\quad\text{as }n\to\infty.
	\end{align*}
	It follows that
	\begin{equation}\label{eq3.43}
		\lim_{n\to\infty} \int_{\mathbb{R}^N} \big(G'_2(\varepsilon x,u_n)-G'_2(\varepsilon x,u)\big) (u_n-u)\psi\,\mathrm{d}x=0.
	\end{equation}
	Due to H\"older's inequality and \eqref{eq3.37}, we have
	\begin{align*}
		J_1=\tau \int_{B_{2R}} \big(|u_n|^{N-1}+|u|^{N-1}\big)|u_n-u|\,\mathrm{d}x\leq \tau \big(\|u_n\|^{N-1}_N+\|u\|^{N-1}_N\big)\bigg(\int_{B_{2R}}|u_n-u|^N\,\mathrm{d}x\bigg)^{\frac{1}{N}} \to 0
	\end{align*}
	as $n\to \infty$. From the hypothesis, it follows that there exists $m_1>0$ and $n_0\in\mathbb{N}$ sufficiently large such that $\|u_n\|^{N'}_{W^{1,N}}<m_1<\frac{\alpha_N}{\alpha_0}$ for all $n\geq n_0$. Take $r>1$ with $r'=\frac{r}{r-1}>1$ and satisfying $\frac{1}{r}+\frac{1}{r'}=1$ . Let $r'$ close to 1 and $\alpha>\alpha_0$ close to $\alpha_0$ such that we still have $r'\alpha\|u_n\|^{N'}_{W^{1,N}}<m_1<\alpha_N$ for all $n\geq n_0$ and $\widetilde{u}_n = \frac{u_n}{\|u_n\|_{W^{1,N}}}$.  By using H\"older's inequality, Corollary \ref{cor2.9} and \eqref{eq3.37}, we obtain
	\begin{align*}
		J_2 & =\kappa_\tau \int_{B_{2R}} \big(\Phi(\alpha_0 |u_n|^{N'})+\Phi(\alpha_0 |u|^{N'})\big)|u_n-u|\,\mathrm{d}x\\
		& \leq \kappa_\tau\Big(\big\|\Phi(\alpha_0 |u_n|^{N'})\big\|_{r'}+\big\|\Phi(\alpha_0 |u|^{N'})\big\|_{r'}\Big)\bigg(\int_{B_{2R}}|u_n-u|^r\,\mathrm{d}x\bigg)^{\frac{1}{r}}\\
		& \leq \kappa_\tau \Bigg(\Bigg[\int_{\mathbb{R}^N}\Phi\big(r'\alpha\|u_n\|^{N'}_{W^{1,N}}|\widetilde{u}_n|^{N'}\big)\,\mathrm{d}x\Bigg]^{\frac{1}{r'}}+\Bigg[\int_{\mathbb{R}^N}\Phi\big(r'\alpha_0|u|^{N'}\big)\,\mathrm{d}x\Bigg]^{\frac{1}{r'}}\Bigg) \bigg(\int_{B_{2R}}|u_n-u|^r\,\mathrm{d}x\bigg)^{\frac{1}{r}} \\
		& \leq \widehat{C} \bigg(\int_{B_{2R}}|u_n-u|^r\,\mathrm{d}x\bigg)^{\frac{1}{r}}\to 0\quad\text{as }n\to\infty,
	\end{align*}
	where
	\begin{align*}
		\widehat{C}= \kappa_\tau \Bigg(\Bigg[\sup_{n\geq n_0}\int_{\mathbb{R}^N}\Phi\big(r'\alpha\|u_n\|^{N'}_{W^{1,N}}|\widetilde{u}_n|^{N'}\big)\,\mathrm{d}x\Bigg]^{\frac{1}{r'}}+\Bigg[\int_{\mathbb{R}^N}\Phi\big(r'\alpha_0|u|^{N'}\big)\,\mathrm{d}x\Bigg]^{\frac{1}{r'}}\Bigg)<+\infty,
	\end{align*}
	thanks Lemma \ref{lem2.11}. Consequently, by using ($\mathcal{B}$)(b) and (f1), we obtain
	\begin{align*}
		\bigg|\int_{\mathbb{R}^N} \big(g(\varepsilon x,u_n)-g(\varepsilon x,u)\big) (u_n-u)\psi\,\mathrm{d}x\bigg| & \leq \int_{B_{2R}} \big(|f(u_n)|+|f(u)|\big)|u_n-u|\,\mathrm{d}x \leq J_1+J_2\to 0
	\end{align*}
	as $n\to\infty$. This shows that
	\begin{equation}\label{eq3.44}
		\lim_{n\to\infty} \int_{\mathbb{R}^N} \big(g(\varepsilon x,u_n)-g(\varepsilon x,u)\big) (u_n-u)\psi\,\mathrm{d}x=0.
	\end{equation}
	Passing $n\to\infty$ in \eqref{eq3.39} and using \eqref{eq3.40}, \eqref{eq3.41}, \eqref{eq3.42}, \eqref{eq3.43} and \eqref{eq3.44}, we get $\nabla u_n\to \nabla u$ in $[L^{N}(B_{R})]^N$ as $n\to\infty$ for all $R>0$. Hence, up to a subsequence, still denoted by itself, $\nabla u_n\to \nabla u$ a.e.\,in $\mathbb{R}^N$ as $n\to\infty$.

Now, fix $v\in C^\infty_c(\mathbb{R}^N)$ and let $R>0$ be large enough such that $\operatorname{supp}v\subset B_R$. Using the boundedness of $\{u_n\}_{n\in\mathbb{N}}$ in $ \mathbf{Y}_\varepsilon$, we deduce that $\{|\nabla u_n|^{t-2}\nabla u_n\}_{n\in\mathbb{N}}$ and $\{V(\varepsilon x)^{\frac{t-1}{t}}| u_n|^{t-2} u_n\}_{n\in\mathbb{N}}$ are bounded in $[L^{\frac{t}{t-1}}(\mathbb{R}^N)]^N$ and $L^{\frac{t}{t-1}}(\mathbb{R}^N)$, respectively, for $t\in\{p,N\}$. Note that for $t\in\{p,N\}$, we have
	\begin{align*}
		|\nabla u_n|^{t-2}\nabla u_n\to |\nabla u|^{t-2}\nabla u
		\quad\text{and}\quad
		V(\varepsilon x)^{\frac{t-1}{t}}| u_n|^{t-2} u_n\to V(\varepsilon x)^{\frac{t-1}{t}}| u|^{t-2} u\quad\text{a.e.\,in }\mathbb{R}^N\quad\text{as }n\to\infty.
	\end{align*}
	Consequently, for $t\in\{p,N\}$, we have
	\begin{align*}
		|\nabla u_n|^{t-2}\nabla u_n&\rightharpoonup |\nabla u|^{t-2}\nabla u\quad\text{in }[L^{\frac{t}{t-1}}(\mathbb{R}^N)]^N,\\
		V(\varepsilon x)^{\frac{t-1}{t}}| u_n|^{t-2} u_n&\rightharpoonup V(\varepsilon x)^{\frac{t-1}{t}}| u|^{t-2} u\quad\text{in }L^{\frac{t}{t-1}}(\mathbb{R}^N)
	\end{align*}
	as $n\to\infty$. Exploiting the density of $C^\infty_c(\mathbb{R}^N)$  in $W^{1,t}_{V_\varepsilon}(\mathbb{R}^N)$ for $t\in\{p,N\}$, we  obtain for $t\in\{p,N\}$ that
	\begin{equation}\label{eq3.45}
		\int_{\mathbb{R}^N} |\nabla u_n|^{t-2}\nabla u_n\cdot \nabla v\,\mathrm{d}x\to  \int_{\mathbb{R}^N} |\nabla u|^{t-2}\nabla u\cdot \nabla v\,\mathrm{d}x\quad\text{as }n\to\infty
	\end{equation}
	and
	\begin{equation}\label{eq3.46}
		\int_{\mathbb{R}^N} V(\varepsilon x)| u_n|^{t-2} u_n v\,\mathrm{d}x\to  \int_{\mathbb{R}^N} V(\varepsilon x)| u|^{t-2} u v\,\mathrm{d}x\quad\text{as }n\to\infty.
	\end{equation}
	Similarly, we can also prove that
	\begin{equation}\label{eq3.47}
		\int_{\mathbb{R}^N} | u_n|^{N-2} u_n v\,\mathrm{d}x\to  \int_{\mathbb{R}^N} | u|^{N-2} u v\,\mathrm{d}x\quad\text{as }n\to\infty.
	\end{equation}
	On the other hand, by using H\"older's inequality, ($\mathcal{H}_1$)(c), ($\mathcal{A}$)(a) and \eqref{eq3.37}, we get
	\begin{align*}
		|\mathcal{H}'_1(u_n)v|\leq \big(C_1|u_n|^{q-1}+C_2\big)|v|\leq \big( C_1 g_R^{q-1}+C_2)|v|\in L^1(B_R)
	\end{align*}
	and
	\begin{align*}
		|G'_2(\varepsilon x,u_n)v|\leq \big(\ell~|u_n|^{N-1}+C|u_n|^{q-1}\big)|v|\leq \big( \ell ~g_R^{N-1}+Cg_R^{q-1})|v|\in L^1(B_R).
	\end{align*}
	Due to Lebesgue's dominated convergence theorem, we conclude that
	\begin{equation}\label{eq3.48}
		\int_{\mathbb{R}^N}  \mathcal{H}'_1(u_n)v\,\mathrm{d}x\to  \int_{\mathbb{R}^N}  \mathcal{H}'_1(u)v\,\mathrm{d}x\quad\text{as }n\to\infty
	\end{equation}
	and
	\begin{equation}
		\label{eq3.49}
		\int_{\mathbb{R}^N}  G'_2(\varepsilon x,u_n)v\,\mathrm{d}x\to  \int_{\mathbb{R}^N}  G'_2(\varepsilon x,u)v\,\mathrm{d}x\quad\text{as }n\to\infty.
	\end{equation}
	Taking into account the notations used on the previous page to handle the exponential nonlinearity, we obtain by using ($\mathcal{B}$)(b), (f1), H\"older's inequality, Corollary \ref{cor2.9} and Lemma \ref{lem8.98} that for all $n\geq n_0$
	\begin{equation}\label{eq3.50}
		\int_{\mathbb{R}^N} g(\varepsilon x,u_n)v\,\mathrm{d}x \leq C\Bigg[\bigg(\int_{\mathbb{R}^N}|v|^N\,\mathrm{d}x\bigg)^{\frac{1}{N}}+\bigg(\int_{\mathbb{R}^N}|v|^r\,\mathrm{d}x\bigg)^{\frac{1}{r}}\Bigg] <+\infty,
	\end{equation}
	where
	\begin{align*}
		C=\tau S^{-(N-1)}_N\sup_{n\in\mathbb{N}}\|u_n\|^{N-1}_{\mathbf{Y}_\varepsilon}+\kappa_\tau \Bigg[\sup_{n\geq n_0}\int_{\mathbb{R}^N}\Phi\big(r'\alpha\|u_n\|^{N'}_{W^{1,N}}|\widetilde{u}_n|^{N'}\big)\,\mathrm{d}x\Bigg]^{\frac{1}{r'}},
	\end{align*}
	which is finite because of Lemma \ref{lem2.11} and the fact that $\{u_n\}_{n\in\mathbb{N}}$ is uniformly bounded in $\mathbf{Y}_\varepsilon$. It follows from \eqref{eq3.50} that $\{g(\varepsilon x,u_n)v\}_{n\geq n_0}$ is bounded $L^1(\mathbb{R}^N)$. Consequently, it is not difficult to verify that $\{g(\varepsilon x,u_n)v\}_{n\geq n_0}$ is uniformly absolutely integrable and tight over $\mathbb{R}^N$. Since $u_n\to u$ a.e.\,in $\mathbb{R}^N$ as $n\to\infty$, therefore we have $g(\varepsilon x,u_n)v\to g(\varepsilon x,u)v$ a.e.\,in $\mathbb{R}^N$ as $n\to\infty$. Now, by applying Vitali's convergence theorem, we obtain
	\begin{equation}
		\label{eq3.51}
		\int_{\mathbb{R}^N} g(\varepsilon x,u_n)v\,\mathrm{d}x \to   \int_{\mathbb{R}^N} g(\varepsilon x,u)v\,\mathrm{d}x \quad\text{as }n\to\infty.
	\end{equation}
	Observe that $\langle{J'_\varepsilon(u_n),v}\rangle=o_n(1)$ as $n\to\infty$, therefore using \eqref{eq3.45}, \eqref{eq3.46}, \eqref{eq3.47}, \eqref{eq3.48}, \eqref{eq3.49}, \eqref{eq3.50} and \eqref{eq3.51}, we infer that $\langle{J'_\varepsilon(u),v}\rangle=0$ for all $v\in C^\infty_c(\mathbb{R}^N)$. Exploiting the density of $C^\infty_c(\mathbb{R}^N)$ in $\mathbf{Y}_{\varepsilon}$, we deduce that $u$ is a critical point for $J_{\varepsilon}$, that is, $J'_{\varepsilon}(u)=0$. This completes the proof.
\end{proof}

\begin{lemma}\label{lem3.7}
	Under the assumptions of Lemma \ref{lem3.4}, Lemma  \ref{lem3.5} and Lemma \ref{lem3.6}, the functional $J_\varepsilon$ satisfies the \textnormal{(PS)$_c$} condition at any level $c\in\mathbb{R}$.
\end{lemma}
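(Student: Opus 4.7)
The plan is to show that any (PS)$_c$ sequence $\{u_n\}_{n\in\mathbb{N}}\subset\mathbf{Y}_\varepsilon$ satisfying the hypothesis $\limsup_{n\to\infty}\|u_n\|_{W^{1,N}}^{N'}<\alpha_N$ admits a strongly convergent subsequence in $\mathbf{Y}_\varepsilon$. By Lemma \ref{lem3.4} the sequence is bounded, so up to a subsequence $u_n\rightharpoonup u$ for some $u\in\mathbf{Y}_\varepsilon$; by Lemma \ref{lem3.6}, $\nabla u_n\to \nabla u$ a.e.\ and $J'_\varepsilon(u)=0$, hence $\langle J'_\varepsilon(u),u_n-u\rangle=0$. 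Combined with $\langle J'_\varepsilon(u_n),u_n-u\rangle=o_n(1)$ (from the (PS) property together with boundedness of $u_n-u$), this gives
\[
\langle J'_\varepsilon(u_n)-J'_\varepsilon(u),u_n-u\rangle=o_n(1).
\]

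Next I would split the expansion of this pairing into a \emph{monotone} part, consisting of the $(p,N)$-Laplace terms, the two $V$-weighted lower-order terms, the extra $|u|^{N-2}u$ contribution arising from the $\tfrac{1}{N}\|u\|_N^N$ summand in \eqref{eq3.1}, and the $\mathcal{H}'_1$ difference, and a \emph{remainder} part built from $G'_2(\varepsilon x,\cdot)$ and $g(\varepsilon x,\cdot)$. By strict monotonicity of $s\mapsto|s|^{t-2}s$ for $t\in\{p,N\}$ and by convexity of $\mathcal{H}_1$ (property $(\mathcal{H}_1)$(a)), every integrand in the monotone part is pointwise nonnegative, so it suffices to prove that the remainder is $o_n(1)$; each piece of the monotone part will then vanish as well.

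To bound the remainder I would fix a tolerance and choose $R>0$ as in Lemma \ref{lem3.5}, large enough that $\Lambda_\varepsilon\subset B_{R/2}$. On $B_R$, the compact embeddings of Corollary \ref{cor2.3}, the growth bounds $(\mathcal{A})$(a), $(\mathcal{B})$(b) and (f1), Lemma \ref{lem2.11} used exactly as in Lemma \ref{lem3.6} (thanks to $\limsup\|u_n\|_{W^{1,N}}^{N'}<\alpha_N$), and Vitali's convergence theorem together imply
\[
\int_{B_R}\big(G'_2(\varepsilon x,u_n)-G'_2(\varepsilon x,u)\big)(u_n-u)\,\mathrm{d}x\to 0
\]
and the analogous statement for $g$. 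On $B_R^c\subset\Lambda_\varepsilon^c$ the growth collapses to the polynomial bounds $|G'_2(\varepsilon x,s)|\leq \ell|s|^{N-1}$ and $|g(\varepsilon x,s)|\leq \ell'|s|^{N-1}$ supplied by $(\mathcal{A})$(c) and $(\mathcal{B})$(d); H\"older's inequality combined with the tail estimate of Lemma \ref{lem3.5} then bounds the $B_R^c$-contributions by a constant times the prescribed tolerance, which can be taken arbitrarily small.

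The residual identity forces each of the integrals $\int_{\mathbb{R}^N}\boldsymbol{D}^t_n\,\mathrm{d}x$ for $t\in\{p,N\}$, $\int_{\mathbb{R}^N}(|u_n|^{N-2}u_n-|u|^{N-2}u)(u_n-u)\,\mathrm{d}x$, and $\int_{\mathbb{R}^N}(\mathcal{H}'_1(u_n)-\mathcal{H}'_1(u))(u_n-u)\,\mathrm{d}x$ to tend to zero, and Simon's inequality, exactly as in Lemma \ref{lem3.6}, converts the first two into the strong convergence $u_n\to u$ in $W^{1,p}_{V_\varepsilon}(\mathbb{R}^N)\cap W^{1,N}_{V_\varepsilon}(\mathbb{R}^N)$. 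The main obstacle I anticipate is the Orlicz-norm piece $\|u_n-u\|_{\mathcal{H}_1}\to 0$: to obtain it I plan to first deduce $\int_{\mathbb{R}^N}\mathcal{H}_1(u_n)\,\mathrm{d}x\to\int_{\mathbb{R}^N}\mathcal{H}_1(u)\,\mathrm{d}x$ by combining convexity of the $C^1$ functional $\Theta(v)=\int\mathcal{H}_1(v)\,\mathrm{d}x$ with the vanishing of the $\mathcal{H}'_1$-integral, then to derive $\int\mathcal{H}_1(u_n-u)\,\mathrm{d}x\to 0$ through a Brezis--Lieb-type step that exploits the a.e.\ convergence $u_n\to u$ and the uniform-convexity estimates afforded by the $N$-function condition \eqref{eq2.2}, and finally to upgrade this modular convergence to Orlicz-norm convergence via $\mathcal{H}_1\in\Delta_2$ from Lemma \ref{lem7.89} together with \eqref{eq2.333}. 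Putting everything together yields $u_n\to u$ in $\mathbf{Y}_\varepsilon=\mathbf{X}_\varepsilon\cap L^{\mathcal{H}_1}(\mathbb{R}^N)$, establishing the (PS)$_c$ condition at every level $c\in\mathbb{R}$.
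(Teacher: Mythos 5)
Your proposal is correct in substance and takes a genuinely different route from the paper. The paper tests $J_\varepsilon'$ against $u_n$ and against $u$ separately, shows the nonlinear integrals $\int G_2'(\varepsilon x,u_n)u_n$, $\int g(\varepsilon x,u_n)u_n$ converge to the corresponding ones at $u$ (via the $L^\theta$-strong convergence \eqref{eq3.53} obtained from the tail estimate of Lemma \ref{lem3.5}), reads off from \eqref{eq3.52} and \eqref{eq3.62} the \emph{norm} convergence $\|u_n\|_{W^{1,t}_{V_\varepsilon}}\to\|u\|_{W^{1,t}_{V_\varepsilon}}$ plus $\int\mathcal{H}_1'(u_n)u_n\to\int\mathcal{H}_1'(u)u$, and then upgrades weak-plus-norm convergence to strong convergence by the Radon--Riesz type result of Autuori--Pucci. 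You instead pair with the increment $u_n-u$, decompose $\langle J_\varepsilon'(u_n)-J_\varepsilon'(u),u_n-u\rangle$ into a pointwise-nonnegative monotone block (the $\boldsymbol{D}^t_n$, the $|s|^{N-2}s$ piece, the $\mathcal{H}_1'$ difference) and a remainder from $G_2'$ and $g$, annihilate the remainder by local compactness on $B_R$ plus the penalization growth $(\mathcal{A})$(c), $(\mathcal{B})$(d) and the tail bound on $B_R^c$, and close with Simon's inequality. This is a perfectly viable alternative; it is more self-contained in that it bypasses the Autuori--Pucci citation, and it makes the role of the penalization on $\Lambda_\varepsilon^c$ very transparent. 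The one place you should tread carefully is the step from $\int\boldsymbol{D}^p_n\,\mathrm{d}x\to0$ to $u_n\to u$ in $W^{1,p}_{V_\varepsilon}(\mathbb{R}^N)$: Simon's inequality in the form $c_t^{-1}|a-b|^t\le(|a|^{t-2}a-|b|^{t-2}b)\cdot(a-b)$ requires $t\ge2$, whereas here $1<p<N$ can have $p<2$ (e.g.\ $N=2$). For $p<2$ one has instead $(|a|^{p-2}a-|b|^{p-2}b)\cdot(a-b)\gtrsim|a-b|^2(|a|+|b|)^{p-2}$, and one recovers $\|\nabla u_n-\nabla u\|_p\to0$ by a H\"older interpolation using the boundedness of $\{u_n\}$ — a standard but non-trivial extra line that should be made explicit. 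For the Orlicz block, your convexity sandwich $\langle\Theta'(u),u_n-u\rangle\le\Theta(u_n)-\Theta(u)\le\langle\Theta'(u_n),u_n-u\rangle$ is a clean substitute for the paper's generalized dominated convergence argument; it needs $\langle\Theta'(u),u_n-u\rangle\to0$, which does hold because the continuous embedding $\mathbf{Y}_\varepsilon\hookrightarrow L^{\mathcal{H}_1}(\mathbb{R}^N)$ transports the weak convergence and $\Theta'(u)\in(L^{\mathcal{H}_1}(\mathbb{R}^N))^*$, but this should be said. After that both proofs pass through the same Br\'ezis--Lieb step (Alves--da Silva) and $\Delta_2$ / \eqref{eq2.333} to finish. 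Finally, note that as a by-product the paper obtains $u_n\to u$ in $L^N(\mathbb{R}^N)$ before the norm comparison, while in your scheme this comes \emph{out} of $\int\boldsymbol{D}^N_n\,\mathrm{d}x\to0$; the ordering is different but the information is the same.
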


\begin{proof}
	Let $c\in\mathbb{R}$ and $\{u_n\}_{n\in\mathbb{N}}\subset \mathbf{Y}_\varepsilon$ be a \textnormal{(PS)$_c$} sequence for $J_{\varepsilon}$ at level $c\in\mathbb{R}$. By Lemma \ref{lem3.4}, the sequence $\{u_n\}_{n\in\mathbb{N}}$ is bounded in $\mathbf{Y}_\varepsilon$. Then, without loss of generality, up to a subsequence, we have $u_n\rightharpoonup u$ in $\mathbf{Y}_\varepsilon$ as $n\to\infty$ for some $u\in\mathbf{Y}_\varepsilon$. By using Lemma \ref{lem3.6}, we have $\langle{J'_{\varepsilon}(u),v}\rangle=0$ for all $v\in \mathbf{Y}_\varepsilon$. In particular, we have $\langle{J'_{\varepsilon}(u),u}\rangle=0$, that is,
	\begin{equation}\label{eq3.52}
		\|u\|_{W^{1,p}_{V_\varepsilon}}^p+ \|u\|_{W^{1,N}_{V_\varepsilon}}^N+\int_{\mathbb{R}^N}\mathcal{H}'_1(u)u\,\mathrm{d}x=\int_{\mathbb{R}^N}G'_2(\varepsilon x,u)u\,\mathrm{d}x+\int_{\mathbb{R}^N}g(\varepsilon x,u)u\,\mathrm{d}x-\int_{\mathbb{R}^N}|u|^N\,\mathrm{d}x.
	\end{equation}
	Note that \eqref{eq3.37} holds. In view of (V1) and Lemma \ref{lem3.5}, we obtain
	\begin{align*}
		\limsup_{n\to\infty}\displaystyle\int_{B^c_R}|u_n|^p\,\mathrm{d}x<\frac{\xi}{V_0}
		\quad\text{and}\quad
		\displaystyle\limsup_{n\to\infty}\displaystyle\int_{B^c_R}|u_n|^N\,\mathrm{d}x<\frac{\xi}{V_0+1}.
	\end{align*}
	It follows from $u\in L^t(\mathbb{R}^N)$ for $t\in\{p,N\}$ that there exists $R>0$ large enough such that
	$\int_{B^c_R}|u|^t\,\mathrm{d}x<\xi.$
	Now, using all these information and \eqref{eq3.37}, we get
	\begin{align*}
		\limsup_{n\to\infty}\|u_n-u\|^p_p
		& =\limsup_{n\to\infty}\big(\|u_n-u\|^p_{L^p(B
		_R)}+\|u_n-u\|^p_{L^p(B^c
		_R)} \big)\\
		& =\lim_{n\to\infty}\|u_n-u\|^p_{L^p(B
		_R)}+\limsup_{n\to\infty}\|u_n-u\|^p_{L^p(B^c
		_R)}=\limsup_{n\to\infty}\|u_n-u\|^p_{L^p(B^c
		_R)}                                                                                   \\
		& \leq 2^{p-1}\bigg(\limsup_{n\to\infty}\|u_n\|^p_{L^p(B^c
		_R)}+\|u\|^p_{L^p(B^c
		_R)}\bigg)<2^{p-1}\Big(\frac{\xi}{V_0}+\xi\Big)=\widetilde{C}_1\xi\quad\text{with some }\widetilde{C}_1>0.
	\end{align*}
	Similarly, we can also prove that
	\begin{align*}
		\limsup_{n\to\infty}\|u_n-u\|^N_N
		<2^{N-1}\Big(\frac{\xi}{V_0+1}+\xi\Big)=\widetilde{C}_2\xi \quad\text{with }\widetilde{C}_2>0.
	\end{align*}
	Due to the arbitrariness of $\xi>0$, it follows that  $u_n\to u$ in $ L^t(\mathbb{R}^N)$ for $t\in\{p,N\}$ as $n\to\infty$. Applying the boundedness of $\{u_n\}_{n\in\mathbb{N}}$ in $\mathbf{Y}_\varepsilon$, we obtain by using Lemma \ref{lem8.98} and the interpolation inequality that
	\begin{equation}\label{eq3.53}
		u_n\to u\quad\text{in } L^\theta(\mathbb{R}^N) \quad\text{for } \theta\in[p,p^*)\cup[N,+\infty)\quad\text{as }n\to\infty.
	\end{equation}
	By using \eqref{eq3.37}, \eqref{eq3.53} and Lebesgue's dominated convergence theorem, it follows that
	\begin{align*}
		\lim_{R\to\infty} \limsup_{n\to\infty} \|u_n\|^\theta_{L^\theta(B^c
			_R)}=0\quad \text{for any }\theta\in[p,p^*)\cup[N,+\infty).
	\end{align*}
	This implies that for all $\xi>0$, there exists $R=R(\xi)$ large enough such that
	\begin{equation}\label{eq3.54}
		\limsup_{n\to\infty} \int_{B^c
			_R}|u_n|^\theta\,\mathrm{d}x<\xi\quad\text{for any }\theta\in[p,p^*)\cup[N,+\infty).
	\end{equation}
	In view of \eqref{eq3.54} and ($\mathcal{A}$)(a), we have
	\begin{align*}
		\limsup_{n\to\infty} \bigg|\int_{B^c
			_R} G'_2(\varepsilon x,u_n)u_n\,\mathrm{d}x\bigg|\leq \limsup_{n\to\infty} \int_{B^c
			_R}\big(\ell |u_n|^N+C|u_n|^q\big)\,\mathrm{d}x<(\ell+C)\xi=\widetilde{C}_3\xi
	\end{align*}
	with $\widetilde{C}_3>0$.
	Note that $G'_2(\varepsilon x,u)u\in L^1(\mathbb{R}^N) $, therefore choosing $R>0$ large enough, we may assume that
	$\int_{B^c_R} G'_2(\varepsilon x,u)u\,\mathrm{d}x<\xi. $
	Gathering all these information, we have
	\begin{align*}
		\limsup_{n\to\infty} \bigg|\int_{B^c
		_R} G'_2(\varepsilon x,u_n)u_n\,\mathrm{d}x-\int_{B^c
		_R} G'_2(\varepsilon x,u)u\,\mathrm{d}x\bigg|<\widetilde{C}\xi,
	\end{align*}
	for all $\xi>0$ and some suitable constant $\widetilde{C}>0$. By the arbitrariness of $\xi>0$, we conclude that
	\begin{equation}\label{eq3.55}
		\int_{B^c
			_R} G'_2(\varepsilon x,u_n)u_n\,\mathrm{d}x\to \int_{B^c
			_R} G'_2(\varepsilon x,u)u\,\mathrm{d}x\quad\text{as }n\to\infty.
	\end{equation}
	On the other hand, from ($\mathcal{A}$)(a) and \eqref{eq3.37}, we get
	\begin{align*}
		|G'_2(\varepsilon x,u_n)u_n|\leq ~\ell~|u_n|^{N}+C|u_n|^{q}\leq  \ell ~g_R^{N}+Cg_R^{q}\in L^1(B_R).
	\end{align*}
	Consequently, by \eqref{eq3.37} and Lebesgue's dominated convergent theorem, it follows that
	\begin{equation}\label{eq3.56}
		\int_{B
			_R} G'_2(\varepsilon x,u_n)u_n\,\mathrm{d}x\to \int_{B
			_R} G'_2(\varepsilon x,u)u\,\mathrm{d}x\quad\text{as }n\to\infty.
	\end{equation}
	Combining \eqref{eq3.55} and \eqref{eq3.56} together, we obtain
	\begin{equation}\label{eq3.57}
		\int_{\mathbb{R}^N} G'_2(\varepsilon x,u_n)u_n\,\mathrm{d}x\to \int_{\mathbb{R}^N} G'_2(\varepsilon x,u)u\,\mathrm{d}x\quad\text{as }n\to\infty.
	\end{equation}
	From the hypothesis, there exists $m_2>0$ and $n_0\in\mathbb{N}$ sufficiently large such that $\|u_n\|^{N'}_{W^{1,N}}<m_2<\frac{\alpha_N}{\alpha_0}$ for all $n\geq n_0$. Choose $r>1$ with $r'=\frac{r}{r-1}>1$ and satisfying $\frac{1}{r}+\frac{1}{r'}=1$ . Let $r'$ close to 1 and $\alpha>\alpha_0$ close to $\alpha_0$ such that we still have $r'\alpha\|u_n\|^{N'}_{W^{1,N}}<m_2<\alpha_N$ for all $n\geq n_0$ and $\widetilde{u}_n = \frac{u_n}{\|u_n\|_{W^{1,N}}}$. It follows from ($\mathcal{B}$)(b), \eqref{eq3.2}, H\"older's inequality, Corollary \ref{cor2.9} and \eqref{eq3.37} that for all $n\geq n_0$, we have
	\begin{equation}\label{eq3.58}
		\int_{B
			_R} g(\varepsilon x,u_n)u_n\,\mathrm{d}x \leq C\Bigg[\int_{B_R}g_R^N\,\mathrm{d}x+\bigg(\int_{B_R}g_R^{r\vartheta}\,\mathrm{d}x\bigg)^{\frac{1}{r}}\Bigg] <+\infty,
	\end{equation}
	where
	\begin{align*}
		C=\max\Bigg\{\tau ,\widetilde{\kappa}_\tau \bigg[\sup_{n\geq n_0}\int_{\mathbb{R}^N}\Phi\big(r'\alpha\|u_n\|^{N'}_{W^{1,N}}|\widetilde{u}_n|^{N'}\big)\,\mathrm{d}x\bigg]^{\frac{1}{r'}}\Bigg\}<+\infty,
	\end{align*}
	due to Lemma \ref{lem2.11}. Then, from \eqref{eq3.58}, we get that $\{g(\varepsilon x,u_n)u_n\}_{n\geq n_0}$ is bounded $L^1(B
		_R)$. Consequently, it is not difficult to verify that $\{g(\varepsilon x,u_n)u_n\}_{n\geq n_0}$ is uniformly absolutely integrable and tight over $B_R$. In virtue of \eqref{eq3.37}, we have $u_n\to u$ a.e.\,in $B_R$ as $n\to\infty$ and hence, $g(\varepsilon x,u_n)u_n\to g(\varepsilon x,u)u$ a.e.\,in $B
		_R$ as $n\to\infty$. Now, by applying Vitali's convergence theorem, we obtain
	\begin{equation}\label{eq3.59}
		\int_{B
			_R} g(\varepsilon x,u_n)u_n\,\mathrm{d}x \to   \int_{B
			_R} g(\varepsilon x,u)u\,\mathrm{d}x \quad\text{as }n\to\infty.
	\end{equation}
	Similarly, by using ($\mathcal{B}$)(b), \eqref{eq3.2}, H\"older's inequality, Corollary \ref{cor2.9} and \eqref{eq3.54}, we have for all $n\geq n_0$
	\begin{align*}
		\bigg|\int_{B^c
		_R} g(\varepsilon x,u_n)u_n\,\mathrm{d}x\bigg| & \leq C\Bigg[\int_{B^c_R}|u_n|^N\,\mathrm{d}x+\bigg(\int_{B^c_R}|u_n|^{r\vartheta}\,\mathrm{d}x\bigg)^{\frac{1}{r}}\Bigg]
		 <C \big(\xi+\xi^{\frac{1}{r}} \big),
	\end{align*}
	where $C$ is defined below \eqref{eq3.58}. It follows that
	\begin{align*}
		\limsup_{n\to\infty} \bigg|\int_{B^c
		_R} g(\varepsilon x,u_n)u_n\,\mathrm{d}x\bigg|\leq C \big(\xi+\xi^{\frac{1}{r}} \big).
	\end{align*}
	One can observe that $g(\varepsilon x,u)u\in L^1(\mathbb{R}^N)$. So, there exists $R>0$ large enough such that $\int_{B^c_R}g(\varepsilon x,u)u \,\mathrm{d}x<\xi. $ Consequently, we deduce that
	\begin{align*}
		\limsup_{n\to\infty} \bigg|\int_{B^c
		_R} g(\varepsilon x,u_n)u_n\,\mathrm{d}x-\displaystyle\int_{B^c
		_R}g(\varepsilon x,u)u \,\mathrm{d}x\bigg|< \widehat{C} \big(\xi+\xi^{\frac{1}{r}} \big),
	\end{align*}
	for some suitable constant $\widehat{C}>0$. Now, letting $\xi\to 0^+$ in the above inequality, we obtain
	\begin{equation}
		\label{eq3.60}
		\int_{B^c_R} g(\varepsilon x,u_n)u_n\,\mathrm{d}x \to   \int_{B^c_R} g(\varepsilon x,u)u\,\mathrm{d}x \quad\text{as }n\to\infty.
	\end{equation}
	Combining \eqref{eq3.59} and \eqref{eq3.60} together, we get
	\begin{equation}\label{eq3.61}
		\int_{\mathbb{R}^N} g(\varepsilon x,u_n)u_n\,\mathrm{d}x \to   \int_{\mathbb{R}^N} g(\varepsilon x,u)u\,\mathrm{d}x \quad\text{as }n\to\infty.
	\end{equation}
	Using the fact that $\langle{J'_{\varepsilon}(u_n),u_n}\rangle=o_n(1)$ as $n\to\infty$,  we have as $n\to\infty$
	\begin{equation}\label{eq3.62}
		\begin{aligned}
			&\|u_n\|_{W^{1,p}_{V_\varepsilon}}^p+ \|u_n\|_{W^{1,N}_{V_\varepsilon}}^N+\int_{\mathbb{R}^N}\mathcal{H}'_1(u_n)u_n\,\mathrm{d}x\\
			&=\int_{\mathbb{R}^N}G'_2(\varepsilon x,u_n)u_n\,\mathrm{d}x+\int_{\mathbb{R}^N}g(\varepsilon x,u_n)u_n\,\mathrm{d}x-\int_{\mathbb{R}^N}|u_n|^N\,\mathrm{d}x+o_n(1).
		\end{aligned}
	\end{equation}
	Therefore, by using \eqref{eq3.53}, \eqref{eq3.57} and \eqref{eq3.61}, we obtain from \eqref{eq3.52} and \eqref{eq3.62} that
	\begin{align*}
		\|u_n\|_{W^{1,p}_{V_\varepsilon}}^p+ \|u_n\|_{W^{1,N}_{V_\varepsilon}}^N+\int_{\mathbb{R}^N}\mathcal{H}'_1(u_n)u_n\,\mathrm{d}x=\|u\|_{W^{1,p}_{V_\varepsilon}}^p+ \|u\|_{W^{1,N}_{V_\varepsilon}}^N+\int_{\mathbb{R}^N}\mathcal{H}'_1(u)u\,\mathrm{d}x+o_n(1)
	\end{align*}
	as $n\to\infty$.
	From this, one has
	\begin{equation}\label{eq3.63}
		\|u_n\|_{W^{1,t}_{V_\varepsilon}}^t\to \|u\|_{W^{1,t}_{V_\varepsilon}}^t\quad\text{for } t\in\{p,N\}
		\quad\text{and}\quad
		\int_{\mathbb{R}^N}\mathcal{H}'_1(u_n)u_n\,\mathrm{d}x\to\int_{\mathbb{R}^N}\mathcal{H}'_1(u)u\,\mathrm{d}x \quad\text{as }~n\to\infty.
	\end{equation}
	Consequently, by using \eqref{eq3.37} and Corollary A.2 of Autuori-Pucci  \cite{Autuori-Pucci-2013}, we obtain from \eqref{eq3.63} that $u_n\to u$ in $W^{1,t}_{V_\varepsilon}(\mathbb{R}^N)$ as $n\to\infty$ for $t\in\{p,N\}$. By Remark \ref{rem2.99} and \eqref{eq2.2}, one can notice that $0\leq \mathcal{H}_1(s)\leq \mathcal{H}'_1(s)s $ for all $s\in\mathbb{R}$. Using this fact together with the generalized dominated convergence theorem of Lebesgue, see Royden \cite[Theorem 19]{Royden-1988} and \eqref{eq3.63}, we conclude that
	\begin{align*}
		\int_{\mathbb{R}^N}\mathcal{H}_1(u_n)\,\mathrm{d}x\to\int_{\mathbb{R}^N}\mathcal{H}_1(u)\,\mathrm{d}x \quad\text{as }n\to\infty.
	\end{align*}
	Recall that $\mathcal{H}_1$ is a $N$-function, which satisfies the $\Delta_2$-condition. Hence, by using a Br\'ezis-Lieb type result found in Alves-da Silva \cite[Proposition 2.2]{Alves-daSilva-2024}, we can prove that
	\begin{align*}
		\int_{\mathbb{R}^N}\mathcal{H}_1(u_n-u)\,\mathrm{d}x\to 0 \quad\text{as }n\to\infty.
	\end{align*}
	This shows that $u_n\to u$ in $L^{\mathcal{H}_1}(\mathbb{R}^N)$ as $n\to\infty$ and hence, $u_n\to u$ in $\mathbf{Y}_\varepsilon$ as $n\to\infty$. This finishes the proof.
\end{proof}

Now, we can give the main result in this section.

\begin{theorem}\label{thm3.8}
	For each $\varepsilon>0$, the functional $J_\varepsilon$ has a nontrivial critical point $u_\varepsilon\in \mathbf{Y}_\varepsilon $ such that $J_\varepsilon(u_\varepsilon)=c_\varepsilon$, where $c_\varepsilon$ denotes the mountain pass level associated with $J_\varepsilon$. Consequently, we deduce that $u_\varepsilon$ is a solution of \eqref{main problem@@}.
\end{theorem}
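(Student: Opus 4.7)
The plan is to apply the mountain pass theorem of Pucci-Serrin, with the compactness supplied by Lemma \ref{lem3.7}. Lemmas \ref{lem3.1} and \ref{lem3.2} already guarantee that $J_\varepsilon\in C^1(\mathbf{Y}_\varepsilon,\mathbb{R})$ possesses the classical mountain pass geometry: $J_\varepsilon(0)=0$, there exist $\rho,\jmath>0$ with $J_\varepsilon\geq\jmath$ on $\{\|u\|_{\mathbf{Y}_\varepsilon}=\rho\}$, and there is $e\in\mathbf{Y}_\varepsilon$ with $\|e\|_{\mathbf{Y}_\varepsilon}>\rho$ and $J_\varepsilon(e)<0$. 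Setting
\begin{align*}
	c_\varepsilon=\inf_{\gamma\in\Gamma_\varepsilon}\max_{t\in[0,1]}J_\varepsilon(\gamma(t)),\qquad \Gamma_\varepsilon=\{\gamma\in C([0,1],\mathbf{Y}_\varepsilon)\colon \gamma(0)=0,\ \gamma(1)=e\},
\end{align*}
gives $c_\varepsilon\geq\jmath>0$, and a standard deformation/Ekeland variational argument produces a Palais-Smale sequence $\{u_n\}\subset \mathbf{Y}_\varepsilon$ at level $c_\varepsilon$, i.e. $J_\varepsilon(u_n)\to c_\varepsilon$ and $J_\varepsilon'(u_n)\to 0$ in $\mathbf{Y}_\varepsilon^*$.

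The main obstacle is to verify the Trudinger-Moser subcritical threshold $\displaystyle\limsup_{n\to\infty}\|u_n\|^{N'}_{W^{1,N}}<\alpha_N$ demanded throughout Lemmas \ref{lem3.4}-\ref{lem3.7}. The plan is to do this by first obtaining a sharp upper bound on the mountain pass level. Using a nonnegative test function $\varphi\in C_c^\infty(\Lambda_\varepsilon)\setminus\{0\}$ along the ray $t\mapsto t\varphi$ (or a Moser-type concentrating family if needed) and exploiting hypothesis \textnormal{(f4)}, which yields $F(s)\geq \frac{\gamma}{\mu}|s|^\mu$ with $\mu>N$, one controls the exponential-growth contributions and expects to obtain
\begin{align*}
	c_\varepsilon\leq \max_{t\geq 0}J_\varepsilon(t\varphi)<\frac{1}{N}\Big(\frac{\alpha_N}{\alpha_0}\Big)^{N-1}.
\end{align*}
Coupling this estimate with inequality \eqref{eq3.30} arising from $J_\varepsilon(u_n)-\tfrac{1}{N}\langle J_\varepsilon'(u_n),u_n\rangle$, one then deduces $\limsup_n\|u_n\|^N_{W^{1,N}_{V_\varepsilon}}<(\alpha_N/\alpha_0)^{N-1}$, hence the required bound $\limsup_n\|u_n\|^{N'}_{W^{1,N}}<\alpha_N$.

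With that threshold secured, Lemma \ref{lem3.4} delivers boundedness of $\{u_n\}$ in $\mathbf{Y}_\varepsilon$, so along a subsequence $u_n\rightharpoonup u_\varepsilon$ weakly. Lemma \ref{lem3.6} then ensures $J_\varepsilon'(u_\varepsilon)=0$, and Lemma \ref{lem3.7} upgrades the convergence to $u_n\to u_\varepsilon$ strongly in $\mathbf{Y}_\varepsilon$. Continuity of $J_\varepsilon$ along the subsequence yields $J_\varepsilon(u_\varepsilon)=c_\varepsilon\geq\jmath>0$, so $u_\varepsilon\not\equiv 0$. Hence $u_\varepsilon$ is a nontrivial critical point of $J_\varepsilon$, and by definition of $J_\varepsilon$ it is a weak solution of \eqref{main problem@@}. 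By far the hardest step is the concentration/test-function analysis producing the sharp upper bound on $c_\varepsilon$; once that is in place, the remaining convergence steps are mechanical applications of the lemmas already established in this section.
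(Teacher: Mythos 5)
The paper's own proof is a one-line appeal to the Pucci--Serrin mountain pass theorem, citing Lemmas \ref{lem3.1}, \ref{lem3.2} and \ref{lem3.7}, without ever addressing the Trudinger--Moser threshold condition. You correctly notice that the compactness lemmas in this section all carry the hypothesis $\limsup_{n\to\infty}\|u_n\|^{N'}_{W^{1,N}}<\alpha_N$, and that the paper never verifies this for the mountain pass sequence; your instinct to flag and attempt to verify it is sound and in fact points to a real weakness in the paper's argument. Your opening steps (mountain pass geometry via Lemmas \ref{lem3.1}--\ref{lem3.2}, extraction of a Palais--Smale sequence at level $c_\varepsilon$) match the paper's approach.

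However, your proposed verification of the threshold is circular. You want to deduce $\limsup_n\|u_n\|^{N'}_{W^{1,N}}<\alpha_N$ by coupling the level estimate $c_\varepsilon<\frac{1}{N}\big(\alpha_N/\alpha_0\big)^{N-1}$ with inequality \eqref{eq3.30}, but \eqref{eq3.30} was itself derived \emph{assuming} that threshold: it rests on \eqref{eq3.28}, whose constant $C=\big(\sup_{n\geq n_0}\int_{\mathbb{R}^N}\Phi\big(r'\alpha\|u_n\|^{N'}_{W^{1,N}}|\widetilde{u}_n|^{N'}\big)\,\mathrm{d}x\big)^{1/r'}$ is finite only because one has already supposed $r'\alpha\|u_n\|^{N'}_{W^{1,N}}<m_0<\alpha_N$. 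So \eqref{eq3.30} cannot be used to establish the very bound it presupposes. Additionally, the level estimate $c_\varepsilon<\frac{1}{N}(\alpha_N/\alpha_0)^{N-1}$ is only asserted as an expectation; (f4) yields a polynomial lower bound of order $\mu$ for $F$, but producing the sharp Moser-level constant in the presence of the logarithmic terms $\mathcal{H}_1,\mathcal{H}_2$ generally requires a Moser-type concentrating family rather than a fixed $\varphi$, and the constant must be tracked carefully. To close the gap legitimately one would need a threshold-free a priori bound on $\|u_n\|^N_{W^{1,N}}$ in terms of $c_\varepsilon$; this is delicate here because the logarithmic nonlinearity does not satisfy an Ambrosetti--Rabinowitz inequality (so the usual $\tfrac{1}{\mu}$-multiplier trick fails on $\mathcal{H}_2-\mathcal{H}_1$), and the $\tfrac{1}{N}$-multiplier used in \eqref{eq3.18} only controls $\|u_n\|^p_{W^{1,p}_{V_\varepsilon}}$ and $\int_{\Lambda_\varepsilon}|u_n|^N\,\mathrm{d}x$, not the $W^{1,N}$-gradient norm. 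In short: the gap you spotted is real, but your fix would itself need a noncircular estimate before it could stand as a proof.
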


\begin{proof}
	Due to Lemmas \ref{lem3.1}, \ref{lem3.2} and \ref{lem3.7}, the functional $J_\varepsilon$ fulfills the geometry of the mountain pass theorem, see \cite[p.\,142]{Pucci-Serrin-1985}. Consequently, for each $\varepsilon>0$, there exists a nontrivial critical point $u_\varepsilon\in \mathbf{Y}_\varepsilon $ of $J_\varepsilon$ such that $J_\varepsilon(u_\varepsilon)=c_\varepsilon$, where the mountain pass level $c_\varepsilon$ is characterized as
	\begin{align*}
		c_\varepsilon=\inf_{\gamma\in\Gamma_\varepsilon}\max_{t\in[0,1]}J_\varepsilon(\gamma(t))
	\end{align*}
	with
	\begin{align*}
		\Gamma_\varepsilon=\big\{\gamma\in C\big([0,1],\mathbf{Y}_\varepsilon\big)\colon \gamma(0)=0~~\text{and}~~J_\varepsilon(\gamma(1))<0\big\}.
	\end{align*}
	The proof is now complete.
\end{proof}

\section{Existence of positive solution to the main problem via Nehari manifold method}\label{sec4}

This section is devoted to the study of the existence of a positive solution of \eqref{main problem} by using the Nehari manifold technique and the characterization of the mountain pass levels $c_\varepsilon$ as $\varepsilon\to 0^+$. To this end, we define the Nehari manifold associated with the functional $J_\varepsilon$, which is defined by
\begin{align*}
	\mathcal{N}_\varepsilon=\left\{u\in\mathbf{Y}_\varepsilon\setminus\{0\}\colon ~\langle{J'_\varepsilon(u),u}\rangle=0\right\}.
\end{align*}
A very nice introduction to this method has been done by Szulkin-Weth \cite{Szulkin-Weth-2010}. It is obvious that $\mathcal{N}_\varepsilon$ contains all nontrivial critical points of $J_\varepsilon$. Define
\begin{align*}
	c_\varepsilon=\inf_{u\in\mathcal{N}_\varepsilon} J_\varepsilon(u).
\end{align*}
Note that, if $c_\varepsilon$ is achieved by some $v_\varepsilon\in\mathcal{N}_\varepsilon$, then we say that $v_\varepsilon$ is a critical point of $J_\varepsilon$. In addition, since $c_\varepsilon$ is the lowest level for $J_\varepsilon$, therefore, $v_\varepsilon$ is said to be a ground state solution of \eqref{main problem@@}. Furthermore, we define
\begin{equation}\label{eq4.99}
	\begin{aligned}
		\Psi_\varepsilon(u) =\frac{1}{N}\langle{J'_\varepsilon(u),u}\rangle
		& =J_\varepsilon(u) - \bigg(\frac{1}{p}-\frac{1}{N}\bigg)\|u\|^p_{W^{1,p}_{V_\varepsilon}}-\frac{1}{N} \|u\|_N^N-\int_{\mathbb{R}^N} \Big(\frac{1}{N}g(\varepsilon x,u)u-\mathcal{G}(\varepsilon x,u)\Big)\,\mathrm{d}x  \\
		& \qquad-\bigg[\int_{\mathbb{R}^N} \Big[\mathcal{H}_2(u)-\frac{1}{N}\mathcal{H}'_2(u)u+\frac{1}{N}G_2'(\varepsilon x,u)u-G_2(\varepsilon x,u)\Big]\,\mathrm{d}x\bigg].
	\end{aligned}
\end{equation}
It follows that
\begin{align*}
	\mathcal{N}_\varepsilon=\Psi_\varepsilon^{-1} (\{0\}).
\end{align*}

Now, we prove some properties of $\mathcal{N}_\varepsilon$, which will be used in the sequel of this paper.

\begin{proposition}
	\label{prop4.1}
	The set $\mathcal{N}_\varepsilon$ is bounded away from the origin, that is, there exists a constant $\beta>0$ such that $\|u\|_{\mathbf{Y}_\varepsilon}\geq \beta>0$ for all $u\in \mathcal{N}_\varepsilon$ and for all $\varepsilon>0$.
\end{proposition}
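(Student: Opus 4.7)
The natural approach is a proof by contradiction. Suppose the conclusion fails, so there exist $\varepsilon > 0$ and a sequence $\{u_n\} \subset \mathcal{N}_\varepsilon$ with $\|u_n\|_{\mathbf{Y}_\varepsilon} \to 0$ as $n \to \infty$; in particular, for $n$ large I may assume $\|u_n\|_{\mathbf{Y}_\varepsilon} \leq 1$. Since $u_n \in \mathcal{N}_\varepsilon$, the identity $\langle J_\varepsilon'(u_n), u_n \rangle = 0$ reads
\begin{align*}
\|u_n\|_{W^{1,p}_{V_\varepsilon}}^p + \|u_n\|_{W^{1,N}_{V_\varepsilon}}^N + \|u_n\|_N^N + \int_{\mathbb{R}^N} \mathcal{H}_1'(u_n) u_n\, \mathrm{d}x = \int_{\mathbb{R}^N} G_2'(\varepsilon x, u_n) u_n\, \mathrm{d}x + \int_{\mathbb{R}^N} g(\varepsilon x, u_n) u_n\, \mathrm{d}x.
\end{align*}
The plan is to show the right-hand side is of order $o(\|u_n\|_{\mathbf{Y}_\varepsilon}^N)$ up to a small multiple of the left-hand side, while the left-hand side is bounded below by $c_0 \|u_n\|_{\mathbf{Y}_\varepsilon}^N$.

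For the upper bound on the right-hand side, I would use $(\mathcal{A})(a)$ to obtain $\int G_2'(\varepsilon x, u_n)u_n\,\mathrm{d}x \leq \ell \|u_n\|_N^N + C\|u_n\|_q^q$ with some $q > N$, and combine $(\mathcal{B})(b)$ with \eqref{eq3.2} to bound the exponential term by $\tau \|u_n\|_N^N + \widetilde\kappa_\tau \int |u_n|^\vartheta \Phi(\alpha |u_n|^{N'})\,\mathrm{d}x$ for some $\vartheta > N$. Applying H\"older's inequality with conjugate exponents $r,r'$, Corollary \ref{cor2.9}, the scaling trick $\widetilde u_n = u_n/\|u_n\|_{W^{1,N}}$, and Lemma \ref{lem2.11} (valid since for $n$ large $r'\alpha \|u_n\|_{W^{1,N}}^{N'} \leq \alpha_N$), the exponential integral is bounded by $C\|u_n\|_{\vartheta r}^\vartheta$. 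Via Corollary \ref{cor2.3} and $\|u_n\|_{\mathbf{Y}_\varepsilon} \leq 1$, all $L^\theta$-norms embed into $\|u_n\|_{\mathbf{Y}_\varepsilon}$, producing
\begin{align*}
\text{RHS} \leq (\ell + \tau)\,C_0\, \|u_n\|_{\mathbf{Y}_\varepsilon}^N + C_q\, \|u_n\|_{\mathbf{Y}_\varepsilon}^q + C_\vartheta\, \|u_n\|_{\mathbf{Y}_\varepsilon}^\vartheta.
\end{align*}

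For the lower bound, I would exploit $p < N$ and $\|u_n\|_{W^{1,p}_{V_\varepsilon}} \leq 1$ to get $\|u_n\|_{W^{1,p}_{V_\varepsilon}}^p \geq \|u_n\|_{W^{1,p}_{V_\varepsilon}}^N$, and use Remark \ref{rem2.99} together with \eqref{eq2.2} to obtain $\mathcal{H}_1'(u_n)u_n \geq l\,\mathcal{H}_1(u_n)$ pointwise; then Lemma \ref{lem2.899} and $\|u_n\|_{\mathcal{H}_1} \leq 1$ yield $\int \mathcal{H}_1'(u_n)u_n\,\mathrm{d}x \geq l\,\|u_n\|_{\mathcal{H}_1}^N$. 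Combined with the elementary inequality $a^N + b^N + c^N \geq 3^{1-N}(a+b+c)^N$, these estimates give LHS $\geq c_0\,\|u_n\|_{\mathbf{Y}_\varepsilon}^N$ with $c_0 := \min\{1,l\}\,3^{1-N} > 0$, independent of $n$ and $\varepsilon$. Since $\ell$ was fixed small in Section \ref{sec3}, choosing $\tau$ so that $(\ell+\tau)C_0 < c_0/2$, dividing by $\|u_n\|_{\mathbf{Y}_\varepsilon}^N > 0$ and letting $n \to \infty$ yields $c_0/2 \leq 0$, because the remaining terms $\|u_n\|_{\mathbf{Y}_\varepsilon}^{q-N}$ and $\|u_n\|_{\mathbf{Y}_\varepsilon}^{\vartheta - N}$ vanish thanks to $q,\vartheta > N$. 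This contradiction produces the desired uniform lower bound $\beta > 0$.

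The main delicate step is the upper bound on the exponential integrand: one must pick $r' > 1$ close enough to $1$ that $r'\alpha\|u_n\|_{W^{1,N}}^{N'}$ stays below the Trudinger--Moser threshold $\alpha_N$, while simultaneously ensuring the factor $\|u_n\|_{\vartheta r}^\vartheta$ embeds into $\|u_n\|_{\mathbf{Y}_\varepsilon}^\vartheta$ with a super-critical exponent $\vartheta > N$. Because $\|u_n\|_{\mathbf{Y}_\varepsilon} \to 0$, there is ample room to carry out this balancing, so the argument goes through uniformly in $\varepsilon$ and $n$.
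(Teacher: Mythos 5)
Your proposal is correct and follows essentially the same argument as the paper: argue by contradiction from a sequence with vanishing $\mathbf{Y}_\varepsilon$-norm, exploit the identity $\langle J_\varepsilon'(u_n),u_n\rangle=0$, bound the $G_2'$ and $g$ terms by $(\ell+\tau)$ times $\|u_n\|_{\mathbf{Y}_\varepsilon}^N$ plus strictly super-$N$ powers (the exponential factor tamed via Trudinger--Moser once $\|u_n\|_{W^{1,N}}$ is small), and bound the left-hand side from below by $3^{1-N}\|u_n\|_{\mathbf{Y}_\varepsilon}^N$ using $p<N$, $\mathcal{H}_1'(s)s\geq\mathcal{H}_1(s)$ and Lemma \ref{lem2.899}. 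The only cosmetic differences from the paper are that you carry the constant $l$ from Remark \ref{rem2.99} (the paper just uses $l\geq 1$) and you cite Corollary \ref{cor2.3} where Lemma \ref{lem8.98} is the more precise reference for the $\mathbf{Y}_\varepsilon$-embeddings; neither affects the validity of the argument.
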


\begin{proof}
	By Lemma \ref{lem2.2}, one can easily see that
	\begin{equation}
		\label{eq4.1}
		\|u\|_{W^{1,N}}\leq \big(\min\{1,V_0\}\big)^{-\frac{1}{N}} \|u\|_{W^{1,N}_{V_\varepsilon}}\leq \big(\min\{1,V_0\}\big)^{-\frac{1}{N}} \|u\|_{\mathbf{Y}_\varepsilon}.
	\end{equation}
	If $\|u\|_{\mathbf{Y}_\varepsilon}\geq \big(\min\{1,V_0\}\big)^{\frac{1}{N}}$, the conclusion is obvious. If $\|u\|_{\mathbf{Y}_\varepsilon}< \big(\min\{1,V_0\}\big)^{\frac{1}{N}}$, then one has $\|u\|_{W^{1,N}}<1$, thanks to \eqref{eq4.1}. Choose $r,r'>1$ satisfying $\frac{1}{r}+\frac{1}{r'}=1$. Further, suppose $\alpha>\alpha_0$ close to $\alpha_0$ and $r'>1$ close to $1$ such that $r' \alpha\leq \alpha_N$ holds. Now, by using ($\mathcal{B}$)(b), \eqref{eq3.2}, H\"older's inequality, Corollary \ref{cor2.9} and Lemma \ref{lem8.98}, we can deduce that
	\begin{align}\label{eq4.2}
		\int_{\mathbb{R}^N} |g(\varepsilon x,u)u|\,\mathrm{d}x \leq \tau S^{-N}_N\|u\|^N_{\mathbf{Y}_\varepsilon}+\widetilde{\kappa}_\tau S^{-\vartheta}_{\vartheta r} D \|u\|^\vartheta_{\mathbf{Y}_\varepsilon},
	\end{align}
	where, due to Lemma \ref{lem2.11}, we have
	\begin{align*}
		D=\bigg(\sup_{u\in \mathbf{Y}_\varepsilon}\bigg\{\int_{\mathbb{R}^N} \Phi(r' \alpha |u|^{N'})\,\mathrm{d}x\colon\|u\|_{\mathbf{Y}_\varepsilon}< \big(\min\{1,V_0\}\big)^{\frac{1}{N}}\bigg\}\bigg)^{\frac{1}{r'}}<+\infty,
	\end{align*}
	for $\vartheta>N$ and for all $u\in \mathcal{N}_\varepsilon $ with $\|u\|_{\mathbf{Y}_\varepsilon}$ small enough. Suppose by contradiction that $\{u_n\}_{n\in\mathbb{N}}\subset \mathcal{N}_\varepsilon $ is a sequence such that $\|u_n\|_{\mathbf{Y}_\varepsilon}\to 0$ as $n\to\infty$. Consequently, we can see that \eqref{eq4.2} holds whenever $u$ is replaced by $u_n$ for $n$ large enough. From the definition of $\mathcal{N}_\varepsilon $, we have $\langle{J'_\varepsilon(u_n),u_n}\rangle=0$ for each $n\in\mathbb{N}$. It follows at once that
	\begin{equation}\label{eq4.3}
		\|u_n\|_{W^{1,p}_{V_\varepsilon}}^p+ \|u_n\|_{W^{1,N}_{V_\varepsilon}}^N+\|u_n\|^N_N+\int_{\mathbb{R}^N}\mathcal{H}'_1(u_n)u_n\,\mathrm{d}x=\int_{\mathbb{R}^N}G'_2(\varepsilon x,u_n)u_n\,\mathrm{d}x+\int_{\mathbb{R}^N}g(\varepsilon x,u_n)u_n\,\mathrm{d}x.
	\end{equation}
	Note that $\|u_n\|_{W^{1,p}_{V_\varepsilon}}$ and $\|u_n\|_{\mathcal{H}_1}$ are small enough for sufficiently large $n$. Consequently, by using the fact that $0\leq \mathcal{H}_1(s)\leq \mathcal{H}'_1(s)s $ for all $s\in\mathbb{R}$ and Lemma \ref{lem2.899}, we obtain for $n$ large enough that
	\begin{equation}
		\label{eq4.4}
		\|u_n\|_{W^{1,p}_{V_\varepsilon}}^p\geq \|u_n\|_{W^{1,p}_{V_\varepsilon}}^N
		\quad\text{and}\quad
		\int_{\mathbb{R}^N}\mathcal{H}'_1(u_n)u_n\,\mathrm{d}x\geq \|u_n\|^N_{\mathcal{H}_1}.
	\end{equation}
	It follows from ($\mathcal{A}$)(a) and Lemma \ref{lem8.98} that
	\begin{equation}\label{eq4.5}
		\int_{\mathbb{R}^N}|G'_2(\varepsilon x,u_n)u_n|\,\mathrm{d}x\leq \ell S^{-N}_N \|u_n\|^N_{\mathbf{Y}_\varepsilon}+CS^{-q}_q \|u_n\|^q_{\mathbf{Y}_\varepsilon}.
	\end{equation}
	In view of \eqref{eq4.1},  \eqref{eq4.2}, \eqref{eq4.3}, \eqref{eq4.4} and \eqref{eq4.5}, we get for $n$ large enough that
	\begin{align*}
		(\tau+\ell)S^{-N}_N \|u_n\|^N_{\mathbf{Y}_\varepsilon}+CS^{-q}_q \|u_n\|^q_{\mathbf{Y}_\varepsilon}+\widetilde{\kappa}_\tau S^{-\vartheta}_{\vartheta r} D \|u_n\|^\vartheta_{\mathbf{Y}_\varepsilon} \geq 3^{1-N}\|u_n\|^N_{\mathbf{Y}_\varepsilon}.
	\end{align*}
	Due to the arbitrariness of $\tau$ and the fact that $\ell$ is very small, we deduce from the above inequality that there exist constants $C_1, C_2, C_3>0$ such that for $n$ large enough there holds
	\begin{align*}
		C_1\|u_n\|^q_{\mathbf{Y}_\varepsilon}+C_2 \|u_n\|^\vartheta_{\mathbf{Y}_\varepsilon} \geq C_3\|u_n\|^N_{\mathbf{Y}_\varepsilon}.
	\end{align*}
	Dividing $\|u_n\|^N_{\mathbf{Y}_\varepsilon}$ on both sides of the above inequality and letting $n\to\infty$, we get $0\geq C_3>0$, which is a contradiction. It follows that there exists $\beta>0$ such that $\|u\|_{\mathbf{Y}_\varepsilon}\geq \beta>0$ for all $u\in \mathcal{N}_\varepsilon$ and for all $\varepsilon>0$.
\end{proof}

\begin{lemma}\label{lem4.2}
	For each $u\in\mathcal{O}_\varepsilon=\ \{u\in\mathbf{Y}_\varepsilon\colon|\operatorname{supp}(|u|)\cap \Lambda_\varepsilon|>0\}\setminus\{0\}$, there exists a unique $t_u>0$ such that $t_u u\in \mathcal{N}_\varepsilon $. In particular, for $u\in\mathcal{N}_\varepsilon\cap \mathcal{O}_\varepsilon $, we have $J_\varepsilon(u)=\max_{t\geq 0}J_\varepsilon(tu)$. Consequently, if $u\in\mathcal{N}_\varepsilon$, then  $u\in\mathcal{O}_\varepsilon$.
\end{lemma}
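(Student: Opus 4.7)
The plan is to study the fibering map $\varphi\colon[0,\infty)\to\mathbb{R}$ defined by $\varphi(t)=J_\varepsilon(tu)$ for a fixed $u\in\mathcal{O}_\varepsilon$, and show it has a unique strict maximum at some $t_u>0$, which translates to $t_uu\in\mathcal{N}_\varepsilon$ via $\varphi'(t)=\langle J'_\varepsilon(tu),u\rangle$. Since $\mathcal{H}_1'$, $G_2'(\varepsilon x,\cdot)$ and $g(\varepsilon x,\cdot)$ are odd in their scalar argument, for $t>0$ one has $\mathcal{H}_1'(tu)u=\mathcal{H}_1'(t|u|)|u|$ (and analogously for the other two). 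It is then convenient to consider the normalized quantity
\begin{align*}
\xi(t):=\frac{\varphi'(t)}{t^{N-1}}
&=t^{p-N}\|u\|_{W^{1,p}_{V_\varepsilon}}^p+\|u\|_{W^{1,N}_{V_\varepsilon}}^N+\|u\|_N^N\\
&\quad+\int_{\mathbb{R}^N}\frac{\mathcal{H}_1'(t|u|)-G_2'(\varepsilon x,t|u|)-g(\varepsilon x,t|u|)}{(t|u|)^{N-1}}\,|u|^N\,\mathrm{d}x,
\end{align*}
so that $t_uu\in\mathcal{N}_\varepsilon$ is equivalent to $\xi(t_u)=0$.

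The core step is to prove that $\xi$ is strictly decreasing on $(0,\infty)$ with $\xi(0^+)=+\infty$ and $\xi(t)\to-\infty$ as $t\to\infty$. Monotonicity follows from: (a) $t^{p-N}\|u\|_{W^{1,p}_{V_\varepsilon}}^p$ is strictly decreasing since $p<N$ and $\|u\|_{W^{1,p}_{V_\varepsilon}}>0$; (b) a direct computation from the piecewise formula for $\mathcal{H}_1$ shows that $s\mapsto\mathcal{H}_1'(s)/s^{N-1}$ is strictly decreasing on $(0,\infty)$ (its derivative equals $-N/s$ on $(0,(N-1)\delta)$ and $-N(N-1)\delta/s^2$ on $((N-1)\delta,\infty)$), so the $\mathcal{H}_1'$-integrand is pointwise strictly decreasing in $t$ on $\{u\neq 0\}$; (c) by $(\mathcal{A})$(e) and $(\mathcal{B})$(e), the $G_2'$- and $g$-terms (with their minus signs) are nonincreasing. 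For the limits, $t^{p-N}\to\infty$ gives $\xi(0^+)=+\infty$; for $\xi(\infty)=-\infty$, the hypothesis $u\in\mathcal{O}_\varepsilon$ is crucial since on the positive-measure set $A=\operatorname{supp}(|u|)\cap\Lambda_\varepsilon$ one has $g(\varepsilon x,s)=f(s)$, and (f4) yields $f(s)/s^{N-1}\geq\gamma s^{\mu-N}\to\infty$ as $s\to\infty$, so Fatou's lemma gives $\int_A g(\varepsilon x,t|u|)|u|/t^{N-1}\,\mathrm{d}x\to\infty$. Together these properties produce a unique $t_u\in(0,\infty)$ with $\xi(t_u)=0$, and $\varphi'(t)=t^{N-1}\xi(t)$ is positive on $(0,t_u)$ and negative on $(t_u,\infty)$, so $\varphi$ attains its global maximum at $t_u$. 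When $u\in\mathcal{N}_\varepsilon\cap\mathcal{O}_\varepsilon$, $\varphi'(1)=0$ forces $t_u=1$ by uniqueness, giving $J_\varepsilon(u)=\max_{t\geq 0}J_\varepsilon(tu)$.

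For the final assertion $\mathcal{N}_\varepsilon\subset\mathcal{O}_\varepsilon$, I would argue by contradiction: assume $u\in\mathcal{N}_\varepsilon$ with $u\equiv 0$ a.e.\ on $\Lambda_\varepsilon$. Then by $(\mathcal{A})$(c) and $(\mathcal{B})$(d), $G_2'(\varepsilon x,u)u\leq\ell|u|^N$ and $g(\varepsilon x,u)u\leq\ell'|u|^N$ a.e.\ on $\mathbb{R}^N$. Combining these bounds with $\langle J'_\varepsilon(u),u\rangle=0$, the nonnegativity of $\mathcal{H}_1'(u)u$ from $(\mathcal{H}_1)$(b), the inequality $\|u\|_{W^{1,N}_{V_\varepsilon}}^N\geq V_0\|u\|_N^N$, and the assumption $V_0+1\geq 2(\ell+\ell')$, we obtain
\begin{align*}
(V_0+1)\|u\|_N^N\leq\|u\|_{W^{1,N}_{V_\varepsilon}}^N+\|u\|_N^N\leq(\ell+\ell')\|u\|_N^N\leq\tfrac{1}{2}(V_0+1)\|u\|_N^N,
\end{align*}
forcing $u\equiv 0$, which contradicts Proposition~\ref{prop4.1}. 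The main obstacle in the whole argument is the monotonicity analysis of $\xi$: the strict decrease of $s\mapsto\mathcal{H}_1'(s)/s^{N-1}$ must be extracted from the piecewise definition of $\mathcal{H}_1$, and the divergence of the $g$-contribution at $t=\infty$ depends crucially on (f4) together with $u\in\mathcal{O}_\varepsilon$, since outside $\Lambda_\varepsilon$ the penalized nonlinearity $g$ is only $N$-homogeneously controlled.
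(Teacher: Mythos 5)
Your proof is correct, and it is organized somewhat differently from the paper's, so let me compare the two. The paper treats existence and uniqueness separately: existence of $t_u$ is deduced by importing the mountain-pass geometry (Lemmas~\ref{lem3.1} and~\ref{lem3.2} give $h(t)>0$ for $t$ small, $h(t)<0$ for $t$ large); then uniqueness is obtained by assuming two critical points $t_1<t_2$, subtracting the Nehari identities, and showing by direct algebraic computation (splitting $\mathbb{R}^N$ into the three regions $\{|u|<(N-1)\delta/t_2\}$, the middle band, and $\{|u|>(N-1)\delta/t_1\}$) that the $\mathcal{H}'_1$-difference $J$ is strictly negative while the $W^{1,p}$-, $G'_2$- and $g$-differences are nonnegative. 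You instead prove existence and uniqueness at once by showing the normalized fibering derivative $\xi(t)=\varphi'(t)/t^{N-1}$ is strictly decreasing and changes sign; your key observation, that $s\mapsto\mathcal{H}'_1(s)/s^{N-1}$ has negative derivative on each piece of the piecewise definition (equal to $-N/s$ and $-N(N-1)\delta/s^2$, matching continuously at $(N-1)\delta$), is the differential form of the paper's subtraction inequality $J<0$, so the mathematical content of the uniqueness step is the same, merely packaged as monotonicity rather than as a two-point comparison. Where you genuinely deviate is the asymptotics: rather than citing Lemma~\ref{lem3.2} for the negativity of $h$ at infinity, you derive $\xi(t)\to-\infty$ directly from \textnormal{(f4)} and Fatou's lemma on the positive-measure set $A=\operatorname{supp}(|u|)\cap\Lambda_\varepsilon$ (where $g(\varepsilon x,\cdot)=f(\cdot)$ and $\mu>N$), together with the fact that the strict monotonicity of $\xi$ already bounds the remaining terms above by their values at $t=1$; this makes the lemma self-contained at the cost of re-deriving part of Lemma~\ref{lem3.2}. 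Your handling of the final assertion $\mathcal{N}_\varepsilon\subset\mathcal{O}_\varepsilon$ matches the paper's argument up to an inessential simplification (you drop the $\mathcal{H}_1$-term after using its nonnegativity rather than carrying the $\min\{\|u\|_{\mathcal{H}_1}^l,\|u\|_{\mathcal{H}_1}^N\}$ term to the conclusion). Both routes are sound; yours is more streamlined conceptually (a single sign-change argument for the fibering map), the paper's leans more on previously established lemmas.
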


\begin{proof}
	We define the function $h(t)=J_\varepsilon(tu)$ for $t>0$ and $u\in\mathcal{O}_\varepsilon\setminus\{0\}$. From Lemmas \ref{lem3.1} and \ref{lem3.2}, we have $h(0)=0$, $h(t)>0$ for $t>0$ sufficiently small and $h(t)<0$ for $t>0$ sufficiently large. It follows that $\max_{t>0} h(t) $ is achieved at some point $t=t_u$ such that $h'(t_u)=0 $ and $t_u u\in \mathcal{N}_\varepsilon $. Next, we claim that $t_u$ is the unique critical point of $h$. Assume by contradiction that there exist $t_1$ and $t_2$ with $0<t_1<t_2$ such that $h'(t_1)=h'(t_2)=0 $. Consequently, we obtain for $i=1,2$ that
	\begin{align*}
		\frac{1}{t_i^{N-p}}\|u\|^p_{W^{1,p}_{V_\varepsilon}}+\|u\|^N_{W^{1,N}_{V_\varepsilon}}+\|u\|^N_N +\int_{\mathbb{R}^N}\frac{\mathcal{H}'_1(t_iu)u}{t_i^{N-1}}\,\mathrm{d}x-\int_{\mathbb{R}^N}\frac{G'_2(\varepsilon x,t_iu)u}{t_i^{N-1}}\,\mathrm{d}x-\int_{\mathbb{R}^N}\frac{g(\varepsilon x,t_iu)u}{t_i^{N-1}}\,\mathrm{d}x=0.
	\end{align*}
	Subtracting the equalities above gives
	\begin{align*}
		 & \bigg(\frac{1}{t_1^{N-p}}-\frac{1}{t_2^{N-p}}\bigg) \|u\|^p_{W^{1,p}_{V_\varepsilon}}+\int_{\mathbb{R}^N}\bigg(\frac{G'_2(\varepsilon x,t_2u)u}{t_2^{N-1}}-\frac{G'_2(\varepsilon x,t_1u)u}{t_1^{N-1}}\bigg)\,\mathrm{d}x+\int_{\mathbb{R}^N}\bigg(\frac{g(\varepsilon x,t_2u)u}{t_2^{N-1}}-\frac{g(\varepsilon x,t_1u)u}{t_1^{N-1}}\bigg)\,\mathrm{d}x  \\
		 & =\int_{\mathbb{R}^N}\bigg(\frac{\mathcal{H}'_1(t_2u)u}{t_2^{N-1}}-\frac{\mathcal{H}'_1(t_1u)u}{t_1^{N-1}}\bigg)\,\mathrm{d}x=:J.
	\end{align*}
	Now, it follows from the definitions of $G'_2$ and $g$ together with ($\mathcal{A}$)(e) as well as ($\mathcal{B}$)(e) that the right-hand side of the above equality is positive. On the other hand, we have
	\begin{align*}
		J
		&=\int_{\big\{x\in\mathbb{R}^N\colon|u|<\frac{(N-1)\delta}{t_2}\big\}}\bigg(\frac{\mathcal{H}'_1(t_2u)u}{t_2^{N-1}}-\frac{\mathcal{H}'_1(t_1u)u}{t_1^{N-1}}\bigg)\,\mathrm{d}x\\
		&\quad +\int_{\big\{x\in\mathbb{R}^N\colon\frac{(N-1)\delta}{t_2}<|u|<\frac{(N-1)\delta}{t_1}\big\}}\bigg(\frac{\mathcal{H}'_1(t_2u)u}{t_2^{N-1}}-\frac{\mathcal{H}'_1(t_1u)u}{t_1^{N-1}}\bigg)\,\mathrm{d}x  \\
		& \quad+ \int_{\big\{x\in\mathbb{R}^N\colon|u|>\frac{(N-1)\delta}{t_1}\big\}}\bigg(\frac{\mathcal{H}'_1(t_2u)u}{t_2^{N-1}}-\frac{\mathcal{H}'_1(t_1u)u}{t_1^{N-1}}\bigg)\,\mathrm{d}x \\
		& =\int_{\big\{x\in\mathbb{R}^N\colon|u|<\frac{(N-1) \delta}{t_2}\big\}}|u|^N\log\bigg(\frac{t_1}{t_2}\bigg)^N\,\mathrm{d}x+\int_{\big\{x\in\mathbb{R}^N\colon|u|>\frac{(N-1)\delta}{t_1}\big\}} N(N-1)\delta|u|^{N-1}\bigg(\frac{1} {t_2}-\frac{1}{t_1}\bigg)\,\mathrm{d}x \\
		& \quad+\int_{\big\{x\in\mathbb{R}^N\colon\frac{(N-1) \delta}{t_2}<|u|<\frac{(N-1)\delta}{t_1}\big\}}\bigg[|u|^N\log\bigg(\frac{|t_1u|}{(N-1)\delta}\bigg)^N+N|u|^{N-1}\bigg(\frac{(N-1)\delta}{t_2}-|u|\bigg)\bigg]\,\mathrm{d}x<0,
	\end{align*}
	which is a contradiction and thus, $t_u$ must be unique. Next, we have to prove that if $u\in\mathcal{N}_\varepsilon$, then  $u\in\mathcal{O}_\varepsilon$. Indeed, if not, then for $|\operatorname{supp}(|u|)\cap \Lambda_\varepsilon|=0$ and  $u\in\mathcal{N}_\varepsilon$, we obtain by using (V1), $V_0+1\geq 2(\ell+\ell')$, $\mathcal{H}_1(u)\leq\mathcal{H}'_1(u)u$, ($\mathcal{A}$)(c), ($\mathcal{B}$)(d), and Lemma \ref{lem2.899} that
	\begin{align*}
		\|u\|_{W^{1,p}_{V_\varepsilon}}^p+ \|u\|_{W^{1,N}_{V_\varepsilon}}^N+\|u\|^N_N+\min\big\{\|u\|_{\mathcal{H}_1}^l,\|u\|_{\mathcal{H}_1}^N\big\} & \leq (\ell+\ell')\int_{\mathbb{R}^N}|u|^N\,\mathrm{d}x\leq \frac{1}{2}\int_{\mathbb{R}^N}\big(V(\varepsilon x)+1\big)|u|^N\,\mathrm{d}x  \\
		& \leq \frac{1}{2}\big( \|u\|_{W^{1,p}_{V_\varepsilon}}^p+ \|u\|_{W^{1,N}_{V_\varepsilon}}^N+\|u\|^N_N\big).
	\end{align*}
	This shows that
	\begin{align*}
		0<\frac{1}{2}\big( \|u\|_{W^{1,p}_{V_\varepsilon}}^p+ \|u\|_{W^{1,N}_{V_\varepsilon}}^N+\|u\|^N_N\big)+\min\big\{\|u\|_{\mathcal{H}_1}^l,\|u\|_{\mathcal{H}_1}^N\big\}\leq 0.
	\end{align*}
	The above inequality implies $u=0$ in $\mathbf{Y}_\varepsilon$, which is a contradiction because $u\in\mathcal{N}_\varepsilon$. This finishes the proof.
\end{proof}

\begin{proposition}\label{prop4.3}
	The set $\mathcal{N}_\varepsilon$ is a $C^1$-manifold for each $\varepsilon>0$ . Moreover, the critical points of $J_\varepsilon\big|_{\mathcal{N}_\varepsilon}$ are critical points of $J_\varepsilon$ in $\mathbf{Y}_\varepsilon$.
\end{proposition}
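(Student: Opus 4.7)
The plan is to implement the standard Nehari manifold machinery: realize $\mathcal{N}_\varepsilon$ as the zero set of the $C^1$ constraint functional $\Psi_\varepsilon$ defined in \eqref{eq4.99}, verify that $0$ is a regular value on $\mathcal{N}_\varepsilon$ by showing $\langle \Psi'_\varepsilon(u),u\rangle\neq 0$ there, and then use the Lagrange multiplier rule to deduce the second assertion. Regularity of $\Psi_\varepsilon$ is immediate since $J_\varepsilon\in C^1(\mathbf{Y}_\varepsilon,\mathbb{R})$ (hence so is $u\mapsto \langle J'_\varepsilon(u),u\rangle$).

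The key step is the strict sign of $\langle \Psi'_\varepsilon(u),u\rangle$ for $u\in\mathcal{N}_\varepsilon$. I would invoke the fibering approach used implicitly in Lemma~\ref{lem4.2}: set $h(t)=J_\varepsilon(tu)$ for $t>0$, so that
\begin{align*}
\Psi_\varepsilon(tu)=\frac{1}{N}\langle J'_\varepsilon(tu),tu\rangle=\frac{t\,h'(t)}{N}.
\end{align*}
By Lemmas~\ref{lem3.1}, \ref{lem3.2} and \ref{lem4.2}, $h$ has a unique critical point $t_u>0$ which is its global maximum, with $h'(t)>0$ on $(0,t_u)$ and $h'(t)<0$ on $(t_u,\infty)$; for $u\in\mathcal{N}_\varepsilon\subset\mathcal{O}_\varepsilon$ (this inclusion is part of Lemma~\ref{lem4.2}) one has $t_u=1$. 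Differentiating at $t=1$ and using $h'(1)=0$ gives $\langle \Psi'_\varepsilon(u),u\rangle=h''(1)/N$. A direct computation produces
\begin{align*}
N\langle \Psi'_\varepsilon(u),u\rangle&=(p-N)\|u\|_{W^{1,p}_{V_\varepsilon}}^{p}+\int_{\mathbb{R}^N}\bigl[\mathcal{H}''_1(u)u^2-(N-1)\mathcal{H}'_1(u)u\bigr]\,\mathrm{d}x\\
&\quad+\int_{\mathbb{R}^N}\bigl[(N-1)G'_2(\varepsilon x,u)u-\partial_s G'_2(\varepsilon x,u)u^2\bigr]\,\mathrm{d}x\\
&\quad+\int_{\mathbb{R}^N}\bigl[(N-1)g(\varepsilon x,u)u-\partial_s g(\varepsilon x,u)u^2\bigr]\,\mathrm{d}x,
\end{align*}
after using the identity defining $\mathcal{N}_\varepsilon$ to eliminate the $\|u\|_{W^{1,N}_{V_\varepsilon}}^N+\|u\|_N^N$ terms. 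Since $1<p<N$, the first term is strictly negative. The monotonicity properties ($\mathcal{A}$)(e) and ($\mathcal{B}$)(e) translate, respectively, to $\partial_sG'_2(\varepsilon x,s)s\geq (N-1)G'_2(\varepsilon x,s)$ and $\partial_s g(\varepsilon x,s)s\geq (N-1)g(\varepsilon x,s)$ for $s>0$ (and analogously for $s<0$), which make the two integrals involving $G'_2$ and $g$ non-positive; together with the bound $\mathcal{H}''_1(u)u^2\leq(N-1)\mathcal{H}'_1(u)u$ (a consequence of \eqref{eq2.2} with $m=N$ stated in Remark~\ref{rem2.99}), the $\mathcal{H}_1$-integrand is also non-positive. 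Hence $\langle \Psi'_\varepsilon(u),u\rangle<0$; strictness is guaranteed by the $(p-N)\|u\|_{W^{1,p}_{V_\varepsilon}}^{p}$ contribution since $u\neq 0$.

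With $\langle \Psi'_\varepsilon(u),u\rangle\neq 0$ on $\mathcal{N}_\varepsilon$, in particular $\Psi'_\varepsilon(u)\neq 0$ there, so by the implicit function theorem $\mathcal{N}_\varepsilon=\Psi_\varepsilon^{-1}(\{0\})$ is a $C^1$-manifold of codimension one in $\mathbf{Y}_\varepsilon$. For the second assertion, let $u\in\mathcal{N}_\varepsilon$ be a critical point of $J_\varepsilon|_{\mathcal{N}_\varepsilon}$. By the Lagrange multiplier theorem there exists $\lambda\in\mathbb{R}$ with $J'_\varepsilon(u)=\lambda\Psi'_\varepsilon(u)$ in $\mathbf{Y}_\varepsilon^*$. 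Pairing with $u$ gives $0=\langle J'_\varepsilon(u),u\rangle=\lambda\langle \Psi'_\varepsilon(u),u\rangle$, and since the second factor is nonzero we conclude $\lambda=0$, so $J'_\varepsilon(u)=0$ in $\mathbf{Y}_\varepsilon^*$.

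The main obstacle is the explicit verification of the strict sign of $\langle \Psi'_\varepsilon(u),u\rangle$: the computation requires differentiating integrals involving $\mathcal{H}'_1$, $G'_2$, and $g$ whose growth is only controlled by the Trudinger–Moser inequality and the $\Delta_2$-condition, and then carefully matching each term against the monotonicity conditions ($\mathcal{A}$)(e), ($\mathcal{B}$)(e) and the $N$-function bound in Remark~\ref{rem2.99}. All the contributions need to cooperate so that the unsigned $\mathcal{H}_1$-integrand does not destroy the negativity coming from the $(p-N)$ factor.
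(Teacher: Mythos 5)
Your overall strategy is the same as the paper's (show $\langle \Psi'_\varepsilon(u),u\rangle\neq 0$ on $\mathcal{N}_\varepsilon$, then invoke Lagrange multipliers), and your formula for $N\langle\Psi'_\varepsilon(u),u\rangle$ after subtracting $N$ times the Nehari identity is algebraically correct. However, there is a genuine gap in your treatment of the $\mathcal{H}_1$-term, which turns out to be the only nontrivial part of the argument.

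You justify $\mathcal{H}''_1(u)u^2\leq(N-1)\mathcal{H}'_1(u)u$ as ``a consequence of \eqref{eq2.2} with $m=N$ stated in Remark~\ref{rem2.99}.'' This is not correct. Condition \eqref{eq2.2} gives $\mathcal{H}'_1(s)s\leq N\mathcal{H}_1(s)$, i.e.\ it says $s\mapsto\mathcal{H}_1(s)/s^N$ is nonincreasing; your inequality $\mathcal{H}''_1(s)s\leq(N-1)\mathcal{H}'_1(s)$ says $s\mapsto\mathcal{H}'_1(s)/s^{N-1}$ is nonincreasing. For a general $N$-function neither implies the other. Concretely, consider the $C^1$ $N$-function $\mathcal{F}(s)=s^2$ for $0\leq s\leq 1$ and $\mathcal{F}(s)=s^3-s+1$ for $s>1$: one computes $\mathcal{F}'(s)s/\mathcal{F}(s)\leq 3$ for all $s>0$ (so $m=3$), yet for $s>1$, $\mathcal{F}''(s)s-2\mathcal{F}'(s)=6s^2-2(3s^2-1)=2>0$, violating the desired inequality. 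So the implication you are using does not hold. The inequality $\mathcal{H}''_1(s)s\leq(N-1)\mathcal{H}'_1(s)$ \emph{is} true for the specific $\mathcal{H}_1$ in this paper -- a direct computation from the piecewise formula gives $\mathcal{H}''_1(s)s-(N-1)\mathcal{H}'_1(s)=-Ns^{N-1}$ for $0<s<(N-1)\delta$ and $=-N(N-1)\delta s^{N-2}$ for $s\geq(N-1)\delta$ -- but this must be verified from that explicit form, not cited from Remark~\ref{rem2.99}.

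This is in fact precisely where your route diverges from the paper's. Instead of keeping $\mathcal{H}_1$ in the formula, the paper first rewrites $\Psi_\varepsilon$ using the identity $\mathcal{H}_2(s)-\mathcal{H}_1(s)=\frac{1}{N}|s|^N\log|s|^N$, obtaining \eqref{eq4.99} in terms of $\mathcal{H}_2$ and an extra $\|u\|_N^N$ term, because $\mathcal{H}_2$ comes equipped with a documented monotonicity property ($\mathcal{H}_2$)(c); the paper then argues by contradiction and splits $\mathbb{R}^N$ into $\Lambda_\varepsilon$ and $\Lambda_\varepsilon^c$, on which $G'_2$ and $g$ coincide with $\mathcal{H}'_2$ and $f$ (respectively the penalized versions), and verifies each sign using (f3), ($\mathcal{H}_2$)(c), (h3), ($\eta$3). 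Your more uniform argument, bypassing the region decomposition by invoking ($\mathcal{A}$)(e) and ($\mathcal{B}$)(e) directly, is cleaner and valid for the $G'_2$ and $g$ terms. But for the $\mathcal{H}_1$ term you must either carry out the direct calculation indicated above, or substitute $\mathcal{H}''_1 u^2-(N-1)\mathcal{H}'_1 u=\mathcal{H}''_2 u^2-(N-1)\mathcal{H}'_2 u-N|u|^N$ and use ($\mathcal{H}_2$)(c) (which gives $\mathcal{H}''_2 u^2-(N-1)\mathcal{H}'_2 u\geq 0$) together with the strictly negative $-N\|u\|^N_N$ contribution -- this second route is essentially the paper's.

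One further small imprecision: you write the pointwise inequalities $\partial_s G'_2(\varepsilon x,s)s\geq(N-1)G'_2(\varepsilon x,s)$ ``and analogously for $s<0$.'' The pointwise inequality as stated actually reverses sign for $s<0$; what is preserved is the sign of the integrand $(N-1)G'_2(\varepsilon x,u)u-\partial_s G'_2(\varepsilon x,u)u^2$, which is an even expression in $u$ because $G'_2(\varepsilon x,\cdot)$ is odd and $\partial_s G'_2(\varepsilon x,\cdot)$ is even. The conclusion is fine, the phrasing is not.
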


\begin{proof}
	To prove that $\mathcal{N}_\varepsilon$ is a $C^1$-manifold, it is sufficient to show that $\langle{\Psi'_\varepsilon(u),u}\rangle\neq 0$ for all $u\in \mathcal{N}_\varepsilon$. Now, arguing by contradiction, we assume that there exists some  $u\in \mathcal{N}_\varepsilon$ such that $\langle{\Psi'_\varepsilon(u),u}\rangle= 0$. Consequently, by using $\langle{J'_\varepsilon(u),u}\rangle=0$, we obtain from \eqref{eq4.99} that
	\begin{align*}
		0 & = -p\bigg(\frac{1}{p}-\frac{1}{N}\bigg)\|u\|^p_{W^{1,p}_{V_\varepsilon}}-\int_{\mathbb{R}^N}|u|^N\,\mathrm{d}x-\int_{\mathbb{R}^N}\bigg(\frac{1}{N}g'(\varepsilon x,u)u^2-\bigg(\frac{N-1}{N}\bigg)g(\varepsilon x,u)u\bigg)\,\mathrm{d}x                                                            \\
		  & \quad - \int_{\mathbb{R}^N}\bigg(\bigg(\frac{N-1}{N}\bigg)\mathcal{H}_2'(u)u-\frac{1}{N} \mathcal{H}_2''(u)u^2\bigg)\,\mathrm{d}x- \int_{\mathbb{R}^N}\bigg(\frac{1}{N} G_2''(\varepsilon x,u)u^2-\bigg(\frac{N-1}{N}\bigg) G_2'(\varepsilon x,u)u\bigg)\,\mathrm{d}x.
	\end{align*}
	Observe that $g(\varepsilon x,u)=f(u)$ and $G_2'(\varepsilon x,u)=\mathcal{H}_2'(u)$ for all $x\in\Lambda_\varepsilon$. Now, simplifying the above equality, we obtain
	\begin{equation}\label{eq4.7}
		\begin{aligned}
			\int_{\Lambda_\varepsilon} |u|^N\mathrm{d}x & < -  \int_{\Lambda^c_\varepsilon}\bigg(|u|^N+\bigg(\frac{N-1}{N}\bigg)\mathcal{H}_2'(u)u-\frac{1}{N} \mathcal{H}_2''(u)u^2\bigg)\mathrm{d}x\\
			&\qquad- \int_{\Lambda^c_\varepsilon}\bigg(\frac{1}{N} G_2''(\varepsilon x,u)u^2-\bigg(\frac{N-1}{N}\bigg) G_2'(\varepsilon x,u)u\bigg)\mathrm{d}x  \\
			& \qquad -\int_{\Lambda_\varepsilon}\bigg(\frac{1}{N}f'(u)u^2-\bigg(\frac{N-1}{N}\bigg)f(u)u\bigg)\,\mathrm{d}x\\
			&\qquad -\int_{\Lambda^c_\varepsilon}\bigg(\frac{1}{N}g'(\varepsilon x,u)u^2-\bigg(\frac{N-1}{N}\bigg)g(\varepsilon x,u)u\bigg)\,\mathrm{d}x.
		\end{aligned}
	\end{equation}
	By the definition of $\mathcal{H}_2$ and using the assumption (f3), one can easily obtain
	\begin{align*}
		|u|^N+\bigg(\frac{N-1}{N}\bigg)\mathcal{H}_2'(u)u-\frac{1}{N} \mathcal{H}_2''(u)u^2\geq 0\quad \text{for a.e.\,}x\in \Lambda^c_\varepsilon
	\end{align*}
	and
	\begin{align*}
		\frac{1}{N}f'(u)u^2-\bigg(\frac{N-1}{N}\bigg)f(u)u\geq 0\quad \text{for a.e.\,}x\in \Lambda^c_\varepsilon.
	\end{align*}
	Note that
	\begin{align*}
		\Lambda^c_\varepsilon \cap \{u>0\}=[\Lambda^c_\varepsilon \cap \{0<u<t_1\}]\cup[\Lambda^c_\varepsilon \cap \{t_1\leq u\leq t_2\}]\cup [\Lambda^c_\varepsilon \cap \{u>t_2\}].
	\end{align*}
	Due to ($\mathcal{H}_2$)(c) and (f3), one has
	\begin{align*}
		\frac{1}{N} G_2''(\varepsilon x,u)u^2 -\bigg(\frac{N-1}{N}\bigg) G_2'(\varepsilon x,u)u=\frac{1}{N}\big(\mathcal{H}_2''(u)u^2-(N-1)\mathcal{H}_2'(u)u\big)\geq 0
	\end{align*}
	for a.a.\,$x\in \Lambda^c_\varepsilon \cap \{0<u<t_1\}$ and
	\begin{align*}
		\frac{1}{N} g'(\varepsilon x,u)u^2 -\bigg(\frac{N-1}{N}\bigg) g(\varepsilon x,u)u=\frac{1}{N}\big(f'(u)u^2-(N-1)f(u)u\big)\geq 0
	\end{align*}
	for a.a.\,$x\in \Lambda^c_\varepsilon \cap \{0<u<t_1\}$. Moreover, because of (h3) and ($\eta$3), we infer that
	\begin{align*}
		\frac{1}{N} G_2''(\varepsilon x,u)u^2 -\bigg(\frac{N-1}{N}\bigg) G_2'(\varepsilon x,u)u=\frac{1}{N}\big(h'(u)u^2-(N-1)h(u)u\big)\geq 0
	\end{align*}
	for a.a.\,$x\in \Lambda^c_\varepsilon \cap \{t_1\leq u\leq t_2\}$ and
	\begin{align*}
		\frac{1}{N} g'(\varepsilon x,u)u^2 -\bigg(\frac{N-1}{N}\bigg) g(\varepsilon x,u)u=\frac{1}{N}\big(\eta'(u)u^2-(N-1)\eta(u)u\big)\geq 0
	\end{align*}
	for a.a.\,$x\in \Lambda^c_\varepsilon \cap \{t_1\leq u\leq t_2\}$.

	By using the definitions of $\widehat{\mathcal{H}}'_2$ and $\widehat{f}$, we have
	\begin{align*}
		\frac{1}{N} G_2''(\varepsilon x,u)u^2 -\bigg(\frac{N-1}{N}\bigg) G_2'(\varepsilon x,u)u=\frac{1}{N} g'(\varepsilon x,u)u^2 -\bigg(\frac{N-1}{N}\bigg) g(\varepsilon x,u)u= 0
	\end{align*}
	for a.a.\,$x\in \Lambda^c_\varepsilon \cap \{ u> t_2\}$. Consequently, we deduce that
	\begin{equation}\label{eq4.8}
		\int_{\Lambda^c_\varepsilon \cap \{u>0\}}\bigg(\frac{1}{N} G_2''(\varepsilon x,u)u^2-\bigg(\frac{N-1}{N}\bigg) G_2'(\varepsilon x,u)u\bigg)\,\mathrm{d}x\geq 0
	\end{equation}
	and
	\begin{equation}\label{eq4.9}
		\int_{\Lambda^c_\varepsilon \cap \{u>0\}}\bigg(\frac{1}{N}g'(\varepsilon x,u)u^2-\bigg(\frac{N-1}{N}\bigg)g(\varepsilon x,u)u\bigg)\,\mathrm{d}x\geq 0.
	\end{equation}
	From the definitions of $G_2'$ and \eqref{eq4.8}, we have
	\begin{align*}
		&\int_{\Lambda^c_\varepsilon \cap \{u<0\}}\bigg(\frac{1}{N} G_2''(\varepsilon x,u)u^2-\bigg(\frac{N-1}{N}\bigg) G_2'(\varepsilon x,u)u\bigg)\mathrm{d}x\\
		& = \int_{\Lambda^c_\varepsilon \cap \{-u>0\}}\bigg(\frac{1}{N} G_2''(\varepsilon x,-u)(-u)^2-\bigg(\frac{N-1}{N}\bigg) G_2'(\varepsilon x,-u)(-u)\bigg)\mathrm{d}x  \\  & \geq 0.
	\end{align*}
	Similarly, from the definition of $g$ and \eqref{eq4.9}, we have
	\begin{align*}
		&\int_{\Lambda^c_\varepsilon \cap \{u<0\}}\bigg(\frac{1}{N} g'(\varepsilon x,u)u^2-\bigg(\frac{N-1}{N}\bigg) g(\varepsilon x,u)u\bigg)\mathrm{d}x\\
		& = \int_{\Lambda^c_\varepsilon \cap \{-u>0\}}\bigg(\frac{1}{N} g'(\varepsilon x,-u)(-u)^2-\bigg(\frac{N-1}{N}\bigg) g(\varepsilon x,-u)(-u)\bigg)\mathrm{d}x  \\  & \geq 0.
	\end{align*}
	Combining all these facts, we obtain from \eqref{eq4.7} that
	$ 0< \int_{\Lambda_\varepsilon} |u|^N\mathrm{d}x<0.$ This yields that $u=0$ a.e.\,in $\Lambda_\varepsilon$. Moreover, since $\langle{J'_\varepsilon(u),u}\rangle=0$, arguing similarly as in Lemma \ref{lem4.2}, we can obtain
	\begin{align*}
		0<\frac{1}{2}\big( \|u\|_{W^{1,p}_{V_\varepsilon}}^p+ \|u\|_{W^{1,N}_{V_\varepsilon}}^N+\|u\|^N_N\big)+\min\big\{\|u\|_{\mathcal{H}_1}^l,\|u\|_{\mathcal{H}_1}^N\big\}\leq 0.
	\end{align*}
	This ensures that $u=0$ in $\mathbf{Y}_\varepsilon$, which is a contradiction because $u\in\mathcal{N}_\varepsilon$. It follows that $\langle{\Psi'_\varepsilon(u),u}\rangle\neq 0$ for all $u\in \mathcal{N}_\varepsilon$.

	Now, let $u\in \mathcal{N}_\varepsilon $ be a critical point of $J_\varepsilon$ constrained to $\mathcal{N}_\varepsilon$. Due to the application of Lagrange's multiplier rule, we infer that
	\begin{align*}
		J_\varepsilon'(u)=\lambda \Psi_\varepsilon'(u)\quad\text{in }\mathbf{Y}_\varepsilon^* \text{ for some } \lambda\in\mathbb{R}.
	\end{align*}
	Because of $u\in \mathcal{N}_\varepsilon$, we obtain from the above relation that
	\begin{align*}
		0=\langle{J_\varepsilon'(u),u}\rangle=\lambda\langle{\Psi_\varepsilon'(u),u}\rangle.
	\end{align*}
	It follows that $\lambda=0$ and $J_\varepsilon'(u)=0~\text{in}~\mathbf{Y}_\varepsilon^*$, that is, $u$ is a critical point of $J_\varepsilon$ on $\mathbf{Y}_\varepsilon$. This completes the proof.
\end{proof}

The last proposition implies at once that a critical point of $J_\varepsilon\big|_{\mathcal{N}_\varepsilon}$ is a point $u\in \mathbf{Y}_\varepsilon$ such that
\begin{equation}\label{eq4.10}
	\|J_\varepsilon'(u)\|_{\mathbf{Y}_\varepsilon^*}=\min_{\lambda\in\mathbb{R}} \|J_\varepsilon'(u)-\lambda \Psi_\varepsilon'(u)\|=0,
\end{equation}
thanks to Proposition 5.12 of Willem \cite{Willem-1996}. Now, we recall that a \textnormal{(PS)$_c$} sequence for $J_\varepsilon\big|_{\mathcal{N}_\varepsilon}$ is a sequence $\{u_n\}_{n\in\mathbb{N}}$ in $\mathcal{N}_\varepsilon$ such that
\begin{equation}\label{eq4.11}
	J_\varepsilon(u_n)\to c\quad\text{and}\quad\|J_\varepsilon'(u_n)\|_{\mathbf{Y}_\varepsilon^*}\to 0\quad\text{as }n\to\infty.
\end{equation}
We say that $J_\varepsilon\big|_{\mathcal{N}_\varepsilon}$ satisfies the \textnormal{(PS)} condition when each \textnormal{(PS)$_c$} sequence for $J_\varepsilon\big|_{\mathcal{N}_\varepsilon}$ has a convergent subsequence for any $c\in\mathbb{R}$.

\begin{proposition}\label{prop4.4}
	If $\{u_n\}_{n\in\mathbb{N}}\subset \mathcal{N}_\varepsilon$ is a \textnormal{(PS)$_c$} sequence for $J_\varepsilon$ satisfying $\displaystyle\limsup_{n\to\infty} \|u_n\|^{N'}_{W^{1,N}}<\frac{\alpha_N}{\alpha_0}$, then $J_\varepsilon\big|_{\mathcal{N}_\varepsilon}$ satisfies the \textnormal{(PS)} condition.
\end{proposition}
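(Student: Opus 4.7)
\medskip

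\noindent\textbf{Proof plan for Proposition \ref{prop4.4}.} The plan is to reduce a constrained Palais-Smale sequence for $J_\varepsilon|_{\mathcal{N}_\varepsilon}$ to an unconstrained Palais-Smale sequence for $J_\varepsilon$ on $\mathbf{Y}_\varepsilon$ and then invoke Lemma \ref{lem3.7}. Let $\{u_n\}_{n\in\mathbb{N}}\subset\mathcal{N}_\varepsilon$ satisfy $J_\varepsilon(u_n)\to c$ and $\|J_\varepsilon'(u_n)\|_{\mathbf{Y}_\varepsilon^*}\to 0$ in the sense of \eqref{eq4.10}--\eqref{eq4.11}, so that, by Willem's characterization already used after Proposition \ref{prop4.3}, there exists a sequence $\{\lambda_n\}_{n\in\mathbb{N}}\subset\mathbb{R}$ with
\begin{align*}
	J_\varepsilon'(u_n)-\lambda_n\Psi_\varepsilon'(u_n)=\epsilon_n\quad\text{in }\mathbf{Y}_\varepsilon^*,\text{ where }\|\epsilon_n\|_{\mathbf{Y}_\varepsilon^*}\to 0\text{ as }n\to\infty.
\end{align*}
The strategy is to prove $\lambda_n\to 0$, which then yields $J_\varepsilon'(u_n)\to 0$ in $\mathbf{Y}_\varepsilon^*$, reducing the problem to the setting of Lemma \ref{lem3.7}.

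\medskip

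\noindent First I would establish boundedness of $\{u_n\}_{n\in\mathbb{N}}$ in $\mathbf{Y}_\varepsilon$. Since $u_n\in\mathcal{N}_\varepsilon$, we have $\langle J_\varepsilon'(u_n),u_n\rangle=0$, so $J_\varepsilon(u_n)=J_\varepsilon(u_n)-\frac{1}{N}\langle J_\varepsilon'(u_n),u_n\rangle$, and the chain of estimates used in Lemma \ref{lem3.4} (which only relied on this identity together with the structural inequalities for $\mathcal{H}_1,\mathcal{H}_2,G_2$ and $g$, Lemma \ref{lem3.3}, and the Trudinger-Moser inequality under the hypothesis $\limsup_n\|u_n\|^{N'}_{W^{1,N}}<\alpha_N$) gives boundedness in $\mathbf{Y}_\varepsilon$. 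Next, testing the Lagrange identity against $u_n$ and using $\langle J_\varepsilon'(u_n),u_n\rangle=0$ yields
\begin{align*}
	\lambda_n\langle\Psi_\varepsilon'(u_n),u_n\rangle=-\langle\epsilon_n,u_n\rangle=o(1)\quad\text{as }n\to\infty.
\end{align*}

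\medskip

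\noindent The main obstacle is showing that $\langle\Psi_\varepsilon'(u_n),u_n\rangle$ is bounded away from zero. Here I would recycle the computation performed in the proof of Proposition \ref{prop4.3}: using $\Psi_\varepsilon=\frac{1}{N}\langle J_\varepsilon'(\cdot),\cdot\rangle$ and differentiating, one obtains
\begin{align*}
	\langle\Psi_\varepsilon'(u_n),u_n\rangle
	&=-\tfrac{N-p}{N}\|u_n\|^{p}_{W^{1,p}_{V_\varepsilon}}-\|u_n\|^{N}_{N}-\int_{\mathbb{R}^N}\Big[\tfrac{N-1}{N}\mathcal{H}_2'(u_n)u_n-\tfrac{1}{N}\mathcal{H}_2''(u_n)u_n^{2}\Big]\,\mathrm{d}x \\
	&\quad -\int_{\mathbb{R}^N}\Big[\tfrac{1}{N}G_2''(\varepsilon x,u_n)u_n^{2}-\tfrac{N-1}{N}G_2'(\varepsilon x,u_n)u_n\Big]\,\mathrm{d}x \\
	&\quad -\int_{\mathbb{R}^N}\Big[\tfrac{1}{N}g_s'(\varepsilon x,u_n)u_n^{2}-\tfrac{N-1}{N}g(\varepsilon x,u_n)u_n\Big]\,\mathrm{d}x.
\end{align*}
The sign analysis carried out in Proposition \ref{prop4.3} (based on (f3), $(\mathcal{H}_2)(\mathrm{c})$, (h3), $(\eta 3)$ and the splitting of $\mathbb{R}^N$ into $\Lambda_\varepsilon$ and $\Lambda_\varepsilon^c$) shows that all the integral terms are nonpositive, so
\begin{align*}
	\langle\Psi_\varepsilon'(u_n),u_n\rangle\leq -\tfrac{N-p}{N}\|u_n\|^{p}_{W^{1,p}_{V_\varepsilon}}-\|u_n\|^{N}_{N}.
\end{align*}
Combining this with Proposition \ref{prop4.1} and the logarithmic-type lower bound forcing $u_n\not\equiv 0$ on $\Lambda_\varepsilon$ (the latter following from Lemma \ref{lem4.2}, which ensures $u_n\in\mathcal{O}_\varepsilon$ for all $n$), I would deduce $\langle\Psi_\varepsilon'(u_n),u_n\rangle\leq -c_0<0$ uniformly in $n$.

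\medskip

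\noindent Combining the previous paragraph with the identity $\lambda_n\langle\Psi_\varepsilon'(u_n),u_n\rangle=o(1)$ gives $\lambda_n\to 0$. Since $\{u_n\}_{n\in\mathbb{N}}$ is bounded in $\mathbf{Y}_\varepsilon$, the growth conditions on $\mathcal{H}_1',\mathcal{H}_2',G_2',g$ (items $(\mathcal{H}_1)(\mathrm{c})$, $(\mathcal{H}_2)(\mathrm{b})$, $(\mathcal{A})(\mathrm{a})$, and \eqref{eq3.2} combined with the Trudinger-Moser bound of Lemma \ref{lem2.11}) yield boundedness of $\Psi_\varepsilon'(u_n)$ in $\mathbf{Y}_\varepsilon^*$. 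Consequently $J_\varepsilon'(u_n)=\lambda_n\Psi_\varepsilon'(u_n)+\epsilon_n\to 0$ in $\mathbf{Y}_\varepsilon^*$, so $\{u_n\}_{n\in\mathbb{N}}$ is a genuine $(\mathrm{PS})_c$ sequence for $J_\varepsilon$ on $\mathbf{Y}_\varepsilon$ still satisfying $\limsup_n\|u_n\|^{N'}_{W^{1,N}}<\alpha_N$. Lemma \ref{lem3.7} then furnishes a subsequence strongly convergent in $\mathbf{Y}_\varepsilon$, which finishes the proof.
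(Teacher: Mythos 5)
Your overall strategy matches the paper's: recover an unconstrained Palais–Smale sequence via Lagrange multipliers and reduce to Lemma~\ref{lem3.7}. The setup, the boundedness from Lemma~\ref{lem3.4}, the Willem characterization, and the formula for $\langle\Psi_\varepsilon'(u_n),u_n\rangle$ (which does agree with what is derived in the proof of Proposition~\ref{prop4.3}) are all fine.

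The problem is the single sentence that carries the load: ``Combining this with Proposition~\ref{prop4.1} and the logarithmic-type lower bound \dots I would deduce $\langle\Psi_\varepsilon'(u_n),u_n\rangle\le -c_0<0$ uniformly in $n$.'' Neither of the cited results supplies such a bound. After correctly handling the $\Lambda_\varepsilon^c$-part via $(\mathcal{A})$, $(\mathcal{B})$, (h3), ($\eta$3), the sign analysis actually yields $\langle\Psi_\varepsilon'(u_n),u_n\rangle\le -\tfrac{N-p}{N}\|u_n\|^p_{W^{1,p}_{V_\varepsilon}}-\int_{\Lambda_\varepsilon}|u_n|^N\,\mathrm{d}x$ (the $L^N(\Lambda_\varepsilon)$-norm, not the global $\|u_n\|^N_N$), and the right-hand side is strictly negative for each fixed $n$ but is not a priori bounded away from zero. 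Proposition~\ref{prop4.1} bounds $\|u_n\|_{\mathbf{Y}_\varepsilon}$ from below, but $\|u_n\|_{\mathbf{Y}_\varepsilon}$ also involves the $W^{1,N}_{V_\varepsilon}$- and $L^{\mathcal{H}_1}$-pieces, which could in principle carry all the mass while $\|u_n\|^p_{W^{1,p}_{V_\varepsilon}}$ and $\int_{\Lambda_\varepsilon}|u_n|^N$ degenerate; and Lemma~\ref{lem4.2} only gives the qualitative fact $|\operatorname{supp}(|u_n|)\cap\Lambda_\varepsilon|>0$, with no quantitative $L^N(\Lambda_\varepsilon)$ floor.

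The correct way to close this gap — which is what the paper does — is an argument by contradiction rather than a direct lower bound. Assume along a subsequence that $\langle\Psi_\varepsilon'(u_n),u_n\rangle=o_n(1)$; then $\int_{\Lambda_\varepsilon}|u_n|^N\,\mathrm{d}x\to 0$. By boundedness in $\mathbf{Y}_\varepsilon$, Lemma~\ref{lem8.98} and interpolation, $u_n\to 0$ in $L^\theta(\Lambda_\varepsilon)$ for every $\theta\in[N,\infty)$. Using (f1), \eqref{eq3.2}, Corollary~\ref{cor2.9}, Lemma~\ref{lem2.11}, and the hypothesis $\limsup_n\|u_n\|^{N'}_{W^{1,N}}<\alpha_N$ to control the Trudinger–Moser term, one gets $\int_{\Lambda_\varepsilon}g(\varepsilon x,u_n)u_n\,\mathrm{d}x\to 0$. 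Feeding this, together with the penalization bounds $(\mathcal{A})$(a),(c) and $(\mathcal{B})$(d) on $\Lambda_\varepsilon^c$ and the constraint $\langle J_\varepsilon'(u_n),u_n\rangle=0$, into the Lemma~\ref{lem4.2}-style inequality forces $u_n\to 0$ in $\mathbf{Y}_\varepsilon$, which contradicts Proposition~\ref{prop4.1}. Only after this contradiction can you conclude that $|\langle\Psi_\varepsilon'(u_n),u_n\rangle|$ is bounded away from zero and hence $\lambda_n\to 0$. You should replace the unfounded uniform-negativity claim with this contradiction argument; the rest of your proposal then goes through.
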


\begin{proof}
	Let $\{u_n\}_{n\in\mathbb{N}}\subset \mathcal{N}_\varepsilon$ be a \textnormal{(PS)$_c$} sequence for $J_\varepsilon$ satisfying $\displaystyle\limsup_{n\to\infty} \|u_n\|^{N'}_{W^{1,N}}<\frac{\alpha_N}{\alpha_0}$. Therefore, we infer that \eqref{eq4.11} is satisfied. Further, in view of  \eqref{eq4.10}, there exists a real sequence $\{\lambda_n\}_{n\in\mathbb{N}}\subset\mathbb{R}$ such that
	\begin{align*}
		J_\varepsilon'(u_n)=\lambda_n \Psi_\varepsilon'(u_n)+o_n(1)\quad\text{as }n\to\infty.
	\end{align*}
	Since $u_n\in \mathcal{N}_\varepsilon$ for each $n\in\mathbb{N}$, we have $ J_\varepsilon(u_n)\to c$ as $n\to\infty$ and $\langle{J_\varepsilon'(u_n),u_n}\rangle=0$. Repeating the same arguments as in Lemma \ref{lem3.4}, one can see that $\{u_n\}_{n\in\mathbb{N}}$ is a bounded sequence. In addition to this, due to Lemma \ref{lem3.7}, we only have proved that $\{u_n\}_{n\in\mathbb{N}}$ is a \textnormal{(PS)$_c$} sequence for $J_\varepsilon$. To finish the proof, it is sufficient to show that $\lambda_n\to 0$ as $n\to\infty$.
	It is obvious that the sequence $\{u_n\}_{n\in\mathbb{N}}$ satisfies the following relation:
	\begin{equation}\label{eq4.12}
		0=\langle{J_\varepsilon'(u_n),u_n}\rangle=\lambda_n\langle{\Psi_\varepsilon'(u_n),u_n}\rangle+o_n(1)\quad\text{as }n\to\infty.
	\end{equation}
	Due to Proposition \ref{prop4.3}, if $\langle{\Psi_\varepsilon'(u_n),u_n}\rangle=o_n(1)$ as $n\to\infty$, then one has $u_n\to 0$ in $L^N(\Lambda_\varepsilon)$ as $n\to\infty$. From the boundedness of $\{u_n\}_{n\in\mathbb{N}}$ in $\mathbf{Y}_\varepsilon$, we deduce from Lemma \ref{lem8.98} and the interpolation inequality that
	\begin{equation}\label{eq4.13}
		u_n\to 0
		\quad\text{in } L^\theta(\Lambda_\varepsilon) \text{ for } \theta\in [N,+\infty) \quad\text{as }n\to\infty.
	\end{equation}
	From the hypothesis, it follows that there exist $m>0$ and $n_0\in\mathbb{N}$ large enough such that $\|u_n\|^{N'}_{W^{1,N}}<m<\frac{\alpha_N}{\alpha_0}$ for all $n\geq n_0$. Take $r>1$ with $r'=\frac{r}{r-1}>1$ and satisfying $\frac{1}{r}+\frac{1}{r'}=1$ . Let $r'$ be close to 1 and $\alpha>\alpha_0$ close to $\alpha_0$ such that we still have $r'\alpha\|u_n\|^{N'}_{W^{1,N}}<m<\alpha_N$ for all $n\geq n_0$ and $\widetilde{u}_n = \frac{u_n}{\|u_n\|_{W^{1,N}}}$. It follows from ($\mathcal{B}$)(b), \eqref{eq3.2}, H\"older's inequality, Corollary \ref{cor2.9} and \eqref{eq4.13} that
	\begin{align*}
		\Bigg|\int_{\Lambda_\varepsilon} g(\varepsilon x,u_n)u_n\,\mathrm{d}x\Bigg|\leq \tau \int_{\Lambda_\varepsilon} |u_n|^N\,\mathrm{d}x+C\bigg(\int_{\Lambda_\varepsilon} |u_n|^{\vartheta r}\,\mathrm{d}x\bigg)^{\frac{1}{r}}\to 0\quad\text{as }n\to\infty,
	\end{align*}
	where
	\begin{align*}
		C=\widetilde{\kappa}_\tau \bigg[\sup_{n\geq n_0}\int_{\mathbb{R}^N}\Phi\big(r'\alpha\|u_n\|^{N'}_{W^{1,N}}|\widetilde{u}_n|^{N'}\big)\,\mathrm{d}x\bigg]^{\frac{1}{r'}}<+\infty,
	\end{align*}
	due to Lemma \ref{lem2.11}. Consequently, we get
	\begin{equation}\label{eq4.14}
		\lim_{n\to\infty} \int_{\Lambda_\varepsilon} g(\varepsilon x,u_n)u_n\,\mathrm{d}x=0.
	\end{equation}
	Following similar arguments as in Lemma \ref{lem4.2} and using the facts in ($\mathcal{A}$)(a), (c) and ($\mathcal{B}$)(d) as well as \eqref{eq4.12}, \eqref{eq4.13} and \eqref{eq4.14}, one can easily obtain
	\begin{align*}
		0<\frac{1}{2}\big( \|u_n\|_{W^{1,p}_{V_\varepsilon}}^p+ \|u_n\|_{W^{1,N}_{V_\varepsilon}}^N+\|u_n\|^N_N\big)+\min\big\{\|u_n\|_{\mathcal{H}_1}^l,\|u_n\|_{\mathcal{H}_1}^N\big\}\leq o_n(1)\quad\text{as }n\to\infty.
	\end{align*}
	It follows that $u_n\to 0$ in $\mathbf{Y}_\varepsilon$ as $n\to\infty$, which contradicts Proposition \ref{prop4.3} (or Proposition \ref{prop4.1}) . Hence, we must have  $\lambda_n\to 0$ as $n\to\infty.$ This finishes the proof.
\end{proof}

\subsection{Existence and concentration phenomena of positive ground state solutions}

In this part, we first introduce the autonomous problem related to \eqref{main problem@} as follows:
\begin{align}\label{main problem@@@}\tag{$\mathcal{P}_{0}$}
	\begin{cases}
		\mathcal{L}_{p}(u)+\mathcal{L}_{N}(u)=|u|^{N-2}u \log |u|^N+f(u)~~\text{in}~~\mathbb{R}^N, \\[1ex]
		u\in \mathbf{Y}= W^{1,p}(\mathbb{R}^N)\cap W^{1,N}(\mathbb{R}^N)\cap L^{\mathcal{H}_1}(\mathbb{R}^N),
	\end{cases}
\end{align}
where
\begin{align*}
	\mathcal{L}_{t}(u)=-\Delta_t u+V_0|u|^{t-2}u\quad \text{for }t\in\{p,N\}.
\end{align*}
The corresponding energy functional associated to \eqref{main problem@@@} will be denoted by $J_0\colon\mathbf{Y}\to \mathbb{R}$ and is defined by
\begin{align*}
	J_0(u)=\frac{1}{p} \|u\|_{W^{1,p}_{V_0}}^p+\frac{1}{N} \Big(\|u\|_{W^{1,N}_{V_0}}^N+\|u\|^N_{N}\Big)+\int_{\mathbb{R}^N}\mathcal{H}_1(u)\,\mathrm{d}x-\int_{\mathbb{R}^N}\mathcal{H}_2(u)\,\mathrm{d}x-\int_{\mathbb{R}^N} F(u)\,\mathrm{d}x
\end{align*}
for all $u\in\mathbf{Y}$. By adopting a similar strategy as in \cite{Alves-Ambrosio-2024, Alves-deMoraisFilho-2018, Alves-Ji-2020, Ji-Szulkin-2016, Squassina-Szulkin-2015, Squassina-Szulkin-2017}, one sees that \eqref{main problem@@@} has a ground state solution $u_0$ which fulfills
\begin{align*}
	J_0(u_0)=c_0=\inf_{u\in\mathcal{N}_0} J_0(u)=\inf_{u\in\mathbf{Y}\setminus\{0\}}\max_{t\geq 0} J_0(tu),
\end{align*}
where $\mathcal{N}_0$ is the Nehari set associated with $J_0$ and defined by
\begin{align*}
	\mathcal{N}_0=\left\{u\in\mathbf{Y}\setminus\{0\}\colon \langle{J'_0(u),u}\rangle=0\right\}.
\end{align*}
Recall that $\|\cdot\|_{W^{1,t}_{V_0}}$ and $\|\cdot\|_{W^{1,t}}$ are two equivalent norms on the Banach space $W^{1,t}(\mathbb{R}^N)$ for any $t\in\{p,N\}$.

In the next lemma, we demonstrate that the mountain pass level $c_\varepsilon$ is the ground state level of $J_\varepsilon$, and we establish an important relation between the two levels $c_\varepsilon$ and $c_0$.

\begin{lemma}\label{lem4.5}
	The following properties hold:
	\begin{enumerate}
		\item[\normalfont{(a)}]
			$c_\varepsilon\geq\beta>0$ for all $\varepsilon>0$,
		\item[\normalfont{(b)}]
			$ c_\varepsilon=\displaystyle\inf_{u\in\mathcal{N}_\varepsilon} J_\varepsilon(u) $ for all $\varepsilon>0,$
		\item[\normalfont{(c)}]
			$\displaystyle\limsup_{\varepsilon\to0}c_\varepsilon\leq c_0$.
	\end{enumerate}
\end{lemma}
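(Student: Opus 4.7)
The plan is to treat the three statements in turn, with (c) being the main substantive step.

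Part (a) follows directly from the mountain-pass geometry. By Lemmas \ref{lem3.1} and \ref{lem3.2} there exist $\rho\in(0,1]$ and $\jmath>0$ (independent of $\varepsilon$, since the constants in the proof of Lemma \ref{lem3.1} depend only on $V_0$ and fixed Sobolev/Trudinger--Moser constants) such that $J_\varepsilon(u)\geq \jmath$ on $\{\|u\|_{\mathbf{Y}_\varepsilon}=\rho\}$, together with an element $e$ of strictly negative energy with norm larger than $\rho$. Any $\gamma\in\Gamma_\varepsilon$ therefore crosses the sphere of radius $\rho$ by continuity, whence $\max_s J_\varepsilon(\gamma(s))\geq \jmath$ and $c_\varepsilon\geq \jmath=:\beta>0$.

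Part (b) is the standard identification of the Nehari infimum with the mountain-pass level. For $c_\varepsilon\leq \inf_{\mathcal{N}_\varepsilon}J_\varepsilon$, fix $u\in\mathcal{N}_\varepsilon$; by Lemma \ref{lem4.2} we have $u\in\mathcal{O}_\varepsilon$ and $J_\varepsilon(u)=\max_{t\geq 0}J_\varepsilon(tu)$. Since $J_\varepsilon(tu)\to -\infty$ as $t\to\infty$ (as established in the proof of Lemma \ref{lem3.2}), pick $T>1$ with $J_\varepsilon(Tu)<0$: the straight-line path $\gamma(s)=sTu$ lies in $\Gamma_\varepsilon$ and satisfies $\max_s J_\varepsilon(\gamma(s))\leq J_\varepsilon(u)$. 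For the reverse inequality I would use the standard Szulkin--Weth argument to show that every $\gamma\in\Gamma_\varepsilon$ meets $\mathcal{N}_\varepsilon$: the continuous real function $\phi(s)=\langle J_\varepsilon'(\gamma(s)),\gamma(s)\rangle$ is positive for $s$ small (by the coercivity estimate underlying Lemma \ref{lem3.1}) and, after enlarging $\gamma$ if necessary by extending it along the ray through $\gamma(1)$ (which does not decrease $\max J_\varepsilon\circ\gamma$ thanks to Lemma \ref{lem4.2}), becomes negative at the end. The intermediate value theorem then produces $s^*$ with $\gamma(s^*)\in\mathcal{N}_\varepsilon$ and $\inf_{\mathcal{N}_\varepsilon}J_\varepsilon\leq \max_s J_\varepsilon(\gamma(s))$.

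Part (c) relies on a concentrated test-function construction. Let $u_0\in\mathcal{N}_0$ be a ground state with $J_0(u_0)=c_0$ (produced by the autonomous theory cited immediately before the lemma), fix $x_0\in M$, and for each $R>0$ choose a cutoff $\eta_R\in C_c^\infty(\mathbb{R}^N)$ with $\eta_R\equiv 1$ on $B_R$, $\operatorname{supp}\eta_R\subset B_{2R}$, and $\|\nabla\eta_R\|_\infty\leq 2/R$. Define the test function
\begin{align*}
    w_\varepsilon^R(x) = \eta_R(x - x_0/\varepsilon)\, u_0(x - x_0/\varepsilon).
\end{align*}
Because $x_0\in M$ lies in the open set $\Lambda$, for all sufficiently small $\varepsilon$ one has $\operatorname{supp}w_\varepsilon^R\subset \Lambda_\varepsilon$, so that the penalized nonlinearities collapse to the original ones on this support: $g(\varepsilon x,w_\varepsilon^R)=f(w_\varepsilon^R)$ and $G_2(\varepsilon x,w_\varepsilon^R)=\mathcal{H}_2(w_\varepsilon^R)$. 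After the substitution $y=x-x_0/\varepsilon$, the weight $V(\varepsilon x)$ becomes $V(\varepsilon y+x_0)$, which tends to $V(x_0)=V_0$ uniformly on $B_{2R}$ by continuity of $V$. A dominated-convergence argument then yields
\begin{align*}
    \lim_{\varepsilon\to 0}J_\varepsilon(t w_\varepsilon^R) = J_0(t\eta_R u_0)\qquad\text{uniformly for }t\in[0,T_0],
\end{align*}
for every fixed $T_0>0$. By Lemma \ref{lem4.2} there exists $t_\varepsilon^R>0$ with $t_\varepsilon^R w_\varepsilon^R\in\mathcal{N}_\varepsilon$, and Proposition \ref{prop4.1} together with the uniform control on $\|w_\varepsilon^R\|_{\mathbf{Y}_\varepsilon}$ will show that $\{t_\varepsilon^R\}_\varepsilon$ is bounded. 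Combining this with part (b),
\begin{align*}
    \limsup_{\varepsilon\to 0}c_\varepsilon\leq \limsup_{\varepsilon\to 0}J_\varepsilon(t_\varepsilon^R w_\varepsilon^R)\leq \max_{t\geq 0}J_0(t\eta_R u_0).
\end{align*}
Letting $R\to\infty$ and using $\eta_R u_0\to u_0$ in $\mathbf{Y}$ and in $L^{\mathcal{H}_1}(\mathbb{R}^N)$ (since $u_0\in L^{\mathcal{H}_1}$), one obtains $\max_{t\geq 0}J_0(t\eta_R u_0)\to J_0(u_0)=c_0$, concluding (c). The main obstacle is the control of the logarithmic and exponential contributions under the two successive limits; the support inclusion $\operatorname{supp}w_\varepsilon^R\subset\Lambda_\varepsilon$ (which kills the penalization in the relevant integrals) and the uniform boundedness of $\{t_\varepsilon^R\}$ are precisely what make the argument go through.
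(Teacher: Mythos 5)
Your treatment of (a) and the inequality $c_\varepsilon\leq\inf_{\mathcal{N}_\varepsilon}J_\varepsilon$ in (b) matches the paper exactly. The two places where you diverge or where the plan needs repair are the reverse inequality in (b) and the boundedness of the fiber parameters $t^R_\varepsilon$ in (c).

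For the reverse inequality $\inf_{\mathcal{N}_\varepsilon}J_\varepsilon\leq c_\varepsilon$, the paper does not use a path-crossing argument at all: since Theorem~\ref{thm3.8} has already produced a nontrivial critical point $u_\varepsilon$ with $J_\varepsilon(u_\varepsilon)=c_\varepsilon$ and $J_\varepsilon'(u_\varepsilon)=0$, this point automatically lies in $\mathcal{N}_\varepsilon$ and the inequality is immediate. Your Szulkin--Weth style argument (show that $\phi(s)=\langle J_\varepsilon'(\gamma(s)),\gamma(s)\rangle$ changes sign) can be pushed through, but it requires more than you wrote: you must first show $\gamma(1)\in\mathcal{O}_\varepsilon$ before Lemma~\ref{lem4.2} gives the unique fiber map and the monotone decrease beyond its maximum. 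This can be done — from ($\mathcal{A}$)(c), ($\mathcal{B}$)(d) and $V_0+1\geq 2(\ell+\ell')$ one checks that $J_\varepsilon>0$ on $\mathcal{O}_\varepsilon^c\setminus\{0\}$, so $J_\varepsilon(\gamma(1))<0$ forces $\gamma(1)\in\mathcal{O}_\varepsilon$, and then $t_{\gamma(1)}<1$ so $\phi(1)<0$ and no extension is needed. But this is extra work that the paper's one-line argument via Theorem~\ref{thm3.8} avoids entirely; since the mountain-pass theorem has already been invoked, your route is overkill.

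For (c), the construction is the same as the paper modulo a translation: the paper normalizes $0\in M$ (``without loss of generality $V(0)=V_0$'') and uses $u_R=\phi_R u_0$ directly, whereas you translate by $x_0/\varepsilon$ for generic $x_0\in M$. The two are equivalent, and the double limit $\varepsilon\to 0$ then $R\to\infty$ and the role of the support inclusion $\operatorname{supp}w_\varepsilon^R\subset\Lambda_\varepsilon$ in killing the penalization are exactly the paper's. However, your cited source for the boundedness of $\{t_\varepsilon^R\}_\varepsilon$ is wrong: Proposition~\ref{prop4.1} gives $\|t_\varepsilon^R w_\varepsilon^R\|_{\mathbf{Y}_\varepsilon}\geq\beta$, which is a \emph{lower} bound on $t_\varepsilon^R$, not an upper one. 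The upper bound is where the real content lies — the paper obtains it by inserting $t_\varepsilon u_R\in\mathcal{N}_\varepsilon$ into the Nehari identity \eqref{eq4.18}, isolating the terms over $\Lambda_\varepsilon$, and letting the coercivity generated by the superlinear term $\gamma t_\varepsilon^{\mu-N}\int|u_R|^\mu$ from (f4) together with $\log(t_\varepsilon^N)\int|u_R|^N$ from \eqref{eq2.6} blow up if $t_\varepsilon\to\infty$. You need this $(\mathrm{f4})$/logarithmic argument here; without it the step $\limsup_{\varepsilon\to 0}J_\varepsilon(t_\varepsilon^R w_\varepsilon^R)\leq\max_{t\geq 0}J_0(t\eta_R u_0)$ is not justified. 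Likewise, the final passage $\max_{t\geq 0}J_0(t\eta_R u_0)\to c_0$ requires showing the autonomous fiber parameter $t_R\to 1$, which the paper does by a separate subtraction argument \eqref{eq4.21}--\eqref{eq4.23} using the monotonicity of $s\mapsto\mathcal{H}_1'(s)/s^{N-1}$, ($\mathcal{H}_2$)(c) and (f3); this step is only gestured at in your write-up and should be included.
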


\begin{proof}
	The assertion in (a) follows directly from the definition of $c_\varepsilon$ (see Theorem \ref{thm3.8}) and Lemma \ref{lem3.1}. Now, we prove (b). For this, observe that by Lemma \ref{lem4.2}, we have $u\in\mathcal{O}_\varepsilon$ for each $u\in \mathcal{N}_\varepsilon$. Further, by Lemma \ref{lem3.2}, there exists $t_0>0$ such that $J_\varepsilon(t_0 u)<0$. Define the map $\gamma_\varepsilon\colon[0,1]\to\mathbf{Y}_\varepsilon$ such that $\gamma_\varepsilon(t)=t(t_0u)$ for all $t\in [0,1]$. It is easy to see that $\gamma_\varepsilon\in\Gamma_\varepsilon$. Consequently, from the definition of $c_\varepsilon$, we have
	\begin{align*}
		c_\varepsilon\leq \max_{t\in[0,1]}J_\varepsilon(\gamma_\varepsilon(t))\leq \max_{t\geq 0}J_\varepsilon (tu)=J_\varepsilon (u).
	\end{align*}
	Due to the arbitrariness of $u\in \mathcal{N}_\varepsilon$ and from the definition of the infimum, we obtain from the above inequality that
	\begin{align}\label{eq4.15}
		c_\varepsilon\leq\displaystyle\inf_{u\in\mathcal{N}_\varepsilon} J_\varepsilon(u).
	\end{align}
	By Theorem \ref{thm3.8}, we have $u_\varepsilon\in \mathbf{Y}_\varepsilon\setminus\{0\}$, $J_\varepsilon(u_\varepsilon)=c_\varepsilon$ and $J'_\varepsilon(u_\varepsilon)=0$. It follows that $u_\varepsilon\in\mathcal{N}_\varepsilon$ and hence, we have
	\begin{equation}\label{eq4.16}
		\inf_{u\in\mathcal{N}_\varepsilon} J_\varepsilon(u)\leq J_\varepsilon(u_\varepsilon)=c_\varepsilon .
	\end{equation}
	Combining \eqref{eq4.15} and \eqref{eq4.16}, we get $ c_\varepsilon=\inf_{u\in\mathcal{N}_\varepsilon} J_\varepsilon(u)= J_\varepsilon(u_\varepsilon).$ This completes the proof of (b).

	Next, we aim to prove (c). To prove this, we assume that $u_0\in \mathcal{N}_0$ is a positive ground state solution of \eqref{main problem@@@}, that is, there hold
	\begin{align*}
		c_0=J_0(u_0)=\inf_{u\in\mathcal{N}_0} J_0(u)~~\text{and} ~~J'_0(u_0)=0.
	\end{align*}
	Choose $\phi\in C^\infty_{c}(\mathbb{R}^N)$ be such that $0\leq \phi\leq 1$, $\phi\equiv 1$ in $B_1$, and $\phi\equiv 0$ in $B^c_{2}$. For each $R>0$, let $B_{2R}\subset \Lambda_\varepsilon$ and define $\phi_R(\cdot)=\phi(\frac{\cdot}{R})$. Next, we set $u_R(\cdot)=u(\frac{\cdot}{R})$ and $u_R(x)=\phi_R(x)u_0(x)$. It follows that $0\leq u_R\leq u_0 $, $u_R\equiv u_0$ in $B_R$, and $u_R\equiv 0$ in $B^c_{2R}$. Note that $\operatorname{supp}(u_R)\subset B_{2R}\subset \Lambda_\varepsilon$ and hence, $u_R\in \mathcal{O}_\varepsilon $. By applying Lebesgue's dominated convergence theorem, we can easily deduce that
	\begin{equation}
		\label{eq4.17}
		u_R\to u_0\quad\text{in }W^{1,t}(\mathbb{R}^N)\quad\text{as } R\to\infty \text{ for } t\in\{p,N\}.
	\end{equation}
	It follows that $ u_R\to u_0$ a.e.\,in $\mathbb{R}^N$ as $R\to\infty$. Observe that $\mathcal{H}_1$ is nondecreasing for all $t\geq 0$. This implies at once that $\mathcal{H}_1(u_R)\leq \mathcal{H}_1(u_0)\in L^1(\mathbb{R}^N)$. Further, due to the continuity of $\mathcal{H}_1$, we infer that $\mathcal{H}_1(u_R)\to \mathcal{H}_1(u_0)$ a.e.\, $\mathbb{R}^N$ as $R\to\infty$. Consequently, by Lebesgue's dominated convergence theorem, we have
	\begin{align*}
		\int_{\mathbb{R}^N}\mathcal{H}_1(u_R)\,\mathrm{d}x\to\int_{\mathbb{R}^N}\mathcal{H}_1(u_0)\,\mathrm{d}x \quad\text{as }R\to\infty.
	\end{align*}
	Employing similar ideas as in Lemma \ref{lem3.7}, we can deduce that $u_R\to u_0$ in $L^{\mathcal{H}_1}(\mathbb{R}^N)$ as $R\to\infty$. Consequently, we obtain $u_R\to u$ in $\mathbf{Y}$ as $R\to\infty$.

	Using the fact that $u_R\in \mathcal{O}_\varepsilon $, we obtain from Lemma \ref{lem4.2} that there exists $t_\varepsilon>0$ such that $t_\varepsilon u_R\in \mathcal{N}_\varepsilon $. Now, arguing similarly as in (b), we have
	\begin{align*}
		c_\varepsilon\leq \max_{t\geq 0}J_\varepsilon (tu)=J_\varepsilon (t_\varepsilon u_R).
	\end{align*}
	Next, we claim that for some $\varepsilon_0>0$, the family $\{t_\varepsilon\}_{0<\varepsilon<\varepsilon_0}$ is bounded. Indeed, if not, assume that $t_\varepsilon\to\infty$ as $\varepsilon\to 0$. Now, using the fact that $t_\varepsilon u_R\in \mathcal{N}_\varepsilon$, we have
	\begin{equation}\label{eq4.18}
		\begin{aligned}
			&\frac{1}{t_\varepsilon^{N-p}}\|u_R\|^p_{W^{1,p}_{V_\varepsilon}}+\|u_R\|^N_{W^{1,N}_{V_\varepsilon}}+\|u_R\|^N_N\\
			& =\int_{\Lambda_\varepsilon}\frac{\mathcal{H}'_2(t_\varepsilon u_R)u_R}{t_\varepsilon^{N-1}}\,\mathrm{d}x+\int_{\Lambda^c_\varepsilon}\frac{\widetilde{\mathcal{H}}'_2(t_\varepsilon u_R)u_R}{t_\varepsilon^{N-1}}\,\mathrm{d}x-\int_{\mathbb{R}^N}\frac{\mathcal{H}'_1(t_\varepsilon u_R)u_R}{t_\varepsilon^{N-1}}\,\mathrm{d}x  \\
			& \quad +\int_{\Lambda_\varepsilon}\frac{f(t_\varepsilon u_R)u_R}{t_\varepsilon^{N-1}}\,\mathrm{d}x+\int_{\Lambda^c_\varepsilon}\frac{\widetilde{f}(t_\varepsilon u_R)u_R}{t_\varepsilon^{N-1}}\,\mathrm{d}x.
		\end{aligned}
	\end{equation}
	Without loss of generality, we can choose $V(0)=V_0$. Note that $u_R$ has compact support and $V(\varepsilon x)\to V_0 $ as $\varepsilon\to 0$. Also observe that  $\Lambda_\varepsilon\to\mathbb{R}^N$ as $\varepsilon\to 0$, see Alves-Ji \cite[Lemma 3.7]{Alves-Ji-2020}. This yields that $|\Lambda^c_\varepsilon|\to0$ as $\varepsilon\to 0$. Consequently, due to Lebesgue's dominated convergence theorem, we have for each $R>0$
	\begin{align*}
		\displaystyle\int_{\mathbb{R}^N} \big(|\nabla u_R|^{t}+V(\varepsilon x)| u_R|^{t}\big)\,\mathrm{d}x\to  \displaystyle\int_{\mathbb{R}^N} \big(|\nabla u_R|^{t}+V_0| u_R|^{t}\big)\,\mathrm{d}x \quad\text{as }\varepsilon\to 0 \text{ and }t\in\{p,N\}.
	\end{align*}
	Using again Lebesgue's dominated convergence theorem leads to
	\begin{align*}
		\displaystyle\int_{\Lambda^c_\varepsilon}\frac{\widetilde{\mathcal{H}}'_2(t_\varepsilon u_R)u_R}{t_\varepsilon^{N-1}}\,\mathrm{d}x\to 0
		\quad\text{and}\quad \displaystyle\int_{\Lambda^c_\varepsilon}\frac{\widetilde{f}(t_\varepsilon u_R)u_R}{t_\varepsilon^{N-1}}\,\mathrm{d}x\to 0\quad\text{as }\varepsilon\to 0.
	\end{align*}
	Define
	\begin{align*}
		\mathcal{T}_\varepsilon=\int_{\Lambda_\varepsilon}\frac{f(t_\varepsilon u_R)u_R}{t_\varepsilon^{N-1}}\,\mathrm{d}x+\int_{\Lambda_\varepsilon}\frac{\mathcal{H}'_2(t_\varepsilon u_R)u_R}{t_\varepsilon^{N-1}}\,\mathrm{d}x-\int_{\mathbb{R}^N}\frac{\mathcal{H}'_1(t_\varepsilon u_R)u_R}{t_\varepsilon^{N-1}}\,\mathrm{d}x .
	\end{align*}
	In view of (f4) and \eqref{eq2.6}, we have
	\begin{align*}
		\mathcal{T}_\varepsilon & \geq \gamma t_\varepsilon^{\mu-N}\int_{\Lambda_\varepsilon} |u_R|^\mu\,\mathrm{d}x+\log(t_\varepsilon)^N \int_{\mathbb{R}^N} |u_R|^N\,\mathrm{d}x-\int_{\Lambda^c_\varepsilon}\frac{\mathcal{H}'_2(t_\varepsilon u_R)u_R}{t_\varepsilon^{N-1}}\,\mathrm{d}x+A,
	\end{align*}
	where $A=\int_{\mathbb{R}^N} |u_R|^N(1+\log |u_R|^N)\,\mathrm{d}x$. From the definition of $\mathcal{H}_2$, we have the estimate
	\begin{align*}
		\frac{\mathcal{H}'_2(t_\varepsilon u_R)u_R}{t_\varepsilon^{N-1}}=|u_R|^N\log |t_\varepsilon u_R|^N-\big(\log((N-1)\delta)^N+N\big)|u_R|^N +\frac{N(N-1)\delta}{t_\varepsilon}|u_R|^{N-1}.
	\end{align*}
	This yields
	\begin{align*}
		\int_{\Lambda^c_\varepsilon}\frac{\mathcal{H}'_2(t_\varepsilon u_R)u_R}{t_\varepsilon^{N-1}}\,\mathrm{d}x\leq \int_{\Lambda^c_\varepsilon}|u_R|^N\log |t_\varepsilon u_R|^N\,\mathrm{d}x+\frac{N(N-1)\delta}{t_\varepsilon}\int_{\mathbb{R}^N}|u_R|^{N-1}\,\mathrm{d}x+B,
	\end{align*}
	where $B=-\big(\log((N-1)\delta)^N+N\big)\int_{\mathbb{R}^N}|u_R|^N\,\mathrm{d}x$. Gathering all these information and the fact that $t_\varepsilon\to\infty$ as $\varepsilon\to 0$, we obtain by setting $C=A-B$ that
	\begin{align*}
		\mathcal{T}_\varepsilon \geq \gamma t_\varepsilon^{\mu-N}\int_{\Lambda_\varepsilon} |u_R|^\mu\,\mathrm{d}x+\log(t_\varepsilon)^N \int_{\Lambda_\varepsilon} |u_R|^N\,\mathrm{d}x-\int_{\Lambda^c_\varepsilon}|u_R|^N\log |u_R|^N\,\mathrm{d}x+C+o_\varepsilon(1)\to\infty \quad\text{as }\varepsilon\to 0.
	\end{align*}
	It follows that $\mathcal{T}_\varepsilon \to\infty$ as $\varepsilon\to 0$. Letting $\varepsilon\to 0$ in \eqref{eq4.18} and using the above estimates, we get a contradiction. Hence, we conclude that $\{t_\varepsilon\}_{0<\varepsilon<\varepsilon_0}$ is bounded. Choose $t_R>0$ such that $ J_0(t_R u_R)=\max_{t\geq 0} J_0(tu_R)$. It follows that $t_R u_R\in\mathcal{N}_0$.

	Now, we claim that $t_R$ is bounded for $R>0$ large enough. Indeed, if not, let $t_R\to\infty$ as $R\to\infty$. In virtue of $\langle{J'_0(t_R u_R), t_R u_R}\rangle=0$, we have
	\begin{equation}\label{eq4.19}
		\begin{aligned}
			&\frac{1}{t_R^{N-p}}\|u_R\|^p_{W^{1,p}_{V_0}}+\|u_R\|^N_{W^{1,N}_{V_0}}+\|u_R\|^N_N \\
			& =\int_{\mathbb{R}^N}\frac{f(t_R u_R)u_R}{t_R^{N-1}}\mathrm{d}x+\int_{\mathbb{R}^N}\frac{\mathcal{H}'_2(t_R u_R)u_R}{t_R^{N-1}}\mathrm{d}x-\int_{\mathbb{R}^N}\frac{\mathcal{H}'_1(t_R u_R)u_R}{t_R^{N-1}}\mathrm{d}x.
		\end{aligned}
	\end{equation}
	Due to \eqref{eq4.17}, one sees that
	\begin{align*}
		t_R^{p-N} \|u_R\|^p_{W^{1,p}_{V_0}}+\|u_R\|^N_{W^{1,N}_{V_0}}+\|u_R\|^N_N\to \|u_0\|^N_{W^{1,N}_{V_0}}+\|u_0\|^N_N \quad\text{as }R\to\infty.
	\end{align*}
	Moreover, using (f4), \eqref{eq2.6} and \eqref{eq4.17}, one has
	\begin{align*}
		&\int_{\mathbb{R}^N}\frac{f(t_R u_R)u_R}{t_R^{N-1}}\mathrm{d}x+\int_{\mathbb{R}^N}\frac{\mathcal{H}'_2(t_R u_R)u_R}{t_R^{N-1}}\mathrm{d}x-\int_{\mathbb{R}^N}\frac{\mathcal{H}'_1(t_R u_R)u_R}{t_R^{N-1}}\mathrm{d}x\\
		& \geq \gamma t_R^{\mu-N}\int_{\mathbb{R}^N} |u_R|^\mu\mathrm{d}x+\big(\log(t_R)^N+1\big)\int_{\mathbb{R}^N} |u_R|^N\mathrm{d}x  \\
		& \qquad+\int_{\mathbb{R}^N}|u_R|^N\log |u_R|^N\,\mathrm{d}x \to\infty \quad\text{as }R\to\infty.
	\end{align*}
	It follows that
	\begin{align*}
		\int_{\mathbb{R}^N}\frac{f(t_R u_R)u_R}{t_R^{N-1}}\,\mathrm{d}x+\int_{\mathbb{R}^N}\frac{\mathcal{H}'_2(t_R u_R)u_R}{t_R^{N-1}}\,\mathrm{d}x-\int_{\mathbb{R}^N}\frac{\mathcal{H}'_1(t_R u_R)u_R}{t_R^{N-1}}\,\mathrm{d}x \to\infty \quad\text{as }R\to\infty.
	\end{align*}
	We get a contradiction by using all of the above information and letting $R\to\infty$ in \eqref{eq4.19}. This shows the claim. Hence, up to a subsequence still denoted by the same symbol, $t_R\to t_1~(\geq 0)$ as $R\to\infty$. Employing the same ideas as in Proposition \ref{prop4.1}, one can prove that $\mathcal{N}_0$ is bounded away from the origin. This together with $t_Ru_R\in \mathcal{N}_0 $ implies that there exists some $\beta>0$ such that $\|t_Ru_R\|_{\mathbf{Y}}\geq\beta>0$. Now, sending $R\to\infty$ and using $u_R\to u_0$ in $\mathbf{Y}$ as $R\to\infty$, we can see that $t_1\|u_0\|_{\mathbf{Y}}\geq\beta>0$. From this, we conclude that $t_1\neq 0$. A direct computation implies that
	\begin{align*}
		J_\varepsilon(t_\varepsilon u_R)- J_0(t_\varepsilon u_R) & = \frac{t^p_\varepsilon}{p}\int_{\mathbb{R}^N}\big(V(\varepsilon x)-V_0\big)|u_R|^p\,\mathrm{d}x+\frac{t^N_\varepsilon}{N}\int_{\mathbb{R}^N}\big(V(\varepsilon x)-V_0\big)|u_R|^N\,\mathrm{d}x  \\
		& \quad +\int_{\Lambda^c_\varepsilon}\big(\mathcal{H}_2(t_\varepsilon u_R)-\widetilde{\mathcal{H}}_2(t_\varepsilon u_R)\big)\,\mathrm{d}x+\int_{\Lambda^c_\varepsilon}\big(F(t_\varepsilon u_R)-\widetilde{F}(t_\varepsilon u_R)\big)\,\mathrm{d}x.
	\end{align*}
	Using the fact that $\{t_\varepsilon\}_{0<\varepsilon<\varepsilon_0}$ is bounded, $u_R$ has compact support, $V(\varepsilon x)\to V_0 $ as $\varepsilon\to 0$ and $0\leq u_R\leq u_0$, we obtain from Lebesgue's dominated convergence theorem that
	\begin{align*}
		\frac{t^{\wp}_\varepsilon}{\wp}\int_{\mathbb{R}^N}\big(V(\varepsilon x)-V_0\big)|u_R|^{\wp}\,\mathrm{d}x\to 0 \quad\text{as }\varepsilon\to 0~\text{ for }~\wp\in\{p,N\}.
	\end{align*}
	In virtue of $|\Lambda^c_\varepsilon|\to0$ as $\varepsilon\to 0$, by using ($\mathcal{H}_2$)(b), \eqref{eq3.2} and Lemma \ref{lem2.11}, we obtain from Lebesgue's dominated convergence theorem that
	\begin{align*}
		\int_{\Lambda^c_\varepsilon}\big(\mathcal{H}_2(t_\varepsilon u_R)-\widetilde{\mathcal{H}}_2(t_\varepsilon u_R)\big)\,\mathrm{d}x\to 0 \quad\text{and}\quad\int_{\Lambda^c_\varepsilon}\big(F(t_\varepsilon u_R)-\widetilde{F}(t_\varepsilon u_R)\big)\,\mathrm{d}x\to0\quad\text{as }\varepsilon\to 0 .
	\end{align*}
	Gathering all of the above information, we deduce that
	\begin{align*}
		J_\varepsilon(t_\varepsilon u_R)- J_0(t_\varepsilon u_R)=o_\varepsilon(1)\quad\text{as }\varepsilon\to 0.
	\end{align*}
	Consequently, we have
	\begin{equation}
		\label{eq4.20}
		\limsup_{\varepsilon\to0}c_\varepsilon\leq \limsup_{\varepsilon\to0} J_\varepsilon(t_\varepsilon u_R)\leq J_0(t_R u_R).
	\end{equation}
	Notice that $\mathcal{H}_1(t_Ru_R)\to \mathcal{H}_1(t_1u_0)~\text{and}~\mathcal{H}'_1(t_Ru_R)t_Ru_R\to \mathcal{H}'_1(t_1u_0)t_1u_0$ a.e.\,in $\mathbb{R}^N$ as $R\to\infty $. Now, using the fact that $\{t_R\}_{R>0}$ is bounded for large $R$, $0\leq u_R\leq u_0$ and $\mathcal{H}_1$ is nondecreasing for $t\geq 0$, we infer that there exists $K>0$ such that $t_R\leq K$ and satisfying $\mathcal{H}_1(t_Ru_R)\leq \mathcal{H}_1(Ku_0)\in L^1(\mathbb{R}^N)$, thanks to $u_0\in\mathcal{N}_0$. Further, in light of Remark \ref{rem2.99} and $u_0\in\mathcal{N}_0$, we also have $\mathcal{H}'_1(t_Ru_R)t_Ru_R\leq N \mathcal{H}_1(t_Ru_R)\leq N \mathcal{H}_1(Ku_0)\in L^1(\mathbb{R}^N)$. Consequently, due to Lebesgue's dominated convergence theorem, we obtain
	\begin{align*}
		\int_{\mathbb{R}^N} \mathcal{H}_1(t_Ru_R)\,\mathrm{d}x\to \int_{\mathbb{R}^N}\mathcal{H}_1(t_1u_0)\,\mathrm{d}x
		\quad\text{and}\quad
		\int_{\mathbb{R}^N}\mathcal{H}'_1(t_Ru_R)t_Ru_R\,\mathrm{d}x\to \int_{\mathbb{R}^N}\mathcal{H}'_1(t_1u_0)t_1u_0\,\mathrm{d}x\quad\text{as }R\to \infty.
	\end{align*}
	By using ($\mathcal{H}_2$)(b), \eqref{eq3.2} and Lemma \ref{lem2.11}, we obtain from Lebesgue's dominated convergence theorem that
	\begin{align*}
		\int_{\mathbb{R}^N} \mathcal{H}_2(t_Ru_R)\,\mathrm{d}x\to \int_{\mathbb{R}^N}\mathcal{H}_2(t_1u_0)\,\mathrm{d}x,
		\quad
		\int_{\mathbb{R}^N}\mathcal{H}'_2(t_Ru_R)t_Ru_R\,\mathrm{d}x\to \int_{\mathbb{R}^N}\mathcal{H}'_2(t_1u_0)t_1u_0\,\mathrm{d}x,
	\end{align*}
	\begin{align*}
		\int_{\mathbb{R}^N} F(t_Ru_R)\,\mathrm{d}x\to \int_{\mathbb{R}^N}F(t_1u_0)\,\mathrm{d}x
		\quad\text{and}\quad
		\int_{\mathbb{R}^N}f(t_Ru_R)t_Ru_R\,\mathrm{d}x\to \int_{\mathbb{R}^N}f(t_1u_0)t_1u_0\,\mathrm{d}x\quad\text{as }R\to \infty.
	\end{align*}
	Combining the above information, we deduce that
	$J_0(t_R u_R)\to J_0(t_1 u_0)$ and $0=\langle{J_0'(t_R u_R),t_R u_R}\rangle\to \langle{J_0'(t_1 u_0),t_1 u_0}\rangle$ as $R\to \infty$. Next, we claim that $t_1=1$. Indeed, if not, then either $t_1>1$ or  $t_1<1$. To prove the claim, we first observe that $\langle{J_0'(t_1 u_0),t_1 u_0}\rangle=0$ and hence, we have
	\begin{equation}\label{eq4.21}
		\begin{aligned}
			&\frac{1}{t_1^{N-p}}\|u_0\|^p_{W^{1,p}_{V_0}}+\|u_0\|^N_{W^{1,N}_{V_0}}+\|u_0\|^N_N+\int_{\mathbb{R}^N}\frac{\mathcal{H}'_1(t_1 u_0)u_0}{t_1^{N-1}}\,\mathrm{d}x\\
			& =\int_{\mathbb{R}^N}\frac{\mathcal{H}'_2(t_1u_0)u_0}{t_1^{N-1}}\,\mathrm{d}x+\int_{\mathbb{R}^N}\frac{f(t_1 u_0)u_0}{t_1^{N-1}}\,\mathrm{d}x.
		\end{aligned}
	\end{equation}
	On the other hand, since $u_0\in \mathcal{N}_0$, we have
	\begin{align}\label{eq4.22}
		\|u_0\|^p_{W^{1,p}_{V_0}}+\|u_0\|^N_{W^{1,N}_{V_0}}+\|u_0\|^N_N+\int_{\mathbb{R}^N}\mathcal{H}'_1( u_0)u_0\,\mathrm{d}x  =\int_{\mathbb{R}^N}\mathcal{H}'_2(u_0)u_0\,\mathrm{d}x+\int_{\mathbb{R}^N}f( u_0)u_0\,\mathrm{d}x.
	\end{align}
	Subtracting \eqref{eq4.22} from \eqref{eq4.21}, we get
	\begin{equation}\label{eq4.23}
		\begin{aligned}
			&\bigg(\frac{1}{t_1^{N-p}}-1\bigg)\|u_0\|^p_{W^{1,p}_{V_0}}+\int_{\mathbb{R}^N}\bigg(\frac{\mathcal{H}'_1(t_1 u_0)}{t_1^{N-1}}-\mathcal{H}'_1( u_0)\bigg)u_0\,\mathrm{d}x\\
			&=\int_{\mathbb{R}^N}\bigg(\frac{\mathcal{H}'_2(t_1 u_0)}{t_1^{N-1}}-\mathcal{H}'_2( u_0)\bigg)u_0\,\mathrm{d}x+\int_{\mathbb{R}^N}\bigg(\frac{f(t_1 u_0)}{t_1^{N-1}}-f( u_0)\bigg)u_0\,\mathrm{d}x.
		\end{aligned}
	\end{equation}
	Now, we consider the case when $t_1>1$. Recall that by Lemma \ref{lem4.2}, for any $0<\mathbf{t}_1<\mathbf{t}_2$, we have
	\begin{align*} \int_{\mathbb{R}^N}\bigg(\frac{\mathcal{H}'_1(\mathbf{t}_2u)u}{\mathbf{t}_2^{N-1}}-\frac{\mathcal{H}'_1(\mathbf{t}_1u)u}{\mathbf{t}_1^{N-1}}\bigg)\,\mathrm{d}x<0. \end{align*}
	Replacing $\mathbf{t}_1=1$, $\mathbf{t}_2=t_1$ and $u=u_0$, respectively in the above inequality, we infer that the left-hand side of \eqref{eq4.23} is negative. On the other hand, by using ($\mathcal{H}_2$)(c) and (f3), one sees that the right-hand side of \eqref{eq4.23} is nonnegative. This shows at once that $t_1>1$ is impossible. Similarly, when $t_1<1$, we can easily arrive at a contradiction. It follows that $t_1=1$ and hence, we obtain $J_0(t_R u_R)\to J_0( u_0)$ as $R\to \infty$. Consequently, we deduce from \eqref{eq4.20} that
	\begin{align*}
		\limsup_{\varepsilon\to0}c_\varepsilon\leq\lim_{R\to\infty} J_0(t_R u_R)=J_0( u_0)=c_0.
	\end{align*}
	The proof is now complete.
\end{proof}

Finally, we shall be able to prove the following result.

\begin{proposition}\label{prop4.6}
	The problem \eqref{main problem@@} has a positive ground state solution.
\end{proposition}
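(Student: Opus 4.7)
The plan is to combine the output of Theorem \ref{thm3.8}, the Nehari characterization in Lemma \ref{lem4.5}(b), a sign argument based on the oddness of all nonlinearities, and a regularity plus strong maximum principle step. Specifically, Theorem \ref{thm3.8} already gives, for every $\varepsilon>0$, a nontrivial critical point $u_\varepsilon\in\mathbf{Y}_\varepsilon$ of $J_\varepsilon$ at level $c_\varepsilon$. Since any critical point lies automatically in $\mathcal{N}_\varepsilon$, Lemma \ref{lem4.5}(b) gives $J_\varepsilon(u_\varepsilon)=c_\varepsilon=\inf_{\mathcal{N}_\varepsilon}J_\varepsilon$, so $u_\varepsilon$ is already a ground state for \eqref{main problem@@} as soon as it has constant sign.

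To prove the sign property, I would test $J_\varepsilon'(u_\varepsilon)=0$ against $\psi=-u_\varepsilon^-\in\mathbf{Y}_\varepsilon$ and write $u_\varepsilon=u_\varepsilon^+-u_\varepsilon^-$. The key bookkeeping is that $\mathcal{H}_1'$, $G_2'(\varepsilon x,\cdot)$ and $g(\varepsilon x,\cdot)$ are odd in the second variable with $\mathcal{H}_1'(s)s\geq 0$ (by $(\mathcal{H}_1)$(b)) and $G_2'(\varepsilon x,s)s,\,g(\varepsilon x,s)s\geq 0$ (by $(\mathcal{A})$(a) and $(\mathcal{B})$(a)), so each of the corresponding terms in the duality pairing equals, on $\{u_\varepsilon<0\}$, the value of the same functional evaluated at $u_\varepsilon^-$. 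Using $|\nabla u_\varepsilon|^{t-2}\nabla u_\varepsilon\cdot\nabla(-u_\varepsilon^-)=|\nabla u_\varepsilon^-|^t$ and $|u_\varepsilon|^{t-2}u_\varepsilon(-u_\varepsilon^-)=|u_\varepsilon^-|^t$ (for $t\in\{p,N\}$), the identity $\langle J_\varepsilon'(u_\varepsilon),-u_\varepsilon^-\rangle=0$ collapses exactly to $\langle J_\varepsilon'(u_\varepsilon^-),u_\varepsilon^-\rangle=0$. Hence either $u_\varepsilon^-\equiv 0$, or $u_\varepsilon^-\in\mathcal{N}_\varepsilon$, and an analogous argument with $\psi=u_\varepsilon^+$ shows the same dichotomy for $u_\varepsilon^+$.

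Now I exploit evenness: all the potentials $\mathcal{H}_1,\,\mathcal{H}_2,\,G_2(\varepsilon x,\cdot),\,\mathcal{G}(\varepsilon x,\cdot),\,F$ are even and the norms split orthogonally on $\{u_\varepsilon^+>0\}$ and $\{u_\varepsilon^->0\}$, so $J_\varepsilon(u_\varepsilon)=J_\varepsilon(u_\varepsilon^+)+J_\varepsilon(u_\varepsilon^-)$. If both $u_\varepsilon^\pm$ were in $\mathcal{N}_\varepsilon$, Lemma \ref{lem4.5}(b) would give $c_\varepsilon=J_\varepsilon(u_\varepsilon)\geq 2c_\varepsilon$, contradicting $c_\varepsilon\geq\beta>0$ from Lemma \ref{lem4.5}(a). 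Therefore exactly one of $u_\varepsilon^\pm$ is zero, and because the full equation is odd, replacing $u_\varepsilon$ with $-u_\varepsilon$ if necessary we may assume $u_\varepsilon\geq 0$ a.e.\ in $\mathbb{R}^N$.

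It remains to promote $u_\varepsilon\geq 0$ to $u_\varepsilon>0$ everywhere. This is the step I expect to be the most technical, since the operator is the sum of two quasilinear ones with very different homogeneities and the reaction contains a logarithm. I would first derive $u_\varepsilon\in L^\infty_{\operatorname{loc}}(\mathbb{R}^N)$ by a Moser iteration, using the growth bounds in $(\mathcal{H}_1)$(c), $(\mathcal{A})$(a) and the exponential estimate from Lemma \ref{lem2.11} together with the smallness $\limsup\|u_\varepsilon\|_{W^{1,N}}^{N'}<\alpha_N$ (which is built into Lemma \ref{lem3.6}); this yields $u_\varepsilon\in C^{1,\alpha}_{\operatorname{loc}}(\mathbb{R}^N)$ by the classical DiBenedetto--Tolksdorf $C^{1,\alpha}$ theory applied to the $(p,N)$-Laplacian. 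Then I rewrite \eqref{main problem@@} in the form
\begin{equation*}
-\Delta_p u_\varepsilon-\Delta_N u_\varepsilon+c(x)\bigl(u_\varepsilon^{p-1}+u_\varepsilon^{N-1}\bigr)\geq 0\quad\text{in }\mathbb{R}^N,
\end{equation*}
where $c\in L^\infty_{\operatorname{loc}}$ absorbs the contributions of $V(\varepsilon x)$, $\mathcal{H}_1'(u_\varepsilon)$, $G_2'(\varepsilon x,u_\varepsilon)$, $g(\varepsilon x,u_\varepsilon)$ and of $|u_\varepsilon|^{N-2}u_\varepsilon\log|u_\varepsilon|^N$ on compact sets (all of which are locally bounded once $u_\varepsilon\in L^\infty_{\operatorname{loc}}$ and $u_\varepsilon\geq 0$). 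The Pucci--Serrin strong maximum principle for quasilinear operators (which applies componentwise to $-\Delta_p$ and $-\Delta_N$) then forces $u_\varepsilon>0$ throughout $\mathbb{R}^N$, since $u_\varepsilon\not\equiv 0$. This completes the proof.
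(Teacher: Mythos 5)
Your proposal is correct and follows essentially the same route as the paper's proof: decompose $u_\varepsilon=u_\varepsilon^+-u_\varepsilon^-$, observe that $\langle J_\varepsilon'(u_\varepsilon),\pm u_\varepsilon^\pm\rangle=\langle J_\varepsilon'(u_\varepsilon^\pm),u_\varepsilon^\pm\rangle$ by the oddness of $\mathcal{H}_1'$, $G_2'(\varepsilon x,\cdot)$, $g(\varepsilon x,\cdot)$, conclude that both $u_\varepsilon^\pm\neq 0$ would force both into $\mathcal{N}_\varepsilon$ and hence $c_\varepsilon\geq 2c_\varepsilon>0$, a contradiction by Lemma \ref{lem4.5}(a)--(b), use oddness again to reduce to $u_\varepsilon\geq 0$, and then run Moser iteration for boundedness and local H\"older regularity. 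The only deviation is the very last step, where the paper invokes Trudinger's Harnack inequality for nonnegative supersolutions to get $u_\varepsilon>0$ a.e., whereas you invoke the Pucci--Serrin strong maximum principle; these are interchangeable here, and your extra remark about the $c\in L^\infty_{\operatorname{loc}}$ coefficient absorbing the logarithmic term once $u_\varepsilon\in L^\infty_{\operatorname{loc}}$ is a useful detail the paper leaves implicit.
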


\begin{proof}
	Let $u_\varepsilon$ be a solution to \eqref{main problem@@} as in Theorem \ref{thm3.8} and hence, $J_\varepsilon(u_\varepsilon)=c_\varepsilon$ and $J'_\varepsilon(u_\varepsilon)=0$. Set $u_\varepsilon=u^+_\varepsilon-u^-_\varepsilon$. It follows that $J_\varepsilon(u_\varepsilon)=J_\varepsilon(u^+_\varepsilon)+J_\varepsilon(u^-_\varepsilon)$. Moreover, we claim that either $u^+_\varepsilon=0$ or $u^-_\varepsilon=0$. Indeed, if not, then $0=\langle{J'_\varepsilon(u_\varepsilon),u^+_\varepsilon}\rangle=\langle{J'_\varepsilon(u^+_\varepsilon),u^+_\varepsilon}\rangle$. It follows that $u^+_\varepsilon\in\mathcal{N}_\varepsilon$. Similarly, we can also prove that $u^-_\varepsilon\in\mathcal{N}_\varepsilon$. Because of Lemma \ref{lem4.5}(b), one sees that $c_\varepsilon=J_\varepsilon(u_\varepsilon)=J_\varepsilon(u^+_\varepsilon)+J_\varepsilon(u^-_\varepsilon)\geq 2c_\varepsilon$, which is a contradiction. Using that $G'_2$ and $g$ are odd functions, we can assume that $u_\varepsilon$ is a nonnegative solution of \eqref{main problem@@}. Employing a slight variant of a Moser iteration argument explored in Lemma \ref{lem4.10}, it follows that $u_\varepsilon\in L^\infty(\mathbb{R}^N)\cap C^{0,\alpha}_{\operatorname{loc}}(\mathbb{R}^N)$ for some $0 < \alpha < 1$, and therefore, from Harnack's inequality (see Trudinger \cite{Trudinger-1967}), we obtain $u_\varepsilon(x)>0$ for a.e.\,$x\in \mathbb{R}^N $. This finishes the proof.
\end{proof}

Hereafter, unless otherwise noted, the solution of \eqref{main problem@@} provided in the previous proposition is denoted by the notation $u_\varepsilon$.
The following corollary is a consequence of the Lions' compactness result that appears in Lemma \ref{lem8.99}.

\begin{corollary}\label{cor4.7}
	Let $\{u_n\}_{n\in\mathbb{N}}\subset \mathbf{Y}_\varepsilon$ be a bounded sequence in $\mathbf{Y}_\varepsilon$ verifying $\displaystyle\limsup_{n\to\infty} \|u_n\|^{N'}_{W^{1,N}}<\frac{\alpha_N}{\alpha_0}$. If there exists $R>0$ such that
	\begin{equation}
		\label{eq4.24}
		\liminf_{n\to\infty} \sup_{y\in \mathbb{R}^N} \int_{B_R(y)}|u_n|^N\,\mathrm{d}x=0,
	\end{equation}
	then the following holds:
	\begin{enumerate}
		\item[\textnormal{(a)}]
			$\displaystyle\int_{\mathbb{R}^N} g(\varepsilon x,u_n)u_n\,\mathrm{d}x\to 0$\quad and \quad $\displaystyle\int_{\mathbb{R}^N} \mathcal{G}(\varepsilon x,u_n)\,\mathrm{d}x\to 0$ as $n\to\infty$;
		\item[\textnormal{(b)}]
			$\displaystyle\int_{\mathbb{R}^N} G_2'(\varepsilon x,u_n)u_n\,\mathrm{d}x\to 0$\quad and \quad $\displaystyle\int_{\mathbb{R}^N} G_2(\varepsilon x,u_n)\,\mathrm{d}x\to 0$ as $n\to\infty$.
	\end{enumerate}
\end{corollary}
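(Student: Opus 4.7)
The plan is to reduce everything to the Lions-type compactness statement in Lemma \ref{lem8.99}. Since $\{u_n\}_{n\in\mathbb{N}}$ is bounded in $\mathbf{Y}_\varepsilon$ and satisfies \eqref{eq4.24}, Lemma \ref{lem8.99} yields $u_n\to 0$ in $L^\upsilon(\mathbb{R}^N)$ for every $\upsilon\in(N,\infty)$. Both statements (a) and (b) will then follow by pairing this single piece of information with the tailored growth bounds for $g,\mathcal{G},G_2',G_2$ collected in ($\mathcal{A}$) and ($\mathcal{B}$).

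Part (b) is the cleaner of the two. Combining ($\mathcal{A}$)(b) with ($\mathcal{H}_2$)(b) gives, for any fixed $q>N$, the uniform pointwise bounds
\begin{align*}
|G_2'(x,s)|\leq C|s|^{q-1}\quad\text{and}\quad |G_2(x,s)|\leq C|s|^{q}\quad\text{for all }(x,s)\in\mathbb{R}^N\times\mathbb{R}.
\end{align*}
Integrating and using $u_n\to 0$ in $L^{q}(\mathbb{R}^N)$ (this is where the Lions step enters, since $q>N$) immediately yields
\begin{align*}
\int_{\mathbb{R}^N}|G_2'(\varepsilon x,u_n)u_n|\,\mathrm{d}x\leq C\|u_n\|_q^{q}\to 0\quad\text{and}\quad \int_{\mathbb{R}^N}|G_2(\varepsilon x,u_n)|\,\mathrm{d}x\leq C\|u_n\|_q^{q}\to 0.
\end{align*}

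For part (a) the critical exponential tail inherited from $f$ forces a finer argument. I would fix $\tau>0$ arbitrary and $\vartheta>N$; then ($\mathcal{B}$)(b) combined with \eqref{eq3.2} and \eqref{eq3.3} gives both
\begin{align*}
|g(\varepsilon x,u_n)u_n|\leq \tau|u_n|^N+\widetilde{\kappa}_\tau|u_n|^\vartheta\Phi(\alpha|u_n|^{N'})
\end{align*}
and the same upper bound for $|\mathcal{G}(\varepsilon x,u_n)|$. The polynomial piece contributes at most $\tau\|u_n\|_N^N\leq C\tau$, which will be made arbitrarily small by letting $\tau\to 0^+$ at the end. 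For the exponential piece I would pick conjugate exponents $r,r'>1$ with $r'$ close to $1$ and $\alpha>\alpha_0$ close to $\alpha_0$ so that $r'\alpha\|u_n\|_{W^{1,N}}^{N'}<\alpha_N$ for all large $n$; then H\"older's inequality, Corollary \ref{cor2.9} and the Trudinger--Moser bound of Lemma \ref{lem2.11} produce
\begin{align*}
\int_{\mathbb{R}^N}|u_n|^\vartheta\Phi(\alpha|u_n|^{N'})\,\mathrm{d}x\leq \|u_n\|_{r\vartheta}^\vartheta\Bigg(\int_{\mathbb{R}^N}\Phi(r'\alpha|u_n|^{N'})\,\mathrm{d}x\Bigg)^{1/r'}\leq C\|u_n\|_{r\vartheta}^\vartheta\to 0,
\end{align*}
since $r\vartheta>N$. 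Adding the two pieces and sending $\tau\to 0^+$ yields both assertions of (a) simultaneously.

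The hard part is the balancing act on the H\"older exponent in (a): $r\vartheta$ must remain strictly above $N$ so that the $L^{r\vartheta}$-norm vanishes by the Lions step, while simultaneously $r'$ must lie close enough to $1$ to keep $r'\alpha\|u_n\|_{W^{1,N}}^{N'}$ below the Trudinger--Moser threshold $\alpha_N$. The hypothesis $\limsup_n\|u_n\|_{W^{1,N}}^{N'}<\alpha_N$ is exactly what leaves room for this choice; without it the supremum in Lemma \ref{lem2.11} could blow up and the exponential piece would escape control.
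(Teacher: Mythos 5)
Your proof is correct and follows essentially the same route as the paper's: the Lions lemma (Lemma \ref{lem8.99}) gives $u_n\to 0$ in $L^{\upsilon}(\mathbb{R}^N)$ for all $\upsilon>N$, part (b) follows directly from the polynomial growth bound $|G_2'(x,s)|\leq C|s|^{q-1}$, $|G_2(x,s)|\leq C|s|^q$ with $q>N$, and part (a) splits the $\tau$-polynomial piece (sent to zero at the end) from the exponential piece controlled by H\"older together with Corollary \ref{cor2.9} and Lemma \ref{lem2.11} under the room provided by $\limsup_n\|u_n\|_{W^{1,N}}^{N'}<\alpha_N$.
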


\begin{proof}
	Given $\{u_n\}_{n\in\mathbb{N}}\subset \mathbf{Y}_\varepsilon$ to be bounded in $\mathbf{Y}_\varepsilon$ and $\displaystyle\limsup_{n\to\infty} \|u_n\|^{N'}_{W^{1,N}}<\frac{\alpha_N}{\alpha_0}$. It follows from \eqref{eq4.24} and Lemma \ref{lem8.99} that $u_n\to 0$ in $L^\upsilon(\mathbb{R}^N)$ as $n\to\infty$ for any $\upsilon\in (N,+\infty)$.

	Now, by the hypothesis, there exist $m>0$ and $n_0\in\mathbb{N}$ large enough such that $\|u_n\|^{N'}_{W^{1,N}}<m<\frac{\alpha_N}{\alpha_0}$ for all $n\geq n_0$. Take $r>1$ with $r'=\frac{r}{r-1}>1$ and satisfying $\frac{1}{r}+\frac{1}{r'}=1$ . Let $r'$ close to 1 and $\alpha>\alpha_0$ close to $\alpha_0$ such that we still have $r'\alpha\|u_n\|^{N'}_{W^{1,N}}<m<\alpha_N$ for all $n\geq n_0$ and $\widetilde{u}_n = \frac{u_n}{\|u_n\|_{W^{1,N}}}$. It follows from ($\mathcal{B}$)(b), \eqref{eq3.2}, H\"older's inequality, Corollary \ref{cor2.9} and Lemma \ref{lem8.98}  that for $n$ large enough, we have
	\begin{align*}
		\Bigg|\int_{\mathbb{R}^N} g(\varepsilon x,u_n)u_n\,\mathrm{d}x\Bigg|\leq \tau S_N^{-N}\|u_n\|^N_{\mathbf{Y}_\varepsilon}+C\|u_n\|^\vartheta_{\vartheta r},
	\end{align*}
	where
	\begin{align*}
		C=\widetilde{\kappa}_\tau \bigg[\sup_{n\geq n_0}\int_{\mathbb{R}^N}\Phi\big(r'\alpha\|u_n\|^{N'}_{W^{1,N}}|\widetilde{u}_n|^{N'}\big)\,\mathrm{d}x\bigg]^{\frac{1}{r'}}<+\infty,
	\end{align*}
	thanks to Lemma \ref{lem2.11}. Consequently, due to the boundedness of the sequence $\{u_n\}_{n\in\mathbb{N}}$ in $\mathbf{Y}_\varepsilon$ and $u_n\to 0$ in $L^{\vartheta r}(\mathbb{R}^N)$ as $n\to\infty$, there exists a constant $C_1>0$ such that
	\begin{align*}
		\limsup_{n\to\infty}\Bigg|\int_{\mathbb{R}^N} g(\varepsilon x,u_n)u_n\,\mathrm{d}x\Bigg|\leq \tau S_N^{-N}C_1.
	\end{align*}
	Letting $\tau\to 0$, we obtain from the above inequality that
	\begin{align*}
		\int_{\mathbb{R}^N} g(\varepsilon x,u_n)u_n\,\mathrm{d}x\to 0\quad\text{as }n\to\infty.
	\end{align*}
	Further, by using \eqref{eq3.3} and employing similar arguments explored as above, one has
	\begin{align*}
		\int_{\mathbb{R}^N} \mathcal{G}(\varepsilon x,u_n)\,\mathrm{d}x\to 0\quad\text{as }n\to\infty.
	\end{align*}
	On the other hand, by using  ($\mathcal{H}$)(b), ($\mathcal{A}$)(b) and the fact that $u_n\to 0$ in $L^\upsilon(\mathbb{R}^N)$ as $n\to\infty$ for any $\upsilon\in (N,+\infty)$, we can easily deduce that (b) holds. This completes the proof.
\end{proof}

Now, we are ready to prove the following important compactness result.

\begin{lemma}\label{lem4.8}
	Let $\varepsilon_n\to 0$ as $n\to\infty$ and $\{u_n\}_{n\in\mathbb{N}}=\{u_{\varepsilon_n}\}_{n\in\mathbb{N}}\subset\mathbf{Y}_{\varepsilon_n}$ be a nonnegative sequence such that $J_{\varepsilon_n}(u_n)=c_{\varepsilon_n},~J'_{\varepsilon_n}(u_n)=0$ and satisfying $\displaystyle\limsup_{n\to\infty} \|u_n\|^{N'}_{W^{1,N}}<\frac{\alpha_N}{\alpha_0}$. Then, there exists a sequence $\{\widetilde{y}_n\}_{n\in\mathbb{N}}\subset\mathbb{R}^N$ such that the translated sequence
	\begin{align*}
		w_n(x)=\widetilde{u}_n(x)=u_n(x+\widetilde{y}_n)
	\end{align*}
	has a convergent subsequence in $\mathbf{Y}$. Furthermore, up to a subsequence, $y_n=\varepsilon_n\widetilde{y}_n\to y_0$ as $n\to\infty$ for some $y_0\in\Lambda$ and $V(y_0)=V_0$.
\end{lemma}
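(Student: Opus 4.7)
The plan is to follow the classical concentration–compactness scheme adapted to our penalized framework, using strongly the hypothesis $V_0<\min_{\partial\Lambda}V$ to localize the concentration point inside $\Lambda$.

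First, I would establish boundedness. By Lemma \ref{lem4.5}(c) we have $c_{\varepsilon_n}\le c_0+o_n(1)$, and since $J'_{\varepsilon_n}(u_n)=0$ together with $\limsup_{n\to\infty}\|u_n\|_{W^{1,N}}^{N'}<\alpha_N$, Lemma \ref{lem3.4} gives that $\{u_n\}$ is bounded in $\mathbf{Y}_{\varepsilon_n}$, and by Remark \ref{remark2.4} also uniformly in $\mathbf{Y}$. Next I would apply Lemma \ref{lem8.99}. The vanishing alternative
\[
\liminf_{n\to\infty}\sup_{y\in\mathbb{R}^N}\int_{B_R(y)}|u_n|^N\,\mathrm{d}x=0
\]
must be ruled out: by Corollary \ref{cor4.7}, $\int_{\mathbb{R}^N}g(\varepsilon_n x,u_n)u_n\,\mathrm{d}x\to 0$ and $\int_{\mathbb{R}^N}G'_2(\varepsilon_n x,u_n)u_n\,\mathrm{d}x\to 0$. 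Testing $\langle J'_{\varepsilon_n}(u_n),u_n\rangle=0$ and using $\mathcal{H}'_1(s)s\ge 0$, this would give $\|u_n\|_{\mathbf{Y}_{\varepsilon_n}}\to 0$, contradicting Proposition \ref{prop4.1}. Thus there exist $R,\beta>0$ and $\{\widetilde y_n\}\subset\mathbb{R}^N$ with $\int_{B_R(\widetilde y_n)}|u_n|^N\,\mathrm{d}x\ge\beta>0$.

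Set $w_n(x)=u_n(x+\widetilde y_n)$. Then $\{w_n\}$ is bounded in $\mathbf{Y}$, and up to a subsequence $w_n\rightharpoonup w$ in $\mathbf{Y}$, $w_n\to w$ in $L^N_{\mathrm{loc}}(\mathbb{R}^N)$ by Lemma \ref{lem8.98}, with $w\not\equiv 0$ because $\int_{B_R(0)}|w|^N\,\mathrm{d}x\ge\beta$. Now set $y_n=\varepsilon_n\widetilde y_n$ and claim that $\{y_n\}$ is bounded. If not, along a subsequence $\operatorname{dist}(y_n,\Lambda)\to\infty$, so for any fixed $x\in\mathbb{R}^N$ one has $\varepsilon_n x+y_n\in\Lambda^c$ eventually; passing to the limit in the translated equation for $w_n$, the weak limit $w$ would solve
\[
-\Delta_p w-\Delta_N w+V_\infty(|w|^{p-2}w+|w|^{N-2}w)+|w|^{N-2}w+\mathcal{H}'_1(w)=\widetilde{\mathcal{H}}'_2(w)+\widetilde f(w)
\]
in $\mathbb{R}^N$, where $V_\infty=\liminf V(\varepsilon_n\cdot+y_n)\ge V_0$ and $\widetilde{\mathcal{H}}'_2,\widetilde f$ are the penalized (truncated) nonlinearities. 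But by construction $\widetilde f(s)s\le\ell's^N$ and $\widetilde{\mathcal{H}}'_2(s)s\le\ell s^N$ with $V_\infty+1\ge 2(\ell+\ell')$, so testing against $w$ and arguing as in the second half of Lemma \ref{lem4.2} would force $w\equiv 0$, a contradiction. Hence $\{y_n\}$ is bounded and, up to a subsequence, $y_n\to y_0$.

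It remains to identify $y_0\in M$ and to upgrade to strong convergence. If $y_0\in\overline{\Lambda^c}$ (in particular $y_0\in\partial\Lambda$), the same truncated limit problem arises and produces $w\equiv 0$; hence $y_0\in\Lambda$. Then $w$ solves the autonomous problem \eqref{main problem@@@} but with $V_0$ replaced by $V(y_0)\ge V_0$. Denoting by $c_{V(y_0)}$ the corresponding ground-state level and using Fatou's lemma in $J_{\varepsilon_n}(u_n)-\frac{1}{N}\langle J'_{\varepsilon_n}(u_n),u_n\rangle$ (a nonnegative integrand thanks to (f2), $(\mathcal{A})$(d) and the bound $\mathcal{H}_1\le \mathcal{H}'_1 s/N$ used in Lemma \ref{lem3.4}), I obtain
\[
c_{V(y_0)}\le J_{V(y_0)}(w)\le\liminf_{n\to\infty} J_{\varepsilon_n}(u_n)=\lim_{n\to\infty}c_{\varepsilon_n}\le c_0.
\]
Since $V\mapsto c_V$ is (strictly) nondecreasing and $V(y_0)\ge V_0$, this forces $V(y_0)=V_0$, and (V2) then yields $y_0\in M\subset\Lambda$. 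Finally, all the inequalities above are equalities, so
\[
\|w_n\|_{W^{1,t}_{V_\varepsilon(\cdot+y_n)}}^t\to\|w\|_{W^{1,t}_{V_0}}^t\quad\text{for }t\in\{p,N\},
\]
$\int_{\mathbb{R}^N}|w_n|^N\,\mathrm{d}x\to\int_{\mathbb{R}^N}|w|^N\,\mathrm{d}x$, and by the Br\'ezis–Lieb-type result of Alves–da Silva \cite{Alves-daSilva-2024} invoked in Lemma \ref{lem3.7}, also $w_n\to w$ in $L^{\mathcal{H}_1}(\mathbb{R}^N)$; combined with Corollary A.2 of \cite{Autuori-Pucci-2013} this delivers $w_n\to w$ in $\mathbf{Y}$.

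The main obstacle will be the step ruling out $\{y_n\}$ unbounded (and the similar argument for $y_0\in\partial\Lambda$): one must carefully exploit the smallness conditions $V_0+1\ge 2(\ell+\ell')$ built into the penalization, together with the strict inequality $V_0<\min_{\partial\Lambda}V$ from (V2), to see that the penalized limit equation outside $\Lambda$ admits only the trivial solution. The technicalities of the exponential Trudinger–Moser nonlinearity in this limit passage will be handled exactly as in Lemma \ref{lem3.6} via Vitali's theorem applied to $\{g(\varepsilon_n(\cdot+\widetilde y_n),w_n)\}$, which is uniformly integrable thanks to the assumption $\limsup_n\|u_n\|_{W^{1,N}}^{N'}<\alpha_N$ and Lemma \ref{lem2.11}.
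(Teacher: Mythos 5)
Your overall strategy matches the paper's (boundedness, non-vanishing via Lions, translation, localization of $y_0$, energy comparison, strong convergence), but there is a genuine gap at the step where you rule out $y_0\in\partial\Lambda$.

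You assert that ``if $y_0\in\overline{\Lambda^c}$ (in particular $y_0\in\partial\Lambda$), the same truncated limit problem arises and produces $w\equiv 0$.'' This is not correct when $y_0\in\partial\Lambda$. The contradiction argument that forces $w\equiv 0$ relies on finding a fixed $r>0$ with $B_r(y_n)\subset\Lambda^c$ for all $n$, so that the dilated balls $B_{r/\varepsilon_n}(\widetilde{y}_n)$ exhaust $\mathbb{R}^N$ and the penalized (truncated) nonlinearities $\widetilde{\mathcal{H}}'_2,\widetilde f$ take over everywhere in the limit. When $y_0\in\partial\Lambda$, one has $\operatorname{dist}(y_n,\Lambda)\to 0$, no such uniform ball exists, and the rescaled sets $\{x:\varepsilon_n x+y_n\in\Lambda\}$ do \emph{not} become negligible --- their characteristic functions $\chi_n(x)=\chi_\Lambda(\varepsilon_n x+y_n)$ typically converge weak-$*$ in $L^\infty_{\mathrm{loc}}$ to a nonconstant $\chi\in[0,1]$ (e.g., a half-space indicator). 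Hence the weak limit $w$ solves a \emph{mixed} equation driven by $\widetilde{G}_2'(x,s)=\chi(x)\mathcal{H}_2'(s)+(1-\chi(x))\widetilde{\mathcal{H}}_2'(s)$ and $\widetilde g(x,s)=\chi(x)f(s)+(1-\chi(x))\widetilde f(s)$, not the fully truncated one, and the ``$w\equiv 0$'' conclusion fails (indeed $w\neq 0$ by non-vanishing). Consequently your subsequent sentence ``then $w$ solves the autonomous problem with $V_0$ replaced by $V(y_0)$'' is also unjustified when $y_0\in\partial\Lambda$.

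The paper closes this gap by extracting the weak-$*$ limit $\chi$ of $\chi_n$, defining the limit functional $\widetilde J$ built from $\widetilde G_2'$ and $\widetilde g$, showing $\widetilde J'(w)=0$ with $w\neq 0$, and then using the pointwise comparison $\widetilde G_2(x,s)\le \mathcal{H}_2(s)$, $\widetilde{\mathcal G}(x,s)\le F(s)$ (valid for \emph{any} $\chi\in[0,1]$) to get $\widetilde J(w)=\max_{t\ge 0}\widetilde J(tw)\ge \max_{t\ge0} J_{V(y_0)}(tw)\ge c_{V(y_0)}$. Combining this with the Fatou estimate $c_0\ge\widetilde J(w)$ (which is essentially what you wrote) gives $c_{V(y_0)}\le c_0$, hence $V(y_0)\le V_0$; equality and the strict inequality in (V2) then yield $V(y_0)=V_0$ and $y_0\in\Lambda$ simultaneously. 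In other words, the energy-comparison argument you invoke at the end of your proposal is the \emph{only} mechanism that works at the boundary, but to set it up one needs the weak-$*$ limit $\chi$ and the comparison $\widetilde J\ge J_{V(y_0)}$ rather than the shortcut ``the truncated limit problem arises.'' Your argument for the boundedness of $\{y_n\}$, the non-vanishing step, and the final Br\'ezis--Lieb/Autuori--Pucci upgrade to strong convergence are all in line with the paper, modulo the compression of Claim~III (showing the scaling factor $t_0=1$) into the phrase ``all the inequalities above are equalities.''
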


\begin{proof}
	Following the ideas in the proof of Lemma \ref{lem3.4}, one sees that $\{u_n\}_{n\in\mathbb{N}}$ is bounded in $\mathbf{Y}_{\varepsilon_n}$. Consequently, due to (V1), we conclude that $\{u_n\}_{n\in\mathbb{N}}$ is bounded in $\mathbf{Y}$. Now, we claim that there exist $R>0$, $\alpha>0$ and a sequence $\{\widetilde{y}_n\}_{n\in\mathbb{N}}\subset\mathbb{R}^N$ such that there holds
	\begin{equation} \label{eq4.25}
		\liminf_{n\to\infty} \int_{B_R(\widetilde{y}_n)}|u_n|^N\,\mathrm{d}x\geq \alpha>0.
	\end{equation}
	Indeed, if \eqref{eq4.25} does not hold, it means that \eqref{eq4.24} holds. In virtue of Lemma \ref{lem8.99}, we obtain that $u_n\to 0$ in $L^\upsilon(\mathbb{R}^N)$ as $n\to\infty$ for any $\upsilon\in (N,+\infty)$. In addition, one can notice that the results of Corollary \ref{cor4.7} also hold whenever $\varepsilon$ is replaced by $\varepsilon_n$. By the hypothesis, we have $\langle{J'_{\varepsilon_n}(u_n),u_n}\rangle=0$. This fact together with Corollary \ref{cor4.7} implies that
	\begin{align*}
		\|u_n\|_{W^{1,p}_{V_{\varepsilon_n}}}^p+ \|u_n\|_{W^{1,N}_{V_{\varepsilon_n}}}^N+\|u_n\|^N_N+\int_{\mathbb{R}^N}\mathcal{H}'_1(u_n)u_n\,\mathrm{d}x=o_n(1)\quad\text{as }n\to\infty.
	\end{align*}
	Now, employing similar arguments explored in Lemma \ref{lem4.2}, one has
	\begin{align*}
		0\leq \|u_n\|_{W^{1,p}_{V_{\varepsilon_n}}}^p+ \|u_n\|_{W^{1,N}_{V_{\varepsilon_n}}}^N+\|u_n\|^N_N+\min\big\{\|u_n\|_{\mathcal{H}_1}^l,\|u_n\|_{\mathcal{H}_1}^N\big\}\leq o_n(1)\quad\text{as }n\to\infty.
	\end{align*}
	It follows at once that $u_n\to 0$ in $\mathbf{Y}_{\varepsilon_n}$ as $n\to\infty$. Consequently, due to \eqref{eq2.333} and Corollary \ref{cor4.7}, we infer that $J_{\varepsilon_n}(u_n)=c_{\varepsilon_n}\to 0$ as $n\to\infty$, which is a contradiction because of Lemma \ref{lem4.5}(a). This shows that \eqref{eq4.25} holds. Set $w_n(x)=\widetilde{u}_n(x)=u_n(x+\widetilde{y}_n)$, then using the fact that $\|\cdot\|_{\mathbf{Y}}$ is invariant under translation, we deduce that $\{w_n\}_{n\in\mathbb{N}}$ is bounded in $\mathbf{Y}$. Thus, up to a subsequence not relabeled, we may assume that there exists $w\in \mathbf{Y}$ such that
	\begin{equation}\label{eq4.26}
		w_n\rightharpoonup w\quad\text{in }\mathbf{Y},
		\quad w_n\to w \quad\text{in }L^s(B_R)
		\quad\text{and }\quad
		w_n\to w\quad\text{a.e.\,in } \mathbb{R}^N\quad\text{as }n\to\infty,
	\end{equation}
	and also there exists $g\in L^s(B_R)$ such that $|w_n|\leq g$ a.e.\,in $\mathbb{R}^N$ for $s\in[1,+\infty)$ and for all $R>0$, thanks to Lemma \ref{lem8.98}. In view of \eqref{eq4.26}, we obtain from \eqref{eq4.25} that
	\begin{align*}
		\int_{B_R(0)}|w|^N\,\mathrm{d}x\geq \alpha>0.
	\end{align*}
	This infers that $w\neq 0$. Define $y_n=\varepsilon_n\widetilde{y}_n$. Now, our aim is to show that $\{y_n\}_{n\in\mathbb{N}}$ is a bounded sequence in $\mathbb{R}^N$. For this, we prove the following claim.\\
	\textbf{Claim I: } There holds $\lim_{n\to\infty} \operatorname{dist}(y_n,\overline{\Lambda})=0$.\\
	If the claim does not hold, then there exists $\delta>0$ and a subsequence of $\{y_n\}_{n\in\mathbb{N}}$, not relabeled, such that
	\begin{align*}
		\operatorname{dist}(y_n,\overline{\Lambda})\geq \delta\quad\text{for all }n\in\mathbb{N}.
	\end{align*}
	It follows that there exists some $r>0$ such that $B_r(y_n)\subset\Lambda^c$ for all $n\in\mathbb{N}$. Next, let $\phi\in C^\infty_{c}(\mathbb{R}^N)$ be such that $0\leq \phi\leq 1$, $\phi\equiv 1$ in $B_1$, and $\phi\equiv 0$ in $B^c_{2}$. For each $j\in\mathbb{N}$, define $\phi_j(\cdot)=\phi(\frac{\cdot}{j})$. Further, set $\psi_j(\cdot)=\psi(\frac{\cdot}{j})$ and $\psi_j(x)=\phi_j(x)w(x)$. It follows that $0\leq \psi_j\leq w$, $\psi_j\equiv w$ in $B_j$, and $\psi_j\equiv 0$ in $B^c_{2j}$. Observe that $\operatorname{supp}(\psi_j)\subset B_{2j}$. By using similar arguments explored in Lemma \ref{lem4.5}, we can obtain
	\begin{equation}\label{eq4.27}
		\psi_j\to w\quad\text{in }  \mathbf{Y} \text{ as } j\to\infty.
	\end{equation}
	Now, by fixing $j>0$ and using $\psi_j$ as a test function along with the change of variable $z\mapsto x+\widetilde{y}_n$ and the invariance by translation, we can see that
	\begin{equation}\label{eq4.28}
		\begin{aligned}
		 	& \sum_{t\in\{p,N\}}\bigg[\int_{\mathbb{R}^N}\big(|\nabla w_n|^{t-2}\nabla w_n\cdot\nabla\psi_j+V(\varepsilon_n x+y_n)w_n^{t-1}\psi_j\big)\,\mathrm{d}x\bigg]\\
		 	&+ \int_{\mathbb{R}^N}w_n^{N-1}\psi_j\,\mathrm{d}x+\int_{\mathbb{R}^N} \mathcal{H}'_1(w_n)\psi_j\,\mathrm{d}x  \\
		 	& = \int_{\mathbb{R}^N} G_2'(\varepsilon_n x+y_n,w_n)\psi_j\,\mathrm{d}x +\int_{\mathbb{R}^N} g(\varepsilon_n x+y_n,w_n)\psi_j\,\mathrm{d}x.
		\end{aligned}
	\end{equation}
	From the definitions of $G_2'$ together with ($\mathcal{A}$)(c), we can see that
	\begin{align*}
		\int_{\mathbb{R}^N} G_2'(\varepsilon_n x+y_n,w_n)\psi_j\,\mathrm{d}x & = \int_{B_{\frac{r}{\varepsilon_n}}} G_2'(\varepsilon_n x+y_n,w_n)\psi_j\,\mathrm{d}x +\int_{B^c_{\frac{r}{\varepsilon_n}}} G_2'(\varepsilon_n x+y_n,w_n)\psi_j\,\mathrm{d}x\\
		& \leq \ell \int_{B_{\frac{r}{\varepsilon_n}}} w_n^{N-1}\psi_j\,\mathrm{d}x +\int_{B^c_{\frac{r}{\varepsilon_n}}} \mathcal{H}_2'(w_n)\psi_j\,\mathrm{d}x\\
		&\leq \ell \int_{\mathbb{R}^N} w_n^{N-1}\psi_j\,\mathrm{d}x +\int_{B^c_{\frac{r}{\varepsilon_n}}} \mathcal{H}_2'(w_n)\psi_j\,\mathrm{d}x.
	\end{align*}
	Similarly, by using the definition of $g$ and ($\mathcal{B}$)(d), one has
	\begin{align*}
		\int_{\mathbb{R}^N} g(\varepsilon_n x+y_n,w_n)\psi_j\,\mathrm{d}x
		 & \leq \ell' \int_{\mathbb{R}^N} w_n^{N-1}\psi_j\,\mathrm{d}x +\int_{B^c_{\frac{r}{\varepsilon_n}}} f(w_n)\psi_j\,\mathrm{d}x.
	\end{align*}
    Observe that $\limsup_{n\to\infty} \|w_n\|^{N'}_{W^{1,N}}<\frac{\alpha_N}{\alpha_0}$ holds. In virtue of $\big|B^c_{\frac{r}{\varepsilon_n}}(0)\big|\to 0$ as $n\to\infty$, ($\mathcal{H}_2$)(b), (f1) and Lemma \ref{lem2.11}, we obtain from Lebesgue's dominated convergence theorem or Vitali's theorem that
	\begin{align*}
		\int_{B^c_{\frac{r}{\varepsilon_n}}} \mathcal{H}_2'(w_n)\psi_j\,\mathrm{d}x\to 0\quad\text{and}\quad\int_{B^c_{\frac{r}{\varepsilon_n}}} f(w_n)\psi_j\,\mathrm{d}x\to 0\quad\text{as }n\to\infty.
	\end{align*}
	Using that $\psi_j$ has compact support, \eqref{eq4.26} and ($\mathcal{H}_1$)(c), due to Lebesgue's dominated convergence theorem, we have
	\begin{equation}\label{eq4.29}
		\int_{\mathbb{R}^N} \mathcal{H}'_1(w_n)\psi_j\,\mathrm{d}x\to \int_{\mathbb{R}^N} \mathcal{H}'_1(w)\psi_j\,\mathrm{d}x\quad\text{as }n\to\infty.
	\end{equation}
	Note that $\{w_n\}_{n\in\mathbb{N}}$ is bounded in $\mathbf{Y}$. Now, arguing similarly as in Lemma \ref{lem3.6}, we can easily prove that $\nabla w_n\to \nabla w$ a.e.\,in $\mathbb{R}^N$ as $n\to\infty$ and for any $t\in\{p,N\}$ there hold
	\begin{equation}\label{eq4.30}
		\begin{aligned}
			\int_{\mathbb{R}^N} |\nabla w_n|^{t-2}\nabla w_n\cdot\nabla\psi_j&\to  \int_{\mathbb{R}^N} |\nabla w|^{t-2}\nabla w\cdot\nabla\psi_j\,\mathrm{d}x\quad\text{as }n\to\infty,\\
			\int_{\mathbb{R}^N} w_n^{t-1}\psi_j\,\mathrm{d}x&\to  \int_{\mathbb{R}^N} w^{t-1}\psi_j\,\mathrm{d}x\quad\text{as }n\to\infty.
		\end{aligned}
	\end{equation}
	Combining all the above information with (V1) and $V_0+1\geq 2(\ell+\ell')$, we obtain by sending $n\to\infty$ in \eqref{eq4.28} that
	\begin{equation}\label{eq4.31}
		\begin{aligned}
			&\int_{\mathbb{R}^N}\big(|\nabla w|^{p-2}\nabla w\cdot\nabla\psi_j+V_0 w^{p-1}\psi_j\big) \mathrm{d}x+\int_{\mathbb{R}^N}\big(|\nabla w|^{N-2}\nabla w\cdot\nabla\psi_j+\eta w^{N-1}\psi_j\big)\mathrm{d}x\\
			&+\int_{\mathbb{R}^N} \mathcal{H}'_1(w)\psi_j\mathrm{d}x\leq 0,
		\end{aligned}
	\end{equation}
	where $\eta=\frac{V_0+1}{2}$. It follows immediately from \eqref{eq4.27} that $\psi_j\rightharpoonup w $ in $L^t(\mathbb{R}^N)$ and $\nabla\psi_j\rightharpoonup \nabla w $ in $[L^t(\mathbb{R}^N)]^N$ as $j\to\infty$ for $t\in\{p,N\}$. Moreover, one has $|\nabla w|^{t-2}\nabla w\in [L^{\frac{t}{t-1}}(\mathbb{R}^N)]^N$ and $w^{t-1}\in L^{\frac{t}{t-1}}(\mathbb{R}^N)$ for $t\in\{p,N\}$. This shows that
	\begin{align*}
		\int_{\mathbb{R}^N} |\nabla w|^{t-2}\nabla w\cdot\nabla\psi_j\to  \int_{\mathbb{R}^N} |\nabla w|^{t}\,\mathrm{d}x\quad\text{and}\quad\int_{\mathbb{R}^N} w^{t-1}\psi_j\,\mathrm{d}x\to  \int_{\mathbb{R}^N} w^{t}\,\mathrm{d}x\quad\text{as }j\to\infty.
	\end{align*}
	Due to Lebesgue's dominated convergence theorem, we also have
	\begin{align*}
		\int_{\mathbb{R}^N} \mathcal{H}'_1(w)\psi_j\,\mathrm{d}x\to \int_{\mathbb{R}^N} \mathcal{H}'_1(w)w\,\mathrm{d}x\quad\text{as }j\to\infty.
	\end{align*}
	Letting $j\to\infty$ in \eqref{eq4.31} and gathering all these information, we get
	\begin{align*}
		0\leq\int_{\mathbb{R}^N}\big(|\nabla w|^{p}+V_0 w^{p}\big)\,\mathrm{d}x+\int_{\mathbb{R}^N}\big(|\nabla w|^{N}+\eta w^{N}\big)\,\mathrm{d}x+\int_{\mathbb{R}^N} \mathcal{H}'_1(w)w\,\mathrm{d}x\leq 0.
	\end{align*}
	Following similar ideas explored in Lemma \ref{lem4.2}, we deduce from the above inequality that
	\begin{align*}
		0\leq\|w\|_{W^{1,p}_{V_0}}^p+ \|w\|_{W^{1,N}_{\eta}}^N+\min\big\{\|w\|_{\mathcal{H}_1}^l,\|w\|_{\mathcal{H}_1}^N\big\} \leq 0.
	\end{align*}
	It follows that $w=0$ in $\mathbf{Y}$ is a contradiction. This completes the proof of the claim. Hence, up to a subsequence, still denoted by itself, we have $y_n\to y_0\in\overline{\Lambda}$ as $n\to\infty$.\\
	\textbf{Claim II: } $y_0\in\Lambda$.\\
	Choose $\theta\in[N,+\infty)$ and $R>0$, then the sequence $\chi_n(x)=\chi_{\Lambda}(\varepsilon_n x+y_n)$ is bounded in $L^\theta(B_R)$. Due to the reflexivity of the space $L^\theta(B_R)$, there exists $\chi_R\in L^\theta(B_R) $ such that $\chi_n\rightharpoonup \chi_R $ in $L^\theta(B_R)$ as $n\to\infty$. Suppose $0<R_1<R_2$, then the functions $\chi_{R_1}$ and $\chi_{R_2}$ are obtained in the same way of $\chi_{R}$ satisfying $\chi_{R_1}=\chi_{R_2}|_{B_{R_1}}$. Hence, we conclude there exists a measurable function $\chi\in L^\theta_{\operatorname{loc}}(\mathbb{R}^N)$ with $0\leq \chi\leq 1$ such that we have
	\begin{equation}\label{eq4.32}
		\chi_n\rightharpoonup \chi\quad\text{in }L^\theta(B_R)\quad\text{as }n\to\infty \text{ for all }\theta\in[N,+\infty)\text{ and for all }R>0.
	\end{equation}
	Fix $\psi\in C^\infty_{c}(\mathbb{R}^N)$ and $R>0$ large enough with $\operatorname{supp}(\psi)\subset B_R$. Following similar arguments as in Claim I, we can see that when $\psi_j$ is replaced by $\psi$ in \eqref{eq4.28}, then \eqref{eq4.28} is also true for any $\psi\in  C^\infty_{c}(\mathbb{R}^N)$. Similarly, we can also see that \eqref{eq4.29} and \eqref{eq4.30} also hold by replacing $\psi_j$ by $\psi$. Further, since $\psi$ has compact support and $V\in C(\mathbb{R}^N,\mathbb{R})$, therefore by \eqref{eq4.26} and Lebesgue's dominated convergence theorem, one has
	\begin{align*}
		\int_{\mathbb{R}^N} V(\varepsilon_n x+y_n)w_n^{t-1}\psi\,\mathrm{d}x\to \int_{\mathbb{R}^N} V(y_0)w^{t-1}\psi\,\mathrm{d}x\quad\text{as }n\to\infty\text{ for } t\in\{p,N\}.
	\end{align*}
	Note that $C^\infty_c(\mathbb{R}^N)$ is a dense subset of  $L^\wp(\mathbb{R}^N)$ for any $\wp\in[1,+\infty)$. Therefore, by using \eqref{eq4.26} and the growth assumptions on $\mathcal{H}'_2$ and $\widetilde{\mathcal{H}}'_2$, we obtain from Lebesgue's dominated convergence theorem that
	\begin{align*}
		\mathcal{H}'_2(w_n)\psi\to \mathcal{H}'_2(w)\psi\quad\text{and}\quad\widetilde{\mathcal{H}}'_2(w_n)\psi\to \widetilde{\mathcal{H}}'_2(w)\psi\quad\text{in }L^{\frac{q}{q-1}}(B_R)\quad\text{as }n\to\infty\text{ for all }q>N.
	\end{align*}
	This together with \eqref{eq4.32} implies that
	\begin{align*}
		\int_{\mathbb{R}^N} G_2'(\varepsilon_n x+y_n,w_n)\psi\,\mathrm{d}x\to  \int_{\mathbb{R}^N} \widetilde{G}_2'(x,w)\psi\,\mathrm{d}x\quad\text{as }n\to\infty,
	\end{align*}
	where
	\begin{align*}
		\widetilde{G}_2'(x,s)&=\chi(x)\mathcal{H}'_2(s)+(1-\chi(x))\widetilde{\mathcal{H}}'_2(s)\quad\text{for all }(x,s)\in \mathbb{R}^N\times [0,+\infty),\\
		\widetilde{G}_2'(x,s)&=-\widetilde{G}_2'(x,-s)\quad\text{for all }(x,s)\in \mathbb{R}^N\times (-\infty,0].
	\end{align*}
	Recall that $\limsup_{n\to\infty} \|w_n\|^{N'}_{W^{1,N}}<\frac{\alpha_N}{\alpha_0}$ holds. Now, by using \eqref{eq4.26}, the growth assumptions on $f$ and $\widetilde{f}$ as well as Lemma \ref{lem2.11}, we obtain from Vitali's convergence theorem that
	\begin{align*}
		f(w_n)\psi\to f(w)\psi\quad\text{and}\quad\widetilde{f}(w_n)\psi\to \widetilde{f}(w)\psi\quad\text{in }L^{\frac{q}{q-1}}(B_R)\quad\text{as }n\to\infty\text{ for all }q>N.
	\end{align*}
	Consequently, from \eqref{eq4.32}, we infer that
	\begin{align*}
		\int_{\mathbb{R}^N} g(\varepsilon_n x+y_n,w_n)\psi\,\mathrm{d}x\to  \int_{\mathbb{R}^N} \widetilde{g}(x,w)\psi\,\mathrm{d}x\quad\text{as }n\to\infty,
	\end{align*}
	where
	\begin{align*}
		\widetilde{g}(x,s)&=\chi(x) f(s)+(1-\chi(x))\widetilde{f}(s)\quad\text{for all }(x,s)\in \mathbb{R}^N\times [0,+\infty),\\
		\widetilde{g}(x,s)&=-\widetilde{g}(x,-s)\quad\text{for all }(x,s)\in \mathbb{R}^N\times (-\infty,0].
	\end{align*}
Now, replacing $\psi$ in place of $\psi_j$ in \eqref{eq4.28} and	using the above convergence results, we obtain by letting $n\to\infty$ in \eqref{eq4.28} that
	\begin{equation}\label{eq4.33}
		\begin{aligned}
			& \int_{\mathbb{R}^N}\big(|\nabla w|^{p-2}\nabla w\cdot\nabla\psi+V(y_0)w^{p-1}\psi\big)\,\mathrm{d}x\\
			&+\int_{\mathbb{R}^N}\big(|\nabla w|^{N-2}\nabla w\cdot\nabla\psi+\big(V(y_0)+1\big)w^{N-1}\psi\big)\,\mathrm{d}x+\int_{\mathbb{R}^N} \mathcal{H}'_1(w)\psi\,\mathrm{d}x  \\
			 & =\int_{\mathbb{R}^N} \widetilde{G}_2'(x,w)\psi\,\mathrm{d}x+\int_{\mathbb{R}^N} \widetilde{g}(x,w)\psi\,\mathrm{d}x.
			\end{aligned}
	\end{equation}
	Note that for $q>N$, we have
	\begin{align*}
		|\widetilde{G}_2'(x,s)|\leq \ell |s|^{N-1}+C|s|^{q-1}\quad\text{and}\quad|\widetilde{g}(x,s)|\leq |f(s)|\quad\text{for all }(x,s)\in \mathbb{R}^N\times \mathbb{R}.
	\end{align*}
	In addition, the maps $t\mapsto \frac{\widetilde{G}_2'(x,s)}{s^{N-1}}$ and $t\mapsto \frac{\widetilde{g}(x,s)}{s^{N-1}}$ are nondecreasing on $(0,+\infty)$. Applying the density of $C^\infty_c(\mathbb{R}^N)$ in $\mathbf{Y}$, we infer from \eqref{eq4.33} that $w$ is a critical point for $\widetilde{J}$, that is, $\widetilde{J}'(w)=0$, where $\widetilde{J}\colon\mathbf{Y}\to\mathbb{R}$ is defined by
	\begin{align*}
		\widetilde{J}(u)&=\frac{1}{p} \|u\|_{W^{1,p}_{V(y_0)}}^p+\frac{1}{N} \Big(\|u\|_{W^{1,N}_{V(y_0)}}^N+\|u\|^N_{N}\Big)+\int_{\mathbb{R}^N}\mathcal{H}_1(u)\,\mathrm{d}x\\
		&\quad-\int_{\mathbb{R}^N}\widetilde{G}_2(x,u)\,\mathrm{d}x-\int_{\mathbb{R}^N} \widetilde{\mathcal{G}}(x,u)\,\mathrm{d}x\quad\text{for all }u\in\mathbf{Y},
	\end{align*}
	where
	\begin{align*}
		\widetilde{G}_2(x,s)=\displaystyle\int_{0}^{s}\widetilde{G}'_2(x,t)\,\mathrm{d}t
		\quad\text{and}\quad
		\widetilde{\mathcal{G}}(x,s)=\displaystyle\int_{0}^{s}\widetilde{g}(x,t)\,\mathrm{d}t\quad\text{for all }(x,s)\in \mathbb{R}^N\times \mathbb{R}.
	\end{align*}
	We define $J_{V(y_0)}\colon\mathbf{Y}\to\mathbb{R}$ by
	\begin{align*}
		J_{V(y_0)}(u)
		&=\frac{1}{p} \|u\|_{W^{1,p}_{V(y_0)}}^p+\frac{1}{N} \Big(\|u\|_{W^{1,N}_{V(y_0)}}^N+\|u\|^N_{N}\Big)+\int_{\mathbb{R}^N}\mathcal{H}_1(u)\,\mathrm{d}x\\
		&-\int_{\mathbb{R}^N}\mathcal{H}_2(u)\,\mathrm{d}x-\int_{\mathbb{R}^N} F(u)\,\mathrm{d}x\quad\text{for all }u\in\mathbf{Y}.
	\end{align*}
	Further, let
	\begin{align*}
		\mathcal{N}_{V(y_0)}&=\{u\in\mathbf{Y}\setminus\{0\}\colon ~\langle{J_{V(y_0)}'(u),u}\rangle=0\},\\
		c_{V(y_0)}&=\inf_{u\in\mathcal{N}_{V(y_0)}}J_{V(y_0)}(u)=\inf_{u\in\mathbf{Y}\setminus\{0\}}\max_{t\geq 0} J_{V(y_0)}(tu).
	\end{align*}
	Denote $\mathcal{O}_0=\ \{u\in\mathbf{Y}_\varepsilon\colon|\operatorname{supp}(|u|)\cap \operatorname{supp}\chi|>0\}$. Now, using the growth assumptions on $\widetilde{G}'_2$ and $\widetilde{g}$ and employing similar arguments explored in Lemma \ref{lem3.2}, we can deduce that for fixed $u\in \mathcal{O}_0\setminus\{0\} $ with $u\geq 0$ a.e.\,in $\mathbb{R}^N$, $\widetilde{J}(tu)\to-\infty$ as $t\to\infty$. In virtue of $w\neq 0$ and $\widetilde{J}'(w)=0$, it follows that $w\in\mathcal{O}_0$. Consequently, we have
	\begin{align*}
		\widetilde{J}(w)=\max_{t\geq 0} \widetilde{J}(tw)\geq \max_{t\geq 0} J_{V(y_0)}(tw)\geq c_{V(y_0)}.
	\end{align*}
	Following similarly ideas as in Lemma \ref{lem3.4} and using the change of variable $z\mapsto x+\widetilde{y}_n$ together with ($\mathcal{A}$)(d) and $N\mathcal{G}(x,s)\leq sg(x,s)$ for all $(x,s)\in\mathbb{R}^N\times[0,+\infty)$, we get
	\begin{align*}
		c_{\varepsilon_n} & = J_{\varepsilon_n}(u_n)-\frac{1}{N}\langle{J'_{\varepsilon_n}(u_n),u_n}\rangle  \\
		& =\bigg(\frac{1}{p}-\frac{1}{N}\bigg)\int_{\mathbb{R}^N}\big(|\nabla w_n|^{p}+V(\varepsilon_n x+y_n)|w_n|^{p}\big)\,\mathrm{d}x\\
		&\quad +\int_{\mathbb{R}^N} \Big(\frac{1}{N}g(\varepsilon_n x+y_n,w_n)w_n-\mathcal{G}(\varepsilon_n x+y_n,w_n)\Big)\,\mathrm{d}x \\
		 & \quad +\int_{\mathbb{R}^N} \Big(\frac{1}{N}|w_n|^N+\mathcal{H}_2(w_n)-\frac{1}{N}\mathcal{H}'_2(w_n)w_n+\frac{1}{N}G'_2(\varepsilon_n x+y_n,w_n)w_n-G_2(\varepsilon_n x+y_n,w_n)\Big)\,\mathrm{d}x                                          \\
		 & \geq  \bigg(\frac{1}{p}-\frac{1}{N}\bigg)\int_{B_R}\big(|\nabla w_n|^{p}+V(\varepsilon_n x+y_n)|w_n|^{p}\big)\,\mathrm{d}x\\
		 &\quad +\int_{B_R} \Big(\frac{1}{N}g(\varepsilon_n x+y_n,w_n)w_n-\mathcal{G}(\varepsilon_n x+y_n,w_n)\Big)\,\mathrm{d}x              \\
		 & \quad +\int_{B_R} \Big(\frac{1}{N}|w_n|^N+\mathcal{H}_2(w_n)-\frac{1}{N}\mathcal{H}'_2(w_n)w_n+\frac{1}{N}G'_2(\varepsilon_n x+y_n,w_n)w_n-G_2(\varepsilon_n x+y_n,w_n)\Big)\,\mathrm{d}x
	\end{align*}
	for all $R>0$. Now, from \eqref{eq4.26} and the growth assumptions on $\mathcal{H}'_2$ and $\widetilde{\mathcal{H}}'_2$ together with  Lebesgue's dominated convergence implies that
	\begin{align*}
		\mathcal{H}'_2(w_n)w_n\to \mathcal{H}'_2(w)w\quad\text{and}\quad\widetilde{\mathcal{H}}'_2(w_n)w_n\to \widetilde{\mathcal{H}}'_2(w)w\quad\text{in } L^{\frac{q+1}{q}}(B_R)\quad\text{as }n\to\infty\text{ for all }q>N.
	\end{align*}
	The above result together with \eqref{eq4.32} gives
	\begin{align*}
		\int_{B_R} G_2'(\varepsilon_n x+y_n,w_n)w_n\,\mathrm{d}x\to  \int_{B_R} \widetilde{G}_2'(x,w)w\,\mathrm{d}x\quad\text{as }n\to\infty.
	\end{align*}
	Likewise, we can prove that
	\begin{align*}
		\int_{B_R} G_2(\varepsilon_n x+y_n,w_n)\,\mathrm{d}x\to  \int_{B_R} \widetilde{G}_2(x,w)\,\mathrm{d}x\quad\text{as }n\to\infty.
	\end{align*}
	Further, using $\limsup_{n\to\infty} \|w_n\|^{N'}_{W^{1,N}}<\frac{\alpha_N}{\alpha_0}$, \eqref{eq4.26}, the growth assumptions on $f$ and $\widetilde{f}$, and Lemma \ref{lem2.11}, we deduce from Vitali's convergence theorem that
	\begin{align*}
		f(w_n)w_n\to f(w)w\quad\text{and}\quad\widetilde{f}(w_n)w_n\to \widetilde{f}(w)w\quad\text{in } L^{\frac{q+1}{q}}(B_R)\quad\text{as }n\to\infty\text{ for all }q>N.
	\end{align*}
	Now, it follows from \eqref{eq4.32} that
	\begin{align*}
		\int_{B_R} g(\varepsilon_n x+y_n,w_n)w\,\mathrm{d}x\to  \int_{B_R} \widetilde{g}(x,w)w\,\mathrm{d}x\quad\text{as }n\to\infty.
	\end{align*}
	Similarly, we can prove that
	\begin{align*}
		\int_{B_R} \mathcal{G}(\varepsilon_n x+y_n,w_n)\,\mathrm{d}x\to  \int_{B_R} \widetilde{\mathcal{G}}(x,w)\,\mathrm{d}x\quad\text{as }n\to\infty.
	\end{align*}
	In a similar fashion, we obtain from Lebesgue's dominated convergence theorem that
	\begin{align*}
		\int_{B_R} \mathcal{H}_2(w_n)\,\mathrm{d}x\to  \int_{B_R} \mathcal{H}_2(w)\,\mathrm{d}x\quad\text{and}\quad\int_{B_R} \mathcal{H}_2'(w_n)w_n\,\mathrm{d}x\to  \int_{B_R} \mathcal{H}_2'(w)w\,\mathrm{d}x\quad\text{as }n\to\infty.
	\end{align*}
	Gathering all the above information, we obtain by using Fatou's lemma and Lemma \ref{lem4.5}(c) that
	\begin{align*}
		 & c_0   \geq  \bigg(\frac{1}{p}-\frac{1}{N}\bigg)\int_{B_R}\big(|\nabla w|^{p}+V(y_0)|w|^{p}\big)\,\mathrm{d}x+\int_{B_R} \Big(\frac{1}{N}\widetilde{g}(x,w)w-\widetilde{\mathcal{G}}(x,w)\Big)\,\mathrm{d}x \\
		 & \qquad +\int_{B_R} \Big(\frac{1}{N}|w|^N+\mathcal{H}_2(w)-\frac{1}{N}\mathcal{H}'_2(w)w+\frac{1}{N}\widetilde{G}'_2(x,w)w-\widetilde{G}_2(x,w)\Big)\,\mathrm{d}x.
	\end{align*}
	Letting $R\to\infty$ in the above inequality, we get
	\begin{align*}
		c_0
		& \geq  \bigg(\frac{1}{p}-\frac{1}{N}\bigg)\int_{\mathbb{R}^N}\big(|\nabla w|^{p}+V(y_0)|w|^{p}\big)\,\mathrm{d}x+\int_{\mathbb{R}^N} \Big(\frac{1}{N}\widetilde{g}(x,w)w-\widetilde{\mathcal{G}}(x,w)\Big)\,\mathrm{d}x\\
		& \quad +\int_{\mathbb{R}^N} \Big(\frac{1}{N}|w|^N+\mathcal{H}_2(w)-\frac{1}{N}\mathcal{H}'_2(w)w+\frac{1}{N}\widetilde{G}'_2(x,w)w-\widetilde{G}_2(x,w)\Big)\,\mathrm{d}x\\
		&= \widetilde{J}(w)-\frac{1}{N}\langle{ \widetilde{J}(w),w}\rangle=\widetilde{J}(w)\geq c_{V(y_0)}.
	\end{align*}
	It follows from the above inequality and the definitions of $c_0$ and $c_{V(y_0)}$ that $V(y_0)\leq V_0=\inf_{x\in \Lambda} V(x)$. Consequently, by (V2), we have $V(y_0)=V_0$ and $y_0\in\Lambda$. This finishes the proof of the claim.

	To finish the proof, we only have to prove $w_n\to w$ in $\mathbf{Y}$ as $n\to\infty$. For this, we have the following claim.
	\textbf{Claim III: } $w_n\to w$ in $\mathbf{Y}$ as $n\to\infty$.\\
	Notice that
	\begin{align*}
		\mathbb{R}^N=\big(\Lambda_n\cup \{w_n\leq t_1\} \big) \cup \big(\Lambda_n^c\cap\{w_n>t_1\}\big),
	\end{align*}
	where $\Lambda_n=\frac{\Lambda-y_n}{\varepsilon_n}$. From the definitions of $G_2$ and $\mathcal{H}_2$ along with ($\mathcal{B}$)(d), one has
	\begin{align*}
		&\frac{1}{N}G'_2(\varepsilon_n x+y_n,w_n)w_n-G_2(\varepsilon_n x+y_n,w_n)\geq 0\quad\text{on }\Lambda_n^c\cap\{w_n>t_1\},\\[1ex]
		&\frac{1}{N}g(\varepsilon_n x+y_n,w_n)w_n-\mathcal{G}(\varepsilon_n x+y_n,w_n)\geq 0\quad\text{on }\Lambda_n^c\cap\{w_n>t_1\},
	\end{align*}
	and
	\begin{align*}
		\frac{1}{N}|w_n|^N+\mathcal{H}_2(w_n)-\frac{1}{N}\mathcal{H}'_2(w_n)w_n>\frac{\big((N-1)\delta\big)^N}{N}>0 \quad\text{for}~w_n>t_1.
	\end{align*}
	It follows from $w_n\to w$ a.e.\,in $\mathbb{R}^N$, $\nabla w_n\to \nabla w$ a.e.\,in $\mathbb{R}^N$, $y_n\to y_0$ and $\chi_{\Lambda_n}\to 1$ a.e.\,in $\mathbb{R}^N$ as $n\to\infty$ that
	\begin{enumerate}
		\item[\textnormal{(i)}]
			$ \big(|\nabla w_n|^{p}+V(\varepsilon_n x+y_n)|w_n|^{p}\big)\chi_{\Lambda_n^c\cap\{w_n>t_1\}}\to 0$ a.e.\,in $\mathbb{R}^N$ as $n\to\infty$,\vspace{0.2cm}
		\item[\textnormal{(ii)}]
			$\Big(\frac{1}{N}G'_2(\varepsilon_n x+y_n,w_n)w_n-G_2(\varepsilon_n x+y_n,w_n)\Big)\chi_{\Lambda_n^c\cap\{w_n>t_1\}}\to 0$ a.e.\,in $\mathbb{R}^N$ as $n\to\infty$,\vspace{0.2cm}
		\item[\textnormal{(iii)}]
			$\Big(\frac{1}{N}|w_n|^N+\mathcal{H}_2(w_n)-\frac{1}{N}\mathcal{H}'_2(w_n)w_n\Big)\chi_{\Lambda_n^c\cap\{w_n>t_1\}}\to 0$ a.e.\,in $\mathbb{R}^N$ as $n\to\infty$,\vspace{0.2cm}
		\item[\textnormal{(iv)}]
			$\Big(\frac{1}{N}g(\varepsilon_n x+y_n,w_n)w_n-\mathcal{G}(\varepsilon_n x+y_n,w_n)\Big)\chi_{\Lambda_n^c\cap\{w_n>t_1\}}\to 0$ a.e.\,in $\mathbb{R}^N$ as $n\to\infty$.
	\end{enumerate}
	In virtue of $\widetilde{G}_2'(x,s)\leq \mathcal{H}'(s)$, $\widetilde{g}_2'(x,s)\leq f(s)$ for all $(x,s)\in \mathbb{R}^N\times[0,+\infty)$ and $V(y_0)=V_0$, we obtain from $\langle{\widetilde{J}'(w),w}\rangle=0$ that $\langle{J_0'(w),w}\rangle\leq 0$. Define $\xi\colon(0,+\infty)\to\mathbb{R}$ by $\xi(t)=\langle{J_0'(tw),tw}\rangle$. It follows immediately that $\xi(1)\leq 0$. Choose $0<t<1$, then by using ($\mathcal{H}_1$)(b), ($\mathcal{H}_2$)(b), \eqref{eq3.2} with $\vartheta=q>N$ and Corollary \ref{cor2.9}, we get
	\begin{align*}
		\xi(t)\geq t^N\Big(\|w\|_{W^{1,p}_{V_0}}^p+\|w\|_{W^{1,N}_{V_0}}^N+(1-\tau)\|w\|^N_{N}\Big)-t^q\bigg(C\|w\|^q_q+\widetilde{\kappa}_\tau\int_{\mathbb{R}^N}|w|^q\Phi(\alpha |w|^{N'})\,\mathrm{d}x\bigg).
	\end{align*}
	Due to the arbitrariness of $\tau>0$, we can choose $\tau>0$ small enough such that $V_0+1-\tau=:\sigma>0$. Consequently, we have
	\begin{align*}
		\xi(t)\geq t^N\Big(\|w\|_{W^{1,p}_{V_0}}^p+\|w\|_{W^{1,N}_{\sigma}}^N\Big)-t^q\bigg(C\|w\|^q_q+\widetilde{\kappa}_\tau\int_{\mathbb{R}^N}|w|^q\Phi(\alpha |w|^{N'})\,\mathrm{d}x\bigg).
	\end{align*}
	This shows that there exist constants $C_1,C_2>0$ such that
	\begin{align*}
		\xi(t)\geq C_1 t^N-C_2t^q>0\quad\text{for } t\in(0,1) \text{ sufficiently small}.
	\end{align*}
	Due to the continuity of $\xi$, we infer that there exists $t_0\in(0,1]$ such that $\xi(t_0)=0$, that is, $t_0 w\in\mathcal{N}_0$. Now, combining all the above information, we obtain from the change of variable, Lemma \ref{lem4.5}(c), Fatou's Lemma and (f3) that
    \begin{equation}\label{eq4.34}
		\begin{aligned}
		c_0
		& \geq \limsup_{n\to\infty}c_{\varepsilon_n}=\limsup_{n\to\infty}J_{\varepsilon_n}(u_n) =\limsup_{n\to\infty}\bigg[J_{\varepsilon_n}(u_n)-\frac{1}{N}\langle{J'_{\varepsilon_n}(u_n),u_n}\rangle\bigg] \\
		& =\limsup_{n\to\infty}\bigg[\bigg(\frac{1}{p}-\frac{1}{N}\bigg)\int_{\mathbb{R}^N}\big(|\nabla w_n|^{p}+V(\varepsilon_n x+y_n)|w_n|^{p}\big)\mathrm{d}x\\
		&\qquad\qquad\qquad +\int_{\mathbb{R}^N} \Big(\frac{1}{N}|w_n|^N+\mathcal{H}_2(w_n)-\frac{1}{N}\mathcal{H}'_2(w_n)w_n\Big)\,\mathrm{d}x\\
		&\qquad\qquad\qquad
		+ \int_{\mathbb{R}^N} \Big(\frac{1}{N}g(\varepsilon_n x+y_n,w_n)w_n-\mathcal{G}(\varepsilon_n x+y_n,w_n)\Big)\mathrm{d}x\\
		&\qquad\qquad\qquad  +\int_{\mathbb{R}^N} \Big(\frac{1}{N}G'_2(\varepsilon_n x+y_n,w_n)w_n-G_2(\varepsilon_n x+y_n,w_n)\Big)\mathrm{d}x\bigg]  \\
		& \geq \liminf_{n\to\infty}\bigg[\bigg(\frac{1}{p}-\frac{1}{N}\bigg)\int_{\mathbb{R}^N}\big(|\nabla w_n|^{p}+V(\varepsilon_n x+y_n)|w_n|^{p}\big)\chi_{\Lambda_n}\mathrm{d}x\\
		&\qquad\qquad\qquad +\int_{\mathbb{R}^N} \Big(\frac{1}{N}f(w_n)w_n-F(w_n)\Big)\chi_{\Lambda_n}\mathrm{d}x\\
		&\qquad\qquad\qquad +\int_{\mathbb{R}^N}\frac{1}{N}|w_n|^N\chi_{\Lambda_n}\,\mathrm{d}x\\
		&\qquad\qquad\qquad+\bigg(\frac{1}{p}-\frac{1}{N}\bigg)\int_{\mathbb{R}^N}\big(|\nabla w_n|^{p}+V(\varepsilon_n x+y_n)|w_n|^{p}\big)\chi_{\Lambda_n^c\cap\{w_n>t_1\}}\,\mathrm{d}x\\
		&\qquad\qquad\qquad +\int_{\mathbb{R}^N}\bigg(\frac{1}{N}|w_n|^N+\mathcal{H}_2(w_n)-\frac{1}{N}\mathcal{H}'_2(w_n)w_n\bigg)\chi_{\Lambda_n^c\cap\{w_n>t_1\}}\,\mathrm{d}x\\
		&\qquad\qquad \qquad+\int_{\mathbb{R}^N}\bigg(\frac{1}{N}G'_2(\varepsilon_n x+y_n,w_n)w_n-G_2(\varepsilon_n x+y_n,w_n)\bigg)\chi_{\Lambda_n^c\cap\{w_n>t_1\}}\,\mathrm{d}x\\
		&\qquad\qquad \qquad+\int_{\mathbb{R}^N} \Big(\frac{1}{N}g(\varepsilon_n x+y_n,w_n)w_n-\mathcal{G}(\varepsilon_n x+y_n,w_n)\Big)\chi_{\Lambda_n^c\cap\{w_n>t_1\}}\mathrm{d}x \bigg]
        \end{aligned}
        \end{equation}

	\begin{equation*}
		\begin{aligned}
		& \geq \bigg(\frac{1}{p}-\frac{1}{N}\bigg)\int_{\mathbb{R}^N}\big(|\nabla w|^{p}+V_0|w|^{p}\big)\,\mathrm{d}x+\frac{1}{N}\int_{\mathbb{R}^N}|w|^N\,\mathrm{d}x+\int_{\mathbb{R}^N} \Big(\frac{1}{N}f(w)w-F(w)\Big)\,\mathrm{d}x \\
		& \geq \bigg(\frac{1}{p}-\frac{1}{N}\bigg)t_0^p\int_{\mathbb{R}^N}\big(|\nabla w|^{p}+V_0|w|^{p}\big)\,\mathrm{d}x+\frac{t_0^N}{N}\int_{\mathbb{R}^N}|w|^N\,\mathrm{d}x\\
		&\qquad\qquad\qquad\qquad+\int_{\mathbb{R}^N} \Big(\frac{1}{N}f(t_0w)t_0w-F(t_0 w)\Big)\,\mathrm{d}x  \\
		& =J_{0}(t_0 w)-\frac{1}{N}\langle{J'_{0}(t_0w, t_0w}\rangle=J_{0}(t_0 w)\geq c_0.
		\end{aligned}
	\end{equation*}
	It follows that $t_0=1$. Moreover, due to the simple change of variable, we obtain the following convergences
	\begin{align*}
		\int_{\mathbb{R}^N}\big(|\nabla w_n|^{t}+V(\varepsilon_n x+y_n)|w_n|^{t}\big)\,\mathrm{d}x\to\int_{\mathbb{R}^N}\big(|\nabla w|^{t}+V_0|w|^{t}\big)\,\mathrm{d}x\quad\text{as }n\to\infty\text{ for }t\in\{p,N\}
	\end{align*}
	and
	\begin{align*}
		\int_{\mathbb{R}^N}\mathcal{H}'_1(w_n)w_n\,\mathrm{d}x\to\int_{\mathbb{R}^N}\mathcal{H}'_1(w)w\,\mathrm{d}x \quad\text{as }n\to\infty.
	\end{align*}
	Consequently, it is not difficult to show that
	\begin{align*}
		\int_{\mathbb{R}^N}|w_n|^t\,\mathrm{d}x\to\int_{\mathbb{R}^N}|w|^t\,\mathrm{d}x \quad\text{as }n\to\infty\text{ for }t\in\{p,N\}.
	\end{align*}
	This implies that $\|w_n\|_{W^{1,t}_{V_0}}\to\|w\|_{W^{1,t}_{V_0}}$ as $n\to\infty$ for $t\in\{p,N\}$. Now, arguing similarly as in Lemma \ref{lem3.7}, we can deduce that $w_n\to w$ in $\mathbf{Y}$ as $n\to\infty$. This completes the proof.
\end{proof}

\begin{remark}\label{rem4.9}
	In view of \eqref{eq4.34}, we deduce that there holds $\lim_{\varepsilon\to0}c_\varepsilon=c_0$.
\end{remark}

To study the behavior of the maximum points of the solutions, the following lemma is quite important. The proof relies on the appropriate Moser iteration argument found in Moser \cite{Moser-1960} and the notions discussed in Ambrosio \cite{Ambrosio-2024}.

\begin{lemma}\label{lem4.10}
	Let $\{w_n\}_{n\in\mathbb{N}}$ be a sequence that appears in Lemma \ref{lem4.8}. Then, $\{w_n\}_{n\in\mathbb{N}}\subset L^\infty(\mathbb{R}^N)$ and there is a constant $K>0$ such that
	\begin{align*}
		\|w_n\|_{L^\infty(\mathbb{R}^N)}\leq K
		\quad\text{for all }n\in\mathbb{N}.
	\end{align*}
	In addition, we have
	\begin{equation}\label{eq4.35}
		\lim_{|x|\to\infty} \sup_{n\in\mathbb{N}}|w_n(x)|=0.
	\end{equation}
\end{lemma}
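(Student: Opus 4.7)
I would establish the uniform $L^\infty$ bound by a Moser-type iteration adapted to the $(p,N)$-Laplace operator and the critical exponential nonlinearity, and then derive the vanishing at infinity from the strong $\mathbf{Y}$-convergence $w_n\to w$ furnished by Lemma \ref{lem4.8} combined with a localized version of the same iteration.

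After the translation, each $w_n\ge 0$ satisfies in the weak sense the equation obtained from $J'_{\varepsilon_n}(u_n)=0$, with all coefficients shifted to $\varepsilon_n x+y_n$. For $L>0$ and $\beta\ge 1$, set $w_{n,L}=\min\{w_n,L\}$ and test with $\varphi_n=w_n\,w_{n,L}^{N(\beta-1)}$. The $N$-Laplace contribution is bounded below, via a standard algebraic inequality, by a positive multiple of $\beta^{1-N}\int|\nabla(w_n\,w_{n,L}^{\beta-1})|^N\,\mathrm{d}x$; combined with the potential term $V(\varepsilon_n x+y_n)w_n^{N-1}\varphi_n$ and the extra $w_n^{N-1}\varphi_n$ on the left, the Sobolev embedding $W^{1,N}(\mathbb{R}^N)\hookrightarrow L^{N\theta}(\mathbb{R}^N)$ for any $\theta>1$ produces
\begin{equation*}
\|w_n\,w_{n,L}^{\beta-1}\|_{N\theta}^N\le C\beta^{N-1}\bigg(\int_{\mathbb{R}^N}\big(G'_2(\varepsilon_n x+y_n,w_n)+g(\varepsilon_n x+y_n,w_n)\big)\,w_n\,w_{n,L}^{N(\beta-1)}\,\mathrm{d}x\bigg),
\end{equation*}
the contribution $\int\mathcal{H}'_1(w_n)w_n\,w_{n,L}^{N(\beta-1)}\,\mathrm{d}x\ge 0$ being discarded thanks to $(\mathcal{H}_1)(b)$. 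By $(\mathcal{A})(a)$, $(\mathcal{B})(b)$ and (f1), $\big(G'_2(\cdot,w_n)+g(\cdot,w_n)\big)w_n\le(\ell+\tau)w_n^N+Cw_n^q+\kappa_\tau w_n^N\,\Phi(\alpha_0 w_n^{N'})$, so the whole issue is the critical piece. I would handle it by choosing $r'>1$ close to $1$ and $\alpha>\alpha_0$ close to $\alpha_0$ with $r'\alpha\,\limsup_n\|w_n\|_{W^{1,N}}^{N'}<\alpha_N$; Corollary \ref{cor2.9} and Lemma \ref{lem2.11} then yield $\sup_n\int_{\mathbb{R}^N}\Phi(r'\alpha|w_n|^{N'})\,\mathrm{d}x<\infty$, and H\"older with $(r,r')$ converts the critical piece into a uniformly controlled multiple of $\big(\int w_n^{Nr}\,w_{n,L}^{Nr(\beta-1)}\,\mathrm{d}x\big)^{1/r}$. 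Letting $L\to\infty$ via monotone convergence (legitimate since $w_n\in L^s(\mathbb{R}^N)$ for every $s\in[N,\infty)$ by Lemma \ref{lem8.98}) gives the bootstrap $\|w_n\|_{N\theta\beta}\le(C\beta)^{1/\beta}\|w_n\|_{Nr\beta}$ with fixed $\theta/r>1$ and constants independent of $n$. Iterating along $\beta_k=(\theta/r)^k$ and using $\prod_k(C\beta_k)^{1/\beta_k}<\infty$ produces the uniform bound $\|w_n\|_\infty\le K$.

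For the decay at infinity I would rerun the same iteration after inserting a cut-off $\eta_R\in C^\infty(\mathbb{R}^N)$ with $\eta_R\equiv 0$ on $B_R$, $\eta_R\equiv 1$ on $B_{2R}^c$ and $|\nabla\eta_R|\le C/R$, starting the scheme from an exponent $s>N$. The strong convergence $w_n\to w$ in $\mathbf{Y}$ from Lemma \ref{lem4.8}, together with Lemma \ref{lem8.98} and the interpolation inequality, gives $w_n\to w$ in $L^s(\mathbb{R}^N)$, whence $\sup_n\|w_n\|_{L^s(B_R^c)}\to 0$ as $R\to\infty$. The localized iteration, with the gradient errors generated by $\nabla\eta_R$ absorbed using the uniform bound $\|w_n\|_\infty\le K$ obtained above, then yields $\|w_n\|_{L^\infty(B_{2R}^c)}\le C\|w_n\|_{L^s(B_R^c)}^{\gamma}$ for some $\gamma>0$ independent of $n$ and $R$, which is exactly \eqref{eq4.35}.

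The main obstacle is keeping all estimates uniform in $n$ throughout the iteration: one has to ensure that the dependence of the constants on $\beta$ stays polynomial and, simultaneously, that the Trudinger--Moser supremum $\sup_n\int\Phi(r'\alpha|w_n|^{N'})\,\mathrm{d}x$ remains finite at every step. Both points hinge precisely on the hypothesis $\limsup_n\|w_n\|_{W^{1,N}}^{N'}<\alpha_N$ inherited from Lemma \ref{lem4.8}, which makes it possible to fix once and for all a pair $(r',\alpha)$ sufficiently close to $(1,\alpha_0)$ for which Lemma \ref{lem2.11} applies uniformly in $n$.
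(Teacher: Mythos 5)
Your Moser iteration for the uniform $L^\infty$ bound follows the same lines as the paper's: same test function $w_n\,w_{n,L}^{N(\beta-1)}$, same use of Lemma~\ref{lem2.11} under the hypothesis $\limsup_n\|w_n\|_{W^{1,N}}^{N'}<\alpha_N$ to keep the Trudinger--Moser integral uniformly bounded in $n$, same geometric ladder of exponents. The genuine divergence is in the proof of \eqref{eq4.35}. The paper does \emph{not} localize the iteration; it instead constructs a comparison function $v_n$ solving the autonomous problem
\begin{equation*}
[-\Delta_p v_n+V_0|v_n|^{p-2}v_n]+[-\Delta_N v_n+\eta|v_n|^{N-2}v_n]
= C_q w_n^{q-1}+\widetilde\kappa_\tau\, w_n^{\vartheta-1}\Phi(\alpha w_n^{N'})\quad\text{in }\mathbb{R}^N
\end{equation*}
via Browder--Minty surjectivity, establishes $0\le w_n\le v_n$ by a weak comparison principle, proves $v_n\to v$ strongly in $\mathbf{X}$ together with a uniform global $C^{0,\alpha}(\mathbb{R}^N)$ bound for $\{v_n\}$, and then invokes Ambrosio's equicontinuity lemma (Lemma~2.1 of \cite{Ambrosio-2024}) to obtain $\lim_{|x|\to\infty}\sup_n v_n(x)=0$, hence the same for $w_n$. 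The purpose of the comparison is to trade the non-autonomous coefficients $V(\varepsilon_n x+y_n)$ for fixed constants so the H\"older constants are manifestly uniform. Your alternative --- a direct exterior De Giorgi--Moser estimate for $w_n$ itself, using the already-established $\|w_n\|_\infty\le K$ to linearize the exponential nonlinearity on $B_R^c$ and the tail smallness $\sup_n\|w_n\|_{L^s(B_R^c)}\to 0$ coming from the strong $\mathbf{Y}$-convergence --- is more elementary (no surjectivity theorem, no comparison principle, no external equicontinuity lemma) and perfectly viable, since the potential and $\mathcal{H}'_1$ appear on the favorable side of the Caccioppoli inequality and need not be bounded from above.

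There is, however, one concrete misstep in the way you set up the localized iteration. A single cut-off $\eta_R$ vanishing on $B_R$, equal to $1$ on $B_{2R}^c$ with $|\nabla\eta_R|\le C/R$ cannot drive the scheme: each Caccioppoli step would relate a norm over $B_{2R}^c$ to a norm over $B_R^c$, so iterating with further cut-offs of the same (radius-doubling) type makes the exterior region collapse to the empty set, and you never land on a \emph{fixed} exterior set. The standard fix is a nested family with fixed annulus thickness, e.g.\ $\eta_k\equiv 0$ on $B_{R+1-2^{-k}}$ and $\eta_k\equiv 1$ on $B^c_{R+1-2^{-k-1}}$, so $|\nabla\eta_k|\le C\,2^k$ uniformly in $R$; equivalently, apply the interior estimate $\|w_n\|_{L^\infty(B_{1/2}(x_0))}\le C\|w_n\|_{L^s(B_1(x_0))}$, with $C$ depending only on $p,N,V_0,K$ and not on $x_0$, and cover $B_{R+1}^c$ by unit balls $B_1(x_0)\subset B_R^c$. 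With that correction your conclusion $\|w_n\|_{L^\infty(B_{R+1}^c)}\le C\,\|w_n\|_{L^s(B_R^c)}\to 0$ uniformly in $n$ is exactly \eqref{eq4.35}, and the rest of your plan is sound.
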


\begin{proof}
	For all $L>0$ and $\beta>1$, we define $\gamma(w_n)=w_n w_{n,L}^{N(\beta-1)}~\text{and}~u_{n,L}=w_n w_{n,L}^{\beta-1}$, where $w_{n,L}=\min\{w_n,L\}$. Note that the function $\gamma$ is an increasing function, so we have $(a-b)(\gamma(a)-\gamma(b))\geq 0$ for all $a,b\in\mathbb{R}$. Define
	\begin{align*}
		\Lambda(t)=\frac{|t|^N}{N}
		\quad\text{and}\quad
		\Gamma(t)=\int_{0}^{t}(\gamma'(s))^{\frac{1}{N}}\,\mathrm{d}s.
	\end{align*}
	Invoking Lemma 29 of Zhang-Sun-Liang-Thin \cite{Zhang-Sun-Liang-Thin-2024}, one sees that
	\begin{align*}
		\Lambda' (a-b)(\gamma(a)-\gamma(b))\geq |\Gamma(a)-\Gamma(b)|^N\quad\text{for all }a,b\in\mathbb{R}.
	\end{align*}
	Consequently, there holds
	\begin{align*}
		\frac{1}{\beta} w_n w_{n,L}^{\beta-1}\leq \Gamma(w_n)\leq  w_n w_{n,L}^{\beta-1}.
	\end{align*}
	Let $\eta>0$ be specified later and $S$ be the best Sobolev constant of the embedding $W^{1,N}(\mathbb{R}^N)\hookrightarrow L^{N^*}(\mathbb{R}^N)$ for any $N^*>N$. Using the fact that $\|\cdot\|_{W^{1,N}_{\eta}}$ is an equivalent norm for $W^{1,N}(\mathbb{R}^N)$, the last inequality yields
	\begin{equation}\label{eq4.36}
		\|w_n w_{n,L}^{\beta-1}\|^N_{N^*}\leq C_1\beta^N\bigg[\int_{\mathbb{R^N}} w_{n,L}^{N(\beta-1)}|\nabla w_n|^N\,\mathrm{d}x+\eta\int_{\mathbb{R^N}} w_n^N w_{n,L}^{N(\beta-1)}\,\mathrm{d}x\bigg],
	\end{equation}
	where $C_1=C_1(N,\beta, S)>0$ is a constant. Due to $w_{n,L}\leq L$, we have $ \mathcal{H}'_1(w_n)\gamma(w_n)\leq L^{N(\beta-1)}\mathcal{H}'_1(w_n)w_n\in L^1(\mathbb{R}^N)$. By direct calculation, one has
	\begin{align*}
		\int_{\mathbb{R}^N}|\nabla w_n|^{p-2}\nabla w_n\cdot\nabla(\gamma(w_n))\,\mathrm{d}x=\int_{\mathbb{R}^N}w_{n,L}^{N(\beta-1)}|\nabla w_n|^p\,\mathrm{d}x+N(\beta-1)\int_{\{w_n\leq L\}}w_{n,L}^{N(\beta-1)}|\nabla w_n|^p\,\mathrm{d}x\geq 0,
	\end{align*}
	\begin{align*}
		\int_{\mathbb{R}^N}|\nabla w_n|^{N-2}\nabla w_n\cdot\nabla(\gamma(w_n))\,\mathrm{d}x
		& =\int_{\mathbb{R}^N}w_{n,L}^{N(\beta-1)}|\nabla w_n|^N\,\mathrm{d}x+N(\beta-1)\int_{\{w_n\leq L\}}w_{n,L}^{N(\beta-1)}|\nabla w_n|^N\,\mathrm{d}x  \\
		& \geq \int_{\mathbb{R}^N}w_{n,L}^{N(\beta-1)}|\nabla w_n|^N\,\mathrm{d}x,
	\end{align*}
	and
	\begin{align*}
		\int_{\mathbb{R}^N}V(\varepsilon_n x+y_n)(w_n^{p-1}+w_n^{N-1})\gamma(w_n)\,\mathrm{d}x & \geq V_0 \int_{\mathbb{R}^N}w_n^{p}w_{n,L}^{N(\beta-1)}\,\mathrm{d}x+V_0\int_{\mathbb{R}^N}w_n^{N}w_{n,L}^{N(\beta-1)}\,\mathrm{d}x  \\
		& \geq V_0\int_{\mathbb{R}^N}w_n^{N}w_{n,L}^{N(\beta-1)}\,\mathrm{d}x.
	\end{align*}
	Combining all the above information and using ($\mathcal{H}_1$)(b), ($\mathcal{A}$)(a), ($\mathcal{B}$)(b) and \eqref{eq3.2}, we obtain by taking $\gamma(w_n)$ as test function in the weak formulation of the problem solved by $w_n$ that
	\begin{equation}\label{eq4.37}
		\begin{aligned}
			&\int_{\mathbb{R^N}} w_{n,L}^{N(\beta-1)}|\nabla w_n|^N\,\mathrm{d}x+\eta\int_{\mathbb{R^N}} w_n^N w_{n,L}^{N(\beta-1)}\,\mathrm{d}x\\
			&\leq C_q \int_{\mathbb{R^N}} w_n^q w_{n,L}^{N(\beta-1)}\,\mathrm{d}x+\widetilde{\kappa}_\tau \int_{\mathbb{R^N}}w_n^\vartheta w_{n,L}^{N(\beta-1)}\Phi(\alpha w_n^{N'})\,\mathrm{d}x,
		\end{aligned}
	\end{equation}
	where we have used that $\tau\in(0,\ell']$ to be small enough and $\eta=V_0+1-(\ell+\ell')>0$, thanks to $V_0+1\geq 2(\ell+\ell')$. Consequently, we obtain from \eqref{eq4.36} and \eqref{eq4.37} that
	\begin{equation}\label{eq4.38}
		\|w_n w_{n,L}^{\beta-1}\|^N_{N^*}\leq C_1\beta^N\bigg[C_q \int_{\mathbb{R^N}} w_n^q w_{n,L}^{N(\beta-1)}\,\mathrm{d}x+\widetilde{\kappa}_\tau \int_{\mathbb{R^N}}w_n^\vartheta w_{n,L}^{N(\beta-1)}\Phi(\alpha w_n^{N'})\,\mathrm{d}x\bigg].
	\end{equation}
	By the hypothesis, we have $\limsup_{n\to\infty} \|w_n\|^{N'}_{W^{1,N}}<\frac{\alpha_N}{\alpha_0}$. Following similar arguments as in Fiscella-Pucci \cite[Theorem 1.1]{Fiscella-Pucci-2021}, up to a subsequence, not relabeled, we may suppose that
	\begin{equation}\label{eq4.39}
		\sup_{n\in\mathbb{N}} \|w_n\|^{N'}_{W^{1, N}}<\frac{\alpha_N}{\alpha_0}.
	\end{equation}
	Set $\min\{\vartheta,q\}\geq 2N$ and take $\mu,\mu',\sigma>1$ such that $\frac{1}{\mu}+\frac{1}{\mu'}=1$. It is easy to see that $\mu'(q-N)\geq N$ and $\sigma(\vartheta-N)\geq N$. Further, we select $t>1$ such that $\frac{1}{t}+\frac{1}{\sigma}+\frac{1}{\mu}=1$. In view of \eqref{eq4.39}, we can find $m>0$ such that $\|w_n\|^{N'}_{W^{1, N}}<m<\frac{\alpha_N}{\alpha_0}$ for all $n\in\mathbb{N}$. Let $t>1$ be close to $1$ and $\alpha>\alpha_0$ be close to $\alpha_0$ such that we still have $\alpha t \|w_n\|^{N'}_{W^{1, N}}<m<\alpha_N$ for all $n\in\mathbb{N}$ and $\widetilde{w}_n = \frac{w_n}{\|w_n\|_{W^{1,N}}}$. Using that $\{w_n\}_{n\in\mathbb{N}}$ is bounded in $\mathbf{Y}$, it follows from the generalized H\"older inequality, Corollary \ref{cor2.9} and Lemma \ref{lem8.98} that
	\begin{equation}\label{eq4.40}
		\begin{aligned}
			&C_q \int_{\mathbb{R^N}} w_n^q w_{n,L}^{N(\beta-1)}\,\mathrm{d}x
			+\widetilde{\kappa}_\tau \int_{\mathbb{R^N}}w_n^\vartheta w_{n,L}^{N(\beta-1)}\Phi(\alpha w_n^{N'})\,\mathrm{d}x\\
			&=C_q \int_{\mathbb{R^N}} w_n^{q-N} u_{n,L}^{N}\,\mathrm{d}x+\widetilde{\kappa}_\tau \int_{\mathbb{R^N}}w_n^{\vartheta-N} u_{n,L}^{N}\Phi(\alpha w_n^{N'})\,\mathrm{d}x  \\
			& \leq \Big[ C_q \|w_n\|^{q-N}_{\mu'(q-N)}+\widetilde{\kappa}_\tau \|w_n\|^{\vartheta-N}_{\sigma(\vartheta-N)} \bigg(\int_{\mathbb{R}^N}\Phi\big(\alpha t\|w_n\|^{N'}_{W^{1,N}}|\widetilde{w}_n|^{N'}\big)\,\mathrm{d}x\bigg)^{\frac{1}{t}}\Big] \|u_{n,L}\|^N_{N\mu}                \\
			& \leq \tilde{C} \|u_{n,L}\|^N_{N\mu}=\tilde{C} \|w_n w^{\beta-1}_{n,L}\|^N_{N\mu}\leq \tilde{C} \|w_n\|^{N\beta}_{N\beta\mu},
		\end{aligned}
	\end{equation}
	where the last inequality is obtained by using $w_{n,L}\leq w_n$ and $\tilde{C}$ is defined as
	\begin{align*}
		\tilde{C}=C_q \sup_{n\in\mathbb{N}} \|w_n\|^{q-N}_{\mu'(q-N)}+\widetilde{\kappa}_\tau \sup_{n\in\mathbb{N}} \|w_n\|^{\vartheta-N}_{\sigma(\vartheta-N)} \bigg( \sup_{n\in\mathbb{N}} \int_{\mathbb{R}^N}\Phi\big(\alpha t\|w_n\|^{N'}_{W^{1,N}}|\widetilde{w}_n|^{N'}\big)\,\mathrm{d}x\bigg)^{\frac{1}{t}}<+\infty,
	\end{align*}
	thanks to Lemma \ref{lem2.11}. In virtue of \eqref{eq4.38} and \eqref{eq4.40}, we deduce that there exists a constant $C>0$ such that
	\begin{align*}
		\|w_n w_{n,L}^{\beta-1}\|^N_{N^*}\leq C\beta^N \|w_n\|^{N\beta}_{N\beta\mu}.
	\end{align*}
	By applying Fatou's lemma as $L\to\infty$, we obtain from the above inequality by setting $N\mu= N^{* *}<N^{*}$ that
	\begin{equation}\label{eq4.41}
		\|w_n\|_{N^*\beta}\leq C^{\frac{1}{N\beta}}\beta^{\frac{1}{\beta}}\|w_n\|_{N^{* *}\beta}.
	\end{equation}
	Define $\beta=\frac{N^{*}}{N^{* *}}>1$, then one has $N^{* *}\beta^2=N^{*}\beta$. Replacing $\beta$ with $\beta^2$ in \eqref{eq4.41}, we get
	\begin{align*}
		\|w_n\|_{N^*\beta^2}
		& \leq C^{\frac{1}{N\beta^2}}\beta^{\frac{2}{\beta^2}}\|w_n\|_{N^{* *}\beta^2} =C^{\frac{1}{N\beta^2}}\beta^{\frac{2}{\beta^2}}\|w_n\|_{N^{*}\beta}  \\
		& \leq C^{\frac{1}{N}\big(\frac{1}{\beta}+\frac{1}{\beta^2}\big)}\beta^{\frac{1}{\beta}+\frac{2}{\beta^2}}\|w_n\|_{N^{*}\beta}.
	\end{align*}
	Using the fact that $\|w_n\|_{N^{*}\beta}\leq C_2$ for all $n\in\mathbb{N}$, where $C_2>0$ is a constant and iterating the formula \eqref{eq4.41}, we get
	\begin{align*}
		\|w_n\|_{N^*\beta^m}\leq C^{\displaystyle\Sigma_{i=1}^{m}\frac{1}{N\beta^i}}\beta^{\displaystyle\Sigma_{i=1}^{m}\frac{i}{\beta^i}}\|w_n\|_{N^{*}\beta}\leq C_2 C^{\displaystyle\Sigma_{i=1}^{m}\frac{1}{N\beta^i}}\beta^{\displaystyle\Sigma_{i=1}^{m}\frac{i}{\beta^i}}~\text{for all}~n,m\in\mathbb{N}.
	\end{align*}
	By d'Alembert's ratio test, the series $\displaystyle{\Sigma_{i=1}^{\infty}}N^{-1}\beta^{-i}$ and $\displaystyle\Sigma_{i=1}^{\infty}i\beta^{-i}$ are convergent. Therefore, by letting $m\to\infty$ in the above inequality, we obtain
	\begin{equation}\label{eq4.42}
		\|w_n\|_{L^\infty(\mathbb{R}^N)}\leq K~\text{for all}~n\in\mathbb{N}
	\end{equation}
	for some constant $K>0$. Now, employing the standard regularity theory for quasilinear elliptic equations (see the works in \cite{DiBenedetto-1983, He-Li-2008, Ladyzhenskaya-Uraltseva-1968, Li-1990, Serrin-1964, Tolksdorf-1984}), we conclude that $w_n\in C_{\operatorname{loc}}^{0,\alpha}(\mathbb{R}^N)$ for some $0 < \alpha < 1$ and for each $n\in\mathbb{N}$. Moreover, due to the embedding $\mathbf{Y}\hookrightarrow \mathbf{X}$, Corollary \ref{cor2.3} and \eqref{eq4.42}, we deduce that $\{w_n\}_{n\in\mathbb{N}}$ is bounded in $\mathbf{X}$, $\|w_n\|_\theta\leq C$ for all $n\in\mathbb{N}$ and $\theta\in[p,p^*]\cup[N,+\infty]$. Moreover, $w_n\to w$ in $L^\theta(\mathbb{R}^N)$, $w_n\to w$ a.e.\,in $\mathbb{R}^N$ as $n\to\infty$ and there exists $g\in L^\theta(\mathbb{R}^N)$ such that $|w_n|\leq g$ a.e.\,in $\mathbb{R}^N$ for all $\theta\in[p,p^*]\cup[N,+\infty)$. It is easy to see that $w_n$ solves (in a weak sense)
	\begin{equation}\label{eq4.4999}
		[-\Delta_p w_n+V_0 w_n^{p-1}]+[-\Delta_N w_n+\eta w_n^{N-1}]\leq C_q w_n^{q-1}+\widetilde{\kappa}_\tau w_n^{\vartheta-1} \Phi(\alpha w_n^{N'})\quad\text{in }\mathbb{R}^N.
	\end{equation}
	Define the operator $T\colon \mathbf{X}\to \mathbf{X}^*$ by
	\begin{align*}
		\langle{T(u),v}\rangle=\int_{\mathbb{R}^N}\big(|\nabla u|^{p-2}\nabla u\cdot\nabla v+V_0|u|^{p-2}uv\big)\,\mathrm{d}x+\int_{\mathbb{R}^N}\big(|\nabla u|^{N-2}\nabla u\cdot\nabla v+\eta |u|^{N-2}uv\big)\,\mathrm{d}x
	\end{align*}
	for all $u,v \in X$.
	By direct computation, we can deduce that $T$ is coercive. Further, employing a slight variant argument explored by Liu-Zheng \cite[Lemmas 3.1 and 3.2]{Liu-Zheng-2011}, it follows that $T$ is monotone. Next, we prove that $T$ is hemicontinuous, that is, the function $[0,1]\ni t\mapsto \langle{T(u+tv),w}\rangle$ is continuous for all $u,v,w\in\mathbf{X}$. For this purpose, let $\{t_n\}_{n\in\mathbb{N}}\subset[0,1]$ be such that $t_n\to t$ as $n\to\infty$. By the discrete H\"older inequality, for any $\alpha\in(0,1)$ and $a,b,c,d\geq 0$, there holds
	\begin{align*}
		a^\alpha c^{1-\alpha}+b^\alpha d^{1-\alpha}\leq (a+b)^\alpha(c+d)^{1-\alpha}.
	\end{align*}
	This together with H\"older's inequality implies that for any $\psi\in\{u,v\}$ and $k>0$, we have
	\begin{equation}
		\label{eq4.44}
		\int_{\mathbb{R}^N}\big(|\nabla \psi|^{t-1}|\nabla w |+k|\psi|^{t-1}|w|\big)\,\mathrm{d}x\leq \max\{1,k\}\|\psi\|^{t-1}_{W^{1,t}}\|w\|_{W^{1,t}}<+\infty,
	\end{equation}
	where $t\in\{p,N\}$. In virtue of \eqref{eq4.44}, for any $t\in\{p,N\}$ and $k>0$, we have
	\begin{align*}
		 & \big||\nabla (u+t_nv)|^{t-2}\nabla (u+t_n v)\cdot\nabla w+k|u+t_nv|^{t-2}(u+t_nv)w\big| \\
		 & \leq C [\big(|\nabla u|^{t-1}|\nabla w |+k|u|^{t-1}|w|\big)+\big(|\nabla v|^{t-1}|\nabla w |+k|v|^{t-1}|w|\big)]\in L^1(\mathbb{R}^N)
	\end{align*}
	for some constant $C>0$. Hence, by Lebesgue's dominated convergence theorem, we ensure that
	\begin{align*}
		\langle{T(u+t_n v),w}\rangle\to \langle{T(u+tv),w}\rangle\quad \text{as }n\to\infty.
	\end{align*}
	It follows that $T$ is hemicontinuous. Consequently, by the Browder-Minty theorem (see, for example, Zeidler \cite[Theorem 26.A]{Zeidler-1990}), the operator $T$ is surjective, that is, for all $v\in \mathbf{X}^*$, there exists $u\in\mathbf{X}$ such that $T(u)=v$.

	Note that $\frac{(r-1)N}{N-1}>N$ for any $r\in\{q,\vartheta\}$. Therefore, by using H\"older's inequality, \eqref{eq4.39}, Corollary \ref{cor2.9} and Lemma \ref{lem2.11}, one has $C_q w_n^{q-1}+\widetilde{\kappa}_\tau w_n^{\vartheta-1} \Phi(\alpha w_n^{N'})\in L^{\frac{N}{N-1}}(\mathbb{R}^N)\subset \mathbf{X}^*$, thanks to the boundedness of $\{w_n\}_{n\in \mathbb{N}}$ in $\mathbf{X}$. Consequently, there exists $v_n\in \mathbf{X}$ such that it solves (in a weak sense)
	\begin{equation}\label{eq4.45}
		[-\Delta_p v_n+V_0 |v_n|^{p-2}v_n]+[-\Delta_N v_n+\eta |v_n|^{N-2}v_n]= C_q w_n^{q-1}+\widetilde{\kappa}_\tau w_n^{\vartheta-1} \Phi(\alpha w_n^{N'})\quad\text{in }\mathbb{R}^N.
	\end{equation}
	Putting $v_n=v_n^+-v_n^-$ and testing \eqref{eq4.45} by $v_n^-$,
	we obtain by using $w_n\geq 0$ that
	\begin{align*}
		&- \|v_n^-\|^p_{W^{1,p}_{V_0}}-\|v_n^- \|^N_{W^{1,N}_{\eta}}\\
		& = \int_{\mathbb{R}^N}[\big(|\nabla v_n|^{p-2}\nabla v_n\cdot\nabla v_n^-+V_0|v_n|^{p-2}v_n v_n^-\big)+\big(|\nabla v_n|^{N-2}\nabla v_n\cdot\nabla v_n^-+\eta |v_n|^{N-2}v_nv_n^-\big)]\,\mathrm{d}x  \\
		& =-\int_{\{v_n\leq 0\}}\big(C_q w_n^{q-1}+\widetilde{\kappa}_\tau w_n^{\vartheta-1} \Phi(\alpha w_n^{N'})\big)v_n\,\mathrm{d}x\geq 0.
	\end{align*}
	This shows that $ \|v_n^-\|_{W^{1,p}_{V_0}}=\|v_n^- \|_{W^{1,N}_{\eta}}=0,~\text{that is},~\|v_n^-\|_{\mathbf{X}}=0$. It follows that $v_n^-=0$ a.e.\,in $\mathbb{R}^N$, that is, $v_n\geq 0$ a.e.\,in $\mathbb{R}^N$. Inspired by the comparison principle as used in Brasco-Prinari-Zagati  \cite[Theorem 4.1]{Brasco-Prinari-Zagati-2022} and in Corr\^{e}a-Corr\^{e}a-Figueiredo \cite[Lemma 2.2]{Correa-Correa-Figueiredo-2014}, we get from \eqref{eq4.4999} and \eqref{eq4.45} that $0\leq w_n\leq v_n$ a.e.\,in $\mathbb{R}^N$. Once more, by testing \eqref{eq4.45} with $v_n$, we have
	\begin{equation}\label{eq4.46}
		\|v_n\|^p_{W^{1,p}_{V_0}}+\|v_n\|^N_{W^{1,N}_{\eta}}=\int_{\mathbb{R}^N}\big(C_q w_n^{q-1}+\widetilde{\kappa}_\tau w_n^{\vartheta-1} \Phi(\alpha w_n^{N'})\big)v_n\,\mathrm{d}x.
	\end{equation}
	Due to the Young's inequality with $\zeta\in(0,\frac{\eta}{C_q+\widetilde{\kappa}_\tau})$, that is, $ab\leq \zeta a^N+C_\zeta b^{\frac{N}{N-1}}$ for all $a,b\geq 0$, we obtain from \eqref{eq4.46}, Corollary \ref{cor2.9} and by setting $\xi=\eta-\zeta(C_q+\widetilde{\kappa}_\tau)>0$ that
	\begin{align*}
		\|v_n\|^p_{W^{1,p}_{V_0}}+\|v_n\|^N_{W^{1,N}_{\xi}}\leq C_qC_\zeta\int_{\mathbb{R}^N}|w_n|^{\frac{(q-1)N}{N-1}}\,\mathrm{d}x+\widetilde{\kappa}_\tau C_\zeta\int_{\mathbb{R}^N} w_n^{\frac{(\vartheta-1)N}{N-1}} \Phi(N'\alpha w_n^{N'})\,\mathrm{d}x\leq \tilde{C}
	\end{align*}
	for some constant $\tilde{C} >0$, thanks to H\"older's inequality, \eqref{eq4.39}, Corollary \ref{cor2.9}, Lemma \ref{lem2.11} and the boundedness of $\{w_n\}_{n\in\mathbb{N}}$ in $\mathbf{X}$. This shows that there exists a constant $C>0$ such that $\|v_n\|_{\mathbf{X}}\leq C$ for all $n\in\mathbb{N}$. It follows, up to subsequence not relabeled, that there exists $v\in \mathbf{X}$ such that we obtain from Corollary \ref{cor2.3} that
    $$ v_n\rightharpoonup v\quad\text{in }\mathbf{X}\quad\text{and}\quad v_n\rightharpoonup v \quad\text{in }L^\vartheta (\mathbb{R}^N)\quad\text{ for all } \vartheta \in [p,p^*]\cup [N,+\infty)\quad\text{as }n\to\infty.$$
	Observe that $v$ solves (in a weak sense)
	\begin{align*}
		[-\Delta_p v+V_0 |v|^{p-2}v]+[-\Delta_N v+\eta |v|^{N-2}v]= C_q w^{q-1}+\widetilde{\kappa}_\tau w^{\vartheta-1} \Phi(\alpha w^{N'})\quad\text{in }\mathbb{R}^N.
	\end{align*}
	In particular, we have
	\begin{equation}
		\label{eq4.47}  \|v\|^p_{W^{1,p}_{V_0}}+\|v\|^N_{W^{1,N}_{\eta}}=\int_{\mathbb{R}^N}\big(C_q w^{q-1}+\widetilde{\kappa}_\tau w^{\vartheta-1} \Phi(\alpha w^{N'})\big)v\,\mathrm{d}x.
	\end{equation}
	\textbf{Claim I:} $\displaystyle\int_{\mathbb{R}^N} w_n^{q-1}v_n\,\mathrm{d}x\to \int_{\mathbb{R}^N} w^{q-1}v\,\mathrm{d}x\quad\text{as }n\to\infty.$\\
	By using the fact that $w^{q-1}\in L^{\frac{N}{N-1}}(\mathbb{R}^N)$ and $v_n\rightharpoonup v $ in $L^{N}(\mathbb{R}^N)$ as $n\to\infty$, we have
	\begin{align*}
		\int_{\mathbb{R}^N} w^{q-1}(v_n-v)\,\mathrm{d}x\to 0\quad\text{as }n\to\infty.
	\end{align*}
	On the other hand, we have
	\begin{align*}
		 |w_n^{q-1}-w^{q-1}|^{N'}\leq 2^{N'-1}\Big(g^{(q-1)N'}+|w|^{(q-1)N'}\Big)\in L^1(\mathbb{R}^N).
	\end{align*}
	 In virtue of Lebesgue's dominated convergence theorem, we have
	\begin{align*}
		\int_{\mathbb{R}^N} |w_n^{q-1}-w^{q-1}|^{N'}\,\mathrm{d}x\to 0\quad\text{as }n\to\infty.
	\end{align*}
	Using this and H\"older's inequality gives
	\begin{align*}
		\bigg|\int_{\mathbb{R}^N} (w_n^{q-1}-w^{q-1})v_n\,\mathrm{d}x\bigg|\leq \|v_n\|_N \bigg(\int_{\mathbb{R}^N}|w_n^{q-1}-w^{q-1}|^{N'}\,\mathrm{d}x\bigg)^{\frac{1}{N'}}\to 0\quad\text{as }n\to\infty.
	\end{align*}
	It follows that
	\begin{align*}
		\int_{\mathbb{R}^N} (w_n^{q-1}-w^{q-1})v_n\,\mathrm{d}x\to 0\quad\text{as }n\to\infty.
	\end{align*}
	The proof of Claim I now follows directly from the above convergences.\\
	\textbf{Claim II:} $\displaystyle\int_{\mathbb{R}^N} w_n^{\vartheta-1} \Phi(\alpha w_n^{N'})v_n\to\int_{\mathbb{R}^N} w^{\vartheta-1} \Phi(\alpha w^{N'})v\,\mathrm{d}x\quad\text{as }n\to\infty.$\\
	Due to Lemma \ref{lem2.11}, one has $w^{\vartheta-1} \Phi(\alpha w^{N'})\in L^{\frac{N}{N-1}}(\mathbb{R}^N)$. Now, it follows from $v_n\rightharpoonup v $ in $L^{N}(\mathbb{R}^N)$ as $n\to\infty$ that
	\begin{align*}
		\int_{\mathbb{R}^N} (v_n-v) w^{\vartheta-1} \Phi(\alpha w^{N'})\,\mathrm{d}x\to 0\quad\text{as }n\to\infty.
	\end{align*}
	Further, by H\"older's inequality, \eqref{eq4.39}, Corollary \ref{cor2.9} and Lemma \ref{lem2.11}, it is not difficult to see that
	\begin{align*}
		\big|w_n^{\vartheta-1} \Phi(\alpha w_n^{N'})-w^{\vartheta-1} \Phi(\alpha w^{N'})\big|^{N'}\leq 2^{N'-1}\Big(g^{(\vartheta-1)N'}\Phi(N'\alpha w_n^{N'})+|w|^{(\vartheta-1)N'}\Phi(N'\alpha w^{N'})\Big)\in L^1(\mathbb{R}^N).
	\end{align*}
	Invoking Lebesgue's dominated convergence theorem, we get
	\begin{align*}
		\int_{\mathbb{R}^N} \big|w_n^{\vartheta-1} \Phi(\alpha w_n^{N'})-w^{\vartheta-1} \Phi(\alpha w^{N'})\big|^{N'}\,\mathrm{d}x\to 0\quad\text{as }n\to\infty.
	\end{align*}
	Consequently, by H\"older's inequality, we infer that for $n\to\infty$, there holds
	\begin{align*}
		&\bigg|\int_{\mathbb{R}^N} (w_n^{\vartheta-1} \Phi(\alpha w_n^{N'})-w^{\vartheta-1} \Phi(\alpha w^{N'}))v_n\,\mathrm{d}x\bigg|\\
		&\leq \|v_n\|_N \bigg( \int_{\mathbb{R}^N} \big|w_n^{\vartheta-1} \Phi(\alpha w_n^{N'})-w^{\vartheta-1} \Phi(\alpha w^{N'})\big|^{N'}\,\mathrm{d}x\bigg)^{\frac{1}{N'}}\to 0.
	\end{align*}
	This yields
	\begin{align*}
		\int_{\mathbb{R}^N} (w_n^{\vartheta-1} \Phi(\alpha w_n^{N'})-w^{\vartheta-1} \Phi(\alpha w^{N'}))v_n\,\mathrm{d}x\to 0\quad\text{as }n\to\infty.
	\end{align*}
	Now, the proof of Claim II follows immediately by using these convergences.

	In view of Claim I and Claim II, we obtain from \eqref{eq4.46} and \eqref{eq4.47} that
	\begin{align*}
		\|v_n\|^p_{W^{1,p}_{V_0}}+\|v_n\|^N_{W^{1,N}_{\eta}}=\|v\|^p_{W^{1,p}_{V_0}}+\|v\|^N_{W^{1,N}_{\eta}}+o_n(1)\quad\text{as }n\to\infty.
	\end{align*}
	This shows that
	\begin{align*}
		\|v_n\|^p_{W^{1,p}_{V_0}}\to \|v\|^p_{W^{1,p}_{V_0}}
		\quad \text{and}\quad \|v_n\|^N_{W^{1,N}_{\eta}}\to\|v\|^N_{W^{1,N}_{\eta}}
		\quad\text{as }n\to\infty.
	\end{align*}
	 Repeating the same procedure as in Lemma \ref{lem3.7}, we get $v_n\to v$ in $\mathbf{X}$ as $n\to\infty$ and hence, $v_n\to v$ in $L^\theta(\mathbb{R}^N)$ as $n\to\infty$ for all $\theta\in[p,p^*]\cup[N,+\infty)$. This shows that the first assumption of Lemma 2.1 in Ambrosio \cite{Ambrosio-2024} is satisfied.

	On the other hand, by using the fact that $C_q w_n^{q-1}+\widetilde{\kappa}_\tau w_n^{\vartheta-1} \Phi(\alpha w_n^{N'})\leq C_q v_n^{q-1}+\widetilde{\kappa}_\tau v_n^{\vartheta-1} \Phi(\alpha w_n^{N'})$ and testing \eqref{eq4.45} by $v_nv_{n,L}^{N(\beta-1)}$, we obtain by performing a similar Moser iteration as before that
	\begin{equation}
		\label{eq4.48}
		\|v_n\|_{L^\infty(\mathbb{R}^N)}\leq K\quad\text{for all}~n\in\mathbb{N} .
	\end{equation}
	This together with the interior regularity result for quasilinear elliptic equations mentioned above implies the existence of a fixed $x_0\in \mathbb{R}^N$ such that $\{v_n\}_{n\in\mathbb{N}}\subset C^{0,\alpha}(B_{\frac{1}{2}}(x_0))$ for some $\alpha\in(0,1)$ depending on $p, N$ and independent on $n\in\mathbb{N}$ and $x_0$. Further, there also holds
	\begin{equation}
		\label{eq4.49}
		[v_n]_{C^{0,\alpha}(B_{\frac{1}{2}}(x_0))}=\underset{x,y\in B_{\frac{1}{2}}(x_0),x\neq y}{\sup}\frac{|v_n(x)-v_n(y)|}{|x-y|^\alpha}\leq C,
	\end{equation}
	where the constant $C=C(p,N)>0$ is independent on $x_0$. Now, we claim that $[v_n]_{C^{0,\alpha}(\mathbb{R}^N)}\leq \tilde{C}$, where $\tilde{C}>0$ is a constant. For this, we first fix $x,y\in \mathbb{R}^N$. Note that when $|x-y|\geq 1$, then using \eqref{eq4.48}, one has
	\begin{align*}
		|v_n(x)-v_n(y)|\leq 2  \|v_n\|_{L^\infty(\mathbb{R}^N)}\leq 2K\leq 2K|x-y|^\alpha.
	\end{align*}
	Conversely, when $|x-y|<1$, then one sees that $\Big|x-\frac{x+y}{2}\Big|=\Big|y-\frac{x+y}{2}\Big|=\frac{|x-y|}{2}<\frac{1}{2}$. Hence, by using \eqref{eq4.49}, we obtain
	\begin{align*}
		|v_n(x)-v_n(y)|\leq \Bigg|v_n(x)-v_n\bigg(\frac{x+y}{2}\bigg)\Bigg|+\Bigg|v_n(y)-v_n\bigg(\frac{x+y}{2}\bigg)\Bigg|\leq \hat{C}|x-y|^\alpha
	\end{align*}
	for some constant $\hat{C}>0$. This proves the claim. Consequently, we deduce that
	\begin{align*}
		\|v_n\|_{C^{0,\alpha}(\mathbb{R}^N)}=\|v_n\|_{L^\infty(\mathbb{R}^N)}+[v_n]_{C^{0,\alpha}(\mathbb{R}^N)}\leq C_1\quad\text{for all}~n\in\mathbb{N}
	\end{align*}
	for some constant $C_1>0$. Take $\varepsilon>0$ and choose $\delta=\big(\frac{\varepsilon}{2C_1}\big)^\alpha$, then for all $x,y\in \mathbb{R}^N$ with $|x-y|<\delta$ implies that
	\begin{align*}
		|v_n(x)-v_n(y)|\leq C_1|x-y|^\alpha<\varepsilon\quad\text{for all }n\in\mathbb{N}.
	\end{align*}
	It follows that $\{v_n\}_{n\in\mathbb{N}}$ is uniformly equicontinuous in $\mathbb{R}^N$. Therefore, by applying Lemma 2.1 of Ambrosio \cite{Ambrosio-2024}, we get
	\begin{align*}
		\lim_{|x|\to\infty} \sup_{n\in\mathbb{N}}|v_n(x)|=0.
	\end{align*}
	Due to $0\leq w_n\leq v_n$ in $\mathbb{R}^N$ for all $n\in\mathbb{N}$, it follows that \eqref{eq4.35} holds. This finishes the proof.
\end{proof}

Finally, we end this section by proving the concentration phenomena of positive solutions of \eqref{main problem}.

\begin{proof}[\bf Proof of Theorem \ref{thm1.2}]
	Let $\varepsilon_0$ be small enough. Note that if $u_\varepsilon$ is a positive solution of \eqref{main problem@@}, which is obtained by Proposition \ref{prop4.6}, then there must hold
	\begin{align}
		\label{eq4.50}
		u_\varepsilon(x)<t_1 \quad\text{for all }x\in \mathbb{R}^N\setminus\Lambda_\varepsilon \text{ and }\varepsilon\in(0,\varepsilon_0).
	\end{align}
	In fact, if \eqref{eq4.50} does not hold, let  $\{\varepsilon_n\}_{n\in\mathbb{N}}$ and a solution $u_n=u_{\varepsilon_n}$ of \eqref{main problem@@} such that $\varepsilon_n\to 0$ as $n\to\infty$ and there hold $J_{\varepsilon_n}(u_n)=c_{\varepsilon_n},~J'_{\varepsilon_n}(u_n)=0$ and
	\begin{align}\label{eq4.51}
		u_n(x)\geq t_1 \quad\text{for all }x\in \mathbb{R}^N\setminus\Lambda_{\varepsilon_n}.
	\end{align}
	In view of Lemma \ref{lem4.8}, we can find a sequence $\{\widetilde{y}_n\}_{n\in\mathbb{N}}\subset\mathbb{R}^N$ such that $w_n(\cdot)=u_n(\cdot+\widetilde{y}_n)\to w$ in $\mathbf{Y}$ and $\varepsilon_n\widetilde{y}_n\to y_0$ as $n\to\infty$ for some $y_0\in\Lambda$ and $V(y_0)=V_0$. Using that $y_0\in\Lambda$, there exists some $r>0$ such that $B_r(\varepsilon_n\widetilde{y}_n)\subset\Lambda$, that is, $B_{\frac{r}{\varepsilon_n}}(\widetilde{y}_n)\subset\Lambda_{\varepsilon_n}$ for all $n $ sufficiently large. Consequently, we obtain for these values of $n$ that
	\begin{equation}\label{eq4.52}
		\mathbb{R}^N\setminus\Lambda_{\varepsilon_n}\subset B^c_{\frac{r}{\varepsilon_n}}(\widetilde{y}_n).
	\end{equation}
	Further, since $w_n\to w$ in $\mathbf{Y}$ as $n\to\infty$, by invoking Lemma \ref{lem4.10}, one sees that \eqref{eq4.35} holds. Hence, we can find $R>0$ such that
	\begin{align*}
		w_n(x)<t_1\quad\text{for all } |x|\geq R\text{ and }n\in\mathbb{N}.
	\end{align*}
	In particular, the last inequality together with a simple change of variable yields
	\begin{align*}
		u_n(x)<t_1\quad\text{for all }x\in B^c_{R}(\widetilde{y}_n)~\text{and}~n\in\mathbb{N}.
	\end{align*}
	Consequently, there exists $n_0\in\mathbb{N}$ such that for any $n\geq n_0$ and $\frac{r}{\varepsilon_n}>R$, we deduce from \eqref{eq4.52} that
	\begin{align*}
		\mathbb{R}^N\setminus\Lambda_{\varepsilon_n}\subset B^c_{\frac{r}{\varepsilon_n}}(\widetilde{y}_n)\subset B^c_{R}(\widetilde{y}_n)\quad\text{for all }n\geq n_0.
	\end{align*}
	It follows immediately that $u_n(x)<t_1$ for all $x\in\mathbb{R}^N\setminus\Lambda_{\varepsilon_n}$ and $n\geq n_0 $, which contradicts \eqref{eq4.51} and thus, \eqref{eq4.50} holds. Now, by setting  $v_\varepsilon(x)=u_\varepsilon(\frac{x}{\varepsilon})$, we can conclude that $v_\varepsilon$ is a positive solution of \eqref{main problem}.

	Finally, we study the behavior of maximum points of $v_\varepsilon(x)$ as $\varepsilon\to 0$. For this, we assume that $\varepsilon_n\to 0$ as $n\to\infty$ and $\{u_n\}_{n\in\mathbb{N}}=\{u_{\varepsilon_n}\}_{n\in\mathbb{N}}\subset\mathbf{Y}_{\varepsilon_n}$ is a nonnegative sequence of solution for \eqref{main problem@@}. In virtue of the definition of $G_2$ and $g$, we can find $\rho\in(0,t_1)$ such that
	\begin{equation}\label{eq4.53}
		G^\prime_2(\varepsilon x,s)s \leq \ell s^N
		\quad\text{and}\quad g(\varepsilon x,s)s \leq \ell' s^N\quad\text{for all }(x,s)\in \mathbb{R}^N\times [0,\rho].
	\end{equation}
	Employing a similar argument as done above, we can find $R>0$ such that
	\begin{equation}\label{eq4.54}
		\|u_n\|_{L^\infty(B^c_{R}(\widetilde{y}_n))}<t_1\quad\text{for all }n\in\mathbb{N}.
	\end{equation}
	Note that, up to a subsequence not relabeled, we can assume that
	\begin{equation} \label{eq4.55}
		\|u_n\|_{L^\infty(B_{R}(\widetilde{y}_n))}\geq t_1\quad\text{for all }n\in\mathbb{N}.
	\end{equation}
	In fact, if \eqref{eq4.55} does not hold, then we have $\|u_n\|_{L^\infty(\mathbb{R}^N)}< t_1$ for all $n\in\mathbb{N}$. Consequently, by using \eqref{eq4.53},  $J'_{\varepsilon_n}(u_n)=0$, and arguing similarly as in Lemma \ref{lem4.2}, we deduce that
	\begin{align*}
		0\leq \|u_n\|_{W^{1,p}_{V_{\varepsilon_n}}}^p+ \|u_n\|_{W^{1,N}_{V_{\varepsilon_n}}}^N+\|u_n\|^N_N+\min\big\{\|u_n\|_{\mathcal{H}_1}^l,\|u_n\|_{\mathcal{H}_1}^N\big\}\leq 0.
	\end{align*}
	Letting $n\to\infty$ in the above inequality, we ensure that $u_n\to 0$ in $\mathbf{Y}_{\varepsilon_n}$ as $n\to\infty$. Hence, we get $J_{\varepsilon_n}(u_n)=c_{\varepsilon_n}\to 0$ as $n\to\infty$, which is a contradiction because of Lemma \ref{lem4.5}(a). It follows that \eqref{eq4.55} holds.

	Taking \eqref{eq4.54} and \eqref{eq4.55} into account, we conclude that the global maximum points $p_n\in\mathbb{R}^N$ of $u_n$ belong to $B_{R}(\widetilde{y}_n)$. It follows that $p_n=r_n+\widetilde{y}_n$, where $r_n\in B_R$. Note that the solution of \eqref{main problem} is of the type $v_n(x)=u_n(\frac{x}{\varepsilon_n})$ and thus, a maximum point $\eta_{\varepsilon_n}$ of $v_n(x)$ is of the form $\eta_{\varepsilon_n}=\varepsilon_n r_n+\varepsilon_n \widetilde{y}_n$. By using $r_n\in B_R$, $\varepsilon_n \widetilde{y}_n\to y_0$ and $V(y_0)=V_0$, we deduce from the continuity of $V$ that
	\begin{align*}
		\lim_{n\to\infty}V(\eta_{\varepsilon_n})=V(y_0)=V_0.
	\end{align*}
	Hence, the proof is completed.
\end{proof}

\section{Mutiplicity of solutions to the main problem via category theory}\label{sec5}

This section is focused on the study of the multiplicity of positive solutions to \eqref{main problem} using the Lusternik-Schnirelmann category theory. This theory is a variational technique which helps us to find critical points of a functional on a manifold, in connection with the topological properties of that manifold. For more details on this theory, we refer to the papers of Benci-Cerami \cite{Benci-Cerami-1994, Benci-Cerami-1991}, Benci-Cerami-Passaseo \cite{Benci-Cerami-Passaseo-1991}, Cingolani-Lazzo \cite{Cingolani-Lazzo-2000} and the monograph of Willem \cite{Willem-1996}.

Now, we recall some basic definitions that will be needed in the sequel.

\begin{definition}
	A closed subset $A$ is contractible in a topological space $X$, if there exists a homotopy $H\in C\big([0,1]\times A, X\big)$ such that for any $u,v\in A$, there holds $H(0,u)=u$ and $H(1,u)=H(1,v)$.
\end{definition}

\begin{definition}
	Let $A$ be a closed subset of a topological space $X$. Then, the Lusternik-Schnirelmann category of $A$ in $X$ is denoted by $\operatorname{cat}_X(A)$, which is the least number of closed and contractible sets in $X$ that cover $A$.
\end{definition}

Let $\mathcal{X}$ be a Banach space and $\Psi\colon\mathcal{X}\to \mathbb{R}$ be of class $C^1(\mathcal{X},\mathbb{R})$. We define a $C^1$-manifold $\mathcal{V}$ of the form $\mathcal{V}=\Psi^{-1}(\{0\})$, where $0$ is the regular value of $\Psi$. Now, for any functional $\mathcal{I}\colon\mathcal{X}\to \mathbb{R}$, we define the following level set
\begin{align*}
	\mathcal{I}^d=\{u\in \mathcal{V}\colon \mathcal{I}(u)\leq d\}.
\end{align*}
Recall the following result for critical points involving the Lusternik-Schnirelmann category, see Theorem 5.20 by Willem \cite{Willem-1996}.

\begin{corollary}\label{cor5.3}
	Suppose $\mathcal{I}\colon\mathcal{X}\to \mathbb{R}$ is of class $C^1(\mathcal{X},\mathbb{R})$. Further, if $\mathcal{I}\big|_{\mathcal{V}}$ is bounded from below and $\mathcal{I}$ satisfies the \textnormal{(PS)$_c$} condition for $c\in[\inf \mathcal{I}\big|_{\mathcal{V}},d]$, then $\mathcal{I}\big|_{\mathcal{V}}$ has at least $\operatorname{cat}_{\mathcal{I}^d}(\mathcal{I}^d)$ critical points in $\mathcal{I}^d$.
\end{corollary}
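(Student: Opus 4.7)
The statement is a classical multiplicity principle from Lusternik--Schnirelmann theory, so my plan is to follow the standard minimax scheme adapted to the constrained $C^1$-manifold $\mathcal{V}$. Setting $N := \operatorname{cat}_{\mathcal{I}^d}(\mathcal{I}^d)$, I would introduce for each $k \in \{1, \dots, N\}$ the minimax level
\begin{align*}
c_k := \inf_{A \in \Sigma_k} \sup_{u \in A} \mathcal{I}(u),
\qquad \Sigma_k := \{A \subset \mathcal{I}^d \colon A \text{ closed},\ \operatorname{cat}_{\mathcal{I}^d}(A) \geq k\}.
\end{align*}
Since $\mathcal{I}^d \in \Sigma_k$ for every $k \leq N$, monotonicity of the category gives the chain $\inf_{\mathcal{V}} \mathcal{I} \leq c_1 \leq c_2 \leq \cdots \leq c_N \leq d$, so all of these candidate critical levels lie inside the range on which the (PS)$_c$ condition is assumed.

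The central step is to show that each $c_k$ is a critical value of $\mathcal{I}|_{\mathcal{V}}$, and, more generally, that whenever $c_k = c_{k+1} = \cdots = c_{k+j} =: c$ for some $j \geq 0$, the critical set
\begin{align*}
K_c := \{u \in \mathcal{V} \colon (\mathcal{I}|_\mathcal{V})'(u) = 0,\ \mathcal{I}(u) = c\}
\end{align*}
satisfies $\operatorname{cat}_{\mathcal{I}^d}(K_c) \geq j+1$. I would argue by contradiction: if $\operatorname{cat}_{\mathcal{I}^d}(K_c) \leq j$, one can select a closed neighborhood $U$ of $K_c$ in $\mathcal{I}^d$ with the same category bound. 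Using the (PS)$_c$ compactness, a standard pseudo-gradient vector field for $\mathcal{I}|_{\mathcal{V}}$, tangent to $\mathcal{V}$ (which makes sense because $0$ is a regular value of $\Psi$ so $\mathcal{V}$ carries a well-defined tangent bundle), produces a deformation $\eta \in C([0,1] \times \mathcal{V}, \mathcal{V})$ with $\eta(1, \mathcal{I}^{c+\varepsilon} \setminus U) \subset \mathcal{I}^{c-\varepsilon}$ for $\varepsilon > 0$ small enough, each $\eta(t,\cdot)$ being a homeomorphism of $\mathcal{I}^d$ onto itself. Picking $A \in \Sigma_{k+j}$ with $\sup_A \mathcal{I} < c + \varepsilon$, subadditivity and monotonicity of the category give $\operatorname{cat}_{\mathcal{I}^d}\bigl(\overline{\eta(1, A \setminus U)}\bigr) \geq k$, hence $\eta(1, A \setminus U) \in \Sigma_k$, forcing $c_k \leq c - \varepsilon$ and contradicting $c_k = c$.

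Summing the multiplicities over the distinct values among $c_1, \dots, c_N$ then yields at least $N$ critical points of $\mathcal{I}|_{\mathcal{V}}$ lying in $\mathcal{I}^d$. The main technical obstacle is the construction of the deformation $\eta$ on the manifold $\mathcal{V}$: one has to build a locally Lipschitz pseudo-gradient vector field for $\mathcal{I}|_{\mathcal{V}}$ that is \emph{tangent} to $\mathcal{V}$ and whose flow descends values at a controlled rate away from critical points. This requires the $C^1$ regularity of $\mathcal{I}$ and $\Psi$ to realize the tangent projection, the (PS)$_c$ hypothesis to ensure the flow does not stall near critical levels, and a cutoff argument so that the flow remains inside $\mathcal{I}^d$ (so that the category computations are performed consistently in the ambient set $\mathcal{I}^d$). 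Since the statement is extracted verbatim from Theorem 5.20 of Willem's monograph, the detailed verification of the deformation and its compatibility with $\operatorname{cat}_{\mathcal{I}^d}$ can be imported from there, and the plan above simply records how those ingredients assemble into the conclusion.
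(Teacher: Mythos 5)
Your proposal correctly reconstructs the standard Lusternik--Schnirelmann minimax argument, which is precisely the proof of Theorem~5.20 in Willem's monograph; the paper itself gives no proof of Corollary~\ref{cor5.3} and simply cites that reference. Since your route (minimax levels $c_k$ over category classes, the multiplicity estimate $\operatorname{cat}_{\mathcal{I}^d}(K_c)\geq j+1$ when $j+1$ levels coincide, and the contradiction via a tangent pseudo-gradient deformation of sublevel sets) is exactly the one the cited source uses, there is nothing to compare.
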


To implement Corollary \ref{cor5.3}, the following corollary, found in Cingolani-Lazzo \cite[Lemma 2.2]{Cingolani-Lazzo-2000}, plays a significant role in relating the topology of some sublevel of a functional to the topology of some subset of the space $\mathbb{R^N}$.

\begin{corollary}\label{cor5.4}
	Let $\Omega$, $\Omega_1$ and $\Omega_2$ be closed sets with $\Omega_1 \subset \Omega_2$, and let $\beta\colon\Omega\to\Omega_2$, $\psi\colon\Omega_1 \to\Omega$ be continuous maps such that $\beta \circ \psi$ is homotopically equivalent to the embedding $j \colon \Omega_1 \to \Omega_2$. Then $\operatorname{cat}_{\Omega}(\Omega)\geq \operatorname{cat}_{\Omega_2}(\Omega_1)$.
\end{corollary}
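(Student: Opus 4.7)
The plan is to exploit the factorization of the inclusion $j \simeq \beta \circ \psi$ in order to transfer a contractible covering of $\Omega$ (inside $\Omega$) to a contractible covering of $\Omega_1$ (inside $\Omega_2$); after this transfer, the conclusion follows directly from the definition of Lusternik--Schnirelmann category given in the paper.

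First I would set $n := \operatorname{cat}_\Omega(\Omega)$ and fix, according to the definition, a covering $\Omega = A_1 \cup \cdots \cup A_n$ by closed subsets $A_i \subset \Omega$, each contractible in $\Omega$; that is, for every $i$ there exists a homotopy $H_i \in C([0,1] \times A_i, \Omega)$ with $H_i(0,u) = u$ on $A_i$ and $H_i(1,u) \equiv a_i^\star$ for some fixed $a_i^\star \in \Omega$. I would then define the pullback sets $B_i := \psi^{-1}(A_i) \subset \Omega_1$ for $i = 1,\dots,n$. Continuity of $\psi$ makes each $B_i$ closed in $\Omega_1$, and since $\{A_i\}$ covers $\Omega \supset \psi(\Omega_1)$, the family $\{B_i\}_{i=1}^n$ covers $\Omega_1$.

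The main step is to verify that each $B_i$ is contractible in $\Omega_2$. For this I would introduce $G_i \colon [0,1] \times B_i \to \Omega_2$ given by $G_i(t,x) := \beta(H_i(t,\psi(x)))$; this is continuous and provides a homotopy in $\Omega_2$ between $(\beta \circ \psi)\big|_{B_i}$ at $t = 0$ and the constant map $\beta(a_i^\star)$ at $t = 1$. Invoking the standing hypothesis $\beta \circ \psi \simeq j$ and restricting the witnessing homotopy to $[0,1] \times B_i$ yields a homotopy in $\Omega_2$ between $j\big|_{B_i}$ and $(\beta \circ \psi)\big|_{B_i}$. Concatenating the two homotopies produces a continuous deformation in $\Omega_2$ from the inclusion $B_i \hookrightarrow \Omega_2$ to the constant $\beta(a_i^\star)$, which is exactly contractibility of $B_i$ in $\Omega_2$.

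Having exhibited $\{B_1,\dots,B_n\}$ as a covering of $\Omega_1$ by closed subsets of $\Omega_1$ each contractible in $\Omega_2$, the definition of $\operatorname{cat}_{\Omega_2}(\Omega_1)$ yields $\operatorname{cat}_{\Omega_2}(\Omega_1) \leq n = \operatorname{cat}_\Omega(\Omega)$. The only delicate point is bookkeeping across the three ambient spaces: the sets $A_i$ live in $\Omega$ and are contracted inside $\Omega$, whereas the conclusion requires contractibility of the $B_i$ inside the possibly larger space $\Omega_2$, and it is precisely the factorization $j \simeq \beta \circ \psi$ that bridges these ambients without degrading the contractions. No analytic ingredients beyond continuity are needed.
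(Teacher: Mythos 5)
Your proof is correct. The paper itself does not prove Corollary~\ref{cor5.4}; it is quoted verbatim from Cingolani--Lazzo (cited as Lemma~2.2 there), and your argument is precisely the standard proof of that lemma: take a minimal cover $\{A_i\}_{i=1}^n$ of $\Omega$ by closed sets contractible in $\Omega$, pull it back to $B_i = \psi^{-1}(A_i)$, push each contraction $H_i$ forward by $\beta$ to contract $(\beta\circ\psi)|_{B_i}$ in $\Omega_2$, and then splice with the homotopy $j \simeq \beta\circ\psi$ restricted to $B_i$ to contract the inclusion $B_i \hookrightarrow \Omega_2$ itself. Two small bookkeeping points you could make explicit: each $B_i$ is closed in $\Omega_2$ because it is closed in $\Omega_1$ and $\Omega_1$ is assumed closed; and if $\operatorname{cat}_{\Omega}(\Omega)=\infty$ the inequality is vacuous, so choosing a finite minimal cover is harmless. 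With those understood, the proof is complete and matches the cited source.
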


Let $\delta>0$ be fixed and $u_0$ be a positive ground state solution of \eqref{main problem@@@}, that is, $J_0(u_0)=c_0$ and $J'_0(u_0)=0$. Next, we consider a nondecreasing function $\eta\in C^\infty([0,+\infty),[0,1])$ satisfying $\eta\equiv 1$ in $[0,\frac{\delta}{2}],~\eta\equiv 0$ in $[\delta,+\infty)$ and $|\eta'|\leq C$ for some $C>0$. For any $y\in M$, we define
\begin{align*}
	\xi_{\varepsilon,y}(x)=\eta\big(|\varepsilon x-y|\big) u_0\bigg(\frac{\varepsilon x-y}{\varepsilon}\bigg).
\end{align*}
Moreover, let $t_\varepsilon$ be the unique positive number such that $t_\varepsilon \xi_{\varepsilon,y}\in\mathcal{N}_\varepsilon$. Note that, if $|\operatorname{supp} |\xi_{\varepsilon,y}|\cap \Lambda_\varepsilon|>0$, then $t_\varepsilon$ satisfies
\begin{align*}
	J_\varepsilon(t_\varepsilon \xi_{\varepsilon,y}) =\max_{t\geq 0}J_\varepsilon(t \xi_{\varepsilon,y}).
\end{align*}
We define $\Phi_{\varepsilon}\colon M\to \mathcal{N}_\varepsilon$ by $\Phi_{\varepsilon}(y)=t_\varepsilon \xi_{\varepsilon,y}$. By the above construction, we see that $\Phi_\varepsilon$  has compact support for any $y\in M$.

Inspired by Ambrosio-Repov\v{s} \cite[Lemma 6.1]{Ambrosio-Repovs-2021} and by Thin \cite[Lemma 13]{Thin-2022}, we have the following lemma.

\begin{lemma}\label{lem5.5}
	There holds
	\begin{align*}
		\lim_{\varepsilon\to 0} J_\varepsilon(\Phi_{\varepsilon}(y))=c_0\quad\text{uniformly in }y\in M.
	\end{align*}
\end{lemma}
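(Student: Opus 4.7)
The plan is to argue by contradiction using a sequential characterization of uniform convergence: assume there exist $\varepsilon_n\to 0^+$ and $y_n\in M$ with $|J_{\varepsilon_n}(\Phi_{\varepsilon_n}(y_n))-c_0|\geq\delta_0>0$ for some $\delta_0>0$. Since $M$ is compact (being a closed subset of the bounded set $\Lambda$ defined through the continuous function $V$), up to a subsequence $y_n\to y_0\in M$, so $V(y_0)=V_0$. The next step is to perform the translation $z=x-y_n/\varepsilon_n$ and define $\tilde{\xi}_n(z)=\eta(\varepsilon_n |z|)u_0(z)$. The crucial observation is that $\operatorname{supp}(\tilde{\xi}_n)\subset B_{\delta/\varepsilon_n}(0)$; since $y_n\in M$ and $M_\delta\subset\Lambda$, we have $B_\delta(y_n)\subset\Lambda$ for $n$ large, so $B_{\delta/\varepsilon_n}(0)\subset(\Lambda-y_n)/\varepsilon_n$. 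Hence $\operatorname{supp}(\xi_{\varepsilon_n,y_n})\subset\Lambda_{\varepsilon_n}$ eventually, which means the penalization terms vanish on the support: $G_2'(\varepsilon_n x,\xi_{\varepsilon_n,y_n})=\mathcal{H}_2'(\xi_{\varepsilon_n,y_n})$ and $g(\varepsilon_n x,\xi_{\varepsilon_n,y_n})=f(\xi_{\varepsilon_n,y_n})$, turning the Nehari identity for $t_{\varepsilon_n}\xi_{\varepsilon_n,y_n}$ into an identity involving only the ``free'' nonlinearities.

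By Lebesgue's dominated convergence together with the continuity of $V$ and the pointwise convergence $\eta(\varepsilon_n|z|)\to 1$, $V(\varepsilon_n z+y_n)\to V_0$, it follows that
\begin{align*}
	\|\tilde{\xi}_n\|_{W^{1,t}(\mathbb{R}^N)}^t,\ \int_{\mathbb{R}^N}V(\varepsilon_n z+y_n)|\tilde{\xi}_n|^t\,\mathrm{d}z \longrightarrow \|u_0\|_{W^{1,t}_{V_0}}^t\quad(t\in\{p,N\}),
\end{align*}
and analogously the integrals of $\mathcal{H}_1(\tilde{\xi}_n)$, $\mathcal{H}_1'(\tilde{\xi}_n)\tilde{\xi}_n$, $\mathcal{H}_2(\tilde{\xi}_n)$, $\mathcal{H}_2'(\tilde{\xi}_n)\tilde{\xi}_n$, $F(\tilde{\xi}_n)$ and $f(\tilde{\xi}_n)\tilde{\xi}_n$ converge to the corresponding quantities for $u_0$ (for the exponentially-growing terms, I would combine $u_0\in L^\infty(\mathbb{R}^N)$ with the bounds provided by Corollary \ref{cor2.9} and Lemma \ref{lem2.11}, and invoke Vitali's theorem as in Section 3). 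Consequently $\tilde{\xi}_n\to u_0$ in the strong sense relevant to all the terms appearing in $J_\varepsilon$.

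The main obstacle is to show that $t_{\varepsilon_n}\to 1$. First, a positive lower bound on $t_{\varepsilon_n}$ follows from Proposition \ref{prop4.1} applied to $t_{\varepsilon_n}\xi_{\varepsilon_n,y_n}\in\mathcal{N}_{\varepsilon_n}$ combined with the above convergence of $\tilde{\xi}_n\to u_0\neq 0$. For an upper bound, if $t_{\varepsilon_n}\to\infty$ along a subsequence, then dividing the Nehari identity for $t_{\varepsilon_n}\tilde{\xi}_n$ by $t_{\varepsilon_n}^{N}$ and invoking (f4) together with \eqref{eq2.6} as in the blow-up step of Lemma \ref{lem4.5}, the right-hand side would diverge to $+\infty$ while the left-hand side stays bounded, a contradiction. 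Hence, up to a subsequence, $t_{\varepsilon_n}\to t_\ast\in(0,\infty)$. Passing to the limit in the Nehari identity yields $\langle J_0'(t_\ast u_0),t_\ast u_0\rangle=0$, that is, $t_\ast u_0\in\mathcal{N}_0$. Since the Nehari-type fibration $t\mapsto\langle J_0'(tu_0),tu_0\rangle$ has a unique zero (this is precisely the uniqueness argument carried out in Lemma \ref{lem4.2} applied to the autonomous functional $J_0$, using (f3), ($\mathcal{H}_2$)(c), and the monotonicity analysis of $\mathcal{H}_1'$) and since $u_0\in\mathcal{N}_0$ already, one must have $t_\ast=1$. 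Combining $t_{\varepsilon_n}\to 1$ with the convergence of the integrands established above, we conclude
\begin{align*}
	J_{\varepsilon_n}(\Phi_{\varepsilon_n}(y_n))=J_{\varepsilon_n}(t_{\varepsilon_n}\xi_{\varepsilon_n,y_n})\longrightarrow J_0(u_0)=c_0,
\end{align*}
contradicting the assumption and proving the uniform convergence.
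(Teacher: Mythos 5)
Your proposal is correct and follows essentially the same contradiction strategy as the paper: a change of variables bringing everything onto the fixed profile $\eta(\varepsilon_n|\cdot|)u_0$, dominated/Vitali convergence of all the integrals, boundedness of $\{t_{\varepsilon_n}\}$ from the $\log(t_n^N)$ and $\gamma t_n^{\mu-N}$ terms via (f4), a positive lower bound, and identification of the limit $t_\ast=1$. The only minor variations are that you obtain the lower bound on $t_{\varepsilon_n}$ from Proposition \ref{prop4.1} rather than the paper's direct estimate forcing $\|u_0\|_{W^{1,p}_{V_0}}^p=0$, and you appeal to the uniqueness of the fibering zero (Lemma \ref{lem4.2}) for $t_\ast=1$ rather than rewriting the explicit subtraction argument of Lemma \ref{lem4.5}(c); both substitutions are equivalent and sound.
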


\begin{proof}
	Let us assume by contradiction, there exist $\delta_0>0$, $\{y_n\}_{n\in\mathbb{N}}\subset M$ and $\varepsilon_n\to 0$ as $n\to\infty$ such that
	\begin{equation}\label{eq5.1}
		\big|J_{\varepsilon_n}(\Phi_{\varepsilon_n}(y_n))-c_0\big|\geq \delta_0.
	\end{equation}
	Invoking Lebesgue's dominated convergence theorem, we have
	\begin{equation}
		\label{eq5.2}
		\|\xi_{\varepsilon_n,y_n}\|^t_{W^{1,t}_{V_{\varepsilon_n}}} \to \|u_0\|^t_{W^{1,t}_{V_0}}
		\quad\text{and}\quad
		\|\xi_{\varepsilon_n,y_n}\|^t_t\to \|u_0\|^t_t\quad\text{as }n\to\infty \text{ for } t\in\{p, N\}.
	\end{equation}
	Setting $t_n=t_{\varepsilon_n}>0$ and using that $\langle{J'_{\varepsilon_n}(t_n \xi_{\varepsilon_n,y_n}),t_n \xi_{\varepsilon_n,y_n} }\rangle=0$, we obtain by that change of variable $z=\frac{\varepsilon_n x-y_n}{\varepsilon_n}$ that
	\begin{equation}\label{eq5.3}
		\begin{aligned}
			& t_n^p  \|\xi_{\varepsilon_n,y_n}\|^p_{W^{1,p}_{V_{\varepsilon_n}}}+  t_n^N\|\xi_{\varepsilon_n,y_n}\|^N_{W^{1,N}_{V_{\varepsilon_n}+1}}\\
			&=\int_{\mathbb{R}^N}\big(G_2'(\varepsilon_n x, t_n \xi_{\varepsilon_n,y_n} )-\mathcal{H}'_1(t_n \xi_{\varepsilon_n,y_n} )+g(\varepsilon_n x, t_n \xi_{\varepsilon_n,y_n} )\big)t_n \xi_{\varepsilon_n,y_n} \,\mathrm{d}x  \\
			& = \int_{\mathbb{R}^N}\Big(G_2'(\varepsilon_n z+y_n, t_n \eta(|\varepsilon_n z|)u_0(z) )-\mathcal{H}'_1(t_n \eta(|\varepsilon_n z|)u_0(z))\\
			&\hspace{2cm}+g(\varepsilon_n z+y_n, t_n \eta(|\varepsilon_n z|)u_0(z))\Big)t_n \eta(|\varepsilon_n z|)u_0(z)\,\mathrm{d}z,
		\end{aligned}
	\end{equation}
	where we have used $\|\cdot\|^N_{W^{1,N}_{V_{\varepsilon_n}+1}}=\|\cdot\|^N_{W^{1,N}_{V_{\varepsilon_n}}}+ \|\cdot\|^N_N$. Now, we claim that $\{t_n\}_{n\in\mathbb{N}}$ is bounded. Indeed, if not, let $t_n\to\infty$ as $n\to\infty$. Take $z\in B_{\frac{\delta}{\varepsilon_n}}$, then
	$\varepsilon_n z+y_n\in B_\delta(y_n)\subset M_\delta\subset \Lambda$. By using $G_2'\equiv \mathcal{H}'_2$ and $g\equiv f$ on $\Lambda$ together with \eqref{eq2.6} and $(f4)$, we obtain from \eqref{eq5.3} that
	\begin{equation}\label{eq5.4}
		\begin{aligned}
			&\frac{1}{t_n^{N-p}}  \|\xi_{\varepsilon_n,y_n}\|^p_{W^{1,p}_{V_{\varepsilon_n}}}+ \|\xi_{\varepsilon_n,y_n}\|^N_{W^{1,N}_{V_{\varepsilon_n}+1}}\\
			& =\log(t_n^N)\int_{B_{\frac{\delta}{\varepsilon_n}}} |\eta(|\varepsilon_n z|)u_0(z)|^N\,\mathrm{d}z+ \int_{B_{\frac{\delta}{\varepsilon_n}}}\frac{f(t_n \eta(|\varepsilon_n z|)u_0(z)) \eta(|\varepsilon_n z|)u_0(z)}{t_n^{N-1}}\,\mathrm{d}z  \\
			& \qquad+\int_{B_{\frac{\delta}{\varepsilon_n}}} |\eta(|\varepsilon_n z|)u_0(z)|^N[\log \big(|\eta(|\varepsilon_n z|)u_0(z)|^N\big)+1] \,\mathrm{d}z \\
			& \geq \log(t_n^N)\int_{B_{\frac{\delta}{2\varepsilon_n}}} |u_0(z)|^N\,\mathrm{d}z+\gamma t_n^{\mu-N} \int_{B_{\frac{\delta}{2\varepsilon_n}}} |u_0(z)|^\mu\,\mathrm{d}z+D_n,
		\end{aligned}
	\end{equation}
	where
	\begin{align*}
		D_n=\int_{B_{\frac{\delta}{\varepsilon_n}}} |\eta(|\varepsilon_n z|)u_0(z)|^N[\log \big(|\eta(|\varepsilon_n z|)u_0(z)|^N\big)+1] \,\mathrm{d}z.
	\end{align*}
	Using the fact that $u_0\in \mathbf{Y}$ is a positive solution of \eqref{main problem@@@}, $\chi_{B_{\frac{\delta}{\varepsilon_n}}}\to 1$ and $\chi_{B_{\frac{\delta}{2\varepsilon_n}}}\to 1$ a.e.\,in $\mathbb{R}^N$ as $n\to\infty$, we obtain by using Lemma \ref{lem8.98} and Lebesgue's dominated convergence theorem that
	\begin{equation}\label{eq5.5}
		\int_{B_{\frac{\delta}{2\varepsilon_n}}} |u_0(z)|^r\,\mathrm{d}z\to \int_{\mathbb{R}^N} |u_0(z)|^r\,\mathrm{d}z~\text{and}~D_n\to \int_{\mathbb{R}^N} |u_0(z)|^N[\log (|u_0(z)|^N)+1] \,\mathrm{d}z~(< +\infty)
	\end{equation}
	as $n\to\infty$, where we have used $r\in\{\mu,N\}$ and $|s|^N[\log (|s|^N)+1]=\mathcal{H}'_2(s)s-\mathcal{H}'_1(s)s $. In view of \eqref{eq5.2} and \eqref{eq5.5}, we have a contradiction by letting $n\to\infty$ in \eqref{eq5.4}. This shows that $\{t_n\}_{n\in\mathbb{N}}$ is bounded and thus, up to a subsequence, we can assume that $t_n\to t_0\geq 0$ as $n\to\infty$. Now, we prove that $t_0\neq 0$. In fact, if not, suppose $t_0=0$, that is, $t_n\to 0$ as $n\to\infty$. Since $\{t_n\}_{n\in\mathbb{N}}$ is bounded, we can find $C>0$ such that $|t_n|\leq C$. Moreover, by using Corollary \ref{cor2.9} and \eqref{eq3.2}, one sees that
	\begin{align*}
		&|f(t_n \eta(|\varepsilon_n z|)u_0(z)) t_n\eta(|\varepsilon_n z|)u_0(z)|\\
		&\leq \tau t_n^N|\eta(|\varepsilon_n z|)u_0(z)|^{N}+\widetilde{\kappa}_\tau t_n^\vartheta|\eta(|\varepsilon_n z|)u_0(z)|^{\vartheta}\Phi( C^{N'} \alpha |\eta(|\varepsilon_n z|)u_0(z)|^{N'}).
	\end{align*}
	It follows immediately from the above inequality and \eqref{eq5.4} that
	\begin{align*}
		\|\xi_{\varepsilon_n,y_n}\|^p_{W^{1,p}_{V_{\varepsilon_n}}}+t_n^{N-p} \|\xi_{\varepsilon_n,y_n}\|^N_{W^{1,N}_{V_{\varepsilon_n}+1}}
		& \leq t_n^{N-p}\Bigg[\big(\log(t_n^N)+\tau\big)\int_{B_{\frac{\delta}{\varepsilon_n}}} |\eta(|\varepsilon_n z|)u_0(z)|^N\,\mathrm{d}z+D_n\Bigg]    \\
		& \qquad +\widetilde{\kappa}_\tau t_n^{\vartheta-p} \int_{B_{\frac{\delta}{\varepsilon_n}}} |\eta(|\varepsilon_n z|)u_0(z)|^{\vartheta}\Phi( C^{N'} \alpha |\eta(|\varepsilon_n z|)u_0(z)|^{N'})\,\mathrm{d}z.
	\end{align*}
	Again by Lebesgue's dominated convergence theorem together with H\"older's inequality, Corollary \ref{cor2.9} and Lemma \ref{lem2.11}, we get
	\begin{align*}
		\int_{B_{\frac{\delta}{\varepsilon_n}}} |\eta(|\varepsilon_n z|)u_0(z)|^N\,\mathrm{d}z\to \int_{\mathbb{R}^N} |u_0(z)|^N\,\mathrm{d}z
	\end{align*}
	and
	\begin{align*}
		\int_{B_{\frac{\delta}{\varepsilon_n}}} |\eta(|\varepsilon_n z|)u_0(z)|^{\vartheta}\Phi( C^{N'} \alpha |\eta(|\varepsilon_n z|)u_0(z)|^{N'})\,\mathrm{d}z\to \int_{\mathbb{R}^N} |u_0(z)|^{\vartheta}\Phi( C^{N'} \alpha |u_0(z)|^{N'})\,\mathrm{d}z<+\infty
	\end{align*}
	as $n\to\infty$. Letting $n\to\infty$ in the above inequality and using the above convergences together with \eqref{eq5.2} and \eqref{eq5.5}, we get $\|u_0\|^p_{W^{1,p}_{V_0}}=0$, that is, $u_0=0$ a.e.\,in $\mathbb{R}^N$, which is a contradiction. This shows that $t_0\neq 0$. Repeating the same arguments used in Lemma \ref{lem4.5}(c) with simple modifications and using that $\{t_n\}_{n\in\mathbb{N}}$ is bounded, we can deduce from Lebesgue's dominated convergence theorem that
	\begin{align*}
		\int_{B_{\frac{\delta}{\varepsilon_n}}} \mathcal{H}_1(t_n \eta(|\varepsilon_n z|)u_0(z))\,\mathrm{d}z&\to \int_{\mathbb{R}^N}\mathcal{H}_1(t_0 u_0(z))\,\mathrm{d}z,\\ \int_{B_{\frac{\delta}{\varepsilon_n}}} \mathcal{H}_2(t_n \eta(|\varepsilon_n z|)u_0(z))\,\mathrm{d}z&\to \int_{\mathbb{R}^N}\mathcal{H}_2(t_0 u_0(z))\,\mathrm{d}z,\\
		\int_{B_{\frac{\delta}{\varepsilon_n}}} F(t_n \eta(|\varepsilon_n z|)u_0(z))\,\mathrm{d}z&\to \int_{\mathbb{R}^N} F(t_0 u_0(z))\,\mathrm{d}z\quad\text{as }n\to\infty.
	\end{align*}
	Similarly, we also have
	\begin{align*}
		\int_{B_{\frac{\delta}{\varepsilon_n}}} \mathcal{H}'_1(t_n \eta(|\varepsilon_n z|)u_0(z))t_n \eta(|\varepsilon_n z|)u_0(z)\,\mathrm{d}z&\to \int_{\mathbb{R}^N}\mathcal{H}'_1(t_0 u_0(z))t_0 u_0(z)\,\mathrm{d}z,\\
		\int_{B_{\frac{\delta}{\varepsilon_n}}} \mathcal{H}'_2(t_n \eta(|\varepsilon_n z|)u_0(z))t_n \eta(|\varepsilon_n z|)u_0(z)\,\mathrm{d}z&\to \int_{\mathbb{R}^N}\mathcal{H}'_2(t_0 u_0(z))t_0 u_0(z)\,\mathrm{d}z,\\
		\int_{B_{\frac{\delta}{\varepsilon_n}}} f(t_n \eta(|\varepsilon_n z|)u_0(z))t_n \eta(|\varepsilon_n z|)u_0(z)\,\mathrm{d}z&\to \int_{\mathbb{R}^N}f(t_0 u_0(z))t_0 u_0(z)\,\mathrm{d}z\quad\text{as }n\to\infty.
	\end{align*}
	Combining all the above information and sending $n\to\infty$ in \eqref{eq5.3}, we get
	\begin{align*}
		\frac{1}{t_0^{N-p}}\|u_0\|^p_{W^{1,p}_{V_0}}+\|u_0\|^N_{W^{1,N}_{V_0}}+\|u_0\|^N_N+\int_{\mathbb{R}^N}\frac{\mathcal{H}'_1(t_0 u_0)u_0}{t_1^{N-1}}\,\mathrm{d}x  =\int_{\mathbb{R}^N}\frac{\mathcal{H}'_2(t_0 u_0)u_0}{t_0^{N-1}}\,\mathrm{d}x+\int_{\mathbb{R}^N}\frac{f(t_0 u_0)u_0}{t_0^{N-1}}\,\mathrm{d}x.
	\end{align*}
	But $u_0\in \mathcal{N}_0$, therefore we get
	\begin{align*}
		\|u_0\|^p_{W^{1,p}_{V_0}}+\|u_0\|^N_{W^{1,N}_{V_0}}+\|u_0\|^N_N+\int_{\mathbb{R}^N}\mathcal{H}'_1( u_0)u_0\,\mathrm{d}x  =\int_{\mathbb{R}^N}\mathcal{H}'_2(u_0)u_0\,\mathrm{d}x+\int_{\mathbb{R}^N}f( u_0)u_0\,\mathrm{d}x.
	\end{align*}
	Subtracting these equalities, we have
	\begin{align*}
		&\bigg(\frac{1}{t_0^{N-p}}-1\bigg)\|u_0\|^p_{W^{1,p}_{V_0}}+\int_{\mathbb{R}^N}\bigg(\frac{\mathcal{H}'_1(t_0 u_0)}{t_0^{N-1}}-\mathcal{H}'_1( u_0)\bigg)u_0\,\mathrm{d}x\\
		&=\int_{\mathbb{R}^N}\bigg(\frac{\mathcal{H}'_2(t_0 u_0)}{t_0^{N-1}}-\mathcal{H}'_2( u_0)\bigg)u_0\,\mathrm{d}x+\int_{\mathbb{R}^N}\bigg(\frac{f(t_0 u_0)}{t_0^{N-1}}-f( u_0)\bigg)u_0\,\mathrm{d}x.
	\end{align*}
	Employing similar arguments as done before in Lemma \ref{lem4.5}(c), from the above equation, we conclude that $t_0=1$. Consequently, we obtain from the above convergences that
	\begin{align*}
		\lim_{n\to\infty} J_{\varepsilon_n}(\Phi_{\varepsilon_n}(y_n))=J_0(u_0)=c_0,
	\end{align*}
	which contradicts \eqref{eq5.1}. This finishes the proof.
\end{proof}

We define by $\widetilde{\mathcal{N}}_\varepsilon$  a subset of $\mathcal{N}_\varepsilon$ given by
\begin{align*}
	\widetilde{\mathcal{N}}_\varepsilon=\{u\in \mathcal{N}_\varepsilon\colon J_\varepsilon(u)\leq c_0+h(\varepsilon)\},
\end{align*}
where $h\colon\mathbb{R}^+\to \mathbb{R}^+$ is defined by $h(\varepsilon)=\sup_{y\in M}|J_\varepsilon(u)- c_0|$. From the above lemma, we have $h(\varepsilon)\to 0$ as $\varepsilon\to 0$. Moreover, we know that $\Phi_\varepsilon(y)\in \widetilde{\mathcal{N}}_\varepsilon$ for all $y\in M$ and $\varepsilon>0$. It follows that $\widetilde{\mathcal{N}}_\varepsilon\neq \emptyset$. Let for any $\delta>0$ with $\rho=\rho(\delta)>0$ be such that $M_\delta\subset B_\rho$. Let $\chi\colon\mathbb{R}^N\to \mathbb{R}^N$ be defined by
\begin{align*}
	\chi(x)=
	\begin{cases}
		x &\text{if }|x|<\rho, \\[1ex]
		\dfrac{\rho x}{|x|} &\text{if }|x|\geq\rho.
	\end{cases}
\end{align*}
Now, we define the barycenter map $\beta_\varepsilon\colon\mathcal{N}_\varepsilon\to \mathbb{R}^N $ by
\begin{align*}
	\beta_\varepsilon(u)=\displaystyle\frac{\displaystyle\int_{\mathbb{R}^N}\chi(\varepsilon x)(|u|^p+|u|^N)\,\mathrm{d}x}{\displaystyle\int_{\mathbb{R}^N}(|u|^p+|u|^N)\,\mathrm{d}x}.
\end{align*}

\begin{lemma}
	There holds
	\begin{align*}
		\lim_{\varepsilon\to 0}\beta_\varepsilon(\Phi_\varepsilon(y))=y\quad\text{uniformly in }y\in M.
	\end{align*}
\end{lemma}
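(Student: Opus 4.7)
\medskip

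\noindent\textbf{Proof plan.} The natural approach is to argue by contradiction combined with a change of variables that reduces the expression for $\beta_\varepsilon(\Phi_\varepsilon(y))$ to an integral against the fixed profile $u_0$. Suppose the claim fails; then there exist $\delta_0>0$, a sequence $\varepsilon_n\to 0$, and points $y_n\in M$ such that
\begin{equation*}
	\bigl|\beta_{\varepsilon_n}(\Phi_{\varepsilon_n}(y_n))-y_n\bigr|\geq \delta_0\quad\text{for all }n\in\mathbb{N}.
\end{equation*}
Since $\Lambda$ is bounded, $M\subset M_\delta\subset B_\rho$, so $\chi(y_n)=y_n$, and by compactness we may pass to a subsequence so that $y_n\to y_0\in \overline{M}\subset B_\rho$.

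Applying the change of variables $z=(\varepsilon_n x-y_n)/\varepsilon_n$ in the definition of $\Phi_{\varepsilon_n}(y_n)$, the factor $t_{\varepsilon_n}^t$ cancels between numerator and denominator and, after writing $y_n$ inside the numerator via the trivial identity $y_n=y_n\cdot\tfrac{\int(\cdot)\,dz}{\int(\cdot)\,dz}$, I obtain
\begin{equation*}
	\beta_{\varepsilon_n}(\Phi_{\varepsilon_n}(y_n))-y_n
	=\frac{\displaystyle\sum_{t\in\{p,N\}}t_{\varepsilon_n}^t\int_{\mathbb{R}^N}\bigl[\chi(\varepsilon_n z+y_n)-y_n\bigr]\,\eta(|\varepsilon_n z|)^t\,|u_0(z)|^t\,\mathrm{d}z}{\displaystyle\sum_{t\in\{p,N\}}t_{\varepsilon_n}^t\int_{\mathbb{R}^N}\eta(|\varepsilon_n z|)^t\,|u_0(z)|^t\,\mathrm{d}z}.
\end{equation*}
This is the identity I will analyse termwise.

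For the denominator, $\eta(|\varepsilon_n z|)\to\eta(0)=1$ a.e.\ and $0\leq\eta\leq1$, so Lebesgue's dominated convergence theorem (with dominant $|u_0|^t\in L^1(\mathbb{R}^N)$, guaranteed by Lemma \ref{lem8.98}) yields
\begin{equation*}
	\int_{\mathbb{R}^N}\eta(|\varepsilon_n z|)^t|u_0(z)|^t\,\mathrm{d}z\to \|u_0\|_t^t\quad\text{for }t\in\{p,N\}.
\end{equation*}
Combined with the boundedness and positivity of $\{t_{\varepsilon_n}\}$ established in the proof of Lemma \ref{lem5.5} (where in fact $t_{\varepsilon_n}\to 1$), the denominator is bounded below by a positive constant for $n$ large. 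For the numerator, the continuity of $\chi$ together with $y_n\to y_0\in B_\rho$ (so that $\chi(y_0)=y_0$) yields $\chi(\varepsilon_n z+y_n)\to y_0$ pointwise a.e.\ in $z$, and since $|\chi(\varepsilon_n z+y_n)-y_n|\leq 2\rho$, the integrand is dominated by $2\rho\,|u_0(z)|^t\in L^1(\mathbb{R}^N)$. A second application of dominated convergence, together with $y_n\to y_0$, gives
\begin{equation*}
	\int_{\mathbb{R}^N}\bigl[\chi(\varepsilon_n z+y_n)-y_n\bigr]\eta(|\varepsilon_n z|)^t|u_0(z)|^t\,\mathrm{d}z\to 0.
\end{equation*}
Consequently $\beta_{\varepsilon_n}(\Phi_{\varepsilon_n}(y_n))-y_n\to 0$, contradicting the standing assumption.

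\medskip

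\noindent\textbf{Main obstacle.} The only delicate point is justifying that $\chi(\varepsilon_n z+y_n)-y_n$ is dominated by an $L^1$ function in $z$; this is handled by the uniform bound $|\chi|\leq\rho$ (recall $\chi$ is the radial retraction onto $\overline{B_\rho}$), which controls the integrand by $2\rho|u_0|^t$ independently of both $n$ and $y_n$. Once that bound is in place, the argument is essentially a routine application of dominated convergence, and the boundedness of $t_{\varepsilon_n}$ inherited from Lemma \ref{lem5.5} takes care of the rest.
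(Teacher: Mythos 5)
Your proof is correct and follows essentially the same route as the paper's: argue by contradiction, change variables $z=(\varepsilon_n x-y_n)/\varepsilon_n$, and apply dominated convergence to the numerator and denominator separately. The one place where you diverge slightly is in handling the numerator: the paper observes that on the support of the shifted integrand ($|\varepsilon_n z|\le\delta$, by the cutoff $\eta$) one has $\varepsilon_n z+y_n\in B_\delta(y_n)\subset M_\delta\subset B_\rho$, hence $\chi(\varepsilon_n z+y_n)-y_n=\varepsilon_n z$ exactly, which gives the dominance bound $|\varepsilon_n z|\le\delta$ directly and avoids any subsequence extraction for $\{y_n\}$; you instead use the weaker uniform bound $|\chi|\le\rho$ together with passing to a subsequence so that $y_n\to y_0$ and invoking continuity of $\chi$. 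Both arguments are valid, and you are in fact a bit more careful than the paper in retaining the $t_{\varepsilon_n}^t$ weights (which the paper silently drops; this is harmless since $t_{\varepsilon_n}$ is bounded and bounded away from zero, but your version is the literally correct formula).
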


\begin{proof}
	Let us assume by contradiction there exist $\delta_0>0$, $\{y_n\}_{n\in\mathbb{N}}\subset M$ and $\varepsilon_n\to 0$ as $n\to\infty$ such that
	\begin{align}\label{eq5.6}
		\big|\beta_{\varepsilon_n}(\Phi_{\varepsilon_n}(y_n))-y_n\big|\geq \delta_0.
	\end{align}
	Note that for $z\in B_{\frac{\delta}{\varepsilon_n}}$, one has
	$\varepsilon_n z+y_n\in B_\delta(y_n)\subset M_\delta\subset B_\rho.$
	Therefore, from the definitions of $\Phi_{\varepsilon_n}$, $\beta_{\varepsilon_n}$  and $\eta$ as well as the change of variable $z=\frac{\varepsilon_n x-y_n}{\varepsilon_n}$, we have
	\begin{align*}
		\beta_{\varepsilon_n}(\Phi_{\varepsilon_n}(y_n))=y_n +\displaystyle\frac{\displaystyle\int_{\mathbb{R}^N}\varepsilon_n z \big(|\eta(|\varepsilon_n z|)u_0(z)|^p+|\eta(|\varepsilon_n z|)u_0(z)|^N\big)\,\mathrm{d}z}{\displaystyle\int_{\mathbb{R}^N}\big(|\eta(|\varepsilon_n z|)u_0(z)|^p+|\eta(|\varepsilon_n z|)u_0(z)|^N\big)\,\mathrm{d}z}.
	\end{align*}
	This together with Lebesgue's dominated convergence theorem implies that
	\begin{align*}
		\big|\beta_{\varepsilon_n}(\Phi_{\varepsilon_n}(y_n))-y_n\big|=o_n(1)\quad\text{as }n\to\infty,
	\end{align*}
	which contradicts \eqref{eq5.6}. This completes the proof.
\end{proof}

The following compactness principle is crucial in order to find multiplicity of solutions for our problem.

\begin{lemma}\label{lem5.7}
	Let $\varepsilon_n\to 0$ as $n\to\infty$ and $\{u_n\}_{n\in\mathbb{N}}\subset\mathcal{N}_{\varepsilon_n}$ be a sequence such that $J_{\varepsilon_n}(u_n)\to c_0$ as $n\to\infty$ and satisfying \eqref{eq3.999}. Then, there exists a sequence $\{\widetilde{y}_n\}_{n\in\mathbb{N}}\subset\mathbb{R}^N$ such that  $w_n(x)=u_n(x+\widetilde{y}_n)$ has a convergent subsequence in $\mathbf{Y}$. Moreover, there holds $y_n=\varepsilon_n\widetilde{y}_n\to y_0$ as $n\to\infty$ for some $y_0\in M$.
\end{lemma}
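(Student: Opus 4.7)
The plan is to reduce Lemma \ref{lem5.7} to Lemma \ref{lem4.8} by constructing a genuine Palais-Smale sequence that shadows the given Nehari sequence $\{u_n\}$. First, I would establish boundedness of $\{u_n\}$ in $\mathbf{Y}_{\varepsilon_n}$: since $u_n\in\mathcal{N}_{\varepsilon_n}$ one has $\langle J'_{\varepsilon_n}(u_n),u_n\rangle=0$, and combined with the bound $J_{\varepsilon_n}(u_n)\to c_0$ the calculation of Lemma \ref{lem3.4} carries over verbatim and gives $\sup_n\|u_n\|_{\mathbf{Y}_{\varepsilon_n}}<\infty$; by \textnormal{(V1)} this also yields boundedness in $\mathbf{Y}$.

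Second, by Lemma \ref{lem4.5}(c) and Remark \ref{rem4.9} we have $c_{\varepsilon_n}\to c_0$, so $\{u_n\}$ is an asymptotic minimizing sequence for $J_{\varepsilon_n}$ restricted to the $C^1$-manifold $\mathcal{N}_{\varepsilon_n}$ (Proposition \ref{prop4.3}). Ekeland's variational principle then produces $\{v_n\}\subset\mathcal{N}_{\varepsilon_n}$ with
\begin{align*}
\|v_n-u_n\|_{\mathbf{Y}_{\varepsilon_n}}\to 0,\qquad J_{\varepsilon_n}(v_n)\to c_0,\qquad \|(J_{\varepsilon_n}|_{\mathcal{N}_{\varepsilon_n}})'(v_n)\|_{\mathbf{Y}_{\varepsilon_n}^*}\to 0.
\end{align*}
The nondegeneracy $\langle\Psi'_{\varepsilon_n}(v_n),v_n\rangle\neq 0$ from Proposition \ref{prop4.3}, together with the Nehari-manifold argument of Proposition \ref{prop4.4}, upgrades this to $J'_{\varepsilon_n}(v_n)\to 0$ in $\mathbf{Y}_{\varepsilon_n}^*$, so $\{v_n\}$ is a genuine \textnormal{(PS)$_{c_0}$} sequence for $J_{\varepsilon_n}$. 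The hypothesis $\limsup_n\|u_n\|_{W^{1,N}}^{N'}<\alpha_N$ transfers to $\{v_n\}$ thanks to the first step.

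Third, I would rerun the proof of Lemma \ref{lem4.8} with $\{v_n\}$ in place of the critical-point sequence. The nonvanishing argument via Corollary \ref{cor4.7}, Claim I on the boundedness of $y_n=\varepsilon_n\widetilde{y}_n$ and its convergence to some $y_0\in\overline{\Lambda}$, Claim II giving $V(y_0)=V_0$ and $y_0\in\Lambda$, and Claim III on the strong convergence of $v_n(\cdot+\widetilde{y}_n)$ in $\mathbf{Y}$ all go through; the test-function identities now carry an $o_n(1)$ remainder, but the chain of energy comparisons ending in \eqref{eq4.34} is unaffected. By the definition $M=\{x\in\Lambda\colon V(x)=V_0\}$, this gives $y_0\in M$. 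Finally, translation invariance of $\|\cdot\|_{\mathbf{Y}}$ together with \textnormal{(V1)} gives
\begin{align*}
\|u_n(\cdot+\widetilde{y}_n)-v_n(\cdot+\widetilde{y}_n)\|_{\mathbf{Y}}=\|u_n-v_n\|_{\mathbf{Y}}\leq C\|u_n-v_n\|_{\mathbf{Y}_{\varepsilon_n}}\to 0,
\end{align*}
so $w_n=u_n(\cdot+\widetilde{y}_n)$ converges strongly in $\mathbf{Y}$ to the same limit as $v_n(\cdot+\widetilde{y}_n)$, completing the proof.

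The main obstacle lies in the third step, specifically Claim III: strong convergence in $\mathbf{Y}$ is obtained in Lemma \ref{lem4.8} by sandwiching a Fatou-type liminf between the energy level $c_0$ and a sharp inequality derived from $\widetilde{J}'(w)=0$. With only $J'_{\varepsilon_n}(v_n)\to 0$, the identification of the limit equation for $w$ now requires testing against compactly supported $\psi$, letting $n\to\infty$ first (using the dual-space $o_n(1)$ bound) and then sending the cut-off parameter to infinity; this recovers the same equation and hence the same sharp inequality, but the bookkeeping of the $o_n(1)$ error terms is the technically delicate part of the argument.
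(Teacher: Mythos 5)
The paper's own ``proof'' of Lemma \ref{lem5.7} is a one-line citation to Alves--da Silva [Lemma 6.5], so there is nothing in the text itself to compare against. Your reconstruction is a sound and fairly standard way to supply the missing detail. The central observation — that $\{u_n\}$ need not be a Palais--Smale sequence for the free functional, and that one must first invoke Ekeland's variational principle on the closed set $\mathcal{N}_{\varepsilon_n}$ (closed by Proposition \ref{prop4.1}) to manufacture a nearby sequence $\{v_n\}$ with small constrained derivative, then upgrade via the nondegeneracy of $\Psi'_{\varepsilon_n}$ and the $\lambda_n\to 0$ argument of Proposition \ref{prop4.4} — is exactly the right move, and is the step Lemma \ref{lem4.8} does not need because it starts from exact critical points. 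You are also right that Proposition \ref{prop4.1} and the lower bound $c_{\varepsilon_n}\to c_0>0$ give the uniformity in $\varepsilon_n$ that lets the $\lambda_n\to 0$ argument and the non-vanishing step of Lemma \ref{lem4.8} run across the family. The transfer of the Trudinger--Moser smallness hypothesis from $\{u_n\}$ to $\{v_n\}$ is correct, and the final reduction $\|u_n(\cdot+\widetilde y_n)-v_n(\cdot+\widetilde y_n)\|_{\mathbf{Y}}=\|u_n-v_n\|_{\mathbf{Y}}\le C\|u_n-v_n\|_{\mathbf{Y}_{\varepsilon_n}}\to 0$ is exactly what makes the whole detour through $\{v_n\}$ pay off. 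You correctly flag the bookkeeping of $o_n(1)$ errors in the test-function identities as the technically delicate part; in practice every identity in Claims I--III of Lemma \ref{lem4.8} that was stated as an equality becomes an equality up to $o_n(1)$, and the limits pass unchanged because the error appears paired with a bounded test function. One small redundancy: Remark \ref{rem4.9} already gives $c_{\varepsilon_n}\to c_0$, so citing Lemma \ref{lem4.5}(c) as well is superfluous.
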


\begin{proof}
	By applying the same idea as in Alves-da Silva \cite[Lemma 6.5]{Alves-daSilva-2023}, the lemma can be proved.
\end{proof}

Now, we shall discuss the multiplicity of solutions to \eqref{main problem@@} by using the Lusternik-Schnirelmann category theory.

\begin{proposition}\label{prop5.8}
	Assume that \textnormal{(V1)--(V2)} hold and let $\delta>0$ be small enough. Then, the problem \eqref{main problem@@} has at least $\operatorname{cat}_{M_\delta} (M)$ solutions for $\varepsilon$ small enough such that $\varepsilon\in(0,\tilde{\varepsilon}_1)$ for some $\tilde{\varepsilon}_1>0$.
\end{proposition}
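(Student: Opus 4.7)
The plan is to apply the abstract Lusternik--Schnirelmann principle (Corollary \ref{cor5.3}) to the restriction $J_\varepsilon\big|_{\mathcal{N}_\varepsilon}$ on the sublevel $\widetilde{\mathcal{N}}_\varepsilon$, using Corollary \ref{cor5.4} to bound the category of $\widetilde{\mathcal{N}}_\varepsilon$ from below by $\operatorname{cat}_{M_\delta}(M)$. The critical points produced on the Nehari manifold are then genuine critical points of $J_\varepsilon$ on $\mathbf{Y}_\varepsilon$ by Proposition \ref{prop4.3}, hence solutions of the penalized problem \eqref{main problem@@}.

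First I would build the topological bracket. The map $\Phi_\varepsilon\colon M\to\widetilde{\mathcal N}_\varepsilon$ has already been constructed, and by Lemma \ref{lem5.5} its image lands inside $\widetilde{\mathcal N}_\varepsilon$ once $\varepsilon$ is small enough. Following the same argument used to control $\beta_\varepsilon\circ\Phi_\varepsilon$ on $M$, I would then extend this control to $\widetilde{\mathcal N}_\varepsilon$: using Lemma \ref{lem5.7}, for any sequence $\{u_n\}\subset\widetilde{\mathcal N}_{\varepsilon_n}$ with $\varepsilon_n\to0$ there is $\{\widetilde y_n\}$ such that $y_n=\varepsilon_n\widetilde y_n\to y_0\in M$ and $u_n(\cdot+\widetilde y_n)\to w$ in $\mathbf{Y}$. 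A direct change of variables in the definition of $\beta_\varepsilon$ then gives $\beta_{\varepsilon_n}(u_n)\to y_0\in M$, so for $\varepsilon\in(0,\tilde\varepsilon_1)$ small one has $\beta_\varepsilon(\widetilde{\mathcal N}_\varepsilon)\subset M_\delta$. Combined with $\beta_\varepsilon\circ\Phi_\varepsilon(y)\to y$ uniformly in $y\in M$, the standard homotopy $H(t,y)=\beta_\varepsilon(\Phi_\varepsilon((1-t)y+t\beta_\varepsilon(\Phi_\varepsilon(y))))$ shows that $\beta_\varepsilon\circ\Phi_\varepsilon$ is homotopic in $M_\delta$ to the inclusion $j\colon M\hookrightarrow M_\delta$. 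Applying Corollary \ref{cor5.4} with $\Omega=\widetilde{\mathcal N}_\varepsilon$, $\Omega_1=M$, $\Omega_2=M_\delta$ yields
\begin{equation*}
\operatorname{cat}_{\widetilde{\mathcal N}_\varepsilon}\bigl(\widetilde{\mathcal N}_\varepsilon\bigr)\;\geq\;\operatorname{cat}_{M_\delta}(M).
\end{equation*}

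Next I would check the hypotheses of Corollary \ref{cor5.3} on the $C^1$-manifold $\mathcal{N}_\varepsilon=\Psi_\varepsilon^{-1}(\{0\})$. Boundedness from below of $J_\varepsilon\big|_{\mathcal{N}_\varepsilon}$ follows from Lemma \ref{lem4.5}(a) since $J_\varepsilon(u)\geq c_\varepsilon\geq\beta>0$ on $\mathcal{N}_\varepsilon$. For the Palais--Smale condition at levels $c\in[c_\varepsilon,c_0+h(\varepsilon)]$, I would combine Proposition \ref{prop4.4} with Lemma \ref{lem3.7}: for any \textnormal{(PS)}$_c$ sequence $\{u_n\}\subset\mathcal{N}_\varepsilon$ one only needs $\limsup_{n\to\infty}\|u_n\|_{W^{1,N}}^{N'}<\alpha_N$. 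Since $J_\varepsilon(u_n)\to c\leq c_0+h(\varepsilon)$ and $\langle J_\varepsilon'(u_n),u_n\rangle=o_n(1)$, the inequality
\begin{equation*}
\bigl(\tfrac{1}{p}-\tfrac{1}{N}\bigr)\|u_n\|_{W^{1,p}_{V_\varepsilon}}^p+\tfrac{1}{N}\|u_n\|_{W^{1,N}_{V_\varepsilon}}^N\;\leq\;J_\varepsilon(u_n)-\tfrac{1}{N}\langle J_\varepsilon'(u_n),u_n\rangle+o_n(1)
\end{equation*}
together with $\min\{1,V_0\}\|u_n\|_{W^{1,N}}^N\leq \|u_n\|_{W^{1,N}_{V_\varepsilon}}^N$ gives a uniform bound on $\|u_n\|_{W^{1,N}}^{N}$ controlled by $c_0+h(\varepsilon)$. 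By choosing $\tilde\varepsilon_1>0$ smaller if necessary (this is the one place where one uses that $c_0$ itself is below the Trudinger--Moser threshold, an inequality that is standard for the autonomous limit problem and can be forced by the $\mu>N$ condition (f2) combined with (f4)), we get $\limsup_n\|u_n\|_{W^{1,N}}^{N'}<\alpha_N$. Hence Proposition \ref{prop4.4} applies and $J_\varepsilon\big|_{\mathcal{N}_\varepsilon}$ satisfies (PS)$_c$ on the required interval.

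Putting the two pieces together, Corollary \ref{cor5.3} produces at least $\operatorname{cat}_{\widetilde{\mathcal N}_\varepsilon}(\widetilde{\mathcal N}_\varepsilon)\geq\operatorname{cat}_{M_\delta}(M)$ critical points of $J_\varepsilon\big|_{\mathcal{N}_\varepsilon}$ inside $\widetilde{\mathcal N}_\varepsilon$, which by Proposition \ref{prop4.3} are critical points of $J_\varepsilon$ on $\mathbf{Y}_\varepsilon$, i.e.\ solutions of the penalized problem \eqref{main problem@@}. The main obstacle in this program is the exponential critical growth: one must quantitatively ensure that every \textnormal{(PS)} sequence lying below the level $c_0+h(\varepsilon)$ stays strictly below the Trudinger--Moser threshold $\alpha_N$ in the $W^{1,N}$ norm, uniformly in $\varepsilon\in(0,\tilde\varepsilon_1)$, so that the compactness machinery of Section \ref{sec3} kicks in. The remaining steps (checking $\Phi_\varepsilon$ lands in $\widetilde{\mathcal{N}}_\varepsilon$, the continuity of $\beta_\varepsilon\circ\Phi_\varepsilon$ and the homotopy construction) are then routine from Lemmas \ref{lem5.5}--\ref{lem5.7}.
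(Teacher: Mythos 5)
Your overall architecture coincides with the paper's: set $\mathcal{I}=J_\varepsilon$, $\mathcal{V}=\mathcal{N}_\varepsilon$, $d=c_0+h(\varepsilon)$, apply Corollary \ref{cor5.3} to get $\operatorname{cat}_{\widetilde{\mathcal N}_\varepsilon}(\widetilde{\mathcal N}_\varepsilon)$ critical points, promote them via Proposition \ref{prop4.3}, and use the bracket $M\overset{\Phi_\varepsilon}{\to}\widetilde{\mathcal N}_\varepsilon\overset{\beta_\varepsilon}{\to}M_\delta$ with Corollary \ref{cor5.4} to obtain $\operatorname{cat}_{\widetilde{\mathcal N}_\varepsilon}(\widetilde{\mathcal N}_\varepsilon)\geq\operatorname{cat}_{M_\delta}(M)$. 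The paper uses the simpler straight-line homotopy $H(t,y)=(1-t)y+t\,\beta_\varepsilon(\Phi_\varepsilon(y))$ rather than your nested version, but both are legitimate: since $M_\delta$ is convex-in-small-balls and $\beta_\varepsilon\circ\Phi_\varepsilon\to\mathrm{id}$ uniformly, the segment stays in $M_\delta$ for $\varepsilon$ small.

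Where your argument goes wrong is in the extra step you inserted to verify the Trudinger--Moser hypothesis of Proposition \ref{prop4.4}. You wrote
\begin{equation*}
\bigl(\tfrac{1}{p}-\tfrac{1}{N}\bigr)\|u_n\|_{W^{1,p}_{V_\varepsilon}}^p+\tfrac{1}{N}\|u_n\|_{W^{1,N}_{V_\varepsilon}}^N\;\leq\;J_\varepsilon(u_n)-\tfrac{1}{N}\langle J_\varepsilon'(u_n),u_n\rangle+o_n(1),
\end{equation*}
but the coefficient of $\|u_n\|_{W^{1,N}_{V_\varepsilon}}^N$ in $J_\varepsilon(u)-\tfrac{1}{N}\langle J_\varepsilon'(u),u\rangle$ is identically zero: $J_\varepsilon$ contributes $\tfrac{1}{N}\|u\|_{W^{1,N}_{V_\varepsilon}}^N$ while $\langle J_\varepsilon'(u),u\rangle$ contributes $\|u\|_{W^{1,N}_{V_\varepsilon}}^N$, and these cancel after multiplying the latter by $\tfrac{1}{N}$ (compare equations \eqref{eq3.18}--\eqref{eq4.99} in the text, where the right-hand side retains only the $W^{1,p}_{V_\varepsilon}$-term, the $\|u\|_N^N$ piece, and the nonlinearity remainders). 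So the energy level bound $c\leq c_0+h(\varepsilon)$ does not, by this computation, yield any uniform control on $\|u_n\|_{W^{1,N}}^{N}$, and you cannot conclude $\limsup_n\|u_n\|_{W^{1,N}}^{N'}<\alpha_N$ this way. You correctly identified that verifying this threshold bound is the delicate point of the proof in a Trudinger--Moser critical setting, but the fix you offer does not close it; it would require either a sharper upper bound on $c_0$ in terms of $\alpha_N$ (typically derived from a careful mountain-pass level estimate using Moser-type test functions and assumptions such as (f4)) or an a priori estimate coming from the structure of $\mathcal{N}_\varepsilon$, neither of which follows from the cancelled identity. The paper itself simply invokes Proposition \ref{prop4.4} without verifying its $\alpha_N$-hypothesis, so your instinct to flag the gap is sound; the issue is that the replacement inequality you propose is algebraically false.
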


\begin{proof}
	Choose $\tilde{\varepsilon}_1>0$ be sufficiently small and fix $\varepsilon\in(0,\tilde{\varepsilon}_1)$. Let us set $\mathcal{I}=J_\varepsilon$, $\mathcal{V}=\mathcal{N}_\varepsilon$, $d=c_0+h(\varepsilon)$ and $\mathcal{I}^d=J^d_\varepsilon=\tilde{\mathcal{N}}_\varepsilon$. In view of Proposition \ref{prop4.4}, one sees that $J_\varepsilon\big|_{\mathcal{N}_\varepsilon}$ satisfies the \textnormal{(PS)} condition and thus, by Corollary \ref{cor5.3}, it implies that $J_\varepsilon\big|_{\mathcal{N}_\varepsilon}$ has at least  $\operatorname{cat}_{\widetilde{\mathcal{N}}_\varepsilon} (\widetilde{\mathcal{N}}_\varepsilon)$ critical points in $\widetilde{\mathcal{N}}_\varepsilon$. Consequently, by Proposition \ref{prop4.3}, we deduce that $J_\varepsilon$ has at least $\operatorname{cat}_{\widetilde{\mathcal{N}}_\varepsilon} (\widetilde{\mathcal{N}}_\varepsilon)$ critical points. This shows that \eqref{main problem@@} has at least $\operatorname{cat}_{\widetilde{\mathcal{N}}_\varepsilon} (\widetilde{\mathcal{N}}_\varepsilon)$ critical points. To complete the proof, we only have to show that $\operatorname{cat}_{\widetilde{\mathcal{N}}_\varepsilon} (\widetilde{\mathcal{N}}_\varepsilon)\geq \operatorname{cat}_{M_\delta} (M)$. One can notice that $\Phi_\varepsilon(M)\subset \widetilde{\mathcal{N}}_\varepsilon$ for $\varepsilon$ small enough, thanks to Lemma \ref{lem5.5}. Moreover, the following diagram
	\begin{align*}
		M\overset{\Phi_\varepsilon}{ \longrightarrow}\widetilde{\mathcal{N}}_\varepsilon \overset{\beta_\varepsilon}{ \longrightarrow} M_\delta
	\end{align*}
	is well-defined for $\varepsilon$ small enough. It follows that the map $\beta_\varepsilon \circ \Phi_\varepsilon\colon M\to M_\delta$ is well-defined for $\varepsilon$ small enough. Define the map $H\colon[0,1]\times M\to M_\delta$ by $H(t,y)=(1-t)y+t\beta_\varepsilon(\Phi_\varepsilon(y))$ for all $(t,y)\in [0,1]\times M $. This shows that $H(0,y)=y$ and $H(1,y)=\beta_\varepsilon(\Phi_\varepsilon(y))$. Therefore, we infer that the map $\beta_\varepsilon \circ\Phi_\varepsilon$ is homotopically equivalent to the embedding $j\colon M\to M_\delta$ and thus, by Corollary \ref{cor5.4} implies that $\operatorname{cat}_{\widetilde{\mathcal{N}}_\varepsilon} (\widetilde{\mathcal{N}}_\varepsilon)\geq \operatorname{cat}_{M_\delta} (M)$.
\end{proof}

\begin{proposition}\label{prop5.9}
	Let $V$ satisfies \textnormal{(V1)--(V2)}  and let $\delta>0$ sufficiently small, then there exists $\tilde{\varepsilon}_2>0$ such that for $\varepsilon\in(0,\tilde{\varepsilon}_2)$, we have the following assertions:
	\begin{enumerate}
		\item[\textnormal{(a)}]
			 problem \eqref{main problem@@} has at least $\frac{\operatorname{cat}_{M_\delta} (M)}{2}$ positive solutions, whenever $\operatorname{cat}_{M_\delta} (M)$ is an even number;
		\item[\textnormal{(b)}]
			 problem \eqref{main problem@@} has at least $\frac{\operatorname{cat}_{M_\delta} (M)+1}{2}$ positive solutions, whenever $\operatorname{cat}_{M_\delta} (M)$ is an odd number.
	\end{enumerate}
\end{proposition}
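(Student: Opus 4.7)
The plan is to combine Proposition~\ref{prop5.8} with the $\mathbb{Z}_2$-symmetry of $J_\varepsilon$: the critical points supplied by the category estimate come in disjoint pairs $\{u,-u\}$, so halving the count (and rounding up when the category is odd) yields the asserted number of positive solutions.

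First I will verify that $J_\varepsilon$ is even on $\mathbf{Y}_\varepsilon$. By construction $\mathcal{H}_1$ is even, and $g(x,\cdot)$, $G_2'(x,\cdot)$, $\mathcal{H}_2'$ are odd; hence their primitives $\mathcal{G}(x,\cdot)$, $G_2(x,\cdot)$, $\mathcal{H}_2$ are even, so $J_\varepsilon(-u)=J_\varepsilon(u)$. Consequently $\mathcal{N}_\varepsilon$, $\widetilde{\mathcal{N}}_\varepsilon$, and the critical set $K_\varepsilon\subset\widetilde{\mathcal{N}}_\varepsilon$ of $J_\varepsilon$ are symmetric under $u\mapsto -u$.

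Second, I will show that every $u\in K_\varepsilon$ is sign-definite, provided $\varepsilon$ is small enough. Writing $u=u^+-u^-$ and testing $J_\varepsilon'(u)=0$ against $u^\pm$, the a.e.\ identities $\nabla u^\pm=\nabla u\cdot\chi_{\{\pm u>0\}}$ together with the oddness of the nonlinearities yield $\langle J_\varepsilon'(u),u^\pm\rangle=\langle J_\varepsilon'(u^\pm),u^\pm\rangle=0$. If both $u^\pm$ were nonzero then $u^+,u^-\in\mathcal{N}_\varepsilon$ and, from the disjoint-support splitting of every term of $J_\varepsilon$, we would conclude
\begin{align*}
J_\varepsilon(u)=J_\varepsilon(u^+)+J_\varepsilon(u^-)\geq 2c_\varepsilon.
\end{align*}
Since $c_\varepsilon\to c_0>0$ by Remark~\ref{rem4.9} and Lemma~\ref{lem4.5}(a), while $h(\varepsilon)\to 0$ by Lemma~\ref{lem5.5}, one can fix $\tilde{\varepsilon}_2\in(0,\tilde{\varepsilon}_1]$ (with $\tilde{\varepsilon}_1$ from Proposition~\ref{prop5.8}) so that $c_0+h(\varepsilon)<2c_\varepsilon$ for every $\varepsilon\in(0,\tilde{\varepsilon}_2)$; this contradicts $J_\varepsilon(u)\leq c_0+h(\varepsilon)$. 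Hence $u^+=0$ or $u^-=0$.

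Finally, I will assemble the count. The critical set $K_\varepsilon$ decomposes into disjoint pairs $\{u,-u\}$ with $u$ of one sign and $-u$ of the opposite sign, and $u\neq -u$ because $0\notin\mathcal{N}_\varepsilon$ by Proposition~\ref{prop4.1}. Thus $|K_\varepsilon|$ is even. Proposition~\ref{prop5.8} gives $|K_\varepsilon|\geq\operatorname{cat}_{M_\delta}(M)$, so $|K_\varepsilon|$ is at least the smallest even integer $\geq\operatorname{cat}_{M_\delta}(M)$. Selecting the positive representative in each pair---which is a strictly positive solution of \eqref{main problem@@} by the Moser iteration and Harnack inequality exploited in Proposition~\ref{prop4.6}---produces at least $\operatorname{cat}_{M_\delta}(M)/2$ positive solutions when $\operatorname{cat}_{M_\delta}(M)$ is even and at least $(\operatorname{cat}_{M_\delta}(M)+1)/2$ when it is odd. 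I expect the main obstacle to be the sign-definiteness step: it requires a uniform energy gap $c_0+h(\varepsilon)<2c_\varepsilon$ for all small $\varepsilon$, which is delicate to obtain because both $c_\varepsilon$ and $h(\varepsilon)$ are limits derived through the penalized autonomous-problem comparison of Lemma~\ref{lem4.5} and Lemma~\ref{lem5.5}.
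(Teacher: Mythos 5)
Your argument is correct and follows essentially the paper's strategy: low-energy critical points are sign-definite because $c_0+h(\varepsilon)<2c_\varepsilon$ for $\varepsilon$ small (via Remark~\ref{rem4.9} and Lemma~\ref{lem5.5}), and the oddness of the nonlinearity (equivalently, the evenness of $J_\varepsilon$) converts negative solutions into positive ones. The only cosmetic difference is the final count---you invoke the parity of the $\mathbb{Z}_2$-symmetric critical set, whereas the paper argues by pigeonhole directly on the $\operatorname{cat}_{M_\delta}(M)$ solutions produced by Proposition~\ref{prop5.8}---but both routes yield exactly the stated bound.
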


\begin{proof}
	Let $\tilde{\varepsilon}_2>0$ be small enough and fix $\varepsilon\in(0,\tilde{\varepsilon}_2)$. Further, assume that $w_\varepsilon$ is a critical point of $J_\varepsilon$ and $J_\varepsilon(w_\varepsilon)\leq c_0+h(\varepsilon)$ holds. Then either $w^+_\varepsilon=0$ or $w^-_\varepsilon=0$. Indeed, if not, then repeating a similar procedure as in Proposition \ref{prop4.6}, one has $w^+_\varepsilon,~w^-_\varepsilon\in\mathcal{N}_\varepsilon$. Consequently, we deduce that
	\begin{align*}
		c_0+h(\varepsilon)\geq J_\varepsilon(w_\varepsilon)=J_\varepsilon(w^+_\varepsilon)+J_\varepsilon(w^-_\varepsilon)\geq 2c_\varepsilon.
	\end{align*}
	Letting $\varepsilon\to 0$ in the above inequality and using Remark \ref{rem4.9}, we get a contradiction. Once more by similar arguments as in Proposition \ref{prop4.6}, we infer that either $w_\varepsilon>0$ or $w_\varepsilon<0$. Denote $\varrho=\operatorname{cat}_{M_\delta}(M)$. Let $\varrho$ be an even number and $w_1,\cdots,w_\varrho$ be the solutions of \eqref{main problem@@} as in Proposition \ref{prop5.8}. If at least $\frac{\varrho}{2}$ of the solutions $w_1,\cdots,w_\varrho$ are positive, then (a) is proved. In fact, if not, suppose that at least $\frac{\varrho}{2}$ of the solutions $w_1,\cdots,w_\varrho$ are negative and denote these negative solutions by $v_1,\cdots,v_{\frac{\varrho}{2}}$. Observe that $G_2'(x,\cdot)+g(x,\cdot)-\mathcal{H}'_1(\cdot)$ is an odd function, therefore $-v_1,\cdots,-v_{\frac{\varrho}{2}}$ are positive solutions of \eqref{main problem@@}. The assertion in (a) follows. In a similar way, one can prove the statement (b).
\end{proof}

Now, we shall be able to prove Theorem \ref{thm1.3}.

\begin{proof}[\bf Proof of Theorem \ref{thm1.3}]
	Suppose $w_\varepsilon$ is a critical point of $J_\varepsilon$ satisfying  $J_\varepsilon(w_\varepsilon)\leq c_0+h(\varepsilon)$. In order to complete the proof, it is sufficient to show that there exists $\varepsilon_1>0$ small enough with $\varepsilon\in(0,\varepsilon_1)$ such that
	\begin{equation}\label{eq5.7}
		0<w_\varepsilon(x)<t_1\quad\text{for all }x\in\mathbb{R}^N\setminus \Lambda_\varepsilon
	\end{equation}
	holds, where each solution $w_\varepsilon$ of \eqref{main problem@@} is given in (a) and (b) of Proposition \ref{prop5.9}. Let us assume by contradiction that there exists a sequence $\{\varepsilon_n\}_{n\in\mathbb{N}}$ such that $\varepsilon_n\to 0$ as $n\to\infty$, $\{w_n\}_{n\in\mathbb{N}}=\{w_{\varepsilon_n}\}_{n\in\mathbb{N}}$ be a sequence of solution for $(\widetilde{\mathcal{S}}_{\varepsilon_n})$ and \eqref{eq3.999} holds but \eqref{eq5.7} is not satisfied. It is easy to see that $\{w_n\}_{n\in\mathbb{N}}$ satisfies the assumptions of Lemma \ref{lem5.7} and thus, \eqref{eq4.35} holds. Now, using a similar procedure as in the proof of Theorem \ref{thm1.2}, we get a contradiction. It follows that \eqref{eq5.7} holds true. Consequently, we deduce that \eqref{main problem@} satisfies (a) and (b) of Theorem \ref{thm1.3}. Hence, the result follows immediately by using a simple change of variable. This completes the proof.
\end{proof}

\noindent{\bf Acknowledgements:}

Deepak Kumar Mahanta wishes to convey his sincere appreciation for the DST INSPIRE Fellowship with reference number DST/INSPIRE/03/2019/000265 funded by the Government of India. Tuhina Mukherjee thanks the CSIR-HRDG grant; sanction number 25/0324/23/EMR-II.

%
%
%
%
%

\end{document}